\tikzstyle{vertex}=[ draw, inner sep=0pt, minimum size=0pt]
\date{}
\theoremstyle{plain}
\newtheorem{theorem}{Theorem}
\newtheorem{lemma}[theorem]{Lemma}
\theoremstyle{definition}
\newtheorem{case}{Case}
\newtheorem{subcase}{Subcase}
\numberwithin{subcase}{case}
\renewcommand{\@seccntformat}[1]{\csname the#1\endcsname. }
\def\h25{\hspace{-.3cm}}
\tikzstyle{vertex}=[ draw, inner sep=0pt, minimum size=0pt]
\newtheorem{lem}{{\sc Lemma}}[section]
\newtheorem{thm}{{\sc Theorem}}[section]
\newtheorem{prop}{{\sc Proposition}}[section]
\newcolumntype{C}[1]{>{\centering}m{#1}}
\begin{document}
%\raggedbottom
%\raggedleft
\title{\sc Semi-equivelar maps of Euler characteristics -2 with few vertices}
\author{{Debashis Bhowmik and Ashish Kumar Upadhyay}\\[2mm]
{\normalsize Department of Mathematics}\\{\normalsize Indian Institute of Technology Patna }\\{\normalsize Bihta, Bihar 801103, India.}\\
{\small \{debashis.pma15, upadhyay\}@iitp.ac.in}}
%\date{May 2009}
\maketitle

\vspace{-5mm}

\hrule

\begin{abstract}  We enumerate and classify all the semi equivelar maps on the surface of $ \chi=-2 $ with upto 12 vertices. We also determine which of these are vertex transitive and which are not.
\end{abstract}

{\small

{\bf AMS classification\,: 52B70, 52C20, 57M20, 57N05}

{\bf Keywords\,:} Semi equivelar maps, Vertex transitive maps, Archimedean solids
}

\bigskip

\hrule

\section{Introduction}\label{intro}

 Recall that a compact, connected, 2-dimensional manifold without boundary is called a closed surface or surface. A \textit{Polyhedral complex} is a finite collection $ X $ of convex polytopes, in Euclidean space $ \mathbb{E}^n $ such that following two conditions are satisfied: First, if $ \sigma\in X $ and $ \tau $ is a face of $ \sigma $, then $ \tau \in X $ and Second, if $ \sigma_{1},\sigma_{2}\in X $ then $ \sigma_{1}\cap\sigma_{2} $ is a face of $ \sigma_{1} $ and $ \sigma_{2} $. The \textit{dimension} $ d $ of $ X $ is the maximum of dimensions (of the affine hulls) of the elements in $ X $. We also call $ X $ a \textit{polyhedral d-complex} or \textit{combinatorial d-complex}. For a polyhedral complex $X$, let $v$ be a vertex of a $X$. Then \textit{star} of $v$, denoted by $st(v)$, is a polyhedral complex $\{\sigma\in X:\{v\}\leq \sigma \}$ and the \textit{link} of a vertex $ v $ in $ X $, denoted by $lk(v)$ or $lk_{X}(v)$, is the polyhedral complex $\{\sigma\in Cl(st(v)):\sigma\cap\{v\}=\phi\}$. The Automorphism group \textit{Aut(X)} of a polyhedral complex $ X $ is a collection of all automorphism $ f:X \rightarrow  X$, naturally it forms a group under composition of maps and is also said to be the group of automorphisms of $ X $. If \textit{Aut(X)} acts transitively on the set of all vertices $V(X)$ of $X$ then $X$ is called a \textit{vertex transitive map}.  For other related definition, one may refer to \cite{west}, \cite{armstrong}, \cite{aku.akt} \cite{bd.aku1} \cite{handbook} \cite{jk.rn} \cite{bd.nn} \cite{bd}. \nocite{GAP4} \nocite{MATLAB:2014}\par
	%\section{Literature review}
	For a polyhedral complex $ X $ with $ v\in V(X) $ , let $ N(v)=\{u\in V(X):\exists\sigma\in X \text{ such that }\allowbreak u,v\in\sigma \} $. For all $ i\geq 0 $, we define the graph $ G_{i}(X) $ as follows: $ V(G_{i}(X))=V(EG(X)) $ and $ [u,v]\in EG(G_{i}(X))  $ if $ |N(u)\cap N(v)|=i $. Clearly, if $ X $ and $ Y $ are isomorphic, then $ G_{i}(X) $ and $ G_{i}(Y) $ are isomorphic for all $ i\geq 0 $.
	The \textit{Face sequence} (see figure \ref{tab:fc}) of a vertex $v$ in a map is a finite cyclically ordered sequence $(a_{1}^{n_{1}},a_{2}^{n_{2}},...,a_{m}^{n_{m}})$ of powers of positive integers $a_{1},...,a_{m}\geq 3$ and $n_{1},...,n_{m}\geq 1$, such that through the vertex $v$, $n_{1}$ numbers of $C_{a_{1}}$, $n_{2}$ numbers of $C_{a_{2}}$, ... , $n_{m}$ numbers of $C_{a_{m}}$ are incident in the given cyclic order where $ C_i $ is a cycle of length $ i $. A polyhedral complex $X$ is called \textit{Semi-Equivelar Map} if the face sequence of each vertex of $X$ is same. We write Semi-Equivelar Map as SEM. A SEM with face sequence $(a_{1}^{n_{1}},a_{2}^{n_{2}},...,a_{m}^{n_{m}})$ is also called SEM of type $(a_{1}^{n_{1}},a_{2}^{n_{2}},...,a_{m}^{n_{m}})$. For more details, see \cite{aku.akt}.
	For $\chi =0$ with 12 vertices, SEM are classified in \cite{bd.aku2}. In this section we present the existence of maps in the surface of Euler characteristics($ \chi $) $ -2 $.
	\begin{figure}
		\begin{center}
			
			\begin{tikzpicture}[scale=1,line width=.5pt]{gull}
			\draw (0,0) node[anchor=north west]{\footnotesize{$0$}}
			-- (1,0) node[anchor=west]{\footnotesize{$1$}}
			-- (1,1) node[anchor=west]{\footnotesize{$2$}}
			-- cycle;
			\draw (0,0)
			-- (1,1)
			-- (0,1) node[anchor=south]{\footnotesize{$3$}}
			-- cycle;
			\draw (0,0)
			-- (0,1)
			-- (-1,1) node[anchor=east]{\footnotesize{$4$}}
			-- cycle;
			\draw (0,0)
			-- (-1,1)
			-- (-1,0) node[anchor=east]{\footnotesize{$5$}}
			-- cycle;
			\draw (0,0)
			-- (-1,0)
			-- (-1,-1) node[anchor=east]{\footnotesize{$6$}}
			-- (0,-1) node[anchor=north]{\footnotesize{$7$}}
			-- cycle;
			\draw (0,0)
			-- (0,-1)
			-- (1,-1) node[anchor=west]{\footnotesize{$8$}}
			-- (1,0)
			-- cycle;
			\node at (0,-2){$(3^4,4^2)$};
			\end{tikzpicture}
			\begin{tikzpicture}[scale=1,line width=.5pt]
			\draw (0,0) node[anchor=north west]{\footnotesize{$0$}}
			-- (1,0) node[anchor=west]{\footnotesize{$1$}}
			-- (1,1) node[anchor=west]{\footnotesize{$2$}}
			-- cycle;
			\draw (0,0)
			-- (1,1)
			-- (0,1) node[anchor=south]{\footnotesize{$3$}}
			-- cycle;
			\draw (0,0)
			-- (0,1)
			-- (-1,1) node[anchor=east]{\footnotesize{$4$}}
			-- cycle;
			\draw (0,0)
			-- (-1,1)
			-- (-1.5,0) node[anchor=east]{\footnotesize{$5$}}
			-- (-1,-1) node[anchor=east]{\footnotesize{$6$}}
			-- cycle;
			\draw (0,0)
			-- (-1,-1)
			-- (0,-1) node[anchor=north]{\footnotesize{$7$}}
			-- cycle;
			\draw (0,0)
			-- (0,-1)
			-- (1,-1) node[anchor=west]{\footnotesize{$8$}}
			-- (1,0)
			-- cycle;
			\node at (0,-2){$(3^3,4,3,4)$};
			
			\end{tikzpicture}
		\end{center}
		\caption{Some examples of face sequence}
		\label{tab:fc}
	\end{figure}
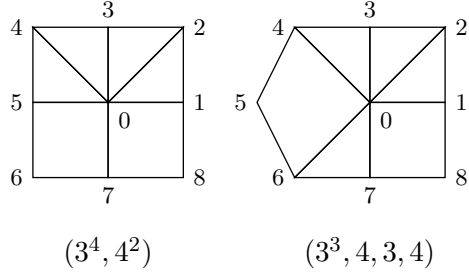
In the present article we show\,:
\begin{prop}\label{prop2}
	A semi equivelar map does not exixt on the surface of double torus with vertices less than or equal to 11.
\end{prop}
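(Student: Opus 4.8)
The plan is to combine Euler's formula with the intersection axioms of a polyhedral complex, reducing the statement to a short explicit list of face sequences and then knocking out each entry with one clean inequality.

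First I would carry out the global count. Assume, for contradiction, that $M$ is a semi-equivelar map of type $(a_1^{n_1},\dots,a_m^{n_m})$ on the double torus (so $\chi=-2$), with $V$ vertices and $V\le 11$. Every vertex has the same degree $d=n_1+\cdots+n_m$, so $M$ has $Vd/2$ edges, and a double count of vertex--face incidences by face size shows that $M$ has $V\bigl(\sum_{a_i=a}n_i\bigr)/a$ faces of size $a$, hence $F=V\sum_i n_i/a_i$ faces altogether. Substituting into $V-E+F=-2$ and writing $q:=\sum_i n_i\tfrac{a_i-2}{a_i}$ gives the single identity $V(q-2)=4$; in particular $q>2$ and $V=4/(q-2)$.

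The crux is a lower bound for $V$ read off from the star of a single vertex. Fix a vertex $v$ and let $\sigma_1,\dots,\sigma_d$ be the faces at $v$, in the cyclic order prescribed by the link. Each $\sigma_i$ is a convex polygon, so it has $a_i$ pairwise distinct vertices; two cyclically consecutive faces meet in exactly the edge of $M$ separating them (two vertices, one being $v$); and if $\sigma_i,\sigma_j$ are non-consecutive, their intersection is a common face of each and contains $v$, so it is $\{v\}$ or an edge through $v$ --- but every edge at $v$ borders two consecutive $\sigma$'s, so lying on one would force $\sigma_i,\sigma_j$ to be consecutive; hence $\sigma_i\cap\sigma_j=\{v\}$, and the same reasoning makes any intersection of three or more of the $\sigma_i$ equal to $\{v\}$. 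Inclusion--exclusion now gives
\[
\Bigl|\,\bigcup_{i=1}^{d}\sigma_i\,\Bigr| \;=\; \Bigl(\sum_{i=1}^{d}a_i\Bigr)-2d+1 \;=\; 1-2d+\sum_k n_k a_k,
\]
so $V\ge 1-2d+\sum_k n_k a_k$. As each $a_k\ge 3$ this already gives $V\ge d+1$; and the same overlap estimate applied to a largest face --- of which there are at least two, since its size is strictly below $V$ --- gives $V\ge 2a_{\max}-2$, so with $V\le 11$ every face size is at most $6$.

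From here everything is finite. Combining $V=4/(q-2)\ge d+1$ with $q\ge d/3$ yields $d^2-5d-18\le 0$, i.e.\ $d\le 7$; since all face sizes lie in $\{3,4,5,6\}$, the value $q-2=4/V$ is a multiple of $1/30$, so $V$ divides $120$, and $5\le V\le 11$ forces $V\in\{5,6,8,10\}$. A short inspection rules out $V=5$ (it would force all faces triangular, hence $q=d/3=14/5$, impossible) and $V=6$ (it would force $a_{\max}\le 4$, hence $2t+3u=16$ with $t+u\le 5$, impossible), and solving $V(q-2)=4$ for $V\in\{8,10\}$ under $a_{\max}\le 6$, $d\le 7$ and integrality of all face counts leaves only the types $(3^6,4)$, $(3^3,4^3)$ and its cyclic variants, and $(4^5)$ at $V=8$, and $(5^4)$ at $V=10$. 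Each of these contradicts the star bound: for $(5^4)$ it demands $V\ge 1-8+20=13$, for $(4^5)$ it demands $V\ge 11>8$, and similarly for the others. This contradiction would complete the proof. I expect the one genuinely delicate point to be proving the star bound carefully --- extracting from the polyhedral-complex axioms that non-consecutive faces at $v$, and all higher multiple intersections, really meet in exactly $\{v\}$ (a repeated neighbour of $v$, or two faces meeting in more than an edge, would wreck the inclusion--exclusion) --- together with the routine but fiddly bookkeeping of the finite case split; once the star bound is on hand, eliminating the surviving candidates is immediate.
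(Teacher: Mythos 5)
Your argument is correct, and it is worth noting that it is substantially more than what the paper records: the paper dismisses this proposition with the single remark that it follows ``by easy computation using the Euler characteristic equation,'' and gives no proof. Your computation shows why that remark, taken literally, is not enough. The Euler identity $V(q-2)=4$ with $q=\sum_i n_i\frac{a_i-2}{a_i}$, together with integrality of the face counts $Vn_k/a_k$, still admits the types $(3^6,4)$, $(3^3,4^3)$ and $(4^5)$ at $V=8$ and $(5^4)$ at $V=10$ (each of these satisfies $V-E+F=-2$ exactly), so some local argument is genuinely required; your star bound $V\ge \sum_k n_ka_k-2d+1$, derived correctly from the polyhedral-complex intersection axiom (non-consecutive faces at $v$ meet only in $\{v\}$ because each edge at $v$ lies in exactly two faces, which are consecutive), is precisely the missing ingredient, and it kills all four survivors. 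The companion bound $V\ge 2a_{\max}-2$ and the deduction $d\le 7$ are also sound. One small bookkeeping slip: from $V\mid 120$ and $V\le 11$ you should also list $V=4$, which you silently exclude by asserting $5\le V$; it is eliminated by the same tools, since $V=4$ forces $q=3$, and with every summand of $q$ at most $\tfrac{2}{3}$ this gives $d\ge 5$, hence $V\ge d+1\ge 6$, a contradiction. With that line added, the proof is complete.
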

\par It can be proved by easy computation using Eular characteristic equation.\par
For $ v=12 $ all possible type of face sequences for this surfaces are
$(3^2,4^2,6)$, $(3^3,6^2)$, $(4^2,6^2)$, $(3^7)$, $(3^4,4^2)$, $(3^3,4,3,4)$, $(3,4^4)$. It is easy to show that corresponding to face sequences $(3^2,4^2,6)$, $(3^3,6^2)$, $(4^2,6^2)$, a semi equivelar map does not exist and the map $(3^7)$ is classified in \cite{bd.aku2} for $leq 12$ vertices.

It has been shown further that the maps described in figure \ref{3(4)4(2)o} and \ref{3(4)4(2)no} are non-isomorphic, (see Lemma \ref{lem3}). Further we will show that\,:
% In this article we also show\,:
\begin{thm}\label{thm4}
	Let $ K $ be a SEM of type $ (3,4^4) $ on the surface of Euler characteristic ($ \chi $) -2 with 12 vertices. If $ K $ is non-orientable then it is isomorphic to one of $ KNO_{1[(3,4^4)]} or KNO_{2[(3,4^4)]} $, given in figure \ref{344no}. If $ K $ is orientable then it is isomorphic to $ KO_{1[(3,4^4)]} $, given in figure \ref{344o}.
\end{thm}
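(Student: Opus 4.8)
The plan is to first pin down the global combinatorics. A vertex of type $(3,4^4)$ has degree $5$ and lies on exactly one triangle and four quadrangular faces, so with $V=12$ we get $2E=5\cdot 12=60$, hence $E=30$, and then $V-E+F=-2$ forces $F=16$. Counting incidences of vertices with triangles and with squares gives exactly $f_3=4$ triangles and $f_4=12$ squares. Since each of the $12$ vertices lies on exactly one of the $4$ triangles and $4\cdot 3=12$, \emph{the four triangles are pairwise vertex-disjoint and their vertex sets partition $V(K)$}. It follows that a triangle-edge lies on exactly one triangle and one square, every other edge lies on two squares, each vertex is incident to exactly two triangle-edges and three non-triangle-edges, and — since any edge joining two vertices of a common triangle is itself a triangle-edge — every non-triangle-edge runs between two \emph{distinct} triangles.

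\textbf{Propagating the link constraint from a seed.} Next I would fix one triangle, say on vertices $\{1,2,3\}$, and read off $\mathrm{lk}(1)$: a $5$-cycle in which the edges $12$ and $13$ bound the triangle and the other three neighbours of $1$ are the "outer" vertices attached to $1$ along the four squares at $1$. I would introduce labels for these neighbours and for the four squares incident to $1$, then carry out the same local analysis at $2$, at $3$, and then at each newly named vertex. At every step the requirement that the vertex just reached again have link $(3,4^4)$ — one triangle followed by four squares, subject to the gluing rules extracted above — together with the rigidity of the triangle partition and the fact that only $9$ vertices lie outside the seed triangle, either forces two hitherto distinct labels to be identified or forces a particular square to appear; where a genuine choice remains, I would branch into finitely many subcases. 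This terminates after a short, bounded case analysis, producing a list of completely vertex-labelled candidate maps, each of which must then be checked to satisfy the intersection condition of a polyhedral complex.

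\textbf{Reducing modulo isomorphism and deciding orientability.} Finally I would collapse the list up to isomorphism. Two maps with non-isomorphic $G_i(\cdot)$ for some $i$ are non-isomorphic, and an orientable map can never be isomorphic to a non-orientable one; using these invariants (and, where two candidates might still coincide, an explicit isomorphism or an automorphism-group computation) one shows that exactly three classes survive. Orientability of each survivor is settled by attempting to orient the $16$ faces coherently across the $30$ edges: two of the three are non-orientable, giving $KNO_{1[(3,4^4)]}$ and $KNO_{2[(3,4^4)]}$ of Figure \ref{344no}, and the third is orientable, giving $KO_{1[(3,4^4)]}$ of Figure \ref{344o}.

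\textbf{Where the difficulty lies.} The crux is the middle step: keeping the case tree from exploding, and being certain that at each branch one has exhausted all possibilities and correctly discarded the branches that force a vertex to acquire the wrong link or two triangles to share a vertex or edge. A secondary nuisance is the isomorphism bookkeeping at the end, since all three surviving maps share $V$, $E$, $F$, the face vector and the degree sequence, so one must exhibit the right finer invariant (a suitable $G_i$, or the size of $\mathrm{Aut}$) to separate the two non-orientable maps from each other and from the orientable one.
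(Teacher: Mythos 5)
Your plan is correct and is essentially the approach the paper takes: the paper likewise works from the face count $f_3=4$, $f_4=12$ forced by the Euler relation (using implicitly, as you make explicit, that the four vertex-disjoint triangles partition $V(K)$ when it enumerates the ``remaining possible incomplete 3-gons''), fixes a seed link $lk(0)=C_9([4,5,6],[6,7,8],[8,9,1],[1,2,3])$, exhaustively propagates the link condition through a large case tree, and finally separates the three survivors by orientability together with automorphism-group/$G_i$-type invariants. The only differences are cosmetic — you seed from a triangle where the paper seeds from a vertex star — and, as you yourself anticipate, the entire substance of the proof is the actual execution and exhaustion of that case tree, which is what occupies the many pages of the paper's argument.
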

\begin{thm}\label{thm5}
	If $ K $ is a SEM of type $ (3^4,4^2) $ on the surface of Euler characteristic$ (\chi) $ $ -2 $ with $12$ vertices, then $ K $ is isomorphic to one of $ KO_{1[(3^4,4^2)]}, KO_{2[(3^4,4^2)]} $ or $ KNO_{(3^4,4^2)} $, given in figure \ref{3(4)4(2)o} and \ref{3(4)4(2)no}. The maps $ KO_{1[(3^4,4^2)]} $ and $ KO_{2[(3^4,4^2)]} $ are orientable and $ KNO_{[(3^4,4^2)]} $ is non-orientable.
\end{thm}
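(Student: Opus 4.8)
The plan is to carry out the standard local-propagation enumeration, which I organise in four steps.

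\emph{Step 1 (census).} From the type $(3^4,4^2)$ every vertex has degree $6$ and is surrounded, cyclically, by four triangles and then two quadrilaterals. With $V=12$ this forces $2E=\sum_v\deg(v)=72$, so $E=36$; counting vertex--face incidences gives $3f_3=4\cdot12=48$ and $4f_4=2\cdot12=24$, hence $f_3=16$, $f_4=6$, $F=22$, and $V-E+F=-2$ — so no cheap Euler obstruction appears and a genuine enumeration is needed. Refining this, at each vertex exactly one incident edge separates two quadrilaterals, two separate a triangle from a quadrilateral, and three separate two triangles; since an edge has the same type at both endpoints, summing over vertices yields $e_{44}=6$, $e_{34}=12$, $e_{33}=18$. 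Thus the six quadrilateral--quadrilateral edges form a perfect matching of $V(K)$, and each quadrilateral is glued to two other quadrilaterals along a pair of \emph{opposite} edges (and not to the same quadrilateral twice: in a polyhedral complex two faces meet in a single face, but two opposite edges of a $4$-gon do not form one). Hence the quadrilaterals assemble into cyclic bands, with only two possibilities — one band of six or two bands of three — and this is the coarse case division.

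\emph{Step 2 (local frame).} Fix a vertex $u$ and read its link off $(3^4,4^2)$: it is the $8$-cycle $(w_1,w_2,w_3,w_4,z_1,w_5,z_2,w_6)$, where $w_1,\dots,w_6$ are the neighbours of $u$, the four triangles at $u$ are $uw_6w_1,\ uw_1w_2,\ uw_2w_3,\ uw_3w_4$, and the two squares are $uw_4z_1w_5$ and $uw_5z_2w_6$. Thus $u$ and its link already use $9$ of the $12$ vertices, leaving only three further vertices $p,q,r$, none adjacent to $u$. This scarcity is the engine of the proof: every face through any of $w_1,\dots,w_6,z_1,z_2$ has all its vertices inside the $12$-element vertex set, so the demand that the face sequence at each of these vertices again be $(3^4,4^2)$ admits only a handful of completions.

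\emph{Step 3 (propagation and enumeration).} The quadrilateral--quadrilateral edge through $u$ is $uw_5$, separating the squares $uw_4z_1w_5$ and $uw_5z_2w_6$; extending the band past these along their opposite edges $w_4z_1$ and $z_2w_6$ forces further squares, and so on around the cyclic band (length $6$ in the first case of Step~1, length $3$ in the second). I would walk around the band while simultaneously filling in the forced triangle fans at $w_1,w_2,w_3$ (whose links already carry two identified triangles each), propagating the labelling over the whole surface. Each genuinely ambiguous step — which of $p,q,r$ plays a given role, and whether the band closes as an annulus or with a twist — branches the argument, and the closed-surface constraints (every edge in exactly two faces; the link of every vertex a single $8$-cycle of type $(3^4,4^2)$; $V=12$, $E=36$, $F=22$; and the band dichotomy) kill all but finitely many branches. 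The surviving completed maps, read up to relabelling, are matched against the explicit maps of Figures \ref{3(4)4(2)o} and \ref{3(4)4(2)no}, and I expect exactly $KO_{1[(3^4,4^2)]}$, $KO_{2[(3^4,4^2)]}$ and $KNO_{[(3^4,4^2)]}$ to survive.

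\emph{Step 4 (orientability and distinctness).} For each surviving map I would attempt to orient all $22$ faces coherently: $KO_{1[(3^4,4^2)]}$ and $KO_{2[(3^4,4^2)]}$ admit such an orientation, whereas in $KNO_{[(3^4,4^2)]}$ there is a face cycle along which no coherent orientation survives (equivalently, a square band that appears as a Möbius strip), so it is non-orientable; this already separates $KNO$ from the two orientable maps. For $KO_{1[(3^4,4^2)]}\not\cong KO_{2[(3^4,4^2)]}$ I would invoke the isomorphism invariants $G_i(K)$ from Section~\ref{intro} — equivalently, the argument already recorded in Lemma \ref{lem3} — noting that, say, $G_0$ or $G_1$ of the two maps differ (e.g.\ in degree sequence). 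The principal obstacle is clearly Step~3: organising the exhaustive case analysis so that no admissible completion is overlooked and no two branches silently coincide, while at each completed branch confirming that every link really is an $8$-cycle of type $(3^4,4^2)$, so that one has genuinely produced a SEM on a closed surface of Euler characteristic $-2$.
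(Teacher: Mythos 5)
Your Steps 1, 2 and 4 are sound, and the band observation in Step 1 is genuinely nice: the counts $E=36$, $f_3=16$, $f_4=6$ and $e_{44}=6$, $e_{34}=12$, $e_{33}=18$ are correct, each quadrilateral does carry exactly two $44$-edges which are necessarily opposite (its four vertices each contribute their unique $44$-edge as an edge of that quadrilateral, so these edges match the four vertices perfectly), and the polyhedral-complex intersection condition legitimately rules out bands of length one or two, leaving one band of six squares or two bands of three. The paper does not isolate this dichotomy; used properly it would give a cleaner frame than the paper's raw normalisation $lk(0)=C_8(2,3,4,[5,6,7],[7,8,1])$ followed by a case split on the possible links of $7$ and $5$. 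Deferring the non-isomorphism of $KO_{1[(3^4,4^2)]}$ and $KO_{2[(3^4,4^2)]}$ to Lemma \ref{lem3} (or to the characteristic-polynomial computation) is also acceptable.

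The gap is Step 3, and it is not a technicality: for a classification statement the exhaustive enumeration \emph{is} the proof, and you have described a plan for it rather than executed it. Nothing in Steps 1--2 bounds the number of surviving completions, shows that every branch either dies or closes up, or identifies the survivors; the sentence ``I expect exactly $KO_{1[(3^4,4^2)]}$, $KO_{2[(3^4,4^2)]}$ and $KNO_{[(3^4,4^2)]}$ to survive'' is precisely the content of Theorem \ref{thm5} and cannot be taken on trust. In the paper this step is the bulk of the argument: after fixing $lk(0)$ one must show the three remaining link entries of $7$ are forced to be $9,10,11$, then branch on the seven possible values of the first unknown entry of $lk(5)$, and in each branch either derive a concrete contradiction (a vertex repeated in a link, a face-sequence violation, an edge lying in too many faces) or complete the map and exhibit an explicit relabelling onto one of the three targets (the paper produces auxiliary completions $K_1,\dots,K_6$ together with the identifying permutations). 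Until that propagation is carried through --- including the check that each completed branch is isomorphic to one of the three maps rather than a fourth, overlooked one, and that every link in a completed map really is an $8$-cycle of type $(3^4,4^2)$ --- the theorem is not proved.
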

\begin{thm}\label{thm6}
	Let $ K $ be a SEM of type $ (3^3,4,3,4) $ on the surface of Euler characteristic$ (\chi) $ $ -2 $ with $12$ vertices If $ K $ is orientable then it is isomorphic to one of $ KO_{i[(3^3,4,3,4)]} $ given in figure \ref{33434or}  and if it is non-orientable then it is isomorphic to one of $ KNO_{j[(3^3,4,3,4)]} $ given in figure \ref{33434non}.
\end{thm}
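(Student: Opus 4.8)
The plan is to carry out an exhaustive combinatorial case analysis analogous to the one used for Theorems \ref{thm4} and \ref{thm5}. First I would fix notation: by Proposition \ref{prop2} no SEM exists below $12$ vertices, and for a SEM $K$ of type $(3^3,4,3,4)$ on a surface with $\chi=-2$ and $12$ vertices, Euler's equation $V-E+F=-2$ together with the fact that each vertex lies in four triangles and two quadrilaterals forces the exact face vector: counting incidences gives $3f_3 = 4V/2\cdot(\text{contribution})$ — concretely, each vertex meets $4$ triangles and $2$ squares, so $f_3 = 4\cdot 12/3 = 16$ and $f_4 = 2\cdot 12/4 = 6$, and then $E = (3f_3+4f_4)/2 = 36$, consistent with $12-36+(16+6)=-2$. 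Every vertex has degree $6$, so $EG(K)$ is a $6$-regular graph on $12$ vertices. The link of each vertex is the prescribed cyclic word $(3,3,3,4,3,4)$.

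Next I would begin the local-to-global reconstruction. Start from a vertex $u_0$ with its fixed link, labelling the six neighbours $u_1,\dots,u_6$ in cyclic order so that the faces around $u_0$ are the triangles $u_0u_1u_2$, $u_0u_2u_3$, $u_0u_3u_4$, the square $u_0u_4u_5\ast$, the triangle $u_0u_5u_6$, and the square $u_0u_6u_1\ast$. Then propagate: each $u_i$ must itself have link type $(3,3,3,4,3,4)$, and the faces already placed at $u_0$ constrain the cyclic order at each $u_i$, forcing the identity of several second-neighbour vertices. Because there are only $12$ vertices total and each already-named vertex has degree $6$, the propagation quickly saturates; at each branch point (an as-yet-unforced identification of a new face-vertex with an existing label) I would split into cases. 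Running this BFS-style expansion, with the help of the graph invariants $G_i(K)$ introduced in the excerpt to prune and to distinguish partial maps, yields a finite list of candidate face-lattices.

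Then I would sort the resulting complete candidates into orientable and non-orientable ones (checking whether a consistent orientation of all $22$ faces exists), discard those that fail to close up into a genuine polyhedral complex (no repeated faces, every edge in exactly two faces, each vertex-link a single $6$-cycle of the right type), and finally reduce the survivors up to isomorphism — again using the $G_i$ invariants, and explicit isomorphisms or their absence, exactly as Lemma \ref{lem3} does for type $(3^4,4^2)$. The maps that remain are precisely those drawn in figures \ref{33434or} and \ref{33434non}, which proves the theorem.

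The main obstacle I anticipate is the sheer size of the case tree: type $(3^3,4,3,4)$ has the least symmetry of the three types treated (the link word is not a single repeated block and has trivial cyclic symmetry group of small order), so the propagation forces fewer identifications automatically and branches more often than in Theorems \ref{thm4}–\ref{thm5}. Keeping the casework organised — choosing the expansion order so that forced identifications are extracted as early as possible, and systematically applying the $G_i(K)$ invariants to collapse isomorphic branches before they multiply — is where the real work lies; the verification that each listed map is well-defined and that the list is complete and irredundant is then routine but lengthy.
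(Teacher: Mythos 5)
Your preliminary computations are correct (for type $(3^3,4,3,4)$ each vertex meets four triangles and two quadrilaterals, so $f_3=16$, $f_4=6$, $E=36$, every vertex has degree $6$, and $12-36+22=-2$), and your overall strategy — fix the link of a base vertex, propagate the face-sequence constraint to its neighbours, branch on the unforced identifications, and finally sort the surviving complexes by orientability and isomorphism type — is exactly the method the paper uses: it fixes $lk(0)=C_8(2,3,[4,5,6],[7,8,1])$ and then runs a case analysis over the possible forms of $lk(1)$, parametrised by a tuple $(a,b,c,d)$, reducing the tuples up to symmetry first and then eliminating or completing each case.

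However, what you have written is a plan, not a proof, and for a classification theorem of this kind the plan is not the hard part — the enumeration \emph{is} the theorem. The entire content of the result (that the list in figures \ref{33434or} and \ref{33434non} is complete) rests on actually executing the case tree: showing for each branch either that some vertex link cannot be completed with the prescribed face sequence, or that an edge would be forced to be both an edge and a non-edge of a $4$-gon, or that the branch closes up into a map isomorphic to one already listed. None of that is present, and describing it as ``routine but lengthy'' misjudges where the difficulty lies; the paper's execution occupies many pages of delicate, non-mechanical argument. A secondary inaccuracy: you propose using the graphs $G_i(K)$ to prune branches \emph{during} the search, but in the paper these invariants (together with characteristic polynomials of edge graphs) are used only \emph{after} the enumeration, to prove the final maps pairwise non-isomorphic (Lemma \ref{figlem3}) and to compute automorphism groups (Lemma \ref{lem4}); the pruning during the enumeration is done by direct structural contradictions in partial links. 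As it stands the proposal establishes the framework but leaves the theorem unproved.
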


We further show that\,:
\begin{lemma}\label{lem2}
	$ Aut(KNO_{1[(3,4^4)]})=\langle\alpha_{1},\alpha_{2}\rangle\cong\mathbb{Z}_2\times\mathbb{Z}_2 $, $ Aut(KNO_{2[(3,4^4)]})=\langle\beta_{1},\beta_{2},\beta_{3},\beta_{4} \rangle\cong S_4 $, $ Aut(KO_{[(3,4^4)]})=\langle\gamma \rangle\cong\mathbb{Z}_{12} $, where $ \alpha_{1}=(1,6)(2,5)(3,4)(7,9)(10,11) $, $ \alpha_{2}=(0,8)(1,7)\allowbreak(2,3)(4,5)(6,9)(10,11) $, $ \beta_{1}=(1,6)(2,5)(3,4)(7,9)(10,11) $, $ \beta_{2}=(0,6,1)(2,8,5)(3,7,10)(4,\allowbreak11,9) $, $ \beta_{3}=(0,8)(1,7)\allowbreak (2,4)(3,5)(6,9)(10,11) $, $ \beta_{4}=(0,10)(1,4)(2,7)(3,9)(5,6)(8,11) $ and $ \gamma=(0,2,9,10,4,6,1,8,\allowbreak 3,11,5,7) $.
\end{lemma}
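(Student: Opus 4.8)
The plan is to compute the automorphism group of each of the three maps by first fixing an explicit combinatorial description (the list of faces, or equivalently the set of vertex stars / face sequences) of $KNO_{1[(3,4^4)]}$, $KNO_{2[(3,4^4)]}$ and $KO_{[(3,4^4)]}$ from the figures, and then determining all permutations of the vertex set $\{0,1,\dots,11\}$ that preserve this face structure. Since a simplicial-type automorphism of a polyhedral map is determined by its action on $V(K)$ and must carry the unique triangle $C_3$ through each vertex to another triangle and likewise for the four squares, the group $Aut(K)$ embeds in $S_{12}$; the strategy is to bound its order from above by a local/stabilizer argument and then exhibit enough explicit automorphisms to meet that bound.

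The concrete steps I would carry out are as follows. First, observe that in type $(3,4^4)$ every vertex lies on exactly one triangle; hence the triangles partition (or at least cover) the vertex set, and any automorphism permutes the $4$ triangles, giving a homomorphism $Aut(K)\to S_{4}$ (acting on the set of triangles) — its image and kernel are what must be pinned down. Second, fix a base vertex $v=0$: an automorphism fixing $0$ must fix (setwise) its unique incident triangle and permute the cyclically ordered list of four squares around $0$, so the stabilizer of $0$ has order dividing $2\cdot 4=8$ at most, and combined with the orbit of $0$ this already caps $|Aut(K)|$. Third, for each of the three maps write down the candidate generators listed in the statement ($\alpha_1,\alpha_2$ for $KNO_1$; $\beta_1,\dots,\beta_4$ for $KNO_2$; $\gamma$ for $KO$) and verify directly that each one maps every face of $K$ to a face of $K$ — this is a finite check over at most a dozen faces. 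Fourth, verify the abstract isomorphism type of the group generated: for $\langle\alpha_1,\alpha_2\rangle$ check $\alpha_1^2=\alpha_2^2=1$ and $\alpha_1\alpha_2=\alpha_2\alpha_1$, giving $\mathbb{Z}_2\times\mathbb{Z}_2$; for $\langle\gamma\rangle$ check $\gamma$ is a $12$-cycle so the group is $\mathbb{Z}_{12}$; for the $\beta_i$, exhibit the relations of $S_4$ (e.g.\ that $\langle\beta_2,\beta_3\rangle$ already realizes $S_3$ on a suitable triple and that the whole group acts faithfully as $S_4$ on the four triangles, forcing $|Aut|\le 24$ and hence equality). Finally, match the upper bound from steps 2–3 with the lower bound (the order of the explicitly generated subgroup) to conclude $Aut(K)$ equals exactly the stated group in each case; for $KNO_2$ and $KO$ one must additionally confirm that $Aut$ acts transitively on $V(K)$ (it does, since $24$ and $12$ are divisible by $12$ and one checks a single orbit), while for $KNO_1$ the orbits of the order-$4$ group on the $12$ vertices are visible from the cycle structure of $\alpha_1,\alpha_2$.

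The main obstacle is the upper bound for $Aut(KNO_{2[(3,4^4)]})$: showing that nothing beyond $S_4$ (order $24$) can occur. The easy part is the lower bounds and the group-theoretic identifications, which are pure finite verification; the delicate part is ruling out extra automorphisms. The cleanest route is to show the action of $Aut(KNO_2)$ on the $4$ triangles is faithful — i.e.\ an automorphism fixing all four triangles setwise must be the identity — because then $Aut(KNO_2)\hookrightarrow S_4$ and the displayed generators already surject onto $S_4$. Faithfulness should follow by a short local argument: an automorphism fixing a triangle $\{a,b,c\}$ setwise and fixing all other triangles permutes the squares incident to $a$ in a way constrained by the fixed neighbouring triangles, and tracing these constraints around the map forces it to fix each vertex individually. (For $KNO_1$ and $KO$ the analogous uniqueness is immediate once one notes the action on triangles is not faithful for $KNO_1$ but the kernel is pinned down directly, and for $KO$ the single generator $\gamma$ together with the stabilizer bound of step 2 gives $|Aut|=12$ at once.)
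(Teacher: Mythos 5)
Your overall strategy (exhibit the listed permutations as automorphisms to get a lower bound, then cap $|Aut(K)|$ by a stabilizer or triangle-action argument) is a legitimately different route from the paper, which instead chases links and the invariant graphs $G_7,G_8$ case by case over all possible images of a base vertex. But two of your upper-bound claims do not hold up. First, the stabilizer computation is wrong in a way that matters: an automorphism fixing a vertex $v$ acts on the $9$-cycle $lk(v)$ preserving the unique link-edge coming from the single triangle at $v$, so it is either the identity or the one reflection through that edge, and by connectedness it is then determined globally. The stabilizer therefore has order at most $2$, not $2\cdot 4=8$ --- the four squares cannot be cyclically permuted independently of the triangle's position. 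Your figure of $8$ gives $|Aut|\le 96$, which is useless; the correct figure gives $|Aut|\le 24$, which happens to settle $KNO_{2[(3,4^4)]}$ once the $24$ elements of $\langle\beta_1,\dots,\beta_4\rangle$ are verified (so your separate faithfulness-on-triangles argument, while plausible, is not even needed there).

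The genuine gap is in the other two cases, where the claimed groups have orders $4$ and $12$, strictly below the bound $24$. For $KO_{[(3,4^4)]}$ your assertion that the stabilizer bound ``gives $|Aut|=12$ at once'' is false: $\gamma$ acts regularly on the $12$ vertices, so $|Aut|$ is $12$ or $24$ according as the order-$2$ local reflection at a vertex does or does not extend to a global automorphism, and deciding this requires actually tracing that reflection through the face list (this is exactly what the paper's check ``$f(0)=v$ forces $f(1)$ uniquely via $G_8$ and $lk$'' accomplishes). For $KNO_{1[(3,4^4)]}$ the situation is worse: you must show both that no automorphism moves $0$ outside $\{0,8\}$ (ruling out ten candidate images) and that the candidate reflections with $\alpha(0)\in\{10,11\}$ fail; reading off the orbits of $\langle\alpha_1,\alpha_2\rangle$ only gives the lower bound. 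The paper disposes of these candidates by computing $G_7$ and $G_8$ (which force $\alpha(0)\in\{0,8,10,11\}$) and then exhibiting a face whose image is not a face. Your proposal contains no mechanism for this elimination, so as written it proves only $Aut(KNO_{1})\supseteq\mathbb{Z}_2\times\mathbb{Z}_2$ and $Aut(KO)\supseteq\mathbb{Z}_{12}$, not equality.
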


\begin{lemma}\label{lem3}
	\begin{sloppypar}
	$ Aut({KO_{1[(3^4,4^2)]}})=\langle\alpha_{1},\alpha_{2},\alpha_{3}\rangle\cong \mathbb{Z}_{2}\times\mathbb{Z}_{2}\times\mathbb{Z}_{3} $, $ Aut({KO_{2[(3^4,4^2)]}})=\langle\beta_{1},\beta_{2},\beta_{3} \rangle\allowbreak\cong D_{6} $ and $ Aut({KNO_{[(3^4,4^2)]}})=\langle\gamma_{1},\gamma_{2} \rangle\cong \mathbb{Z}_{2}\times\mathbb{Z}_{2} $, where $ \alpha_{1}=(0,3, 2)(1, 4, 5)(6, 8, 9)(7, 10,\allowbreak 11) $,
	$ \alpha_{2}=(0,4)(1,2)(3,5)(6,10)(7,9)(8,11) $,
	$ \alpha_{3}=(0,9)(1,11)(2,8)(3,6)(4,7)(5,10) $, $ \beta_{1} =(0,2)(1,3)\allowbreak(4,5)(6,10)(7,9)(8,11) $,$ \beta_{2} =(0, 4,3)(1,2,5)(6,8,9)(7,10,11) $, $ \beta_{3}=(0,6)(1,11)\allowbreak(2,10)(3,8)\allowbreak(4,9)(5,7) $, $\gamma_{1} =(0,4)(1,2)(3,5)(6,10)(7,9)(8,11) $ and $ \gamma_{2}=(0,7)(1,8)(2,11)\allowbreak(3,10)(4,9)\allowbreak(5,6) $.
\end{sloppypar}
\end{lemma}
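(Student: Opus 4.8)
The plan is to prove each identity $Aut(K)=\langle\cdots\rangle$ by the two inclusions and then read off the abstract isomorphism type from the generators. First I would extract from figures \ref{3(4)4(2)o} and \ref{3(4)4(2)no} the complete face list of each of $KO_{1[(3^4,4^2)]}$, $KO_{2[(3^4,4^2)]}$ and $KNO_{[(3^4,4^2)]}$ on the vertex set $\{0,\dots,11\}$, equivalently the cyclically ordered link $lk(v)$ together with the face sequence $(3^4,4^2)$ at every vertex $v$. For the inclusion $\supseteq$, I would then check for each listed permutation $g\in\{\alpha_1,\alpha_2,\alpha_3,\beta_1,\beta_2,\beta_3,\gamma_1,\gamma_2\}$ that $g$ carries every $3$-face onto a $3$-face and every $4$-face onto a $4$-face of the same map; since $g$ is already a bijection of the vertex set, this is exactly $g\in Aut(K)$. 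Multiplying the generators then identifies the groups: for $KO_{1[(3^4,4^2)]}$, $\alpha_1$ has order $3$ and commutes with the commuting involutions $\alpha_2,\alpha_3$, so $\langle\alpha_1,\alpha_2,\alpha_3\rangle=\langle\alpha_1\rangle\times\langle\alpha_2,\alpha_3\rangle\cong\mathbb{Z}_3\times\mathbb{Z}_2\times\mathbb{Z}_2$; for $KO_{2[(3^4,4^2)]}$, $\beta_2$ has order $3$, $\beta_1$ is a central involution and $\beta_3\beta_2\beta_3=\beta_2^{-1}$, so $\langle\beta_2,\beta_3\rangle\cong S_3$ and $\langle\beta_1,\beta_2,\beta_3\rangle\cong\mathbb{Z}_2\times S_3\cong D_6$; for $KNO_{[(3^4,4^2)]}$, $\gamma_1\neq\gamma_2$ are commuting involutions, so $\langle\gamma_1,\gamma_2\rangle\cong\mathbb{Z}_2\times\mathbb{Z}_2$. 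Each of these is a finite permutation computation.

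The content of the lemma is the reverse inclusion, that there is nothing further. For this I would use the invariant graphs $G_i(K)$ of Section \ref{intro}: every automorphism of $K$ acts on the vertex set $V(G_i(K))=V(K)$ and preserves the relation $|N(u)\cap N(v)|=i$, so $Aut(K)\le Aut(G_i(K))$ for every $i$, and $Aut(K)$ embeds into this finite group (an automorphism fixing every vertex fixes every face, as faces are determined by their vertex sets). I would compute $G_i(K)$ for an $i$ making the graph combinatorially rigid, determine $Aut(G_i(K))$, and then discard those graph automorphisms that do not respect the faces of $K$. To make that last test short, I would use that an automorphism of a connected map is determined by the image of a single flag (a vertex with an incident edge and one of the two faces at that edge): because the face sequence is the same $(3^4,4^2)$ at every vertex and the colored cycle $3,3,3,3,4,4$ has symmetry group of order $2$, fixing one reference flag leaves at most $12\cdot2$ combinatorially admissible target flags, and by connectedness each either extends to a unique automorphism or to none. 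Propagating the forced images vertex by vertex (this terminates, $V(K)$ being finite) should leave exactly the flags realized by the groups above, so $|Aut(K)|=12$, $12$, $4$ respectively. As a by-product one gets the non-isomorphism claimed before the statement: $KNO_{[(3^4,4^2)]}$ is non-orientable while the others are orientable, and $\mathbb{Z}_3\times\mathbb{Z}_2\times\mathbb{Z}_2$ is abelian while $D_6$ is not.

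The main obstacle is the flag-propagation step. The graph $G_i(K)$ will generally have strictly more automorphisms than $K$, so the decisive work is to trace, for each of the few admissible images of the reference flag, the forced images of the neighbouring vertices and to test them against all $3$- and $4$-faces, carrying this out cleanly for all three maps and, on the non-orientable map $KNO_{[(3^4,4^2)]}$, allowing both orientation classes of flags. Cutting the branching down first by using $G_i(K)$ to restrict the possible images of the reference vertex is what keeps this case analysis feasible by hand.
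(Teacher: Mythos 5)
Your proposal is correct and follows essentially the same route as the paper: the paper also verifies the listed permutations directly, and for the reverse inclusion computes $G_{5}(KO_{1[(3^4,4^2)]})$, $G_{5}(KO_{2[(3^4,4^2)]})$ and $G_{4}(KNO_{[(3^4,4^2)]})$, then runs a case analysis over the possible images of the vertex $0$, propagating the forced images through the links until each candidate is either identified with a product of the stated generators or eliminated. Your flag-counting bound of $12\cdot 2$ admissible images and the by-hand identification of the group structures are mild refinements of the paper's presentation (which leans on GAP for some of the group identifications), but the substance is identical.
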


\begin{lemma}\label{lem4}
	\begin{sloppypar}
	$ Aut(KO_{1[(3^3,4,3,4)]})=\langle\alpha\rangle\cong \mathbb{Z}_2 $, $ Aut(KO_{2[(3^3,4,3,4)]})=\langle\alpha_{1},\alpha_{2}\rangle\cong \mathbb{Z}_2\times\mathbb{Z}_2 $, $ Aut(KNO_{1[(3^3,4,3,4)]})=\langle\beta\rangle\cong\mathbb{Z}_2 $, $ Aut(KNO_{2[(3^3,4,3,4)]})=\langle\beta_{1},\beta_{2}\rangle\cong\mathbb{Z}_2\times\mathbb{Z}_2 $,\allowbreak $ Aut(KNO_{3[(3^3,4,3,4)]})=\langle\gamma\rangle\cong\mathbb{Z}_4 $, where $ \alpha=(0,3)(1,9)(2,4)(5,8)(6,10)(7,11) $, $ \alpha_{1}=(0,2)(1,3)(4,9)(5,11)(6,8)\allowbreak(7,10) $, $ \alpha_{2}=(0,4)(2,9)(5,6)(7,10)(8,11) $, $ \beta=(0,3)(1,9)(2,4)(5,8)(6,11) $, $ \beta_{1}=(0,1)(3,9)\allowbreak(5,10)(6,11)(7,8) $, $ \beta_{2}=(0,3)(1,9)(2,4)(5,8)(7,10) $, $ \gamma=(0,1,10,6)(2,11,5,7)(3,4,8,9) $.
\end{sloppypar}
\end{lemma}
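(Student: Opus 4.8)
The plan is to treat the five maps $KO_{1[(3^3,4,3,4)]}$, $KO_{2[(3^3,4,3,4)]}$, $KNO_{1[(3^3,4,3,4)]}$, $KNO_{2[(3^3,4,3,4)]}$, $KNO_{3[(3^3,4,3,4)]}$ in parallel, each in two steps: (i) show that the permutations listed really are automorphisms and that they generate a group of the stated isomorphism type; (ii) show that there are no further automorphisms. Step (i) is a direct computation from the explicit face lists read off Figures \ref{33434or} and \ref{33434non}: for each listed permutation one checks that it carries the set of $16$ triangular faces onto itself and the set of $6$ quadrilateral faces onto itself, which (since a vertex bijection preserving the face set is automatically a map automorphism) is all that is needed. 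The abstract structure then follows from orders and relations of the generators: here every single displayed generator $\alpha$, $\beta$, $\gamma$ is visibly of order $2$, $2$ or $4$ (for instance $\gamma=(0,1,10,6)(2,11,5,7)(3,4,8,9)$ is a product of disjoint $4$-cycles, so $\langle\gamma\rangle\cong\mathbb{Z}_4$), and in the rank-two cases one verifies $\alpha_1^2=\alpha_2^2=\mathrm{id}$ and $\alpha_1\alpha_2=\alpha_2\alpha_1$, identifying $\langle\alpha_1,\alpha_2\rangle\cong\mathbb{Z}_2\times\mathbb{Z}_2$.

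For step (ii) the key structural input is that $K$ is semi-equivelar of type $(3^3,4,3,4)$, and the cyclic face sequence $(3,3,3,4,3,4)$ has combinatorial symmetry group exactly $\mathbb{Z}_2$ (the single reflection fixing it as a cyclic word; no nontrivial rotation does). Consequently any automorphism $\phi$ is determined by the image $\phi(v_0)$ of a fixed base vertex $v_0$ together with one of the two admissible identifications of the cyclically ordered link of $v_0$ with that of $\phi(v_0)$; since $K$ is connected this data propagates uniquely over all of $K$, and hence $|Aut(K)|\le 12\cdot 2=24$, with the assignment $\phi\mapsto(\phi(v_0),\text{local choice})$ injective. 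To cut this down to the exact group I would use the invariant graphs $G_i(K)$ of Section \ref{intro}: every automorphism of $K$ is an automorphism of each $G_i(K)$, so computing the numbers $|N(u)\cap N(v)|$ and the resulting $G_i(K)$ constrains $\phi(v_0)$ sharply. Two further cheap invariants help the elimination: $Aut(K)$ permutes the $6$ quadrilateral faces, and it preserves the $2$-regular graph on $V(K)$ formed by the edges lying in two triangles (the pattern of face pairs around a vertex forces exactly two such "pure" edges per vertex), whose cycle structure is a rigid invariant. After discarding the candidates incompatible with these invariants, a short check of which of the remaining vertex-plus-orientation choices actually extend to face-preserving bijections should leave precisely the subgroup found in step (i).

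The hard part will be exactly this last elimination. Because $K$ is semi-equivelar, all twelve vertices carry identical first neighbourhoods, so a candidate image of $v_0$ cannot be ruled out from local data alone; one must bring in second-neighbourhood information — which is what the graphs $G_i(K)$ and the action on the six squares encode — and then verify, case by case, that the surviving candidate bijections do or do not respect the full combinatorial face structure. This is a finite but somewhat delicate bookkeeping that must be carried out separately for each of the five maps (with the orientable/non-orientable distinction helping, since on the orientable examples roughly half the $24$ candidates are orientation-reversing); the payoff is the stated groups $\mathbb{Z}_2$, $\mathbb{Z}_2\times\mathbb{Z}_2$, $\mathbb{Z}_2$, $\mathbb{Z}_2\times\mathbb{Z}_2$, $\mathbb{Z}_4$, together with the explicit generators recorded in the statement.
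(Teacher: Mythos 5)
Your plan is essentially the paper's own proof: in both, one first verifies the listed permutations are face-preserving and reads off the group structure from their orders and commutation, and then eliminates all other candidate automorphisms by computing the invariant graphs $G_i(K)$ to restrict where a base vertex can go and propagating through links case by case. Your a priori bound $|Aut(K)|\le 24$ from the $\mathbb{Z}_2$ symmetry of the cyclic face sequence $(3,3,3,4,3,4)$ and the auxiliary $2$-regular graph of edges lying in two triangles are sensible additional invariants not made explicit in the paper, but they do not change the method, which still comes down to the same finite link-by-link elimination the paper carries out.
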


%--------------------------------------------
The article is organized in the following manner. In the next section, we present examples of semi-equivelar maps on the surface of Euler characteristic $ -2 $. In the section following it, we describe the results and their proofs.
The technique of the proofs involve exhaustive search for the desired objects. This leads to a case by case considerations and enumeration of these objects.

\newpage

\section{Examples: Semi-equivelar maps on the surface of Euler characteristic $ -2 $}\label{example 4.1}
\begin{center}
	\textbf{\captionof{table}{Tabulated list of SEMs on the surface of $ \chi=-2 $ with 12 verticex}} \label{tab1}
	\begin{longtable}{ |c|c|C{3cm}|C{6cm}|c| }
%		\caption {Table Title} \label{tab1}
		\hline
		Type & Maps($ M $) & $ Aut(M) $& Orbits & comment \\
		\hline
		& $ KO_{1[(3^4,4^2)]} $ & & &\\
		\cline{2-5}
		$ (3^4,4^2) $ & $ KO_{2[(3^4,4^2)]} $ & $ D_6 $ & $ [0,1,2]_{12}, [0,3,4]_{4},[0,5,6,7]_{6} $ & 3-isohedral \\
		\cline{2-5}
		& $ KNO_{[(3^4,4^2)]} $ & $ \mathbb{Z}_2\times\mathbb{Z}_2 $ &$ [0,1,2]_4 $, $ [0,2,3]_4 $, $ [0,3,4]_4 $, $ [1,5,10]_4 $, $ [0,5,6,7]_2 $, $ [0,1,8,7]_2 $, $ [1,8,3,10]_2 $ & 7-isohedral \\
		\hline
		$ (3,4^4) $ & $ KNO_{1[(3,4^4)]} $ & $ \mathbb{Z}_2\times\mathbb{Z}_2 $ & $ [0,3,4]_2 $, $ [1,9,10]_2 $, $ [0,1,2,3]_4 $, $ [0,6,7,8]_2 $, $ [1,2,11,6]_4 $, $ [2,5,10,11]_2 $ & 5-isohedral \\
		\cline{2-5}
		& $ KNO_{2[(3,4^4)]} $ & $ S_{4} $ & $ [0,3,4]_4 $, $ [0,1,2,3]_{12} $ & 2-isohedral\\
		\cline{2-5}
		& $ KO_{[(3,4^4)]} $ & $ \mathbb{Z}_{12} $& $ [0,3,4]_4 $, $ [0,1,2,3]_{12} $ & 2-isohedral\\
		\hline
		$ (3^3,4,3,4) $ & $ KO_{1[(3^3,4,3,4)]} $ & $ \mathbb{Z}_2 $ & $ [0,1,2]_2 $, $ [0,2,3]_2 $, $ [1,2,9]_2 $, $ [0,6,7]_2 $, $ [5,7,9]_2 $, $ [2,6,8]_2 $, $ [5,7,10]_2 $, $ [5,6,11]_2 $, $ [0,4,5,6]_2 $, $ [0,1,8,7]_2 $, $ [2,6,7,9]_2 $ & 11-isohedral \\
		\cline{2-5}
		& $ KO_{2[(3^3,4,3,4)]} $ & $ \mathbb{Z}_2\times\mathbb{Z}_2 $ & $ [0,1,2]_4 $, $ [0,3,4]_2 $, $ [0,6,7]_4 $, $ [3,5,6]_2 $, $ [5,8,20]_2 $, $ [5,7,8]_2 $, $ [0,4,5,6]_2 $, $ [0,1,8,7]_4 $ & 8-isohedral\\
		\cline{2-5}
		& $ KNO_{1[(3^3,4,3,4)]} $ & $ \mathbb{Z}_2 $ & $ [0,1,2]_2 $, $ [0,2,3]_2 $, $ [0,6,7]_2 $, $ [1,2,9]_2 $, $ [6,7,11]_1 $, $ [1,8,11]_2 $, $ [2,8,10]_2 $, $ [5,8,10]_1 $, $ [5,7,8]_1 $, $ [6,10,11]_1 $, $ [0,4,5,6]_2 $, $ [0,1,8,7]_2 $, $ [1,4,10,11]_2 $ & 13-isohedral\\
		\cline{2-5}
		& $ KNO_{2[(3^3,4,3,4)]} $ & $ \mathbb{Z}_2\times\mathbb{Z}_2 $ & $ [1,2,3]_2 $, $ [0,2,3]_4 $, $ [0,6,7] $, $ [2,7,8]_2 $, $ [5,6,8]_2 $, $ [6,7,10]_2 $, $ [0,4,5,6]_4 $, $ [0,1,8,7]_2 $ &8-isohedral \\
		\cline{2-5}
		& $ KNO_{3[(3^3,4,3,4)]} $ & $ \mathbb{Z}_4 $ & &\\
		\hline
	\end{longtable}
\end{center}
\subsection{Figure of existing SEMs}
%\textbf{\captionof{Examples}{Tabulated list of SEMs on the surface of $ \chi=-1 $ and $ \chi=-2 $ with 12 verticex}} \label{exm1}
\begin{figure}[H]
	
	\begin{center}
		\begin{tikzpicture}[scale=0.9,line width=.5pt]
		\draw (0,0) node[anchor=south east]{\footnotesize{$0$}}
		-- (1,0) node[anchor=south west]{\footnotesize{$1$}}
		-- (1,1) node[anchor=north west]{\footnotesize{$9$}}
		-- (0,1) node[anchor=north east]{\footnotesize{$8$}}
		-- cycle;
		\draw (0,0)
		-- (0,1)
		-- (-1,1) node[anchor=north east]{\footnotesize{$7$}}
		-- (-1,0) node[anchor=south east]{\footnotesize{$6$}}
		--cycle;
		\draw (0,0)
		-- (-0.5,-1) node[anchor=north east]{\footnotesize{$4$}}
		-- (-1.5,-1) node[anchor=north]{\footnotesize{$5$}}
		-- (-1,0)
		-- cycle;
		\draw (0,0)
		-- (0.5,-1) node[anchor=north west]{\footnotesize{$3$}}
		-- (1.5,-1) node[anchor=north]{\footnotesize{$2$}}
		-- (1,0)
		-- cycle;
		\draw (0,0)
		-- (0.5,-1)
		-- (-0.5,-1)
		-- cycle;
		\draw (-1,0)
		-- (-2,0) node[anchor=south]{\footnotesize{$1$}}
		-- (-2.5,-1) node[anchor=east]{\footnotesize{$10$}}
		-- (-1.5,-1)
		-- cycle;
		\draw (1,0)
		-- (2,0) node[anchor=south]{\footnotesize{$6$}}
		-- (2.5,-1) node[anchor=west]{\footnotesize{$11$}}
		-- (1.5,-1)
		-- cycle;
		\draw (-2,0)
		-- (-2.5,-1)
		-- (-2.5,0) node[anchor=east]{\footnotesize{$ 9 $}}
		--cycle;
		\draw (2,0)
		-- (2.5,-1)
		-- (2.5,0) node[anchor=west]{\footnotesize{$ 7 $}}
		--cycle;
		\draw (0,1)
		-- (0.5,2) node[anchor=south west]{\footnotesize{$5$}}
		-- (1.5,2) node[anchor=south]{\footnotesize{$4$}}
		-- (1,1)
		-- cycle;
		\draw (0,1)
		-- (-0.5,2) node[anchor=south east]{\footnotesize{$2$}}
		-- (-1.5,2) node[anchor=south]{\footnotesize{$3$}}
		-- (-1,1)
		-- cycle;
		\draw (0,1)
		-- (0.5,2)
		-- (-0.5,2)
		-- cycle;
		\draw (-1,1)
		-- (-2,1) node[anchor=east]{\footnotesize{$9$}}
		-- (-2.5,2) node[anchor=east]{\footnotesize{$10$}}
		-- (-1.5,2)
		-- cycle;
		\draw (1,1)
		-- (2,1) node[anchor=west]{\footnotesize{$7$}}
		-- (2.5,2) node[anchor=west]{\footnotesize{$11$}}
		-- (1.5,2)
		-- cycle;
		\draw (-0.5,2)
		-- (0.5,2)
		-- (0.5,3) node[anchor=south]{\footnotesize{$10$}}
		-- (-0.5,3) node[anchor=south]{\footnotesize{$11$}}
		-- cycle;
		\draw (-0.5,-1)
		-- (0.5,-1)
		-- (0.5,-2) node[anchor=north]{\footnotesize{$10$}}
		-- (-0.5,-2) node[anchor=north]{\footnotesize{$11$}}
		-- cycle;
		\node at (0,-2.7){$\boldsymbol{KNO_{1[(3,4^4)]}}$};%(Aut(KNO_{1[(3,4^4)]})=\mathbb{Z}_2\times\mathbb{Z}_2)$};
		\end{tikzpicture}
		\begin{tikzpicture}[scale=0.9,line width=.5pt]
		\draw (0,0) node[anchor=west]{\footnotesize{$ 0 $}}
		-- (-0.5,1) node[anchor=south west]{\footnotesize{$ 3 $}}
		-- (0.5,1) node[anchor=south east]{\footnotesize{$ 4 $}}
		-- cycle;
		\draw (0,0)
		-- (-1,-0.5) node[anchor=south]{\footnotesize{$ 1 $}}
		-- (-1.5,0.5) node[anchor=south]{\footnotesize{$ 2 $}}
		-- (-0.5,1)
		--cycle;
		\draw (-1,-0.5)
		-- (-2,-1) node[anchor=north]{\footnotesize{$ 6 $}}
		-- (-2.5,0) node[anchor=south]{\footnotesize{$ 11 $}}
		-- (-1.5,0.5)
		-- cycle;
		\draw (-2,-1)
		-- (-2.5,0)
		-- (-3,-1) node[anchor=east]{\footnotesize{$ 7 $}}
		-- cycle;
		\draw (0,0)
		-- (1,-0.5) node[anchor=south]{\footnotesize{$ 6 $}}
		-- (1.5,0.5) node[anchor=south]{\footnotesize{$ 5 $}}
		-- (0.5,1)
		--cycle;
		\draw (1,-0.5)
		-- (2,-1) node[anchor=north]{\footnotesize{$ 1 $}}
		-- (2.5,0) node[anchor=south]{\footnotesize{$ 10 $}}
		-- (1.5,0.5)
		-- cycle;
		\draw (2,-1)
		-- (2.5,0)
		-- (3,-1) node[anchor=west]{\footnotesize{$ 9 $}}
		-- cycle;
		\draw (-0.5,1)
		-- (0.5,1)
		-- (0.5,2) node[anchor=north west]{\footnotesize{$ 10 $}}
		-- (-0.5,2) node[anchor=north east]{\footnotesize{$ 11 $}}
		-- cycle;
		\draw (-0.5,2)
		-- (0.5,2)
		-- (0.5,3) node[anchor=west]{\footnotesize{$ 5 $}}
		-- (-0.5,3) node[anchor=east]{\footnotesize{$ 2 $}}
		-- cycle;
		\draw (-0.5,3)
		-- (0.5,3)
		-- (0,4) node[anchor=south]{\footnotesize{$ 8 $}}
		-- cycle;
		\draw (-0.5,1)
		-- (-1.5,0.5)
		-- (-2.5,1) node[anchor=east]{\footnotesize{$ 8 $}}
		-- (-1.5,1.5) node[anchor=south east]{\footnotesize{$ 9 $}}
		-- cycle;
		\draw (-0.5,1)
		-- (-1.5,1.5)
		-- (-1.5,2.5) node[anchor=south]{\footnotesize{$ 7 $}}
		-- (-0.5,2)
		-- cycle;
		\draw (0.5,1)
		-- (1.5,0.5)
		-- (2.5,1) node[anchor=west]{\footnotesize{$ 8 $}}
		-- (1.5,1.5) node[anchor=south west]{\footnotesize{$ 7 $}}
		-- cycle;
		\draw (0.5,1)
		-- (1.5,1.5)
		-- (1.5,2.5) node[anchor=south]{\footnotesize{$ 9 $}}
		-- (0.5,2)
		-- cycle;
		\draw (0,0)
		-- (-1,-0.5)
		-- (-1,-1.5) node[anchor=north]{\footnotesize{$ 9 $}}
		-- (0,-1) node[anchor=north]{\footnotesize{$ 8 $}}
		-- cycle;
		\draw (0,0)
		-- (1,-0.5)
		-- (1,-1.5) node[anchor=north]{\footnotesize{$ 7 $}}
		-- (0,-1)
		-- cycle;
		\node at (0,-2.5){$\boldsymbol{KNO_{2[(3,4^4)]}}$};%(Aut(KNO_{2[(3,4^4)]})=S_4)$};
		\end{tikzpicture}
	\end{center}
	\caption{Non-rientable SEM of type $ (3,4^4) $}
	\label{344no}
\end{figure}
\begin{figure}[H]
	
	\begin{center}
		\begin{tikzpicture}[scale=1,line width=.5pt]
		\draw (0,0) node[anchor=west]{\footnotesize{$ 0 $}}
		-- (-0.5,1) node[anchor=south west]{\footnotesize{$ 3 $}}
		-- (0.5,1) node[anchor=south east]{\footnotesize{$ 4 $}}
		-- cycle;
		\draw (0,0)
		-- (-1,-0.5) node[anchor=south]{\footnotesize{$ 1 $}}
		-- (-1.5,0.5) node[anchor=south]{\footnotesize{$ 2 $}}
		-- (-0.5,1)
		--cycle;
		\draw (-1,-0.5)
		-- (-2,-1) node[anchor=north]{\footnotesize{$ 7 $}}
		-- (-2.5,0) node[anchor=south]{\footnotesize{$ 6 $}}
		-- (-1.5,0.5)
		-- cycle;
		\draw (0,0)
		-- (1,-0.5) node[anchor=south]{\footnotesize{$ 6 $}}
		-- (1.5,0.5) node[anchor=south]{\footnotesize{$ 5 $}}
		-- (0.5,1)
		--cycle;
		\draw (1,-0.5)
		-- (2,-1) node[anchor=north]{\footnotesize{$ 11 $}}
		-- (2.5,0) node[anchor=south]{\footnotesize{$ 10 $}}
		-- (1.5,0.5)
		-- cycle;
		\draw (-0.5,1)
		-- (0.5,1)
		-- (0.5,2) node[anchor=north west]{\footnotesize{$ 11 $}}
		-- (-0.5,2) node[anchor=north east]{\footnotesize{$ 9 $}}
		-- cycle;
		\draw (-0.5,2)
		-- (0.5,2)
		-- (0.7,2.8) node[anchor=west]{\footnotesize{$ 6 $}}
		-- (0,3.3) node[anchor=south]{\footnotesize{$ 2 $}}
		-- (-0.7,2.8) node[anchor=east]{\footnotesize{$ 8 $}}
		-- cycle;
		\draw (0,3.3)
		-- (0.5,2)
		-- cycle;
		\draw (-0.5,1)
		-- (-1.5,0.5)
		-- (-2.5,1) node[anchor=east]{\footnotesize{$ 8 $}}
		-- (-1.5,1.5) node[anchor=north]{\footnotesize{$ 10 $}}
		-- cycle;
		\draw (-0.5,1)
		-- (-1.5,1.5)
		-- (-1.5,2) node[anchor=east]{\footnotesize{$ 5 $}}
		-- (-0.5,2)
		-- cycle;
		\draw (-1.5,2)
		-- (-1.5,2.5) node[anchor=south]{\footnotesize{$ 1 $}}
		-- (-0.5,2)
		-- cycle;
		\draw (0.5,1)
		-- (1.5,0.5)
		-- (2.5,1) node[anchor=west]{\footnotesize{$ 1 $}}
		-- (1.5,1.5) node[anchor=south west]{\footnotesize{$ 7 $}}
		-- cycle;
		\draw (0.5,1)
		-- (1.5,1.5)
		-- (1.5,2.5) node[anchor=south]{\footnotesize{$ 10 $}}
		-- (0.5,2)
		-- cycle;
		\draw (0,0)
		-- (-1,-0.5)
		-- (-1,-1.5) node[anchor=north]{\footnotesize{$ 9 $}}
		-- (0,-1) node[anchor=north]{\footnotesize{$ 8 $}}
		-- cycle;
		\draw (0,0)
		-- (1,-0.5)
		-- (1,-1) node[anchor=west]{\footnotesize{$ 7 $}}
		-- (0,-1)
		-- cycle;
		\draw (0,-1)
		-- (1,-1.5) node[anchor=north]{\footnotesize{$ 10 $}}
		-- (1,-1)
		-- cycle;
		\node at (0,-2.5){$\boldsymbol{KO_{[(3,4^4)]}}$};%(Aut(KO_{[(3,4^4)]})=\mathbb{Z}_{12})$};
		\end{tikzpicture}
	\end{center}
	\caption{Orientable SEM of type $ (3,4^4) $}
	\label{344o}
\end{figure}
\begin{figure}[H]
	\begin{center}
		\begin{tikzpicture}[scale=0.9,line width=.5pt]
		\draw (0,0) node[anchor=north east]{\footnotesize{$0$}}
		-- (1,0) node[anchor=north east]{\footnotesize{$1$}}
		-- (1,-1) node[anchor=south east]{\footnotesize{$8$}}
		-- (0,-1) node[anchor=south east]{\footnotesize{$7$}}
		-- cycle;
		\draw (0,0)
		-- (-1,0) node[anchor=north west]{\footnotesize{$5$}}
		-- (-1,-1) node[anchor=south west]{\footnotesize{$6$}}
		-- (0,-1)
		-- cycle;
		\draw (0,0)
		-- (-1,0)
		-- (-1,1) node[anchor=south]{\footnotesize{$4$}}
		-- cycle;
		\draw (0,0)
		-- (1,0)
		-- (1,1) node[anchor=south west]{\footnotesize{$2$}}
		-- cycle;
		\draw (0,0)
		-- (-1,1)
		-- (0,1) node[anchor=south east]{\footnotesize{$3$}}
		-- cycle;
		\draw (0,0)
		-- (1,1)
		-- (0,1)
		-- cycle;
		\draw (-1,0)
		-- (-1,-1)
		-- (-2,-1) node[anchor=north]{\footnotesize{$11$}}
		-- (-2,0) node[anchor=south]{\footnotesize{$2$}}
		-- cycle;
		\draw (1,0)
		-- (1,-1)
		-- (2,-1) node[anchor=north]{\footnotesize{$10$}}
		-- (2,0) node[anchor=south]{\footnotesize{$3$}}
		-- cycle;
		\draw (2,0)
		-- (2,-1)
		-- (3,-1) node[anchor=west]{\footnotesize{$9$}}
		-- (3,0) node[anchor=west]{\footnotesize{$4$}}
		-- cycle;
		\draw (-2,0)
		-- (-2,-1)
		-- (-3,-1) node[anchor=east]{\footnotesize{$9$}}
		-- (-3,0) node[anchor=east]{\footnotesize{$4$}}
		-- cycle;
		\draw (0,-1)
		-- (-1,-1)
		-- (-1,-2) node[anchor=north]{\footnotesize{$9$}}
		-- cycle;
		\draw (0,-1)
		-- (1,-1)
		-- (1,-2) node[anchor=north west]{\footnotesize{$11$}}
		-- cycle;
		\draw (0,-1)
		-- (-1,-2)
		-- (0,-2) node[anchor=north east]{\footnotesize{$10$}}
		-- cycle;
		\draw (0,-1)
		-- (1,-2)
		-- (0,-2)
		-- cycle;
		\draw (0,1)
		-- (1,1)
		-- (1,2) node[anchor=south]{\footnotesize{$5$}}
		-- cycle;
		\draw (0,1)
		-- (0,2) node[anchor=south]{\footnotesize{$1$}}
		-- (1,2)
		-- cycle;
		\draw (0,-2)
		-- (1,-2)
		-- (1,-3) node[anchor=north]{\footnotesize{$6$}}
		-- cycle;
		\draw (0,-2)
		-- (0,-3) node[anchor=north]{\footnotesize{$8$}}
		-- (1,-3)
		-- cycle;
		\draw (-1,0)
		-- (-1,1)
		-- (-2,1) node[anchor=east]{\footnotesize{$1$}}
		-- cycle;
		\draw (1,0)
		-- (1,1)
		-- (2,1) node[anchor=west]{\footnotesize{$4$}}
		-- cycle;
		\draw (-1,-1)
		-- (-1,-2)
		-- (-2,-2) node[anchor=east]{\footnotesize{$8$}}
		-- cycle;
		\draw (1,-1)
		-- (1,-2)
		-- (2,-2) node[anchor=west]{\footnotesize{$9$}}
		-- cycle;
		\node at (0,-4){$\boldsymbol{KO_{1[(3^4,4^2)]}}$};%[Aut(\boldsymbol{KO_{1[(3^4,4^2)]}})\cong\mathbb{Z}_{2}\times \mathbb{Z}_{2}\times\mathbb{Z}_{3}]$};
		\end{tikzpicture}
		\begin{tikzpicture}[scale=0.9,line width=.5pt]
		\draw (0,0) node[anchor=north east]{\footnotesize{$0$}}
		-- (1,0) node[anchor=north east]{\footnotesize{$1$}}
		-- (1,-1) node[anchor=south east]{\footnotesize{$8$}}
		-- (0,-1) node[anchor=south east]{\footnotesize{$7$}}
		-- cycle;
		\draw (0,0)
		-- (-1,0) node[anchor=north west]{\footnotesize{$5$}}
		-- (-1,-1) node[anchor=south west]{\footnotesize{$6$}}
		-- (0,-1)
		-- cycle;
		\draw (0,0)
		-- (-1,0)
		-- (-1,1) node[anchor=south east]{\footnotesize{$4$}}
		-- cycle;
		\draw (0,0)
		-- (1,0)
		-- (1,1) node[anchor=south]{\footnotesize{$2$}}
		-- cycle;
		\draw (0,0)
		-- (-1,1)
		-- (0,1) node[anchor=south west]{\footnotesize{$3$}}
		-- cycle;
		\draw (0,0)
		-- (1,1)
		-- (0,1)
		-- cycle;
		\draw (-1,0)
		-- (-1,-1)
		-- (-2,-1) node[anchor=north]{\footnotesize{$11$}}
		-- (-2,0) node[anchor=south]{\footnotesize{$3$}}
		-- cycle;
		\draw (1,0)
		-- (1,-1)
		-- (2,-1) node[anchor=north]{\footnotesize{$10$}}
		-- (2,0) node[anchor=south]{\footnotesize{$4$}}
		-- cycle;
		\draw (2,0)
		-- (2,-1)
		-- (3,-1) node[anchor=west]{\footnotesize{$9$}}
		-- (3,0) node[anchor=west]{\footnotesize{$2$}}
		-- cycle;
		\draw (-2,0)
		-- (-2,-1)
		-- (-3,-1) node[anchor=east]{\footnotesize{$9$}}
		-- (-3,0) node[anchor=east]{\footnotesize{$2$}}
		-- cycle;
		\draw (0,-1)
		-- (-1,-1)
		-- (-1,-2) node[anchor=north]{\footnotesize{$9$}}
		-- cycle;
		\draw (0,-1)
		-- (1,-1)
		-- (1,-2) node[anchor=north west]{\footnotesize{$11$}}
		-- cycle;
		\draw (0,-1)
		-- (-1,-2)
		-- (0,-2) node[anchor=north east]{\footnotesize{$10$}}
		-- cycle;
		\draw (0,-1)
		-- (1,-2)
		-- (0,-2)
		-- cycle;
		\draw (0,1)
		-- (-1,1)
		-- (-1,2) node[anchor=south]{\footnotesize{$1$}}
		-- cycle;
		\draw (0,1)
		-- (0,2) node[anchor=south]{\footnotesize{$5$}}
		-- (-1,2)
		-- cycle;
		\draw (0,-2)
		-- (1,-2)
		-- (1,-3) node[anchor=north]{\footnotesize{$6$}}
		-- cycle;
		\draw (0,-2)
		-- (0,-3) node[anchor=north]{\footnotesize{$8$}}
		-- (1,-3)
		-- cycle;
		\draw (-1,0)
		-- (-1,1)
		-- (-2,1) node[anchor=east]{\footnotesize{$2$}}
		-- cycle;
		\draw (1,0)
		-- (1,1)
		-- (2,1) node[anchor=west]{\footnotesize{$5$}}
		-- cycle;
		\draw (-1,-1)
		-- (-1,-2)
		-- (-2,-2) node[anchor=east]{\footnotesize{$8$}}
		-- cycle;
		\draw (1,-1)
		-- (1,-2)
		-- (2,-2) node[anchor=west]{\footnotesize{$9$}}
		-- cycle;
		\node at (0,-4){$\boldsymbol{KO_{2[(3^4,4^2)]}}$};%[Aut(\boldsymbol{KO_{2[(3^4,4^2)]}})\cong D_{6}]$};
		\end{tikzpicture}
	\end{center}
	\caption{Orientable SEM of type $ (3^4,4^2) $}
	\label{3(4)4(2)o}
\end{figure}
\begin{figure}[H]
	
	\begin{center}
		\begin{tikzpicture}[scale=0.9,line width=.5pt]
		\draw (0,0) node[anchor=north east]{\footnotesize{$0$}}
		-- (1,0) node[anchor=north east]{\footnotesize{$1$}}
		-- (1,-1) node[anchor=south east]{\footnotesize{$8$}}
		-- (0,-1) node[anchor=south east]{\footnotesize{$7$}}
		-- cycle;
		\draw (0,0)
		-- (-1,0) node[anchor=north west]{\footnotesize{$5$}}
		-- (-1,-1) node[anchor=south west]{\footnotesize{$6$}}
		-- (0,-1)
		-- cycle;
		\draw (0,0)
		-- (-1,0)
		-- (-1,1) node[anchor=south]{\footnotesize{$4$}}
		-- cycle;
		\draw (0,0)
		-- (1,0)
		-- (1,1) node[anchor=south west]{\footnotesize{$2$}}
		-- cycle;
		\draw (0,0)
		-- (-1,1)
		-- (0,1) node[anchor=south east]{\footnotesize{$3$}}
		-- cycle;
		\draw (0,0)
		-- (1,1)
		-- (0,1)
		-- cycle;
		\draw (-1,0)
		-- (-1,-1)
		-- (-2,-1) node[anchor=north]{\footnotesize{$2$}}
		-- (-2,0) node[anchor=south]{\footnotesize{$11$}}
		-- cycle;
		\draw (1,0)
		-- (1,-1)
		-- (2,-1) node[anchor=north]{\footnotesize{$3$}}
		-- (2,0) node[anchor=south]{\footnotesize{$10$}}
		-- cycle;
		\draw (2,0)
		-- (2,-1)
		-- (3,-1) node[anchor=west]{\footnotesize{$4$}}
		-- (3,0) node[anchor=west]{\footnotesize{$9$}}
		-- cycle;
		\draw (-2,0)
		-- (-2,-1)
		-- (-3,-1) node[anchor=east]{\footnotesize{$4$}}
		-- (-3,0) node[anchor=east]{\footnotesize{$9$}}
		-- cycle;
		\draw (0,-1)
		-- (-1,-1)
		-- (-1,-2) node[anchor=north]{\footnotesize{$9$}}
		-- cycle;
		\draw (0,-1)
		-- (1,-1)
		-- (1,-2) node[anchor=north west]{\footnotesize{$11$}}
		-- cycle;
		\draw (0,-1)
		-- (-1,-2)
		-- (0,-2) node[anchor=north east]{\footnotesize{$10$}}
		-- cycle;
		\draw (0,-1)
		-- (1,-2)
		-- (0,-2)
		-- cycle;
		\draw (0,1)
		-- (1,1)
		-- (1,2) node[anchor=south]{\footnotesize{$6$}}
		-- cycle;
		\draw (0,1)
		-- (0,2) node[anchor=south]{\footnotesize{$8$}}
		-- (1,2)
		-- cycle;
		\draw (0,-2)
		-- (1,-2)
		-- (1,-3) node[anchor=north]{\footnotesize{$5$}}
		-- cycle;
		\draw (0,-2)
		-- (0,-3) node[anchor=north]{\footnotesize{$1$}}
		-- (1,-3)
		-- cycle;
		\draw (-1,0)
		-- (-1,1)
		-- (-2,1) node[anchor=east]{\footnotesize{$1$}}
		-- cycle;
		\draw (1,0)
		-- (1,1)
		-- (2,1) node[anchor=west]{\footnotesize{$4$}}
		-- cycle;
		\draw (-1,-1)
		-- (-1,-2)
		-- (-2,-2) node[anchor=east]{\footnotesize{$8$}}
		-- cycle;
		\draw (1,-1)
		-- (1,-2)
		-- (2,-2) node[anchor=west]{\footnotesize{$9$}}
		-- cycle;
		\node at (0,-4){$\boldsymbol{KNO_{[(3^4,4^2)]}}$};%[Aut(\boldsymbol{KNO_{[(3^4,4^2)]}})\cong\mathbb{Z}_{2}\times \mathbb{Z}_{2}]$};
		\end{tikzpicture}
		
	\end{center}
	\caption{Non-orientable SEM of type $ (3^4,4^2) $}
	\label{3(4)4(2)no}
\end{figure}
\begin{figure}[H]
	
	\begin{center}
		%\begin{minipage}{.4\textwidth}
		
		%\end{minipage}
		%\begin{minipage}{.4\textwidth}
		\begin{tikzpicture}[scale=0.9,line width=.5pt]
		%\tikzpicture[jd]
		%\begin{scope}
		\draw (0,0) node[anchor=south west]{\footnotesize{$0$}}
		-- (1,0) node[anchor=north east]{\footnotesize{$1$}}
		-- (0,1) node[anchor=north east]{\footnotesize{$2$}}
		-- cycle;

		\draw (0,0)
		-- (-1,1) node[anchor=east]{\footnotesize{$3$}}
		-- (0,1)
		-- cycle;
		\draw (0,0)
		-- (-1,0) node[anchor=east]{\footnotesize{$4$}}
		-- (-1,1)
		-- cycle;
		\draw (0,0)
		-- (-1,0)
		-- (-1.2,-1) node[anchor=east]{\footnotesize{$5$}}
		-- (-.3,-1) node[anchor=north]{\footnotesize{$6$}}
		-- cycle;
		\draw (0,0)
		-- (-.3,-1)
		-- (0.2,-1) node[anchor=north]{\footnotesize{$7$}}
		-- cycle;
		\draw (0,0)
		-- (0.2,-1)
		-- (0.7,-1) node[anchor=north]{\footnotesize{$8$}}
		-- (1,0)
		-- cycle;
		\draw (1,0)
		-- (0.7,-1)
		-- (1.2,-1) node[anchor=south west]{\footnotesize{$11$}}
		-- cycle;
		\draw (1,0)
		-- (1.2,-1)
		-- (2,-1) node[anchor=north]{\footnotesize{$10$}}
		-- (2,0) node[anchor=north east]{\footnotesize{$4$}}
		-- cycle;
		\draw (2.7,0) node[anchor=west]{\footnotesize{$5$}}
		-- (2,-1)
		-- (2,0)
		-- cycle;
		\draw (1,0)
		-- (0,1)
		-- (1,1) node[anchor=north east]{\footnotesize{$9$}}
		-- cycle;
		\draw (1,0)
		-- (1,1)
		-- (2,0)
		-- cycle;
		\draw (0,1)
		-- (-1,1)
		-- (-1.2,2) node[anchor=east]{\footnotesize{$10$}}
		-- (-.3,2) node[anchor=south]{\footnotesize{$8$}}
		-- cycle;
		\draw (-.3,2)
		-- (-1.2,2)
		-- (-1.2,2.7) node[anchor=south]{\footnotesize{$7$}}
		-- cycle;
		\draw (0,1)
		-- (0.2,2) node[anchor=south]{\footnotesize{$6$}}
		-- (-.3,2)
		-- cycle;
		\draw (0,1)
		-- (0.2,2)
		-- (0.7,2) node[anchor=south]{\footnotesize{$7$}}
		-- (1,1)
		-- cycle;
		\draw (0.7,2)
		-- (1.2,2)
		-- (1.5,2.7) node[anchor=south]{\footnotesize{$10$}}
		-- cycle;
		\draw (1,1)
		-- (0.7,2)
		-- (1.2,2) node[anchor=north west]{\footnotesize{$5$}}
		-- cycle;
		\draw (1,1)
		-- (1.2,2)
		-- (2,2) node[anchor=south]{\footnotesize{$11$}}
		-- (2,1) node[anchor=north east]{\footnotesize{$3$}}
		-- cycle;
		\draw (2.7,1) node[anchor=west]{\footnotesize{$10$}}
		-- (2,2)
		-- (2,1)
		-- cycle;
		\draw (1,1)
		-- (2,1)
		-- (2,0)
		-- cycle;
		\draw (0.7,-1)
		-- (1.2,-1)
		-- (1.5,-1.7) node[anchor=north]{\footnotesize{$6$}}
		-- cycle;
		\draw (-.3,-1)
		-- (-1.2,-1)
		-- (-1.2,-1.7) node[anchor=north]{\footnotesize{$11$}}
		-- cycle;
		%\end{scope}
		\node at (.7,-2.5){$\boldsymbol{KO_{1[(3^3,4,3,4)]}}$};
		\end{tikzpicture}
		\begin{tikzpicture}[scale=0.9,line width=.5pt]
		\draw (0,0) node[anchor=south west]{\footnotesize{$0$}}
		-- (1,0) node[anchor=north east]{\footnotesize{$1$}}
		-- (0,1) node[anchor=north east]{\footnotesize{$2$}}
		-- cycle;

		\draw (0,0)
		-- (-1,1) node[anchor=east]{\footnotesize{$3$}}
		-- (0,1)
		-- cycle;
		\draw (0,0)
		-- (-1,0) node[anchor=east]{\footnotesize{$4$}}
		-- (-1,1)
		-- cycle;
		\draw (0,0)
		-- (-1,0)
		-- (-1.2,-1) node[anchor=east]{\footnotesize{$5$}}
		-- (-.3,-1) node[anchor=north east]{\footnotesize{$6$}}
		-- cycle;
		\draw (0,0)
		-- (-.3,-1)
		-- (0.2,-1) node[anchor=north]{\footnotesize{$7$}}
		-- cycle;
		\draw (0,0)
		-- (0.2,-1)
		-- (0.7,-1) node[anchor=north]{\footnotesize{$8$}}
		-- (1,0)
		-- cycle;
		\draw (1,0)
		-- (0.7,-1)
		-- (1.2,-1) node[anchor=south west]{\footnotesize{$11$}}
		-- cycle;
		\draw (1,0)
		-- (1.2,-1)
		-- (2,-1) node[anchor=west]{\footnotesize{$10$}}
		-- (2,0) node[anchor=north east]{\footnotesize{$4$}}
		-- cycle;
		\draw (2.7,0) node[anchor=west]{\footnotesize{$5$}}
		-- (2,-1)
		-- (2,0)
		-- cycle;
		\draw (1,0)
		-- (0,1)
		-- (1,1) node[anchor=north east]{\footnotesize{$9$}}
		-- cycle;
		\draw (1,0)
		-- (1,1)
		-- (2,0)
		-- cycle;
		\draw (0,1)
		-- (-1,1)
		-- (-1.2,2) node[anchor=east]{\footnotesize{$6$}}
		-- (-.3,2) node[anchor=south east]{\footnotesize{$10$}}
		-- cycle;
		
		\draw (0,1)
		-- (0.2,2) node[anchor=south]{\footnotesize{$8$}}
		-- (-.3,2)
		-- cycle;
		\draw (0,1)
		-- (0.2,2)
		-- (0.7,2) node[anchor=south]{\footnotesize{$11$}}
		-- (1,1)
		-- cycle;
		
		\draw (1,1)
		-- (0.7,2)
		-- (1.2,2) node[anchor=south]{\footnotesize{$7$}}
		-- cycle;
		\draw (1,1)
		-- (1.2,2)
		-- (2,2) node[anchor=west]{\footnotesize{$5$}}
		-- (2,1) node[anchor=north east]{\footnotesize{$3$}}
		-- cycle;
		\draw (2.7,1) node[anchor=west]{\footnotesize{$6$}}
		-- (2,2)
		-- (2,1)
		-- cycle;
		\draw (1,1)
		-- (2,1)
		-- (2,0)
		-- cycle;
		\draw (-.3,2)
		-- (.2,2)
		-- (-.5,2.7) node[anchor=south]{\footnotesize{$5$}}
		-- cycle;
		\draw (2,2)
		-- (1.2,2)
		-- (2,2.7) node[anchor=south]{\footnotesize{$8$}}
		-- cycle;
		\draw (2,-1)
		-- (1.2,-1)
		-- (2,-1.7) node[anchor=north]{\footnotesize{$6$}}
		-- cycle;
		\draw (-.3,-1)
		-- (.2,-1)
		-- (-.5,-1.7) node[anchor=north]{\footnotesize{$11$}}
		-- cycle;
		
		\node at (.7,-2.5){$\boldsymbol{KO_{2[(3^3,4,3,4)]}}$};
		\end{tikzpicture}
		
		%\end{minipage}
		%\end{tabular}
	\end{center}
	\caption{Orientable SEMs of type $ (3^3,4,3,4) $ }
	\label{33434or}
\end{figure}
\begin{figure}[H]
	\begin{center}
		\begin{tikzpicture}[scale=0.9,line width=.5pt]
		\draw (0,0) node[anchor=south west]{\footnotesize{$0$}}
		-- (1,0) node[anchor=north east]{\footnotesize{$1$}}
		-- (0,1) node[anchor=north east]{\footnotesize{$2$}}
		-- cycle;
		\draw (0,0)
		-- (-1,1) node[anchor=east]{\footnotesize{$3$}}
		-- (0,1)
		-- cycle;
		\draw (0,0)
		-- (-1,0) node[anchor=east]{\footnotesize{$4$}}
		-- (-1,1)
		-- cycle;
		\draw (0,0)
		-- (-1,0)
		-- (-1.2,-1) node[anchor=east]{\footnotesize{$5$}}
		-- (-.3,-1) node[anchor=north east]{\footnotesize{$6$}}
		-- cycle;
		\draw (0,0)
		-- (-.3,-1)
		-- (0.2,-1) node[anchor=north]{\footnotesize{$7$}}
		-- cycle;
		\draw (0,0)
		-- (0.2,-1)
		-- (0.7,-1) node[anchor=north]{\footnotesize{$8$}}
		-- (1,0)
		-- cycle;
		\draw (1,0)
		-- (0.7,-1)
		-- (1.2,-1) node[anchor=south west]{\footnotesize{$11$}}
		-- cycle;
		\draw (1,0)
		-- (1.2,-1)
		-- (2,-1) node[anchor=west]{\footnotesize{$10$}}
		-- (2,0) node[anchor=north east]{\footnotesize{$4$}}
		-- cycle;
		\draw (2.7,0) node[anchor=west]{\footnotesize{$5$}}
		-- (2,-1)
		-- (2,0)
		-- cycle;
		\draw (1,0)
		-- (0,1)
		-- (1,1) node[anchor=north east]{\footnotesize{$9$}}
		-- cycle;
		\draw (1,0)
		-- (1,1)
		-- (2,0)
		-- cycle;
		\draw (0,1)
		-- (-1,1)
		-- (-1.2,2) node[anchor=east]{\footnotesize{$11$}}
		-- (-.3,2) node[anchor=south east]{\footnotesize{$8$}}
		-- cycle;
		
		\draw (0,1)
		-- (0.2,2) node[anchor=south]{\footnotesize{$10$}}
		-- (-.3,2)
		-- cycle;
		\draw (-.3,2)
		-- (0.2,2)
		-- (-0.3,2.7) node[anchor=south]{\footnotesize{$5$}}
		-- cycle;
		\draw (0,1)
		-- (0.2,2)
		-- (0.7,2) node[anchor=south]{\footnotesize{$6$}}
		-- (1,1)
		-- cycle;
		
		\draw (1,1)
		-- (0.7,2)
		-- (1.2,2) node[anchor=south]{\footnotesize{$5$}}
		-- cycle;
		\draw (1,1)
		-- (1.2,2)
		-- (2,2) node[anchor=west]{\footnotesize{$7$}}
		-- (2,1) node[anchor=north east]{\footnotesize{$3$}}
		-- cycle;
		\draw (1.2,2)
		-- (2,2)
		-- (2,2.7) node[anchor=south]{\footnotesize{$8$}}
		-- cycle;
		\draw (2.7,1) node[anchor=west]{\footnotesize{$11$}}
		-- (2,2)
		-- (2,1)
		-- cycle;
		\draw (1,1)
		-- (2,1)
		-- (2,0)
		-- cycle;
		\draw (2,-1)
		-- (1.2,-1)
		-- (2,-1.7) node[anchor=north]{\footnotesize{$6$}}
		-- cycle;
		\draw (-.3,-1)
		-- (.2,-1)
		-- (-.3,-1.7) node[anchor=north]{\footnotesize{$11$}}
		-- cycle;
		%\end{scope}
		\node at (.7,-2.5){$\boldsymbol{KNO_{1[(3^3,4,3,4)]}}$};
		\end{tikzpicture}
		\begin{tikzpicture}[scale=0.9,line width=.5pt]
		\draw (0,0) node[anchor=south west]{\footnotesize{$0$}}
		-- (1,0) node[anchor=north east]{\footnotesize{$1$}}
		-- (0,1) node[anchor=north east]{\footnotesize{$2$}}
		-- cycle;

		\draw (0,0)
		-- (-1,1) node[anchor=east]{\footnotesize{$3$}}
		-- (0,1)
		-- cycle;
		\draw (0,0)
		-- (-1,0) node[anchor=east]{\footnotesize{$4$}}
		-- (-1,1)
		-- cycle;
		\draw (0,0)
		-- (-1,0)
		-- (-1.2,-1) node[anchor=east]{\footnotesize{$5$}}
		-- (-.3,-1) node[anchor=north east]{\footnotesize{$6$}}
		-- cycle;
		\draw (0,0)
		-- (-.3,-1)
		-- (0.2,-1) node[anchor=north]{\footnotesize{$7$}}
		-- cycle;
		\draw (0,0)
		-- (0.2,-1)
		-- (0.7,-1) node[anchor=north]{\footnotesize{$8$}}
		-- (1,0)
		-- cycle;
		\draw (1,0)
		-- (0.7,-1)
		-- (1.2,-1) node[anchor=south west]{\footnotesize{$11$}}
		-- cycle;
		\draw (1,0)
		-- (1.2,-1)
		-- (2,-1) node[anchor=west]{\footnotesize{$10$}}
		-- (2,0) node[anchor=north east]{\footnotesize{$4$}}
		-- cycle;
		\draw (2.7,0) node[anchor=west]{\footnotesize{$5$}}
		-- (2,-1)
		-- (2,0)
		-- cycle;
		\draw (1,0)
		-- (0,1)
		-- (1,1) node[anchor=north east]{\footnotesize{$9$}}
		-- cycle;
		\draw (1,0)
		-- (1,1)
		-- (2,0)
		-- cycle;
		\draw (0,1)
		-- (-1,1)
		-- (-1.2,2) node[anchor=east]{\footnotesize{$6$}}
		-- (-.3,2) node[anchor=south]{\footnotesize{$8$}}
		-- cycle;
		\draw (-.3,2)
		-- (-1.2,2)
		-- (-1.2,2.7) node[anchor=south]{\footnotesize{$5$}}
		-- cycle;
		\draw (0,1)
		-- (0.2,2) node[anchor=south]{\footnotesize{$7$}}
		-- (-.3,2)
		-- cycle;
		\draw (0,1)
		-- (0.2,2)
		-- (0.7,2) node[anchor=south]{\footnotesize{$11$}}
		-- (1,1)
		-- cycle;
		\draw (0.7,2)
		-- (1.2,2)
		-- (1.5,2.7) node[anchor=south]{\footnotesize{$8$}}
		-- cycle;
		\draw (1,1)
		-- (0.7,2)
		-- (1.2,2) node[anchor=north west]{\footnotesize{$5$}}
		-- cycle;
		\draw (1,1)
		-- (1.2,2)
		-- (2,2) node[anchor=south]{\footnotesize{$10$}}
		-- (2,1) node[anchor=north east]{\footnotesize{$3$}}
		-- cycle;
		\draw (2.7,1) node[anchor=west]{\footnotesize{$6$}}
		-- (2,2)
		-- (2,1)
		-- cycle;
		\draw (1,1)
		-- (2,1)
		-- (2,0)
		-- cycle;
		\draw (2,-1)
		-- (1.2,-1)
		-- (2,-1.7) node[anchor=north]{\footnotesize{$7$}}
		-- cycle;
		\draw (-.3,-1)
		-- (.2,-1)
		-- (-.3,-1.7) node[anchor=north]{\footnotesize{$10$}}
		-- cycle;
		%\end{scope}
		\node at (.7,-2.5){$\boldsymbol{KNO_{2[(3^3,4,3,4)]}}$};
		\end{tikzpicture}
		\begin{tikzpicture}[scale=0.9,line width=.5pt]
		\draw (0,0) node[anchor=west]{\footnotesize{$0$}}
		-- (-0.5,1) node[anchor=north]{\footnotesize{$6$}}
		-- (0.5,1) node[anchor=north]{\footnotesize{$7$}}
		-- cycle;
		\draw (0,0)
		-- (-1,-0.5) node[anchor=north]{\footnotesize{$4$}}
		-- (-1,0.2) node[anchor=west]{\footnotesize{$5$}}
		-- (-0.5,1)
		-- cycle;
		\draw (0,0)
		-- (1,-0.5) node[anchor=south east]{\footnotesize{$1$}}
		-- (1,0.2) node[anchor=east]{\footnotesize{$8$}}
		-- (0.5,1)
		-- cycle;
		\draw (1,-0.5)
		-- (1,0.2)
		-- (1.5,-0.5) node[anchor=north west]{\footnotesize{$10$}}
		-- cycle;
		\draw (1.5,-0.5)
		-- (1,-0.5)
		-- (1,-1) node[anchor=north]{\footnotesize{$4$}}
		-- cycle;
		\draw (1.5,-0.5)
		-- (1,-1)
		-- (1.5,-1) node[anchor=north]{\footnotesize{$11$}}
		-- cycle;
		\draw (-1,-0.5)
		-- (-1,0.2)
		-- (-1.5,-0.5) node[anchor=north]{\footnotesize{$1$}}
		-- cycle;
		\draw (1,0.2)
		-- (1.5,-0.5)
		-- (2,-0.5) node[anchor=west]{\footnotesize{$3$}}
		-- (2,0.2) node[anchor=west]{\footnotesize{$9$}}
		-- cycle;
		\draw (1,0.2)
		-- (2,0.2)
		-- (2,0.8) node[anchor=west]{\footnotesize{$5$}}
		-- cycle;
		\draw (1,0.2)
		-- (2,0.8)
		-- (1.5,1)
		-- cycle;
		
		\draw (-1,0.2)
		-- (-1.5,-0.5)
		-- (-2,-0.5) node[anchor=east]{\footnotesize{$2$}}
		-- (-2,0.2) node[anchor=east]{\footnotesize{$9$}}
		-- cycle;
		\draw (0,0)
		-- (-1,-0.5)
		-- (-0.5,-1) node[anchor=north]{\footnotesize{$3$}}
		-- cycle;
		\draw (0,0)
		-- (1,-0.5)
		-- (0.5,-1) node[anchor=north]{\footnotesize{$2$}}
		-- cycle;
		\draw (-0.5,-1)
		-- (0.5,-1)
		-- (0,-1.5) node[anchor=north]{\footnotesize{$10$}}
		-- cycle;
		\draw (0,0)
		-- (-0.5,-1)
		-- (0.5,-1)
		-- cycle;
		\draw (-0.5,1)
		-- (0.5,1)
		-- (0,2) node[anchor=south]{\footnotesize{$9$}}
		-- cycle;
		\draw (-0.5,1)
		-- (0,2)
		-- (-0.7,2) node[anchor=south]{\footnotesize{$2$}}
		-- cycle;
		\draw (0.5,1)
		-- (0,2)
		-- (0.7,2) node[anchor=south]{\footnotesize{$3$}}
		-- cycle;
		\draw (0.5,1)
		-- (0.7,2)
		-- (1.5,1.5) node[anchor=south]{\footnotesize{$4$}}
		-- (1.5,1) node[anchor=south west]{\footnotesize{$11$}}
		-- cycle;
		\draw (-0.5,1)
		-- (-0.7,2)
		-- (-1.5,1.5) node[anchor=south]{\footnotesize{$10$}}
		-- (-1.5,1) node[anchor=east]{\footnotesize{$11$}}
		-- cycle;
		\draw (-1,0.2)
		-- (-0.5,1)
		-- (-1.5,1)
		-- cycle;
		\draw (1,0.2)
		-- (0.5,1)
		-- (1.5,1)
		-- cycle;

		\node at (-0.1,-2.5){$\boldsymbol{KNO_{3[(3^3,4,3,4)]}}$};
		
		\end{tikzpicture}
		
	\end{center}
	\caption{Non-orientable SEMs of type $ (3^3,4,3,4) $}
	\label{33434non}
\end{figure}
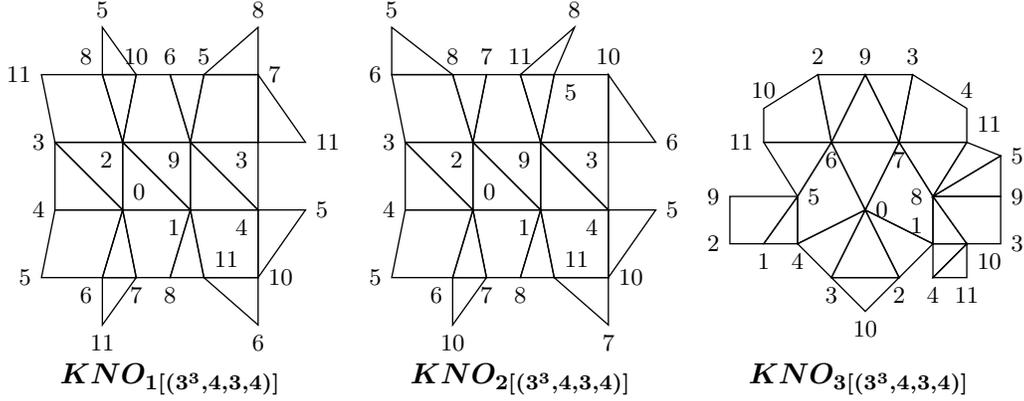

\section{Proof of results}
\begin{lem}\label{figlem1}
$ KNO_{1[(3,4^4)]}\ncong KNO_{2[(3,4^4)]} $, $ KO_{[(3,4^4)]}\ncong KNO_{1[(3,4^4)]},KNO_{2[(3,4^4)]} $, in figure \ref{344no} and \ref{344o}.
\end{lem}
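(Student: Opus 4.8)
The plan is to split the three non-isomorphisms into two kinds: the orientable map versus the two non-orientable ones, which is handled by a topological invariant, and the two non-orientable maps against each other, which is handled by the combinatorial invariants $G_i$ recalled in Section~\ref{intro}.

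For $KO_{[(3,4^4)]}\ncong KNO_{1[(3,4^4)]},KNO_{2[(3,4^4)]}$ I would simply invoke orientability: an isomorphism of polyhedral complexes induces a homeomorphism of the underlying surfaces and therefore preserves orientability, while by construction $KO_{[(3,4^4)]}$ sits on the orientable surface of Euler characteristic $-2$ and $KNO_{1[(3,4^4)]},KNO_{2[(3,4^4)]}$ sit on the non-orientable one. One could equally cite Lemma~\ref{lem2}, where the three automorphism-group orders $|Aut(KO_{[(3,4^4)]})|=12$, $|Aut(KNO_{1[(3,4^4)]})|=4$ and $|Aut(KNO_{2[(3,4^4)]})|=24$ are pairwise distinct, which by itself already yields all three statements; but orientability is the cheaper argument for the first two.

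The substantive case is $KNO_{1[(3,4^4)]}\ncong KNO_{2[(3,4^4)]}$: both are maps of type $(3,4^4)$ on the same non-orientable surface with the same $f$-vector, so no mere counting invariant distinguishes them. Here I would use the fact recalled in Section~\ref{intro} that isomorphic maps $X,Y$ satisfy $G_i(X)\cong G_i(Y)$ for all $i\ge 0$. The steps are: (i) from Figure~\ref{344no}, after resolving the edge identifications along the boundary of each planar diagram, list for every one of the $12$ vertices $v$ the five faces through it (one triangle and four quadrilaterals), and hence the set $N(v)$, which is the union of the five edge-neighbours of $v$ with the ``diagonal'' vertices of its four incident quadrilaterals; (ii) for every unordered pair $\{u,v\}$ compute $|N(u)\cap N(v)|$, which determines the graphs $G_i(KNO_{1[(3,4^4)]})$ and $G_i(KNO_{2[(3,4^4)]})$ completely; (iii) exhibit one index $i$ with $G_i(KNO_{1[(3,4^4)]})\not\cong G_i(KNO_{2[(3,4^4)]})$, which I expect to be visible already from a coarse invariant such as the number of edges of $G_i$, its degree sequence, or its number of connected components (the markedly larger symmetry of $KNO_{2[(3,4^4)]}$, whose automorphism group is $S_4$, makes its graphs $G_i$ more homogeneous, so for a suitable $i$ the corresponding graph of $KNO_{1[(3,4^4)]}$ should be visibly less regular).

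The only real obstacle is accuracy in the bookkeeping: the two diagrams carry nontrivial boundary identifications, so the twelve neighbourhood sets and then the $\binom{12}{2}=66$ intersection cardinalities must be read off without error. Once the graphs $G_i$ are tabulated, step (iii) — and hence the whole lemma — is mechanical.
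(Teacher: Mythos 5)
Your argument is essentially the paper's: the published proof observes that $KO_{[(3,4^4)]}$ is orientable while $KNO_{1[(3,4^4)]}$ and $KNO_{2[(3,4^4)]}$ are not, and then settles $KNO_{1[(3,4^4)]}\ncong KNO_{2[(3,4^4)]}$ by citing Lemma~\ref{lem2}, whose automorphism groups $\mathbb{Z}_2\times\mathbb{Z}_2$ and $S_4$ have different orders. Your parenthetical remark that the three automorphism-group orders $4$, $24$, $12$ are pairwise distinct is therefore not just an ``equally valid'' alternative, it is the argument the paper actually uses, and it does close the lemma completely. Your preferred route for the substantive case, distinguishing $KNO_{1[(3,4^4)]}$ from $KNO_{2[(3,4^4)]}$ by exhibiting some $i$ with $G_i$ non-isomorphic, is a reasonable and more self-contained idea (the paper itself computes $G_7$ and $G_8$ of these maps inside the proof of Lemma~\ref{lem2}, and indeed $G_7(KNO_{1[(3,4^4)]})$ is a disjoint union of two $4$-cycles, so some such $i$ does work), but as written it ends at ``I expect to be visible,'' so on its own it would be an unfinished computation rather than a proof. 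Since you also supply the automorphism-group comparison, the lemma is established; just be aware that the burden of the whole statement then rests on Lemma~\ref{lem2}, exactly as in the paper.
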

\begin{proof}
	From figure \ref{344no} and \ref{344o}, we see that $ KNO_{1[(3,4^4)]},KNO_{2[(3,4^4)]} $ are non-orientable and $ KO_{[(3,4^4)]} $ is orientable, therefore by Lemma \ref{lem2}, it is clear.
\end{proof}
\begin{lemma}\label{figlem2}
	All $ KO_{1[(3^4,4^2)]},KO_{2[(3^4,4^2)]} $ and $ KNO_{[(3^4,4^2)]} $, in figure \ref{3(4)4(2)o} and \ref{3(4)4(2)no}, are non-isomorphic and $ KO_{1[(3^4,4^2)]},KO_{2[(3^4,4^2)]} $ are orientable, $ KNO_{[(3^4,4^2)]} $ is non-orientable.
\end{lemma}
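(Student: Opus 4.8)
The argument will run parallel to that of Lemma \ref{figlem1}: the statement bundles three pairwise non-isomorphisms, and I would settle two of them by orientability and the remaining one by the automorphism-group data of Lemma \ref{lem3}.

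Step 1 (orientability). Since orientability is invariant under map isomorphism, it suffices to check that $KO_{1[(3^4,4^2)]}$ and $KO_{2[(3^4,4^2)]}$ are orientable while $KNO_{[(3^4,4^2)]}$ is not. Working from Figures \ref{3(4)4(2)o} and \ref{3(4)4(2)no}, I would first read off, for each of the three maps, the complete list of triangular and square faces together with the edge identifications encoded by the repeated vertex labels $1,\dots,11$ on the outer boundary. For $KO_{1[(3^4,4^2)]}$ and $KO_{2[(3^4,4^2)]}$ one then fixes an orientation of the central face and propagates it across shared edges, verifying that no conflict occurs around any vertex link; for $KNO_{[(3^4,4^2)]}$ the same propagation is forced into a contradiction along a closed band of faces, i.e.\ an orientation-reversing cycle. (This is also what the orientability assertions in Theorem \ref{thm5} record.) Consequently $KNO_{[(3^4,4^2)]}\ncong KO_{1[(3^4,4^2)]}$ and $KNO_{[(3^4,4^2)]}\ncong KO_{2[(3^4,4^2)]}$.

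Step 2 ($KO_{1[(3^4,4^2)]}\ncong KO_{2[(3^4,4^2)]}$). An isomorphism of maps induces an isomorphism of their automorphism groups, so it is enough to show these groups are non-isomorphic. By Lemma \ref{lem3}, $Aut(KO_{1[(3^4,4^2)]})\cong\mathbb{Z}_2\times\mathbb{Z}_2\times\mathbb{Z}_3$, which is abelian, whereas $Aut(KO_{2[(3^4,4^2)]})\cong D_6$ is non-abelian; both groups have order $12$, so the order alone is not decisive, but the abelian/non-abelian distinction rules out an isomorphism. Hence $KO_{1[(3^4,4^2)]}$ and $KO_{2[(3^4,4^2)]}$ are non-isomorphic maps. (Should one wish to avoid citing Lemma \ref{lem3}, the graphs $G_i(X)$ of Section \ref{intro}, computed on the twelve vertices of each map for small $i$, furnish an alternative discriminating invariant via a short finite check.)

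The only genuine obstacle here is bookkeeping: faithfully reconstructing the face list and all boundary gluings from the planar diagrams — whose outer boundaries carry many repeated labels — and then carrying out the orientation propagation for the three maps without error. Once that data is in hand, both steps above are routine.
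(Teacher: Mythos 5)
Your proposal is correct, but the key step is handled differently from the paper. For the orientability part (separating $KNO_{[(3^4,4^2)]}$ from the two orientable maps) you and the paper do the same thing: read orientability off the figures and invoke its invariance under isomorphism. For the genuinely non-trivial part, $KO_{1[(3^4,4^2)]}\ncong KO_{2[(3^4,4^2)]}$, the paper does \emph{not} use automorphism groups: it computes (via MATLAB) the characteristic polynomials of the edge graphs of the two maps, observes that they differ ($x^{12}-35x^{10}-\cdots$ versus $x^{12}-36x^{10}-\cdots$), and concludes non-isomorphism since isomorphic maps have isomorphic edge graphs and hence cospectral ones. Your route instead cites Lemma \ref{lem3}: $Aut(KO_{1[(3^4,4^2)]})\cong\mathbb{Z}_2\times\mathbb{Z}_2\times\mathbb{Z}_3$ is abelian while $Aut(KO_{2[(3^4,4^2)]})\cong D_6$ is not, and an isomorphism of maps conjugates one automorphism group onto the other. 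This is a valid argument and is in fact exactly the style the paper uses for the analogous Lemma \ref{figlem1} in the $(3,4^4)$ case; it has the advantage of requiring no new computation beyond what Lemma \ref{lem3} already establishes (and there is no circularity, since the proof of Lemma \ref{lem3} does not rely on this lemma). What the paper's spectral argument buys is independence from the long case-by-case automorphism computations: the characteristic polynomial is a directly computable invariant of the edge graph alone. Either way the conclusion stands; your fallback suggestion of using the graphs $G_i$ would also work, and is what the paper uses elsewhere (e.g.\ Lemma \ref{figlem3}).
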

\begin{proof}
	From figure we see that $ KO_{1[(3^4,4^2)]} $ and $ KO_{2[(3^4,4^2)]} $ are orientable and $ KNO_{[(3^4,4^2)]} $ is non-orientable. Let $ poly({M}) $ be the characteristic polynomial of edge graph of $ {M} $ where $ {M} $ is a SEM. Then(by MATLAB \cite{MATLAB:2014}) \\
	$ poly({KO_{1[(3^4,4^2)]}})=x^{12} - 35x^{10} - 80x^9 + 204x^8 + 1024x^7 + 1456x^6 + 768x^5 + 64x^4 $\\
	$ poly({KO_{2[(3^4,4^2)]}})=x^{12} - 36x^{10} - 80x^9 + 240x^8 + 1152x^7 + 1600x^6 + 768x^5 $.
	
	If $KO_{1[(3^4,4^2)]}$ and $KO_{2[(3^4,4^2)]}$ are isomorphic then its edge graph $EG(KO_{1[(3^4,4^2)]})$ and $EG(KO_{2[(3^4,4^2)]})$ are also isomorphic and then $poly(KO_{1[(3^4,4^2)]})=poly(KO_{2[(3^4,4^2)]})$, \cite{bd.nn}. We see that $poly(KO_{1[(3^4,4^2)]}) \neq poly(KO_{2[(3^4,4^2)]})$. Therefore $KO_{1[(3^4,4^2)]}\ncong KO_{2[(3^4,4^2)]}$.
\end{proof}
\begin{lemma}\label{figlem3}
	$ KO_{1[(3^3,4,3,4)]}\ncong KO_{1[(3,^3,4,3,4)]} $, $ KNO_{i[(3^3,4,3,4)]}\ncong KNO_{j[(3^3,4,3,4)]} $, for all $ i\neq j $, $ i,j=1,2,3 $ in figure \ref{33434or} and \ref{33434non}.
\end{lemma}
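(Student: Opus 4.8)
The plan is to separate the maps using two isomorphism invariants that are already essentially available: orientability, which is read off the figures exactly as in Lemmas~\ref{figlem1} and \ref{figlem2}, and the full automorphism group, which has been computed in Lemma~\ref{lem4}. Since any isomorphism of maps induces an isomorphism of their automorphism groups (and of their edge graphs), it suffices, for each pair in the statement, to point to one such invariant on which the two maps disagree. Note that the claim only concerns pairs within the orientable class, namely $KO_{1[(3^3,4,3,4)]}$ versus $KO_{2[(3^3,4,3,4)]}$, and pairs within the non-orientable class, $KNO_{i[(3^3,4,3,4)]}$ versus $KNO_{j[(3^3,4,3,4)]}$; orientability itself (visible in Figures~\ref{33434or} and \ref{33434non}) already separates every $KO$ from every $KNO$.

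First I would record the data from Lemma~\ref{lem4}: $Aut(KO_{1[(3^3,4,3,4)]})\cong\mathbb{Z}_2$, $Aut(KO_{2[(3^3,4,3,4)]})\cong\mathbb{Z}_2\times\mathbb{Z}_2$, $Aut(KNO_{1[(3^3,4,3,4)]})\cong\mathbb{Z}_2$, $Aut(KNO_{2[(3^3,4,3,4)]})\cong\mathbb{Z}_2\times\mathbb{Z}_2$ and $Aut(KNO_{3[(3^3,4,3,4)]})\cong\mathbb{Z}_4$. Comparing orders of these groups immediately gives $KO_{1[(3^3,4,3,4)]}\ncong KO_{2[(3^3,4,3,4)]}$ (orders $2$ and $4$), $KNO_{1[(3^3,4,3,4)]}\ncong KNO_{2[(3^3,4,3,4)]}$ (orders $2$ and $4$), and $KNO_{1[(3^3,4,3,4)]}\ncong KNO_{3[(3^3,4,3,4)]}$ (orders $2$ and $4$). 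The only pair not already separated by the \emph{order} of the automorphism group is $\{KNO_{2[(3^3,4,3,4)]},KNO_{3[(3^3,4,3,4)]}\}$, where both groups have order $4$; there I would invoke the group structure: $Aut(KNO_{3[(3^3,4,3,4)]})\cong\mathbb{Z}_4$ contains the order-$4$ element $\gamma=(0,1,10,6)(2,11,5,7)(3,4,8,9)$, while every non-identity element of $Aut(KNO_{2[(3^3,4,3,4)]})\cong\mathbb{Z}_2\times\mathbb{Z}_2$ has order $2$, so the two groups are non-isomorphic and hence the two maps are non-isomorphic.

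Once Lemma~\ref{lem4} is in hand there is no real obstacle left; the substantive work has been front-loaded into the determination of those automorphism groups, so this lemma is essentially bookkeeping. If one prefers to avoid appealing to the precise group structure for the last pair, the alternative is to argue as in the proof of Lemma~\ref{figlem2}: compute the characteristic polynomials $poly(KNO_{2[(3^3,4,3,4)]})$ and $poly(KNO_{3[(3^3,4,3,4)]})$ of the respective edge graphs (e.g.\ with MATLAB \cite{MATLAB:2014}) and check that they differ; since isomorphic maps have isomorphic edge graphs and hence equal characteristic polynomials \cite{bd.nn}, this again forces $KNO_{2[(3^3,4,3,4)]}\ncong KNO_{3[(3^3,4,3,4)]}$. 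Either route disposes of the remaining case, and the lemma follows.
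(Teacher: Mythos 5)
Your proposal is correct, but it takes a genuinely different route from the paper's proof of this lemma. You separate all four pairs by the automorphism group alone: orders $2$ versus $4$ dispose of $KO_{1[(3^3,4,3,4)]}$ vs.\ $KO_{2[(3^3,4,3,4)]}$, $KNO_{1}$ vs.\ $KNO_{2}$, and $KNO_{1}$ vs.\ $KNO_{3}$, and the distinction $\mathbb{Z}_2\times\mathbb{Z}_2\ncong\mathbb{Z}_4$ handles $KNO_{2}$ vs.\ $KNO_{3}$. This is sound — an isomorphism of maps conjugates one automorphism group onto the other — and it is in fact the same device the paper itself uses for Lemma~\ref{figlem1} (``by Lemma~\ref{lem2}, it is clear''). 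The paper's own proof of Lemma~\ref{figlem3} instead works with the invariants $G_7$ and the characteristic polynomial of the edge graph: $G_{7}(KNO_{2[(3^3,4,3,4)]})=\emptyset$ while the other two $G_7$'s are nonempty, which separates $KNO_2$ from $KNO_1$ and $KNO_3$; $poly(KNO_{1})\neq poly(KNO_{3})$ separates that pair; and the pair $KO_1$ vs.\ $KO_2$ is settled by a long case analysis ruling out every candidate isomorphism $f$ according to the possible values of $f(0)$ read off from $G_7$. The trade-off is clear: your argument reduces the lemma to bookkeeping, but it leans entirely on the exhaustiveness of the computations in Lemma~\ref{lem4} (in particular, that \emph{no} automorphisms were missed for the maps with the smaller groups), and it requires Lemma~\ref{lem4} to be established first — harmless, since its proof nowhere invokes Lemma~\ref{figlem3}, but it would force a reordering of the exposition. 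The paper's route is self-contained and does not presuppose the full automorphism computation, at the cost of a substantially longer argument, especially for $KO_1\ncong KO_2$. Your fallback via characteristic polynomials for the last pair is also legitimate in principle, though as stated it is conditional on actually computing and comparing the two polynomials.
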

\begin{proof}
	\begin{sloppypar}
	We have $ G_7(KO_{1[(3^3,4,3,4)]})=C(0,9,10)\allowbreak\cup C(1,3,6) $, $ G_7(KO_{2[(3^4,4,3,4)]})=C(0,9,10)\cup C(2,4,7) $, $ G_{7}(KNO_{1[(3^3,4,3,4)]})=C(0,9,10)\cup C(1,3,6)\cup C(2,4,7) $, $ G_{7}(KNO_{2[(3^3,4,3,4)]})=\emptyset $, $ G_7(KNO_{3[(3^3,4,3,4)]})=C(0,9,11)\cup C(1,3,5)\cup C(2,6,8) $ and $ poly(KNO_{1[(3^3,4,3,4)]})=x^{12}-48x^{10}-146x^9+72x^8+576x^7+81x^6-648x^5 $, $ poly(KNO_{3[(3^3,4,3,4)]})=x^{12}-48x^{10}-144x^9+66x^8+50x^7+8x^6-57x^5-27x^4+216x^3 $.
	
	We see that $ poly(KNO_{1[(3^3,4,3,4)]})\neq poly(KNO_{3[(3^3,4,3,4)]}) $ and $ G_{7}(KNO_{2[(3^3,4,3,4)]})\neq G_{7}(KNO_{1[(3^3,4,3,4)]}),G_{7}(KNO_{4[(3^3,4,3,4)]}) $, therefore $ KNO_{1[(3^3,4,3,4)]}\ncong KNO_{3[(3^3,4,3,4)]} $, $ KNO_{2[(3^3,4,3,4)]}\allowbreak\ncong KNO_{1[(3^3,4,3,4)]},KNO_{3[(3^3,4,3,4)]} $.
	
	If $ KO_{1[(3,^3,4,3,4)]}\allowbreak\cong KO_{1[(3,^3,4,3,4)]} $, then there exist an isomorphism, say $ f:KO_{1[(3^3,4,3,4)]}\rightarrow KO_{2[(3^3,4,3,4)]} $. Then from $ G_7(KO_{1[(3^3,4,3,4)]}) $ and $ G_7(KO_{2[(3^3,4,3,4)]}) $, $ f(0)\in \{0,2,4,7,9,10\} $. If $ f(0)=0 $ then from $ G_7(KO_{1[(3^3,4,3,4)]}) $, $ G_7(KO_{2[(3^3,4,3,4)]}) $, $ lk(0_{KO_{1[(3^3,4,3,4)]}}) $\footnote{$ lk(0_{M})= lk(0) $ of the polyhedron $ M $}, $ lk(0_{KO_{2[(3^3,4,3,4)]}}) $, $ lk(1_{KO_{1[(3^3,4,3,4)]}}) $ and $ lk(4_{KO_{2[(3^3,4,3,4)]}}) $, we get $ f=(1,4)(2,3)(5,8)(6,7)(10,11) $. But $ [2,9,7,\allowbreak6] $ is a face of $ KO_{1[(3^3,4,3,4)]} $ and $ f([2,9,7,6])=[3,9,6,7] $, $ [3,9,6,7] $ is not a face of $ KO_{2[(3^3,4,3,4)]} $, make a contradiction. If $ f(0)=2 $ then from $ G_7(KO_{1[(3^3,4,3,4)]}) $, $ G_7(KO_{2[(3^3,4,3,4)]}) $, $ lk(0_{KO_{1[(3^3,4,3,4)]}}) $ and $ lk(2_{KO_{2[(3^3,4,3,4)]}}) $, we get $ f(1)=9,f(2)=1,f(3)=0,f(4)=3,f(5)=6,f(6)=10,f(7)=8,f(8)=11 $. Now from $ lk(0_{KO_{2[(3^3,4,3,4)]}}) $ and $ lk(3_{KO_{1[(3^3,4,3,4)]}}) $, we get $ f(5)=5 $, which make contradiction. If $ f(0)=4 $ then from $ G_7(KO_{1[(3^3,4,3,4)]}) $, $ G_7(KO_{2[(3^3,4,3,4)]}) $, $ lk(0_{KO_{1[(3^3,4,3,4)]}}) $, $ lk(0_{KO_{2[(3^3,4,3,4)]}}) $, $ lk(1_{KO_{1[(3^3,4,3,4)]}}) $, $ lk(4_{KO_{2[(3^3,4,3,4)]}}) $, we get $ f=(0,4,1)(2,3,9)\allowbreak (5,11,7)(6,10,8) $. Now $ C(0,9,10) $ is a cycle in $ G_7(KO_{1[(3^3,4,3,4)]}) $, but $ f(C(0,9,10))=C(4,2,8) $ is not a cycle in $ G_7(KO_{2[(3^3,4,3,4)]}) $, make contradiction. If $ f(0)=7 $ then from $ lk(0_{KO_{1[(3^3,4,3,4)]}}) $ and $ lk(7_{KO_{2[(3^3,4,3,4)]}}) $, we have $ f(4)\in \{0,9\} $, but from $ G_7(KO_{1[(3^3,4,3,4)]}) $ and $ G_7(KO_{2[(3^3,4,3,4)]}) $, it is not possible. If $ f(0)=9 $ then from $ G_7(KO_{1[(3^3,4,3,4)]}) $, $ G_7(KO_{2[(3^3,4,3,4)]}) $, $ lk(0_{KO_{[(3^3,4,3,4)]}}) $ and $ lk(9_{KO_{2[(3^3,4,3,4)]}}) $, we get $ f(1)=2, f(2)=1,f(3)=4,f(4)=3,f(5)=5,f(6)=7,f(7)=11,f(8)=8 $ and then $ f([8,2,6])=[8,1,7] $ where $ [8,2,6] $ is a face of $ KO_{1[(3^3,4,3,4)]} $ but $ [8,1,7] $ is not a face of $ KO_{1[(3^3,4,3,4)]} $, which make contradiction. If $ f(0)=10 $ then from $ lk(0_{KO_{1[(3^3,4,3,4)]}}) $ and $ lk(10_{KO_{2[(3^3,4,3,4)]}}) $, we have $ f(4)\in \{2,4\} $ which is not possible, from $ G_7(KO_{1[(3^3,4,3,4)]}) $, $ G_7(KO_{2[(3^3,4,3,4)]}) $. Therefore $ KO_{1[(3,^3,4,3,4)]}\ncong KO_{1[(3,^3,4,3,4)]} $.	
\end{sloppypar}
\end{proof}
%\vskip1in

\begin{proof}[\textbf{Proof of Lemma \ref{lem2}}]
	We have if $ f $ be an automorphism of $ K $, then $ f $ is also an automorphism of $ G_i(K) $ for $ i=1,2,3,...,12 $.
	\begin{sloppypar}
	Now for $ KNO_{1[(3,4^4)]} $, $ G_7(KNO_{1[(3,4^4)]})=C(1,2,6,5)\cup C(3,7,4,9) $ and $ G_8(KNO_{1[(3,4^4)]})=C(1,4,2,9,6,3,5,7)\cup\{[0,8],[0,10],[0,11],[8,10],[8,11],[10,11]\} $. Let $ \alpha\in Aut(KNO_{1[(3,4^4)]}) $, then $ \alpha $ is also an automorphism of $ G_i(KNO_{1[(3,4^4)]}),i=7,8 $, therefore $ \alpha(0)\in\{0,8,10,11\} $. Now if $ \alpha(0)=0 $ then $ \alpha(8)=8 $ and $ \alpha(1)\in\{1,6\} $. If $ \alpha(1)=1 $ then from $ lk(0),lk(1) $ we get $ \alpha(i)=i $ for all $ i\in V $ i.e. $ \alpha=Id_{KNO_{1[(3,4^4)]}} $. If $ \alpha(1)=6 $ then form $ lk(0),lk(1),lk(6) $ of $ KNO_{1[(3,4^4)]} $, we get $ \alpha=\alpha_{1} $. If $ \alpha(0)=8 $ then $ \alpha(1)\in\{7,9\} $. When $ \alpha(1)=7 $ then from $ lk(0),lk(1),lk(7) $ of $KNO_{1[(3,4^4)]} $, we get $ \alpha=\alpha_{2} $ and when $ \alpha(1)=9 $ then form $ lk(0),lk(1),lk(9) $ of $ KNO_{1[(3,4^4)]} $, we get $ \alpha=(0,8)(1,9)(2,4)(3,5)(6,7)=\alpha_1\circ\alpha_{2} $. Now if $ \alpha(0)=10 $ then from $ lk(0) $ we get $ \alpha(1)\in\{3,5\} $. If $ \alpha(1)=3 $ then from $ lk(0),lk(1),lk(3),lk(10) $ we get $ \alpha=(0,10)(1,3,9,4)(2,7)(5,6)(8,11) $ and then $ \alpha(C(1,2,6,\allowbreak5))=C(3,7,5,6)\notin \allowbreak G_7(KNO_{1[(3,4^4)]}) $, therefore $ \alpha\notin Aut(KNO_{1[(3,4^4)]}) $. If $ \alpha(1)=5 $ then similarly we get $ \alpha=(0,10)(1,5,7,4,9,2,6,3)(8,11) $ which implies $ \alpha(C(1,2,6,5))=C(5,6,3,7)\notin G_7(KNO_{1[(3,4^4)]}) $, therefore $ \alpha\notin Aut(KNO_{1[(3,4^4)]}) $. Now if $ \alpha(0)=11 $ then $ \alpha(1)\in\{1,4\} $. If $ \alpha(1)=1 $ then form $ lk(0),lk(1),lk(11) $ of $ KNO_{1[(3,4^4)]} $, we get $ \alpha(2)=2,\alpha(3)=6,\alpha(4)=7,\alpha(5)=9,\alpha(6)=4,\alpha(7)=3,\alpha(8)=10,\alpha(9)=5 $ which implies $ \alpha(C(1,2,6,5))=C(1,2,4,9)\notin G_7(KNO_{1[(3,4^4)]}) $, therefore $ \alpha\notin Aut(KNO_{1[(3,4^4)]}) $. Similarly, if $ \alpha(1)=4 $ we get $ \alpha=(0,11)(1,4,6)(2,9,3,7,5)(8,10) $ and then $ \alpha([4,9,7,11])=[6,3,5,0] $ where $ [4,9,7,11] $ is a face of $ KNO_{1[(3,4^4)]} $, but $ [6,3,5,0] $ is not a face of $ KNO_{1[(3,4^4)]} $, therefore $ \alpha\notin Aut(KNO_{1[(3,4^4)]}) $. Therefore $ Aut(KNO_{1[(3,4^4)]})=\langle\alpha_{1},\alpha_{2}\rangle $ and using GAP \cite{GAP4}, we get $ \langle\alpha_{1},\alpha_{2}\rangle\cong \mathbb{Z}_2 \times\mathbb{Z}_2$.
\end{sloppypar}
	
	Let $ \beta\in Aut(KNO_{2[(3,4^4)]}) $, then $ \beta(0)\in V $. In this paragraph, we consider $ lk(v) $ as link of $ v $ in $ KNO_{2[(3,4^4)]} $. If $ \beta(0)=0 $ then from $ lk(0) $, we get $ \beta(3)\in\{3,4\} $. If $ \beta(3)=3 $ then from $ lk(0),lk(3) $, we get $ \beta=Id_{KNO_{2[(3,4^4)]}} $. If $ \beta(3)=4 $ then from $ lk(0),lk(3),lk(4) $, we get $ \beta=\beta_{1} $. Now if $ \beta(0)=1 $ then from $ lk(0),lk(1) $ we get $ \beta(3)\in \{9,10\} $. If $ \beta(3)=9 $ then from $ lk(0),lk(1),lk(3),lk(9) $, $ \beta=(0,1)(2,8)(3,9)(4,10)(7,11)=\beta_{2}\circ\beta_{1} $ and if $ \beta(3)=10 $ then from $ lk(0),lk(1),lk(3),lk(10) $, we get $ \beta=(0,1,6)(2,5,8)(3,10,7)(4,9,11)=\beta_2\circ\beta_{1}\circ\beta_{2} $. Now if $ \beta(0)=2 $ then from $ lk(0),lk(2) $, we see that $ \beta(3)\in\{5,8\} $. From $ lk(0),lk(1),lk(2),lk(3) $, we get, when $ \beta(3)=5 $ then $ \beta=(0,2,10,7)(1,11,4,8)(3,5,9,6)=\beta_{4}\circ\beta_{1}\circ\beta_{4}\circ\beta_{2}^2 $ and when $ \beta(3)=9 $ then $ \beta=(0,2,9)(1,3,8)(4,5,10)(6,11,7)=\beta_{3}\circ\beta_{2}^2 $. Now if $ \beta(0)=3 $ then from $ lk(0),lk(3) $, we get $ \beta(3)\in\{0,4\} $. From $ lk(0),lk(3),lk(4) $, we get, when $ \beta(3)=0 $ then $ \beta=(0,3)(1,2)(5,10)(6,11)(8,9)=\beta_{1}\circ\beta_{4}\circ\beta_{2} $ and when $ \beta(3)=4 $ then $ \beta=(0,3,4)(1,11,5)(2,10,6)(7,9,8)=\beta_{4}\circ\beta_{2} $. If $ \beta(0)=4 $ then $ lk(0),lk(4) $, we get $ \beta(3)\in\{0,3\} $. From $ lk(0),lk(3),lk(4) $, we get, when $ \beta(3)=0 $ then $ \beta=(0,4,3)(1,5,11)(2,6,10)(7,9,8)=\beta_3\circ\beta_{2}^2\circ\beta_{3} $ and when $ \beta(3)=4 $ then $ \beta=(0,4)(1,10)(2,11)(5,6)(7,8)=\beta_{4}\circ\beta_{2}\circ\beta_{1} $. If $ \beta(0)=5 $ then from $ lk(0),lk(5) $, we get $ \beta(3)\in\{2,8\} $. From $ lk(0),lk(2),lk(3),lk(5),lk(8) $, we get, when $ \beta(3)=2 $ then $ \beta=(0,5,7)(1,10,9)(2,11,3)(4,8,6)=\beta_{4}\circ\beta_{1}\circ\beta_{4}\circ\beta_{2}^2\circ\beta_{1} $ and when $ \beta(3)=8 $ then $ \beta=(0,5,11,9)(1,4,2,7)(3,8,6,10)=\beta_{4}\circ\beta_{1}\circ\beta_{4}\circ\beta_{2} $. If $ \beta(0)=6 $ then from $ lk(0),lk(6) $, we get $ \beta(3)\in\{7,11\} $. From $ lk(0)$, $lk(3)$, $lk(6)$, $lk(7)$, $lk(11) $, we get, when $ \beta(3)=7 $ then $ \beta=\beta_{2} $ and when $ \beta(3)=11 $ then $ \beta=(0,6)(3,11)(4,7)(5,8)(9,10)=\beta_{1}\circ\beta_{2} $. If $ \beta(0)=7 $ then from $ lk(0),lk(7) $, we get $ \beta(3)\in\{6,11\} $. From $ lk(0),lk(3),lk(6),lk(7),lk(11) $, we get, when $ \beta(3)=6 $ then $ \beta=(0,7,10,2)(1,\allowbreak8,4,11)(3,6,9,5)=\beta_{2}\circ\beta_{3}\circ\beta_{1} $ and when $ \beta(3)=11 $ then $ \beta=(0,7,5)(1,9,10)(2,3,11)(4,6,\allowbreak8)=\beta_{3}\circ\beta_{2}\circ\beta_{3}\circ\beta_{2} $. If $ \beta(0)=8 $ then from $ lk(0),lk(8) $, we get $ \beta(3)\in\{2,5\} $. From $ lk(0),lk(2),lk(3),lk(5),lk(8) $, we get, when $ \beta(3)=2 $ then $ \beta=(0,8)(1,9)(2,3)(4,5)(6,7)=\beta_{4}\circ\beta_{1}\circ\beta_{4} $ and when $ \beta(3)=5 $ then $ \beta=\beta_{3} $. If $ \beta(0)=9 $ then from $ lk(0),lk(9) $, we get $ \beta(3)\in\{1,10\} $and then from $ lk(0),lk(1),lk(3),lk(9),lk(10) $, we get, when $ \beta(3)=1 $ then $ \beta=(0,9,2)(1,8,3)(4,10,5)(6,\allowbreak7,11)=\beta_{2}\circ\beta_{3} $ and when $ \beta(3)=10 $ then $ \beta=(0,9,11,5)(1,7,2,4)(3,10,6,8)=\beta_{4}\circ\beta_{1}\circ\beta_{2} $. If $ \beta(0)=10 $ then form $ lk(0),lk(10) $, we get $ \beta(3)\in\{1,9\} $ and then from $ lk(0),lk(1),lk(3),lk(9),lk(10) $, we get, when $ \beta(3)=1 $ then $ \beta=(0,10,8,11)(1,5,7,3)(2,6,\allowbreak4,9)=\beta_{1}\circ\beta_{4} $ and when $ \beta(3)=9 $ then $ \beta=\beta_{4} $. If $ \beta(0)=11 $ then from $ lk(0),lk(11) $, we get $ \beta(3)\in\{6,7\} $ and then from $ lk(0),lk(3),lk(6),lk(7),lk(11) $, we get, when $ \beta(3)=6 $ then $ \beta=(0,11)(1,2)(3,6)(4,7)(5,9)(8,10)=\beta_{4}\circ\beta_{3} $ and when $ \beta(3)=7 $ then $ \beta=(0,11,8,10)(1,3,7,5)(2,9,4,6)=\beta_{4}\circ\beta_{1} $. Therefore, we see that $ Aut(KNO_{2[(3,4^4)]})=\langle\beta_{1},\beta_{2},\beta_{3},\beta_{4}\rangle $ and by GAP\cite{GAP4}, we get $ \langle\beta_{1},\beta_{2},\beta_{3},\beta_{4}\rangle\cong S_4 $.
	
	In this paragraph, we consider $ lk(v) $ link of $ v $ in $ KO_{[(3,4^4)]} $. We have $ G_8(KO_{[(3,4^4)]})=\{$ [0,1], [0,10], [0,11], [1,10], [1,11], [10,11], [2,4], [2,5], [2,8], [4,5], [4,8], [5,8], [3,6], [3,7], [3,9], [6,7], [6,9], [7,9]$\} $. Let $ f\in Aut(KO_{[(3,4^4)]}) $, then $ f(0)\in V $. Now if $ f(0)=0 $ then from $ G_8(KO_{[(3,4^4)]}) $ and $ lk(0) $, we get $ f(1)=1 $ and then from $ lk(0),lk(1) $ we have $ f=Id_{KO_{[(3,4^4)]}}=\gamma^{12} $. If $ f(0)=1 $ then from $ G_8(KO_{[(3,4^4)]}) $ and $ lk(0),lk(1) $, we get $ f(1)=0 $ and therefore from $ lk(0),lk(1),lk(6),lk(7) $, $ f=(0,1)(2,8)(3,9)(4,5)(6,7)(10,11)=\gamma^6 $. If $ f(0)=2 $ then from $ G_8(KO_{[(3,4^4)]}) $ and $ lk(0),lk(2) $, we get $ f(1)=8 $ and therefore from $ lk(0),lk(1),lk(2),lk(8) $, $ f=\gamma $. If $ f(0)=3 $ then from $ G_8(KO_{[(3,4^4)]}) $ and $ lk(0),lk(3) $, we get $ f(1)=9 $ and therefore from $ lk(0),lk(1),lk(3),lk(9) $, $ f=(0,3,4)(1,9,5)(2,11,6)(7,8,10)=\gamma^8 $. Similarly, if $ f(0)=4,5 $ then from $ G_8(KO_{[(3,4^4)]})$,  $lk(0),lk(1),\allowbreak lk(4),lk(5) $, we get $ f=(0,4,3)(1,5,9)(2,6,11)(7,\allowbreak10,8)=\gamma^4 $ and $ f=(0,5,3,1,4,9)(2,7,11,\allowbreak 8,6,10)=\gamma^{10} $, respectively. If $ f(0)=6,7 $ then from $ G_8(KO_{[(3,4^4)]}),lk(0),lk(1),lk(6),lk(7) $, we get $ f=(0,6,5,10,3,2,1,7,4,11,9,8)=\gamma^5 $ and $ f=(0,7,5,11,3,8,1,6,4,10,9,2)=\gamma^{11} $, respectively. If $ f(0)=8 $ then from $ G_8(KO_{[(3,4^4)]}),\allowbreak lk(0),lk(1),lk(2),lk(8) $, we get $ f=(0,8,9,11,\allowbreak4,7,1,\allowbreak2,3,\allowbreak 10,5,6)=\gamma^7 $. If $ f(0)=9 $ then from $ G_8(KO_{[(3,4^4)]}),lk(0),lk(1),lk(3),lk(9) $, we get $ f=(0,9,4,1,3,\allowbreak 5)(2,10,6,8,11,7)=\gamma^2 $. If $ f(0)=10,11 $ then from $ G_8(KO_{[(3,4^4)]}),lk(0)$, $lk(1)$, $lk(10)$, $lk(11) $, we get $ f=(0,10,1,11)\allowbreak(2,4,8,5)(3,7,9,6)=\gamma^3 $ and $ f=(0,11,1,10)\allowbreak(2,5,8,\allowbreak4)(3,6,9,7)=\gamma^9 $, respectively. Therefore, $ Aut(KO_{[(3,4^4)]})=\langle\gamma\rangle $ and by GAP\cite{GAP4}, we get $ \langle\gamma\rangle\cong\mathbb{Z}_{12} $.	
\end{proof}
\begin{proof}[\textbf{Proof of Lemma \ref{lem3}}]
	
	$ G_{5}(KO_{1[(3^4,4^2)]})=\{$[0,2], [0,3], [2,3], [1,4], [1,5], [4,5], [6,8], [6,9], [8,9], [7,10], [7,11], [10,11]$\}$, $ G_{5}(KO_{2[(3^4,4^2)]})=\{$[0,3], [0,4], [3,4], [1,2], [1,5], [2,5], [6,8], [6,9], [8,9], [7,10], [7, 11], [10,11]$\} $ and $ G_{4}(KNO_{[(3^4,4^2)]})=\{$[0,4], [0,7], [0,8], [1,2], [1,7], [1,8], [2,9], [2,11], [3,6], [3,10], [4,9], [4,11], [5,6], [5,10], [7,9], [8,11]$\} $\\
	We see that for $ 1\leq i\leq 3 $ and $ 1\leq j \leq 2 $, $ \alpha_{i} $'s, $ \beta_{i} $'s and $ \gamma_{j} $'s are automorphisms of $ KO_{1[(3^4,4^2)]} $, $ KO_{2[(3^4,4^2)]} $ and $ KNO_{[(3^4,4^2)]} $, respectively. Now let $ \alpha ,\beta , \gamma $ be any automorphism of $ KO_{1[(3^4,4^2)]}, KO_{2[(3^4,4^2)]},KNO_{[(3^4,4^2)]} $, respectively. Then $ \alpha , \beta , \gamma $ induced an automorphism (also denoted by $ \alpha ,\beta ,\gamma $) on $ G_{i}(KO_{1[(3^4,4^2)]}),\allowbreak G_{i}(KO_{2[(3^4,4^2)]}), G_{i}(KNO_{[(3^4,4^2)]}) $ respectively. Now $ \alpha(0)=0,1,2,3,4,5,6,7,8,9,10 $ or $ 11 $. If $ \alpha(0)=3,4 $ or $ 10 $ then $ \alpha = \alpha_{1},\alpha_{2} $ or $ \alpha_{3} $, respectively. Now, if $ \alpha(0)=0 $ then $ lk(0) $ map to $ lk(0) $ under $ \alpha $ and then from induced graph of $ G_{i}(KO_{1[(3^4,4^2)]}) $, we get $ \alpha(2)=2 $ and then $ \alpha(i)=i $ for $ 1\leq i\leq 11 $, which implies $ \alpha =identity= \alpha_{2}\circ \alpha_{2} $. If $ \alpha(0)=1 $ then from $ E_{5,1} $ and to complete $ lk(2) $, $ \alpha(2) $ has to be $ 5 $ and then $ \alpha(1)=3,\alpha(3)=4,\alpha(4)=2,\alpha(5)=0,\alpha(6)=7,\alpha(8)=10,\alpha(9)=11 $. As $ \alpha(1)=3 $, therefore $ \alpha(10)=9 $ and $ \alpha(11)=6 $. Therefore $ \alpha=(0,1,3,4,2,5)(6,7,8,10,9,11)=\alpha_{1}^2\circ\alpha_{2} $. Similarly, if $ \alpha(0)=2 $ then $ \alpha=(0,2,3)(1,5,4)(6,9,8)(7,11,10)=\alpha_{1}^2 $. If $ \alpha(0)=5 $ then $ \alpha=(0,5,2,4,3,1)(6,11,9,\allowbreak 10,8,7)=\alpha_{1}\circ\alpha_{2} $. If $ \alpha(0)=6 $ then $ \alpha=(0,6,2,9,3,8)(1,7,5,11,4,10)=\alpha_{1}\circ\alpha_{3} $. If $ \alpha(0)=7 $ then $ \alpha=(0,7)(1,8)(2,11)(3,\allowbreak 10)(4,9)(5,6)=\alpha_{2}\circ\alpha_{3} $. If $ \alpha(0)=8 $ then $ \alpha=(0,8,3,9,2,6)(1,10,4,11,\allowbreak 5,7)=\alpha_{3}\circ\alpha_{1}^2 $. If $ \alpha(0)=10 $ then $ \alpha=(0,10,2,7,3,11)(1,9,5,8,4,\allowbreak6)\allowbreak=\alpha_{3}\circ\alpha_{1}\circ\alpha_{2} $. If $ \alpha(0)=11 $ then $ \alpha=(0,11,3,7,2,10)\allowbreak (1,6,4,8,5,9)=\alpha_{3}\circ\alpha_{1}^2\circ\alpha_{2} $. These are the all automorphism of $ KO_{1[(3^4,4^2)]} $ and can express as the composition of $ \alpha_{1},\alpha_{2},\alpha_{3} $. Therefore $ Aut(KO_{1[(3^4,4^2)]})=\langle\alpha_{1},\alpha_{2},\alpha_{3}\rangle\cong \mathbb{Z}_{2}\times\mathbb{Z}_{2}\times\mathbb{Z}_{3} $.\\
	Now $ \beta(0)=i $ for $ 0\leq i\leq 11 $.  $ \beta=\beta_{1},\beta_{2},\beta_{3} $ for $ \beta(0)=2,4,6 $, respectively. If $ \beta(0)=0 $ then from induced map on $ G_{i}(KO_{2[(3^4,4^2)]}) $ and $ lk(0) $ we see that $ \beta(i)=i $ for $ 1\leq i\leq 11 $ which implies $ \beta=identity=\beta_{1}^2 $. Similarly if $ \beta(0)=1 $ then $ \beta=(0,1,4,2,3,5)(6,7,8,10,9,11)=\beta_{1}\circ\beta_{2}^2 $. If $ \beta(0)=3 $ then $ \beta =(0,3,4)(1,5,2)(6,9,8)\allowbreak (7,11,10)=\beta_{2}^2 $. If $ \beta(0)=5 $ then $ \beta=(0,5,3,2,4,1)(6,11,9,10,8,7)=\beta_{1}\circ\beta_{2} $. If $ \beta(0)=7 $ then $ \beta=(0,7)(1,6)(2,9)(3,10)(4,11)\allowbreak (5,8)=\beta_{2}\circ\beta_{3} $. If $ \beta(0)=8 $ then $ \beta=(0,8)(1,7)(2,11)(3,9)(4,6)(5,10)=\beta_{2}^2\circ\beta_{3}\circ\beta_{1} $. If $ \beta(0)=9 $ then $ \beta=(0,9)(1,10)(2,7)(3,6)(4,8)(5,11)=\beta_{2}^2\circ \beta_{3} $. If $ \beta(0)=10 $ then $ \beta=(0,10)(1,8)(2,6)(3,\allowbreak 11)(4,7)(5,9)=\beta_{1}\circ\beta_{3} $. If $ \beta(0)=11 $ then $ \beta=(0,11)(1,9)(2,8)(3,7)(4,\allowbreak 10)(5,6)=\beta_{3}\circ\beta_{1}\circ\beta_{2}^2 $. These are all automorphism maps of $ KO_{2[(3^4,4^2)]} $ and can express as the composition of $ \beta_{1},\beta_{2} $ and $ \beta_{3} $. Therefore $ Aut(KO_{2[(3^4,4^2)]})=\langle\beta_{1},\beta_{2},\beta_{3}\rangle \cong D_{6} $ (of order $ 12 $).\\
	Now for induced map $ \gamma $ on $ G_{i}(KNO_{[(3^4,4^2)]}) $, we see that $ \gamma(0)=0,1,2,4,7,8,9,11 $. $ \gamma=\gamma_{1},\gamma_{2} $ for $ \gamma(0)=4,7 $ respectively. If $ \gamma(0)=0 $ then from $ KN_{1,1} $ and $ lk(0) $, we see that $ \gamma(i)=i $ for $ 1\leq i\leq 11 $ which implies $ \gamma=identity=\gamma_{1}^2 $. If $ \gamma(0)=1,2,8,11 $ then $ lk(0) $ map to $ lk(1), lk(2),lk(8),lk(11) $ respectively, under $ \gamma $ and then $ \gamma(1)=10,6,3,5 $ respectively. In each case, we see that $ G_{4}(KNO_{[(3^4,4^2)]}) $ is not automorphic. If $ \gamma(0)=9 $ then $ lk(0) $ map to $ lk(9) $ and then we get $ \gamma=(0,9)(1,11)(2,8)(3,6)(4,7)(5,10)=\gamma_{1}\circ\gamma_{2} $. Therefore we can see that every automorphism map of $ KNO_{[(3^4,4^2)]} $ can express as the composition of $ \gamma_{1} $ and $ \gamma_{2} $. Hence $ Aut(KNO_{[(3^4,4^2)]})=\langle\gamma_{1},\gamma_{2}\rangle\cong \mathbb{Z}_{2}\times\mathbb{Z}_{2} $.
\end{proof}
\begin{proof}[\textbf{Proof of Lemma \ref{lem4}}]
	We have $ G_7(KO_{1[(3^3,4,3,4)]})=C(0,9,10)\cup C(1,3,6) $. Let $ \phi $ be an automorphism map of $ KO_{1[(3^3,4,3,4)]} $, therefore from $ G_7(KO_{1[(3^3,4,3,4)]}) $ we get $ \phi(0),\phi(1)\in\{0,1,3,6,9,10\} $ and $ \phi(4)\in\{2,4,5,7,8,11\} $. Now if $ \phi(0)=0 $, then from $ lk(0) $, we get $ \phi(4)=4 $ which implies $ \phi=Id_{KO_{1[(3^3,4,3,4)]}} $. If $ \phi(0)=1 $ then from $ lk(0) $ and $ lk(4) $, we get $ \phi(1)=0,\phi(3)=9, \phi(4)=4,\phi(6)=11 $. Now $ \phi(C(1,3,6))=C(0,9,11) $, but $ C(1,3,5) $ is a cycle and $ C(0,9,11) $ is not a cycle in $ G_7(KO_{1[(3^3,4,3,4)]}) $, is a contradiction. If $ \phi(0)=3 $ then from $ lk(0), lk(2),lk(3),lk(4) $, we get $ \phi=(0,3)(1,9)(2,4)(5,8)(6,10)(7,11)=\alpha $. If $ \phi(0)=6 $ then from $ lk(0) $ and $ lk(6) $ we get $ \phi(1)\in\{2,5\} $, is a contradiction. If $ \phi(0)=9 $ then from $ lk(0),lk(9) $, we get $ \phi(4)=2,\phi(1)=3,\phi(3)=1,\phi(5)=6 $ which implies $ \phi(C(1,3,5))=C(1,3,6) $. But $ C(1,3,5) $ is a cycle and $ C(1,3,6) $ is not a cycle in $ G_7(KO_{1[(3^3,4,3,4)]}) $, is a contradiction. If $ \phi(0)=10 $ then from $ lk(0),lk(10) $, we get $ \phi(1)\in\{4,10\} $, is a contradiction. Therefore $ Aut(KO_{1[(3^3,4,3,4)]})=\langle \alpha\rangle=\mathbb{Z}_2 $.
		
\begin{sloppypar}

	$ G_7(KO_{2[(3^3,4,3,4)]})=C(0,9,10)\cup C(2,4,7) $. Let $ \phi $ be an automorphism map of $ KO_{2[(3^3,4,3,4)]} $, therefore from $ G_7(KO_{2[(3^3,4,3,4)]}) $ we get $ \phi(0),\phi(4)\in\{0,2,4,7,9,10\} $ and $ \phi(1)\in\{1,3,5,6,\allowbreak8,11\} $. If $ \phi(0)=0 $, then from $ lk(0),lk(1) $, we get $ \phi=Id_{KO_{2[(3^3,4,3,4)}} $. If $ \phi(0)=2 $ then from $ lk(0),lk(2) $, we get $ \phi=(0,2)(1,3)(4,9)(5,11)(6,8)(7,10)=\alpha_{1} $. If $ \phi(0)=4 $ then from $ lk(0),lk(4) $, we get $ \phi=(0,4)(2,9)(5,6)(7,10)(8,11)=\alpha_{2} $. If $ \phi(0)=7 $ then from $ lk(0),lk(7) $, we get $ \phi(1)\in\{0,9\} $, is a contradiction. If $ \phi(0)=9 $, then from $ lk(0),lk(1),lk(3),lk(9) $, we get $ \phi=(0,9)(1,3)(2,4)(5,8)(6,11)=\alpha_{1}\circ\alpha_{2} $. If $ \phi(0)=10 $ then from $ lk(0),lk(10) $, we get $ \phi(1)\in\{2,4\} $, is a contradiction. Therefore $ Aut(KO_{2[(3^3,4,3,4)]})=\langle \alpha_{1},\alpha_{2}\rangle =\mathbb{Z}_2\times\mathbb{Z}_2$.
\end{sloppypar}
	
\begin{sloppypar}
	$ G_7(KNO_{1[(3^3,4,3,4)]})=C(0,9,11)\cup C(1,3,6)\cup C(2,4,7) $ and $ G_6(KNO_{1[(3^3,4,3,4)]})=\{[0,10],\allowbreak[3,10],[7,10],[1,5],[2,5],[5,11],[4,8],[6,8],[8,9]\} $. Let $ \phi $ be an automorphism map of $ KO_{2[(3^3,4,3,4)]} $, therefore from $ G_6(KNO_{1[(3^3,4,3,4)]}) $, we get $ \phi(10)\in\{5,8,10\} $. If $ \phi(10)=5 $ then $ \phi(2)\in\{4,7\} $. $ \phi(2)=4 $ implies $ \phi=(0,11,9)(1,3)(2,4,7)(5,8,10) $, but $ \phi(KNO_{1[(3^3,4,3,4)]})\neq KNO_{1[(3^3,4,3,4)]} $, is a contradiction. $ \phi(2)=7 $ implies $ \phi=(0,1)(2,7)(3,11,6,9)(5,10) $, but $\phi(KNO_{1[(3^3,4,3,4)]})\neq KNO_{1[(3^3,4,3,4)]} $, is a contradiction. If $ \phi(10)=8 $ then $ \phi(2)\in\{2,7\} $. $ \phi(2)=2 $ implies $ \phi=(0,6,11,1)(3,9)(4,7)(8,10) $, but $ \phi(KNO_{1[(3^3,4,3,4)]})\neq KNO_{1[(3^3,4,3,4)]} $, is a contradiction. $ \phi(2)=7 $ implies $ \phi=(0,9)(1,3,6)(4,2,7)(5,10,8) $, but $ \phi(KNO_{1[(3^3,4,3,4)]})\neq KNO_{1[(3^3,4,3,4)]} $, is a contradiction. If $ \phi(0)=10 $ then $ \phi(2)\in\{2,4\} $. If $ \phi(2)=2 $, then from $ lk(2),lk(10) $, we get $ \phi=Id_{KNO_{1[(3^3,4,3,4)]}} $. If $ \phi(2)=4 $, then from $ lk(2),lk(10) $, we get $ \phi=(0,3)(1,9)(2,4)(5,8)(6,11)=\beta $. Therefore $ Aut(KNO_{1[(3^3,4,3,4)]})=\langle\beta\rangle=\mathbb{Z} $.
\end{sloppypar}
	
\begin{sloppypar}
	$ G_5(KNO_{2[(3^3,4,3,4)]})=C(0,2,1,9,4,3)\cup C(5,6,8,7,11,10) $ and let $ \phi $ be an automorphism map of $ KNO_{2[(3^3,4,3,4)]} $. Therefore $ \phi (0)\in V $. If $ \phi(0)=0 $ then from $ G_5(KNO_{2[(3^3,4,3,4)]}) $, we get $ \phi(1)\in\{1,4\} $. If $ \phi(1)=1 $, then from $ lk(1) $ and $ G_5(KNO_{2[(3^3,4,3,4)]}) $, we get $ \phi=Id_{KNO_{2[(3^3,4,3,4)]}}$ and if $ \phi(1)=4 $ then from $ lk(1),lk(4) $ and $ G_5(KNO_{2[(3^3,4,3,4)]}) $ we get $ \phi = (1,4)(2,3)(5,8)(6,7)(10,11)=\phi_1 $(say). But $ \phi_1([5,6,8])=[8,7,5] $, which is not a face of $ KNO_{2[(3^3,4,3,4)]} $. Therefore $ \phi_1 $ is not an automorphism map. If $ \phi(0)=1 $, then from $ G_5(KNO_{2[(3^3,4,3,4)]}) $, we get $ \phi(1)\in\{0,4\} $. If $ \phi(1)=0 $ then from $ lk(0),lk(1),lk(4) $ and $ KNO_{2[(3^3,4,3,4)]} $, we get $ \phi = (0,1)(3,9)(5,10)(6,11)(7,8)=\beta_{1} $. If $ \phi(1)=4 $ then from $ lk(0),lk(1),lk(4) $ and $ G_5(KNO_{2[(3^3,4,3,4)]}) $, we get $ \phi=(0,1,4)(2,9,3)(5,7,11)(6,8,10)=\phi_2 $(say). Then $ \phi_2([5,6,8])=[7,8,10] $, which is not a face, make a contradiction. Therefore $ \phi_2 $ is not an automorphism map. If $ \phi(0)=2 $ then from $ G_5(KNO_{2[(3^3,4,3,4)]}) $ we get $ \phi(1)\in\{3,9\} $. If $ \phi(1)=3 $ then from $ lk(0),lk(1),lk(2),lk(3) $ and $ G_5(KNO_{2[(3^3,4,3,4)]}) $, we get $ \phi=(0,2)(1,3)(4,9)(5,11,10)(6,7,8)=\phi_3 $(say). But $ \phi_3([5,6,8])=[11,7,6] $, which is not a face, make a contradiction. Therefore $ \phi_3 $ is not an automorphism map. Similarly if $ \phi(1)=9 $ then we get $ \phi=(0,2,1,9,4,3)(5,6,8,11) $, which is also not an automorphism map. If $ \phi(0)=3 $ then from $ G_5(KNO_{2[(3^3,4,3,4)]}) $, we get $ \phi(1)\in\{2,9\} $. If $ \phi(1)=2 $ then from $ lk(0),lk(1),lk(2),lk(3) $ and $ G_5(KNO_{2[(3^3,4,3,4)]}) $ we get $ \phi=(0,3,4,9,1,2)(5,6,10,11,7)=\phi_4 $(say). But $ \phi_4([5,6,8])=[6,10,8] $, which is not a face, make a contradiction. Therefore $ \phi_4 $ is not an automorphism map. If $ \phi(1)=9 $, then from $ lk(0),lk(1),lk(3),lk(9) $ and $ G_5(KNO_{2[(3^3,4,3,4)]}) $, we get $ \phi=(0,3)(1,9)(2,4)(5,8)(7,10)=\beta_{2} $. If $ \phi(0)=4 $ then from $ G_5(KNO_{2[(3^3,4,3,4)]}) $, we get $ \phi(1)\in\{0,1\} $. By similar to above, if $ \phi(1)=0 $ then $ \phi=(0,4,1)(2,3,9)(5,11,7)(6,10,8)=\phi_5 $(say) and if $ \phi(1)=1 $ then $ \phi=(0,4)(2,9)(5,6)(7,10)\allowbreak(8,11)=\phi_6 $(say) and both $ \phi_5,\phi_6 $ are not an automorphism maps. If $ \phi(0)=5 $, then from $ G_5(KNO_{2[(3^3,4,3,4)]}) $ we get $ \phi(1)\in\{8,11\} $ and from $ lk(0),lk(5) $ we get $ \phi(1)\in\{6,9\} $, which make a contradiction. If $ \phi(0)=6 $ then from $ G_5(KNO_{2[(3^3,4,3,4)]}) $ we get $ \phi(1)\in\{7,10\} $ and from $ lk(0),lk(6) $, we get $ \phi(1)\in\{0,3\} $, make a contradiction. If $ \phi(0)=7 $, then from $ G_5(KNO_{2[(3^3,4,3,4)]}) $ we get $ \phi(1)\in\{6,10\} $ and from $ lk(0),lk(7) $, we get $ \phi(1)\in\{0,11\} $, is a contradiction. If $ \phi(0)=8 $ then from $ G_5(KNO_{2[(3^3,4,3,4)]}) $, we get $ \phi(1)\in\{5,11\} $ and from $ lk(0),lk(8) $, we get $ \phi(1)\in\{1,6\} $, is a contradiction. If $ \phi(0)=10 $, then from $ G_5(KNO_{2[(3^3,4,3,4)]}) $, we get $ \phi(1)\in\{6,7\} $ and from $ lk(0),lk(10) $, we get $ \phi(1)\in\{3,11\} $, is a contradiction. If $ \phi(0)=11 $, the from $ G_5(KNO_{2[(3^3,4,3,4)]}) $, we get $ \phi(1)\in\{5,8\} $ and from $ lk(0),lk(11) $, we get $ \phi(1)\in \{1,9\} $, is a contradiction. If $ \phi(0)=9 $ then from $ G_5(KNO_{2[(3^3,4,3,4)]}) $, we get $ \phi(1)\in\{2,3\} $. If $ \phi(1)=2 $, then $ \phi=(0,9)(1,2)(3,4)(5,10,6)(7,11,8) $, which is not an automorphism map and if $ \phi(1)=3 $ then $ \phi=(0,9)(1,3)(2,4)(5,7)(6,11)(8,10)=\beta_{1}\not\beta_{2} $. Therefore $ Aut(KNO_{2[(3^3,4,3,4)]})=\langle\beta_1,\beta_{2}\rangle=\mathbb{Z}_2\times\mathbb{Z}_2 $.
\end{sloppypar}
	
\begin{sloppypar}
	$ G_7(KNO_{3[(3^3,4,3,4)]})=C(0,9,11)\cup C(1,3,5)\cup C(2,6,8)\cup C(4,7,10) $. Let $ \phi $ be an automorphism map of $ KNO_{3[(3^3,4,3,4)]} $. Then $ \phi(0)\in V $. If $ \phi(0)=0 $ then from $ lk(0) $, we get $ \phi(1)\in\{1,4\} $. If $ \phi(1)=1 $, then $ \phi=Id_{KNO_{3[(3^3,4,3,4)]}} $. If $ \phi(1)=4 $ then from $ lk(0) $, we get $ \phi(2)=3, \phi(3)=2 $, which implies $ \phi([1,3])=[4,2] $. But $ [1,3] $ is an edge in $ G_7(KNO_{3[(3^3,4,3,4)]}) $, $ [4,2] $ is not an edge in $ G_7(KNO_{3[(3^3,4,3,4)]}) $, is a contradiction. If $ \phi(0)=1 $ then from $ lk(0),lk(1) $, we get $ \phi(1)\in\{8,10\} $. If $ \phi(1)=8 $ then $ \phi(2)=4,\phi(3)=11 $, which implies $ \phi([1,3])=[8,11] $. $ [1,3] $ is an edge in $ G_7(KNO_{3[(3^3,4,3,4)]}) $, but $ [8,11] $ is not an edge in $ G_7(KNO_{3[(3^3,4,3,4)]}) $, is a contradiction. If $ \phi(1)=10 $, then $ \phi=(0,1,10,6)(2,11,5,7)(3,4,8,9)=\alpha_{1} $. If $ \phi(0)=2 $, then from $ lk(0),lk(2) $ and $ G_7(KNO_{3[(3^3,4,3,4)]}) $, we have $ \phi(1)\in\{1,11\} $. $ \phi(1)=1 $ implies $ \phi(9)=6,\phi(11)=4 $, from $ lk(0),lk(1),lk(2) $. Therefore $ \phi(C(0,9,11))=C(2,6,4) $. $ C(0,9,11) $ is a cycle in $ G_7(KNO_{3[(3^3,4,3,4)]}) $, but $ C(2,6,4) $ is not a cycle in $ G_7(KNO_{3[(3^3,4,3,4)]}) $, is a contradiction. $ \phi(1)=11 $ implies $ \phi(2)=3,\phi(6)=9 $, therefore $ \phi([2,6])=[3,9] $. $ [2,6] $ is an edge of $ G_7(KNO_{3[(3^3,4,3,4)]}) $, but $ [3,9] $ is not an edge of $ G_7(KNO_{3[(3^3,4,3,4)]}) $, is a contradiction. When $ \phi(0)=3 $ then from $ lk(0),lk(3) $, we get $ \phi(1)\in\{4,11\} $. $ \phi(1)=4 $ implies $ \phi(2)=0,\phi(6)=6 $, therefore $ \phi([2,6])=[0,6] $. $ [2,6] $ is an edge but $ [0,6] $ is not an edge of $ G_7(KNO_{3[(3^3,4,3,4)]}) $, is a contradiction. $ \phi(1)=11 $ implies $ \phi(2)=2,\phi(6)=9 $, therefore $ \phi([2,6])=[2,9] $. $ [2,6] $ is an edge but $ [2,9] $ is not an edge of $ G_7(KNO_{3[(3^3,4,3,4)]}) $, is a contradiction. When $ \phi(0)=4 $ then $ lk(0),lk(4) $, we get $ \phi(1)\in\{5,8\} $. $ \phi(1)=5 $ implies $ \phi(2)=11,\phi(6)=3 $, therefore $ \phi([2,6])=[11,3] $. $ [2,6] $ is an edge but $ [11,3] $ is not an edge of $ G_7(KNO_{3[(3^3,4,3,4)]}) $, is a contradiction. $ \phi(1)=8 $ implies $ \phi(3)=11 $, therefore $ \phi([1,3])=[8,11] $. $ [1,3] $ is an edge but $ [8,11] $ is not an edge of $ G_7(KNO_{3[(3^3,4,3,4)]}) $, is a contradiction. When $ \phi(0)=5 $ then  from $ lk(0),lk(5) $, we get $ \phi(1)\in\{6,7\} $. $ \phi(1)=6 $ implies $ \phi(2)=10,\phi(6)=11 $, therefore $ \phi([2,6])=[10,11] $. $ [2,6] $ is an edge but $ [10,11] $ is not an edge of $ G_7(KNO_{3[(3^3,4,3,4)]}) $, is a contradiction. $ \phi(1)=7 $ implies $ \phi(2)=8,\phi(6)=4 $, therefore $ \phi([2,6])=[8,4] $. $ [2,6] $ is an edge but $ [8,4] $ is not an edge of $ G_7(KNO_{3[(3^3,4,3,4)]}) $, is a contradiction. When $ \phi(0)=6 $ then from $ lk(0),lk(6) $, we get $ \phi(1)\in\{0,3\} $. $ \phi(1)=0 $ implies $ \phi=(0,6,10,1)(2,7,5,11)(3,9,8,4)=\alpha_{1}^3 $. $ \phi(1)=3 $ implies $ \phi(3)=7 $, therefore $ \phi([1,3])=[3,7] $. $ [1,3] $ is an edge but $ [3,7] $ is not an edge of $ G_7(KNO_{3[(3^3,4,3,4)]}) $, is a contradiction. When $ \phi(0)=7 $ then from $ lk(0),lk(7) $, we get $ \phi(1)\in\{0,2\} $. $ \phi(1)=0 $ implies $ \phi(2)=6,\phi(6)=5 $, therefore $ \phi([2,6])=[6,5] $. $ [2,6] $ is an edge but $ [6,5] $ is not an edge of $ G_7(KNO_{3[(3^3,4,3,4)]}) $, is a contradiction. $ \phi(1)=2 $ implies $ \phi(3)=6,\phi(5)=1 $, therefore $ \phi([3,5])=[6,1] $. $ [3,5] $ is an edge but $ [6,1] $ is not an edge of $ G_7(KNO_{3[(3^3,4,3,4)]}) $, is a contradiction. When $ \phi(0)=8 $ then from $ lk(0),lk(8) $, we get $ \phi(1)\in\{7,9\} $. $ \phi(1)=7 $ implies $ \phi(2)=5,\phi(6)=4 $, therefore $ \phi([2,6])=[5,4] $. $ [2,6] $ is an edge but $ [5,4] $ is not an edge, is a contradiction. $ \phi(1)=9 $ implies $ \phi(3)=5 $, therefore $ \phi([1,3])=[9,5] $. $ [1,3] $ is an edge but $ [9,4] $ is not an edge of $ G_7(KNO_{3[(3^3,4,3,4)]}) $, is a contradiction. When $ \phi(0)=9 $ then from $ lk(0),lk(9) $, we get $ \phi(1)\in\{2,3\} $. $ \phi(1)=2 $ implies $ \phi(3)=9 $, therefore $ \phi([1,3])=[2,9] $. $ [1,3] $ is an edge but $ [2,9] $ is not an edge of $ G_7(KNO_{3[(3^3,4,3,4)]}) $, is a contradiction. $ \phi(1)=3 $ implies $ \phi(3)=7 $, therefore $ \phi([1,3])=[3,7] $. $ [1,3] $ is an edge but $ [3,7] $ is not an edge of $ G_7(KNO_{3[(3^3,4,3,4)]}) $, is a contradiction. When $ \phi(0)=10 $ then from $ lk(0),lk(10) $, we get $ \phi(1)\in\{6,9\} $. $ \phi(1)=6 $ implies $ \phi=(0,10)(1,6)(2,5)(3,8)(4,9)(7,11)=\alpha_{1}^2 $. $ \phi(1)=9 $ implies $ \phi(3)=5 $, therefore $ \phi([1,3])=[9,5] $. $ [1,3] $ is an edge but $ [9,5] $ is not an edge of $ G_7(KNO_{3[(3^3,4,3,4)]}) $, is a contradiction. When $ \phi(0)=11 $ then from $ lk(0),lk(11) $, we get $ \phi(1)\in\{5,10\} $. $ \phi(1)=5 $ implies $ \phi(2)=4,\phi(6)=3 $, therefore $ \phi([2,6])=[4,3] $. $ [2,6] $ is an edge but $ [4,3] $ is not an edge of $ G_7(KNO_{3[(3^3,4,3,4)]}) $, is a contradiction. $ \phi(1)=10 $ implies $ \phi(2)=1,\phi(6)=7 $, therefore $ \phi([2,6])=[1,7] $. $ [2,6] $ is an edge but $ [1,7] $ is not an edge of $ G_7(KNO_{3[(3^3,4,3,4)]}) $, is a contradiction. Therefore $ Aut(KNO_{3[(3^3,4,3,4)]})=\langle\alpha_{1}\rangle=\mathbb{Z}_4 $.
\end{sloppypar}
\end{proof}
\begin{proof}[\textbf{Proof of theorem \ref{thm4}}]
		Let $K$ be a map of type $(3,4^4)$ on the surface of $\chi =-2$. Let $V(K)=\{0,1,2,3,4,5,6,\allowbreak 7,8,9,10,11\}$. We express $lk(v)$ by the notation $lk(v)=C_{8}([v_1,v_2,v_3],[v_3,v_4,v_5],[v_5,v_6,v_7],\allowbreak [v_7,v_8,v_9])$ where $[v,v_{1},v_{9}]$ form 3-gon face and $[v,v_{1},\allowbreak v_{2},v_{3}],[v,v_{3},v_{4},v_{5}], [v,v_5,v_6,v_7],[v,v_7,\allowbreak v_8,v_9]$ form 4-gon faces. With out loss of generality let $ lk(0)=C_{9}([4,5,6],[6,7,8],[8,9,1],[1,\allowbreak 2,3]) $. Then either $ lk(8)=C_{9}([c,d,7],[7,6,0],[0,\allowbreak1,9],[9,a,b]) $ or  $ lk(8)=C_{9}([a,b,c],[c,d,9],\allowbreak [9,1, 0],[0,6,7]) $ or $ lk(8)=C_{9}([a,b,c],[c,d,7],[7,\allowbreak6,0],[0,1,9]) $.
	
	Now for \underline{$\boldsymbol{lk(8)=C_{9}([c,d,7],[7,6,0],[0,1,9],[9,a,b])}  $}, all possible value of $ (a,b,c,d)$ are $ (2,5,10,\allowbreak 3)\approx \{(2,5,11,3), (3,2,10,11), (3,2,11,10)\} $, $ (2,5,10,4)\approx \{(2,5,11,4)$, $(4,2,\allowbreak10,5)$, $ (4,2,11, 5)$, $  (3,10,2,11)$, $(3,11,2,10)$, $(10,11,5,3)$, $ (11,10,5,3)\} $, $ (2,5,10,\allowbreak 11)\approx (2,5,11,10) $,  $ (2,10,11,3)\approx (2,11,10,3) $, $ (2,10,11,4)\approx (2,11,10,4) $, $ (2,10,11,\allowbreak 5)\approx (2,11,\allowbreak10,5) $, $ (2,10,5,\allowbreak 3)\approx\{ (2,11,5,3),(3,11,2,5),(3,10,2,5), (10,5,11,4), (11,5,10,4) \}$, $ (2,10,\allowbreak5,4)\approx\{(2,11,5,\allowbreak 4), (4,10,2,11), (4,11,2,10)\} $, $ (2,10,5,11)\allowbreak \approx (2,11,5,10) $, $ (3,2,5,4) $, $ (3,2,\allowbreak5,10)\approx (3,2,5,\allowbreak 11) $, $ (3,2,10,4)\approx (3,2,11,4) $, $ (3,2,10,\allowbreak 5)\approx \{(3,2,11,5), (10,5,11,3)\} $, $ (3,5,2,\allowbreak 4) $, $ (3,5,2,10)\approx (3,5,2,11) $, $ (3,5,10,2)\approx (3,5,11,2) $, $ (3,5,10,4)\approx (3,5,11,4) $, $ (3,5,10,11)\allowbreak\approx (3,5,11,10) $, $ (3,10,2,4)\approx (3,11,2,4) $, $ (3,10,5,2)\approx (3,11,5,2) $, $ (3,10,5,4)\allowbreak\approx (3,11,5,4) $, $ (3,10,\allowbreak 5,11)\approx (3,11,5,10) $, $ (3,10,11,2)\approx (3,11,10,2) $, $ (3,10,11,4)\approx \{(3,11,\allowbreak10,4), (10,2,\allowbreak 5,11), (11,2,5,10)\} $, $ (3,10,11,5)\approx (3,11,10,5) $, $ (4,2,5,3) $, $ (4,2,5,10)\allowbreak\approx (4,2,\allowbreak5,11) $, $ (4,2,\allowbreak 10,3)\approx (4,2,11,3) $, $ (4,2,10,11)\approx (4,2,11,10) $, $ (4,5,2,3) $, $ (4,5,2,10)\allowbreak\approx (4,5,\allowbreak 2,11) $, $ (4,5,\allowbreak 10,2)\approx (4,5,11,2) $, $ (4,5,10,3)\approx (4,5,11,3) $, $ (4,5,10,11)\approx (4,5,11,\allowbreak 10) $, $ (4,10,2,3)\approx (4,11,2,3) $, $ (4,10,2,5)\approx (4,11,2,5) $, $ (4,10,5,2)\approx(4,11,5,2) $, $ (4,10,5,\allowbreak3)\approx \{(4,11,5,3),\allowbreak (5,2,10,\allowbreak11),\allowbreak (5,2,11,10)\} $, $ (4,10,5,11)\approx (4,11,5,10) $, $ (4,10,11,2)\approx (4,11,10,2) $, $ (4,10,11,3)\allowbreak\approx \{(4,\allowbreak11,10,3), (10,5,\allowbreak 2,11), (11,5,2,10)\} $, $ (4,10,11,5)\approx (4,11,\allowbreak10,5) $, $ (5,2,10,3)\approx (5,2,11,\allowbreak3) $, $ (5,2,\allowbreak10,4)\approx (5,2,11,4) $, $ (5,10,2,3)\approx (5,11,2,3) $, $ (5,10,\allowbreak 2,4)\approx (5,11,2,4) $, $ (5,10,2,\allowbreak11)\approx (5,11,2,10) $, $ (5,10,11,2)\approx (5,11,10,2) $, $ (5,10,\allowbreak 11,3)\approx (5,11,10,3) $, $ (5,10,11,4)\approx (5,11,\allowbreak10,5) $, $ (10,2,5,\allowbreak 3)\approx (11,2,5,3) $, $ (10,2,5,\allowbreak 4)\approx (11,2,5,4) $, $ (10,2,11,3)\approx (11,2,10,\allowbreak3) $, $ (10,2,11,4)\approx (11,2,10,4) $, $ (10,2,11,\allowbreak 5)\approx (11,2,10,5) $, $ (10,5,\allowbreak2,3)\approx (11,5,2,3) $, $ (10,5,2,\allowbreak4)\allowbreak\approx (11,5,2,4) $, $ (10,5,11,2)\approx (11,5,10,2) $, $ (10,11,2,3)\approx (11,10,2,3) $, $ (10,11,\allowbreak2,4)\approx (11,\allowbreak10,2,4) $, $ (10,11,\allowbreak 2,5)\approx (11,10,2,5) $, $ (10,11,5,2)\approx (11,10,\allowbreak5,2) $, $ (10,11,5,4)\approx(11,10,5,4) $.
	
	\hspace{-0.6cm}\underline{\textbf{If} $ \boldsymbol{(a,b,c,d)=(2,5,10,3)} $} then either $ lk(3)=C_{9}([4,a_{1},10],[10,8,7],[7,a_{2},2],[2,\allowbreak 1, 0]) $ or $ lk(3)=C_{9}([4,a_{1},7],[7,8,10],[10.a_{2},2],[2,1,0]) $.
	
	If $ lk(3)=C_{9}([4,a_{1},10],[10,\allowbreak 8,7],[7,a_{2},2],[2,1,0]) $ then $ a_{1},a_{2}\in \{9,11\} $. For $ \boldsymbol{a_{2}=9} $ then $ lk(2)=C_{9}([11,a_{3},5],\allowbreak [5,8,9],[9,7,3],[3,0,1]) $ and $ lk(5)=C_{9}([10,a_{4},4],\allowbreak [4,0,a_{3}],[a_{3},\allowbreak11,2]) $. Therefore from $ lk(3) $ and $ lk(5) $ we see that $ 4 $ and $ 10 $ occur in two 4-gon and does not form an edge, which is a contradiction. For $ \boldsymbol{a_{2}=11} $ then $ lk(2)=C_{9}([9,8,5],[5,a_{3},1],[1,0,\allowbreak3],[3,7,\allowbreak 11]) $ and $ lk(7)=C_{9}([1,a_{4},11],[11,2,3],[3,10,8],[8,\allowbreak 0,6]) $. Then $ lk(1)=C_{9}([7,11,9],\allowbreak [9,8,0],\allowbreak [0,3,2],[2,5,6]) $ and $ a_{3}=6,a_{4}=9 $ which is a contradiction as $ [2,9,11] $ is a 3-gon.
	
	If $ lk(3)=C_{9}([4,a_{1},7],[7,8,10],[10.a_{2},2],[2,1,0]) $ then $ a_{2}\in \{6,11\} $. If $ \boldsymbol{a_{2}=6} $ then $ lk(5)=C_{9}([10,1,4],[4,0,6],[6,11,2],[2,9,8]) $ which implies $ [2,6] $ form edge and non-edge in two 4-gon, which is not possible. If $ \boldsymbol{a_{2}=11} $ then $ a_{1}=9 $ and either $ lk(2)=C_{9}([11,\allowbreak10,3],[3,\allowbreak0, 1],[1,6,5],[5,8,9]) $ or $ lk(2)=C_{9}([1,0,3],[3,10,11],[11,6,5],[5,8,9]) $. If $ lk(2)=C_{9}([11,10,3],\allowbreak [3,0,1],[1,6,5],[5,8,9]) $ then $ lk(1)=C_{9}([7,a_{3},\allowbreak 9],[9,8,0],[0,3,2],[2,5,6]) $ and then $ [7,9] $ form edge and non-edge in two 4-gon which is not possible. If $ lk(2)=C_{9}([1,0,3],\allowbreak[3,10,11],[11,6,\allowbreak 5],[5,8,9]) $ then $ 7 $ occur two times in $ lk(7) $ which is not possible.
	
	\hspace{-0.6cm}\underline{\textbf{If} $ \boldsymbol{(a,b,c,d)=(4,10,5,2)} $} then $ lk(5)=C_{9}([10,a_{2},6],[6,0,4],[4,a_{1},2],[2,7,8]),\allowbreak  lk(4)=C_{9}([3,a_{3},10],[10,8,a_{1}],[a_{1},2,5],[5,6,0]) $ which implies $ a_{1}=9 $. Now $ lk(10)=C_{9}([8,9,4],[4,\allowbreak3, a_{3}],[a_{3},a_{4},a_{2}],[a_{2},6,5]) $ where $ a_{2}\in \{1,11\} $ and $ a_{3}\in \{7,11\} $. If $ a_{3}=7 $ then $ a_{4}\in \{1,11\} $ which is not possible as then $ lk(7) $ will not complete. Therefore $ a_{3}=11 $ which implies $ a_{2}=1 $ and $ a_{4}\in\{2,7\} $. But if $ a_{4}=7 $ then $ lk(1) $ will not possible, therefore $ a_{4}=2 $ and then $ lk(1)=C_{9}([6,5,10],[10,11,2],[2,3,0],[0,8,9]) $ and then $ lk(2) $ will not possible.
	
	\hspace{-0.6cm}\underline{\textbf{If} $ \boldsymbol{(a,b,c,d)=(11,10,5,2)\approx(10,11,5,2)} $} then $ lk(5)=C_{9}([10,a_{2},6],[6,0,4],[4,a_{1},\allowbreak2],[2,7,8]) $ or $ lk(5)\allowbreak =C_{9}([10,a_{2},4],[4,0,6],[6,a_{1},2],[2,7,8]) $.\\
	If $ lk(5)=C_{9}([10,a_{2},6],[6,0,4],[4,a_{1},2],[2,7,8]) $ then $ lk(4)\allowbreak =C_{9}([3,a_{3},a_{4}],[a_{4},a_{5},a_{1}],[a_{1},\allowbreak 2,5],[5,6,0]) $ which implies $ a_{1}\in \{1,9,11\} $. \textbf{If} $ \boldsymbol{a_{1}=1} $ then $ lk(2)=C_{9}([b_{1},b_{2},3],[3,0,1],[1,\allowbreak4,5],[5,8,7]) $ and $ lk(1)=C_{9}([b_{3},b_{4},4],[4,5,2],[2,3,0],\allowbreak [0,8,9]) $. Now two incomplete 3-gons are $ [2,7,b_{1}] $ and $ [1,9,b_{3}] $ which implies $ b_{1},b_{3}\in\{6,11\} $. $ (b_{1},b_{3})\neq (6,11),(11,6) $ as in first case, $ 1 $ or $ 11 $ will occur two times in $ lk(9) $ and in second case, $ \{4,6\} $ will occur in two 4-gon which is not possible. \textbf{If} $ \boldsymbol{a_{1}=9} $ $ a_{5}\in\{1,11\} $. For $ a_{5}=11 $ one 3-gon will be $ [1,2,9] $ which is not possible. For $ a_{5}=1 $, $ lk(9)=C_{9}([11,10,8],[8,0,1],[1,a_{4},\allowbreak 4],[4,5,2]) $ then one 3-gon is $ [1,6,7] $ which implies $ a_{4}=7 $ and then $ lk(1)=C_{9}([7,4,9],[9,8,0],\allowbreak [0,3,2],[2,11,6]) $ and $ a_{2}=3 $ which implies $ lk(6) $ is not possible. \textbf{If} $ \boldsymbol{a_{1}=11} $ then $ a_{2}\in\{1,3\} $. Now $ lk(2)=C_{9}([b_{1},b_{2},5],[5,b_{3},b_{4}],[b_{4},b_{5},3],[3,0,1]) $ which implies two incomplete 3-gons are $ [1,2,b_{1}] $ and $ [6,9,c_{1}] $. As $ a_{2}\in\{1,3\} $ then $ [6,a_{2}] $ will be an adjacent edge of two 4-gons and then $ lk(6)=C_{9}([7,8,0],[0,4,5],[5,10,a_{2}],[a_{2},a_{3},9]) $ which implies $ b_{1}=11,c_{1}=7 $ and $ a_{2}=3 $ and then $ b_{2}=4,b_{3}=8,b_{4}=7, b_{5}=10 $. Now $ lk(7)=C_{9}([9,b_{6},10],[10,3,2],[2,5,8],[8,0,6]) $ which implies $ \{9,10\} $ does not form an edge and occur in two 4-gon which is not possible.\\
	If $ lk(5)\allowbreak =C_{9}([10,a_{2},4],[4,0,6],[6,a_{1},2],[2,7,8]) $ then $ lk(4)=C_{9}([3,a_{3},a_{4}],[a_{4},a_{5},a_{2}],[a_{2},\allowbreak10,5],[5,6,0]) $ and $ lk(10)=C_{9}([8,9,11],[11,c_{1},c_{2}],[c_{2},c_{3},a_{2}],[a_{2},4,5]) $. From these we see that $ a_{2}=1 $ which implies $ a_{5}\in \{2,9\} $. We see that $ [1,10,b_{1}] $ will form a 3-gon which is not possible for any $ a_{5} $.
	
	\hspace{-0.6cm}\underline{\textbf{If} $ \boldsymbol{(a,b,c,d)=(3,5,10,2)} $} then $ a_{1}\in \{1,11\} $ and $ a_{2}\in\{2,11\} $. For $ a_{1}=1 $, after completing $ lk(1) $ we will see that $ 10 $ will occur in two 3-gon which is not possible. Therefore $ a_{1}=11 $ and then $ a_{2}=2 $. Now $ lk(3)=C_{9}([4,7,9],[9,8,5],[5,6,2],[2,1,0]) $, $ lk(4)=C_{9}([3,9,7],[7,a_{3},11],[11,10,5],[5,6,0]) $ and $ lk(7)=C_{9}([9,3,4],[4,11,2],[2,10,8],[8,0,6]) $ which implies $ a_{3}=2 $ and $ [1,2,11] $ will form a 3-gon and then $ lk(2) $ will not possible.
	
	\hspace{-0.6cm}\underline{\textbf{If} $ \boldsymbol{(a,b,c,d)=(11,5,10,2)\approx(10,5,11,2)} $} then either $ lk(5)=C_{9}([10,1,4],[4,0,6],[6,\allowbreak a_{1},11],[11,9,8]) $ or $ lk(5)=C_{9}([10,a_{1},6],[6,0,4],[4,a_{2},11],[11,9,8]) $. If $ lk(5)=C_{9}([10,1,\allowbreak4],[4,0,6],[6,a_{1},\allowbreak11],\allowbreak [11,9,8]) $ then $ [1,10] $ will be an adjacent edge of two 4-gon, which implies $ 4 $ will occur in two 4-gon which is a contradiction. If $ lk(5)=C_{9}([10,a_{1},6],[6,0,4],[4,\allowbreak a_{2},11],[11,9,8]) $ then $ a_{1}\in\{1,3\} $. If $ \boldsymbol{a_{1}=1} $ then $ [1,10] $ will be an adjacent edge two 4-gon, let the other adjacent 4-gon be $ [1,10,a_{3},a_{4}] $. Then $ a_{4}=9 $ and $ lk(1)=C_{9}([6,5,10],[10,a_{3},\allowbreak9],[9,8,0],[0,3,\allowbreak2]) $ which implies $ [7,9,11] $ will be one 3-gon and then $ a_{3}=4 $. Now $ lk(9)=C_{9}([7,a_{5},4],[4,\allowbreak10,1],[1,0,8],[8,\allowbreak 5, 11]) $ and $ lk(7)=C_{9}([11,a_{6},6],[6,0,8],[8,10,a_{5}],[a_{5},4,9]) $ which implies $ a_{5}=2 $ and then $ lk(6)=C_{9}([2,11,7],[7,8,0],[0,4,5],[5,10,1]) $. Therefore $ a_{6}=2 $, which is not possible. If $ \boldsymbol{a_{1}=3} $ then $ lk(3)=C_{9}([4,a_{3},10],[10,5,6],[6,a_{4},2],[2,1,\allowbreak0]) $, $ lk(10)=C_{9}([8,7,2],[2,a_{5},a_{3}],\allowbreak [a_{3},4,3],[3,6,5]) $, $ lk(6)=C_{9}([a_{4},2,3],[3,10,5],[5,4,0],\allowbreak[0,8,7]) $ and $ lk(4)=C_{9}([3,10,a_{3}],\allowbreak [a_{3},a_{6},a_{2}],[a_{2},11,5],[5,6,0]) $ which implies $ a_{3}=9, a_{4}=11 $ and then $ [1,2,9] $ will form a 3-gon which is not possible.
	
	\hspace{-0.6cm}\underline{\textbf{If} $ \boldsymbol{(a,b,c,d)=(3,11,10,2)\approx(3,10,11,2)} $} then $ lk(3)=C_{9}([4,a_{1},9],[9,8,11],[11,a_{2},\allowbreak2],[2,1,0]) $ or $ lk(3)\allowbreak =C_{9}([4,a_{1},11],[11,8,9],[9,a_{2},2],[2,1,0]) $. For $ lk(3)=C_{9}([4,a_{1},9],[9,8,\allowbreak11],[11,a_{2},2],[2,1,\allowbreak 0]) $, $ a_{1}\in\{7,10\} $. \textbf{If} $ \boldsymbol{a_{1}=7} $ then $ lk(7)=C_{9}([9,3,4],[4,a_{3},6],[6,0,8],[8,\allowbreak10,2]) $ and $ lk(6)=C_{9}([1,a_{4},a_{3}],[a_{3},4,7],[7,8,0],[0,4,5]) $ which implies $ a_{3}=11 $ and then from $ lk(2) $ we see that $ a_{2}\in\{9,10\} $ which make contradiction. \textbf{If} $ \boldsymbol{a_{1}=10} $ then $ lk(9)=C_{9}([1,0,8],[8,11,3],[3,\allowbreak4, 10],[4,a_{4},a_{3}]) $ and $ lk(10)=C_{9}([11,a_{5},4],[4,3,9],[9,a_{3},2],[2,7,8]) $ which implies $ a_{4}=2 $ and $ a_{3}=a_{2} $ and then $ lk(2)=C_{9}([7,8,10],[10,9,a_{2}],[a_{2},11,3],[3,0,1]) $ which implies $ 1 $ occur in two 3-gon which is not possible. For $ lk(3) =C_{9}([4,a_{1},11],[11,8,9],\allowbreak[9,a_{2},2],[2,1,\allowbreak0]) $, $ a_{1}\in\{7,10\} $. \textbf{If} $ \boldsymbol{a_{1}}=7 $ then $ 11 $ occur two 3-gon and \textbf{if} $ \boldsymbol{a_{1}=10} $ then $ 8 $ will occur two times in $ lk(11) $.
	
	\begin{sloppypar}
	\hspace{-0.6cm}\underline{\textbf{If} $ \boldsymbol{(a,b,c,d)=(4,11,10,2)\approx(4,10,11,2)} $} then we can see that $ [10,11] $ form an edge in a 4-gon, therefore let us assume that six incomplete 4-gons are $ [10,11,a_{2},a_{1}],[2,10,a_{3},a_{4}],\allowbreak[4,11,a_{5},a_{6}],\allowbreak[10,\allowbreak a_{7}, a_{8},a_{9}],[11,a_{10},a_{11},a_{12}],[a_{13},a_{14},a_{15},a_{16}] $. As $ [7,10] $ form a non-edge in a 4-gon and $ [7,11] $ does not form an edge in a 3-gon, therefore $ [7,11] $ will be an adjacent edge of two 4-gon, therefore $ a_{5}=7 $ and $ a_{12}=7 $. Similarly as $ [9,11] $ is a non-edge in a 4-gon, therefore $ [9,10] $ will form an adjacent edge of two 4-gon and then $ a_{3}=9,a_{9}=9 $. Now  we can see that $ a_{6}\in\{3,5\} $. As $ [0,1,2,3] $ and $ [2,7,8,10] $ are disjoint, therefore either $ [1,2] $ or $ [2,7] $ will form an edge in a 3-gon and as $ [4,9] $ is an adjacent edge of two 4-gon, therefore either $ a_{4}=4 $ or $ a_{8}=4 $. But if $ a_{4}=4 $ then $ lk(4) $ will not possible, therefore $ a_{8}=4 $ which implies $ a_{7}\in\{3,5\} $. Now four 4-gons at $ 7 $ are $ [0,6,7,8],[2,7,8,10],[4,11,7,a_{6}],[7,11,a_{10},a_{11}] $ which implies $ [7,a_{6}] $ will form an edge in a 3-gon, therefore $ a_{6}=5,a_{7}=3 $ and $ a_{4}=5 $. Now $ lk(4)=C_{9}([3,10,9],[9,8,11],[11,7,5],[5,6,0]), lk(7)=C_{9}([2,10,8],[8,0,6],[6,a_{10},11],[11,\allowbreak4,5]),\allowbreak lk(\allowbreak 5) =C_{9}([2,10,9],[9,b_{1},6],[6,0,4],[4,11,7]) $ and $ lk(9)=C_{9}([6,b_{2},10],[10,3,4],[4,\allowbreak11,8],[8,0,\allowbreak 1]) $ which implies $ [6,9,10,b_{2}] $ is a 4-gon which in not any one of the above six incomplete 4-gon, which make contradiction. Similarly for $ \boldsymbol{(a,b,c,d)=(5,11,10,2)\allowbreak\approx(5,10,11,2)} $, map will not exist.
\end{sloppypar}
	
\begin{sloppypar}
    \hspace{-0.6cm}\underline{\textbf{If} $ \boldsymbol{(a,b,c,d)=(4,5,2,3)} $} then $ lk(2)=C_{9}([5,a_{1},a_{2}],[a_{2},a_{3},1],[1,0,3],[3,7,8]),lk(4)=C_{9}([3,a_{4},a_{5}],[a_{5},a_{6},9],[9,8,5],[5,6,0]),lk(5)=C_{9}([2,a_{2},a_{1}],[a_{1},a_{7},6],[6,0,4],[4,9,8]) $ and $ lk(3)=C_{9}([4,a_{5},a_{4}],[a_{4},a_{8},7],[7,8,2],[2,1,0]) $. From these we see that if $ a_{1},a_{2}, a_{3}\in\{6,10,\allowbreak 11\};a_{4},a_{5},a_{6}\in\{7,10,11\};a_{7}\in\{1,10,11\} $ and $ a_{8}\in\{9,10,11\} $.
	As $ a_{1},a_{2}\neq 6 $, therefore $ a_{3}=6 $. Similarly we get $ a_{6}=7,a_{7}=1,a_{8}=9 $. With out loss go generality, let $ a_{1}=10 $ and $ a_{2}=11 $. Now $ lk(6)=C_{9}([11,2,1],[1,10,5],[5,4,0],[0,8,7]) $ which implies $ [1,9,10] $ form a 3-gon. Therefore $ lk(1)=C_{9}([10,5,6],[6,11,2],[2,3,0],[0,8,9]) $ and $ lk(7)=C_{9}([11,b_{1},9],[9,a_{4},3],[3,2,8],[8,0,6]) $. From here we see that $ a_{4}=10,a_{5}=11 $ and $ a_{1}=4 $. Therefore $ lk(9)=C_{9}([10,3,7],[7,11,4],[4,5,8],[8,0,1]),lk(10)=C_{9}([1,6,5],[5,2,11],[11,\allowbreak4,\allowbreak 3],[3,7,9]) $ and $ lk(11)=C_{9}([1,6,2],[2,5,10],[10,3,4],[4,9,7]) $. Let this be the map $ \boldsymbol{KNO_{1[(3,4^4)]}} $.
\end{sloppypar}	
	
\begin{sloppypar}
\hspace{-0.6cm}\underline{\textbf{If} $ \boldsymbol{(a,b,c,d)=(10,5,2,3)} $} then $ lk(2)=C_{9}([5,a_{1},a_{2}],[a_{2},a_{3},1],[1,0,3],[3,7,8]) $ and $ lk(3)\allowbreak =C_{9}([4,a_{4},a_{5}],[a_{5},a_{6},7],[7,8,2],[2,1,0]) $. As $ 11\notin lk(0),lk(8) $ therefore one of $ a_{1},a_{2},a_{3} $ is $ 11 $ and one of $ a_{4},a_{5},a_{6} $ is $ 11 $. Now $ a_{2}\notin\{4,6,10\} $ therefore $ a_{2}=11 $ and $ a_{1},a_{3}\in\{4,6,10\} $. From $ lk(5) $ we get $ a_{1}\neq 10 $, therefore for any $ a_{1}\in\{4,6\} $, $ a_{3}=10 $. \textbf{If} $ \boldsymbol{a_{1}=4} $ then $ lk(5)=C_{9}([2,11,4],[4,0,6],[6,a_{7},10],[10,9,8]) $ and $ lk(4)=C_{8}([3,a_{5},a_{4}],[a_{4},a_{8},11],[11,2,5],[5,6,\allowbreak 0]) $ whish implies $ a_{6}=11 $ and then $ a_{4},a_{5}\in\{9,10\} $. Now $ lk(2) $ implies $ a_{4}=9,a_{5}=10 $ and $ a_{8}=1 $ which implies $ lk(9) $ is not possible. \textbf{If} $ \boldsymbol{a_{1}=6} $ then $ lk(5)=C_{9}([2,11,6],[6,0,4],[4,a_{7},\allowbreak 10],[10,9,8]) $ and $ lk(4)=C_{9}([3,a_{5},a_{4}],[a_{4},a_{8},a_{7}],[a_{7},10,5],[5,6,0]) $ which implies $ a_{4},a_{5}\in\{9,11\} $ and $ a_{7}\in\{1,7\} $. If $ a_{4}=11 $ then $ a_{8}\in\{6,10\} $, which is not possible and if $ a_{4}=9 $ then $ a_{6}\in\{6,10\} $ and $ a_{7}=7 $ which implies $ a_{8}\in\{3,6\} $ which is also not possible.
\end{sloppypar}	
	
	\hspace{-0.6cm}\underline{\textbf{If} $ \boldsymbol{(a,b,c,d)=(5,10,2,3)} $} then $ lk(2)=C_{9}([10,a_{1},a_{2}],[a_{2},a_{3},1],[1,0,3],[3,7,8]) $ and $ lk(3)\allowbreak =C_{9}([4,a_{4},a_{5}],[a_{5},a_{6},7],[7,8,2],[2,1,0]) $ where $ a_{1},a_{2},a_{3}\in\{4,5,6,11\} $ and $ a_{4},a_{5},a_{6}\allowbreak\in\{5,9,10,11\} $. If $ a_{6}=5 $ then $ a_{5}\in\{9,10\} $ as at any vertex, three 4-gon can not be distinct and therefore $ a_{4}=11 $. If $ a_{5}=9 $ then from $ lk(5) $ we can see that $ [5,6,7] $ will form a 3-gon which is not possible and if $ a_{5}=10 $ then $ lk(10)=C_{8}([8,9,5],[5,7,3],[3,4,11],[11,a_{7},2]) $. Now $ a_{7}\neq 1 $ as then $ a_{2}=10,a_{3}=11 $ which is not possible. Therefore $ a_{7}=6 $ and then $ a_{1}=11,a_{2}=6,a_{3}=5 $. Therefore from $ lk(6) $ we can see that $ [1,5,9] $ will be a 3-gon which is not possible. Therefore $ a_{6}\neq 5 $ i.e. $ a_{6}\in\{9,10,11\} $. As $ [9,10] $ is a non-edge in a 4-gon, therefore $ a_{5}=11 $ and $ (a_{4},a_{5})\in\{(9,10),(10,9)\} $.\\
	\textbf{If} $ \boldsymbol{(a_{4},a_{5})=(9,10)} $ then $ a_{1}=11 $ which implies $ lk(10)=C_{9}([8,9,5],[5,a_{7},7],[7,3,11],[11,\allowbreak a_{2},2]) $, $ lk(7)=C_{9}([a_{7},5,10],[10,11,3],[3,2,8],[8,0,6]) $ and then $ a_{7}=1 $, therefore $ lk(1) $ is not possible.\\
	\textbf{If} $ \boldsymbol{(a_{4},a_{5})=(10,9)} $ then $ a_{1}=4 $, therefore $ lk(10)=C_{9}([8,9,5],[5,a_{7},11],[11,3,4],[4,a_{2},\allowbreak2]) $ and $ a_{7}\in\{1,6\} $. If $ a_{7}=1 $ then $ [1,9] $ will be an adjacent edge of a 3-gon and a 4-gon and we have $ [2,10] $ is an edge, therefore $ lk(1)=C_{9}([11,10,5],[5,a_{2},2],[2,3,0],[0,8,9]) $ which implies $ a_{3}=5,a_{1}=11 $ and $ [5,6,7] $ form a 3-gon which is not possible. If $ a_{7}=6 $ then $ [5,9] $ will form an adjacent edge of a 3-gon and a 4-gon, therefore $ lk(5)=C_{9}([a_{8},a_{9},4],[4,0,6],[6,\allowbreak11,10],[10,\allowbreak 8,9]) $ which implies $ a_{8}=7 $ and then $ lk(7)=C_{9}([5,4,6],[6,0,8],[8,2,3],[3,11,\allowbreak9]) $ i.e. $ [5,6] $ form an edge and a non-edge in two different 4-gon, which is a contradiction.

	\hspace{-0.6cm}\underline{\textbf{If} $ \boldsymbol{(a,b,c,d)=(4,11,2,3)\approx(4,10,2,3)} $} then $ lk(2)=C_{9}([11,a_{1},a_{2}],[a_{2},a_{3},1],[1,0,3],\allowbreak[3,7,8]) $, $ lk(3)\allowbreak=C_{9}([4,a_{4},a_{5}],[a_{5},a_{6},7],[7,8,2],[2,1,0]) $ and $ lk(4)=C_{9}([3,a_{5},a_{4}],[a_{4},8,\allowbreak a_{7}],[a_{7},a_{8},5],[5,\allowbreak6,0]) $ where $ a_{1},a_{2},a_{3}\in\{4,5,6,10\} $, $ a_{5}\in\{9,10,11\} $, $ a_{6}\in\{5,9,10,11\} $ and $ a_{4}\in\{9,11\} $. $ \boldsymbol{a_{4}=9}\Rightarrow a_{7}=11,a_{5}=10,a_{6}=11 $. Now $ lk(11)=C_{9}([2,a_{9},3],[3,10,a_{8}],[a_{8},\allowbreak5,4],[4,9,8]) $ which implies $ [2,3] $ form an edge and non-edge in two 4-gon which is not possible. $ \boldsymbol{a_{4}=11}\allowbreak\Rightarrow a_{7}=9,a_{5}=10 $. Now $ lk(11)=C_{9}([2,a_{2},a_{1}],[a_{1},a_{9},10],[10,3,4],[4,9,8]) $. Therefore $ a_{1},a_{2}\in\{5,6\} $, $ a_{9}=1 $ and $ a_{8}\in\{2,7\} $. If $ a_{1}=6 $ then $ a_{2}=5 $ and $ [1,6,7],[5,9,10] $ form 3-gons. Now $ lk(6)=C_{9}([1,10,11],[11,2,5],[5,4,0],[0,8,7]) $. As $ [1,9] $ and $ [9,a_{8}] $ is not an edge in a 3-gon, therefore $ lk(9) $ is not possible. If $ a_{1}=5,a_{2}=6 $ then $ lk(5)=C_{9}([a_{8},9,4],[4,0,6],[6,2,11],\allowbreak [11,10,1]) $ which implies $ a_{8}=7 $ and $ [6,9,10] $ will be a 3-gon and then $ lk(9) $ will not possible as $ [1,9],[7,9] $ will not form an edge in a 3-gon.
	
	\hspace{-0.6cm}\underline{\textbf{If} $ \boldsymbol{(a,b,c,d)=(4,2,5,3)} $} then $ lk(2)=C_{8}([5,a_{1},3],[3,0,1],[1,a_{2},4],[4,9,8]) $ which implies $ [3,5] $ form an edge and non-edge, which is not possible.
	
	\hspace{-0.6cm}\underline{\textbf{If} $ \boldsymbol{(a,b,c,d)=(10,2,5,3)} $} then $ lk(2)=C_{9}([5,a_{1},1],[1,0,3],[3,a_{2},10],[10,9,8]) $ and $ lk(3)\allowbreak =C_{9}([4,a_{3},7],[7,8,5],[5,a_{4},2],[2,1,0]) $ which implies $ a_{2}=5,a_{4}=10 $ and then $ 5 $ will occur two times in $ lk(2) $ which is a contradiction.
	
	\begin{sloppypar}
	\hspace{-0.6cm}\underline{\textbf{If} $ \boldsymbol{(a,b,c,d)=(2,10,5,3)} $} then $ lk(3)=C_{9}([4,a_{1},7],[7,8,5],[5,a_{2},2],[2,1,0]) $ and $ lk(5)=C_{9}([10,a_{3},4],[4,0,6],[6,a_{4},3],[3,7,8]) $ which implies $ a_{1}=11=a_{3},a_{2}=6,a_{4}=2 $ and then $ 3 $ will occur two times in $ lk(4) $ which make contradiction. Similarly for $ \textbf{(a,b,c,d)=(4,10,5,3)} $, map will not exist.
\end{sloppypar}
	
	\hspace{-0.6cm}\underline{\textbf{If} $ \boldsymbol{(a,b,c,d)=(11,10,5,3)\approx(10,11,5,3)} $} then $ lk(3)=C_{9}([4,a_{1},7],[7,8,5],[5,a_{2},2],\allowbreak[2,1,0])$ and $lk(5)\allowbreak=C_{9}([10,a_{3},4],[4,0,6],[6,a_{4},3],[3,7,8]) $. From here we see that $ a_{2}=6,a_{3}=1,a_{4}=2$ and $ a_{1}=11 $. Now $ lk(4)=C_{9}([3,7,a_{1}],[a_{1},a_{5},1],[1,10,5],[5,6,0]) $ where $ a_{5}\in\{2,9\} $. Therefore for any $ a_{5} $, $ [1,10,b_{1}] $ will be a 3-gon which is not possible.
	
	\hspace{-0.6cm}\underline{\textbf{If} $ \boldsymbol{(a,b,c,d)=(4,2,10,3)} $} then $ lk(2)=C_{9}([10,a_{1},3],[3,0,1],[1,a_{2},4],[4,9,8]) $ which implies $ [3,10] $ form an edge and non-edge in a 4-gon which is not possible.
	
	\hspace{-0.6cm}\underline{\textbf{If} $ \boldsymbol{(a,b,c,d)=(4,5,10,3)} $} then $ lk(3)=C_{9}([4,a_{1},b_{1}],[b_{1},8,b_{2}],[b_{2},a_{2},2],[2,1,0]) $. We see that $ a_{1}\neq 9 $ as then $ 9 $ will occur two times in $ lk(4) $, therefore $ a_{1}=11 $ and $ b_{1},b_{2}\in\{7,10\} $.
	
	\textbf{If} $ \boldsymbol{(a_{1},b_{1})=(11,7)} $ then $ b_{2}=10 $ and $ lk(4)=C_{9}([3,7,11],[11,a_{3},9],[9,8,5],[5, 6,\allowbreak0]) $ where $ a_{2}\in\{6,9\} $ and $ a_{3}\in\{2,10\} $. If $ a_{2}=6 $ then $ lk(6)=C_{9}([7,8,0],[0,4,5],[5,11,10],\allowbreak [10,3,2]) $ and then $ [1,9,11] $ will form a 3-gon, which implies $ lk(11) $ is not possible. If $ a_{2}=9 $ then $ lk(9)=C_{9}([1,0,8],[8,5,4],[4,11,10],[10,3,2]) $ which implies $ 9 $ will occur two times in $ lk(1) $.
	
	\textbf{If} $ \boldsymbol{(a_{1},b_{1})=(11,10)} $ then $ b_{2}=7 $ and $ a_{2}=9 $ (from $ lk(5) $ we see that $ a_{2}\neq 5 $). Now $ lk(9)=C_{9}([1,0,8],[8,5,4],[4,11,2],[2,3,7]) $ which implies $ [2,6,11] $ form a 3-gon and then $ lk(2)=C_{9}([11,4,9],[9,7,3],[3,0,1],[1,a_{3},6]) $. From $ lk(10) $ we see that $ a_{3}=5 $, therefore $ lk(5)=C_{9}([10,a_{4},1],[1,2,6],[6,0,4],[4,9,8]) $ which implies $ a_{4}\notin V $.
	
	\hspace{-0.6cm}\underline{\textbf{If} $ \boldsymbol{(a,b,c,d)=(11,5,10,3)\approx(10,5,11,3)} $} then $ lk(3)=C_{9}([4,a_{1},b_{1}],[b_{1},8,b_{2}],[b_{2},a_{2},\allowbreak2],[2,1,0]) $ and $ lk(5)=C_{9}([10,a_{3},b_{3}],[b_{3},0,b_{4}],[b_{4},a_{4},11],[11,9,8]) $ where $ a_{1}\in\{9,11\} $ and $ b_{1}\in\{7,10\} $.
	
	\textbf{If} $ \boldsymbol{(a_{1},b_{1})=(9,7)} $ then $ b_{2}=10 $ and either $ lk(9)=C_{9}([1,0,8],[8,5,11],[11,a_{5},4],\allowbreak [4,3,7]) $ or $ lk(9)=C_{9}([11,5,87],[8,0,1],[1,a_{5},4],[4,3,7]) $. If $ lk(9)=C_{9}([1,0,8],[8,5,11],\allowbreak [11,a_{5},4],[4,3,7]) $ then $ [2,6,11] $ will form a 3-gon and $ lk(4)=C_{9}([3,7,9],[9,11,a_{5}],[a_{5},a_{6},\allowbreak5],\allowbreak [5,6,0]) $ which implies $ b_{3}=4,a_{6}=10 $ and $ a_{3}=2 $ with $ a_{3}=a_{5} $. Now $ lk(5)=C_{9}([10,2,4],[4,\allowbreak 0,6],[6,a_{4},11],[11,9,8]) $ which implies $ a_{4}=1 $ and then $ lk(1) $ will not possible. If $ lk(9)=C_{9}([11,5,87],[8,0,1],[1,a_{5},4],[4,3,7]) $ then $ [1,2,6] $ will form a 3-gon and $ lk(4)=C_{9}([0,6,5],\allowbreak [5,a_{6},a_{5}],[a_{5},1,9],[9,7,3]) $ and $ lk(1)=C_{9}([6,a_{7},a_{5}],[a_{5},4,9],[9,8,0],\allowbreak[0,3,2]) $ which implies $ a_{5},a_{6}\in\{10,11\} $ that means either $ 10 $ or $ 11 $ will occur two times in $ lk(5) $.
	
	\textbf{If} $ \boldsymbol{(a_{1},b_{1})=(9,10)} $ then $ b_{2}=7 $ and then $ a_{2}=11 $ as from $ lk(5) $ we see that $ a_{2}\neq 5 $. Now $ lk(4)=C_{9}([3,10,9],[9,a_{5},a_{6}],[a_{6},a_{7},5],[5,6,0]) $ which implies $ b_{3}=6,b_{4}=4 $ and then $ a_{7}=11 $. Now we have $ 10 $ can not occur in new 3-gon, therefore $ [9,a_{5}] $ will be an edge in a 3-gon and then $ lk(9) $ will not possible.
	
	\textbf{If} $ \boldsymbol{(a_{1},b_{1})=(11,7)} $ then $ b_{2}=10, b_{3}=4,b_{4}=6 $ and $ a_{2}\in\{6,9\}, a_{4}\in\{1,2,3\} $. Now $ lk(4)=C_{9}([3,7,11],[11,a_{5},a_{6}],[a_{6},a_{7},5],[5,6,0]) $ which implies $ a_{7}=10 $. Now from $ lk(4),lk(5) $ and $ lk(11) $ we see that either $ a_{4}=a_{5} $ or $ a_{4}=7 $. But $ a_{4}\neq a_{4} $, therefore $ a_{4}=a_{5} $. Now $ lk(11)=C_{9}([9,8,5],[5,6,a_{5}],[a_{5},a_{6},4],[43,7]) $ which implies $ [1,2,6] $ form a 3-gon. Then $ a_{2}=9 $ and then $ lk(9) $ is not possible.
	
	\textbf{If} $ \boldsymbol{(a_{1},b_{1})=(11,10)} $ then $ b_{2}=7 $ and $ a_{2}=9 $ as from $ lk(5) $ we see that $ a_{2}\neq 5 $. Now $ lk(4)=C_{9}([3,10,11],[11,a_{5},a_{6}],[a_{6},a_{7},5],[5,6,0]) $ which implies $ a_{7}\notin V $.
	
	\hspace{-0.6cm}\underline{\textbf{If} $ \boldsymbol{(a,b,c,d)=(2,11,10,3)\approx(2,10,11,3)} $} then $ lk(3)=C_{9}([4,a_{1},b_{1}],[b_{1},8,b_{2}],[b_{2},a_{2},\allowbreak2],[2,1,0]) $ where $ a_{1}\in\{9,11\}, b_{1}\in\{7,10\} $ and $ a_{2},b_{2}\in V $.
	
	\textbf{If} $ \boldsymbol{(a_{1},b_{1})=(9,7)} $ then $ b_{2}=10 $ and $ a_{2}\in\{5,6\} $. Now $ lk(4)=C_{9}([3,7,9],[9,a_{3},\allowbreak a_{4}],[a_{4},a_{5},5],[5,6,0]) $. From $ lk(4),lk(9) $ and $ lk(9) $ we get $ a_{3}\in\{1,2\} $. \textbf{If} $ \boldsymbol{a_{3}=1} $ then $ [1,5,6] $ and $ [2,7,9] $ form 3-gons. Now $ lk(7)=C_{9}([9,4,3],[3,\allowbreak10,8],[8,0,6],[6,a_{6},2]) $ which implies $ a_{6}\notin V $. \textbf{If} $ \boldsymbol{a_{3}=2} $ then $ [1,7,9] $ and $ [2,5,6] $ form 3-gons. Now $ lk(7)=C_{9}([9,4,3],[3,\allowbreak10,8],[8,0,6],[6,a_{6},1]) $ which implies $ a_{6}=11 $ as if $ a_{6}=2 $ then face sequence will not follow in $ lk(1) $. Therefore $ lk(1)=C_{9}([9,8,0],[0,3,2],[2,a_{7},11],[11,6,\allowbreak7]) $ which implies $ a_{7}=5,a_{2}=6 $ and then $ lk(6)=C_{9}([5,4,0],[0,8,7],[7,a_{8},10],[10,3,2]) $ which implies $ [7,10] $ form a non-edge in two 4-gon, which is not possible.
	
	\textbf{If} $ \boldsymbol{(a_{1},b_{1})=(9,10)} $ then $ b_{2}=7 $ and $ a_{2}\in\{5,11\} $. Now $ lk(4)=C_{9}([3,10,9],[9,\allowbreak a_{3},a_{4}],[a_{4},a_{5},5],[5,6,0]) $. From $ lk(4),lk(8) $ and $ lk(9) $ we see that $ a_{3}\in\{1,2\} $, but form any $ a_{3} $, $ 10 $ will occur in two 3-gon, which is a contradiction.
	
	\textbf{If} $ \boldsymbol{(a_{1},b_{1})=(11,7)} $ then $ b_{2}=10 $ and $ a_{2}\in\{5,6,9\} $. Now $ lk(4)=C_{9}([3,7,11],\allowbreak [11,a_{3},a_{4}],[a_{4},a_{5},5],[5,6,0]) $, $ lk(11)=C_{9}([10,a_{4},4],[4,3,7],[7,a_{6},2],[2,9,8]) $ and $ lk(10)=C_{9}([11,4,a_{4}],[a_{4},a_{7},a_{2}],[a_{2},2,3],[3,7,8]) $. From these we see that $ a_{3}=10,a_{4}=1 $ and form any $ a_{5} $, $ 10 $ will occur in two 3-gon, which is a contradiction.
	
	\textbf{If} $ \boldsymbol{(a_{1},b_{1})=(11,10)} $ then $ b_{2}=7 $ and $ a_{2}\in\{5,9\} $. Now $ lk(11)=C_{9}([8,9,2],[2,\allowbreak a_{3},a_{4}],[a_{4},a_{5},4],[4,3,10]) $ and $ lk(4)=C_{9}([3,10,11],[11,a_{4},a_{5}],[a_{5},a_{6},5],[5,6,0]) $. We see that if $ a_{2}=5 $ then $ a_{2}=a_{3}=5 $ and then $ [1,2,9] $ will form a 3-gon which is not possible as then $ 9 $ will occur two times in $ lk(1) $ and if $ a_{2}=9 $ then $ lk(2)=C_{9}([1,0,3],[3,7,9],[9,8,11],\allowbreak[11,a_{4},\allowbreak a_{3}]) $ which implies $ a_{4}\notin V $.
	
	\hspace{-0.6cm}\underline{\textbf{If} $ \boldsymbol{(a,b,c,d)=(4,11,10,3)\approx(4,10,11,3)} $} then $ lk(3)=C_{9}([4,a_{1},b_{1}],[b_{1},8,b_{2}],[b_{2},a_{2},\allowbreak2],[2,1,0]) $ and $ lk(4)=C_{9}([3,b_{1},a_{1}],[a_{1},8,b_{3}],[b_{3},a_{3},5],[5,6,0]) $ where $ a_1\in\{9,11\},b_{1}\in\{7,10\} $.
	
	\textbf{If} $ \boldsymbol{(a_{1},b_{1})=(9,7)} $ then either $ lk(7)=C_{9}([6,0,8],[8,10,3],[3,4,9],[9,a_{4},2]) $ or $ lk(7)=C_{9}([9,4,3],[3,10,8],[8,0,6],[6,a_{4},2]) $.
	\begin{case}
		If $ lk(7)=C_{9}([6,0,8],[8,10,3],[3,4,9],[9,a_{4},2]) $: as we have $ [1,9] $ can not be an edge and a non-edge of a 4-gon, therefore $ [1,5,9] $ will be a 3-gon and then $ lk(9)=C_{9}([5,2,7],[7,3,4],\allowbreak [4,11,8],[8,0,1]) $ which implies $ a_{4}=5$. Now $ lk(5)=C_{9}([9,7,2],[2,a_{7},\allowbreak a_{8}],[a_{8},0,a_{5}],[a_{5},a_{6},\allowbreak 1]), lk(6)=C_{9}([2,a_{9},a_{6}],[a_{6},1,5],[5,4,0],[0,8,7]) $ and $ lk(2)=C_{9}([6,\allowbreak a_{10},3],[3,0,1],[1,4,5],\allowbreak [5,9,7]) $ which implies $ a_{5}=6,a_{8}=4, a_{6}=a_{10}=11, a_{9}=3 $ and $ a_{7}=1 $ which is a contradiction.
	\end{case}
	\begin{case}
		If $ lk(7)=C_{9}([9,4,3],[3,10,8],[8,0,6],[6,a_{4},2]) $: then $ [1,5,6] $ will form a 3-gon which implies $ a_{4}=11,a_{3}=3 $. Now $ lk(11)=C_{9}([10,a_{5},6],[6,7,2],[2,5,4],[4,9,8]) $ and $ lk(6)=C_{9}([1,10,11],[11,2,7],[7,8,0],[0,4,5]) $ which implies $ a_{5}=1 $, then $ lk(1) $ will not possible.
	\end{case}
	\textbf{If} $ \boldsymbol{(a_{1},b_{1})=(9,10)} $ then $ b_{2}=7 $. Now $ lk(9)=C_{9}([1,0,8],[8,11,4],[4,3,10],[10,\allowbreak a_{5},a_{4}]) $. From here we see that $ a_{4}\in\{5,6\} $ and therefore $ [2,7] $ form an edge in a 3-gon. Therefore $ [2,7] $ form an edge and non-edge, which is a contradiction.
	
	\textbf{If} $ \boldsymbol{(a_{1},b_{1})=(11,7)} $ then $ b_{2}=10,b_{3}=9 $. Now $ lk(7)=C_{9}([6,0,8],[8,10,3],[3,4,\allowbreak 11],[11,a_{4},a_{5}]) $ which implies $ a_{5}\in\{1,2\} $ and then $ [5,9] $ form an edge in a 3-gon i.e. $ [5,9] $ form an edge and a non-edge which is a contradiction.
	
	\textbf{If} $ \boldsymbol{(a_{1},b_{1})=(11,10)} $ then $ 10 $ will occur two times in $ lk(11) $ which make contradiction.
	
	\hspace{-0.6cm}\underline{\textbf{If} $ \boldsymbol{(a,b,c,d)=(5,11,10,3)\approx(5,10,11,3)} $} then $ lk(3)=c_{9}([4,a_{1},b_{1}],[b_{1},8,b_{2}],[b_{2},a_{2},\allowbreak2],[2,1,0]) $ where $ a_{1}\in\{9,11\} $ and $ b_{1}\in\{7,10\} $.
	
	\textbf{If} $ \boldsymbol{(a_{1},b_{1})=(9,7)} $ then $ b_{2}=10 $ and either $ lk(7)=C_{9}([6,0,8],[8,10,3],[3,4,9],[9,a_{3},\allowbreak a_{4}]) $ or $ lk(7)=C_{9}([9,4,3],[3,10,8],[8,0,6],[6,a_{3},a_{4}]) $. If  $ lk(7)=C_{9}([6,0,8],[8,10,3], [3,4,\allowbreak9],\allowbreak[9, a_{3},a_{4}]) $ then $ a_{3}\in\{1,5\} $. We see that for any $ a_{3} $, $ [4,9] $ will form an edge in a 3-gon which implies $ 4 $ occur in two 3-gon which is not possible. If $ lk(7)=C_{9}([9,4,3],[3,10,8],[8,0,\allowbreak 6],[6,a_{3},a_{4}]) $ then $ lk(9)=C_{9}([a_{4},11,8],[8,0,1],[1,a_{5},4],[4,3,7]) $ which implies $ a_{4}=5 $ and then $ [5,6] $ form an edge and a non-edge which make contradiction.
	
	\textbf{If} $ \boldsymbol{(a_{1},b_{1})=(9,10)} $ then $ b_{2}=7 $ and then $ lk(9) $ is not possible.
	
	\textbf{If} $ \boldsymbol{(a_{1},b_{1})=(11,7)} $ then $ b_{2}=10 $. Now $ lk(7)=C_{9}([6,0,8],[8,10,3],[3,4,11],\allowbreak [11,a_{3},\allowbreak a_{4}]) $ and $ lk(11)=C_{9}([10,a_{5},4],[4,3,7],[7,1,5],[5,9,8]) $. From these we see that $ a_{3}=5 $ and we have $ [9,11] $ is a non-edge in a 4-gon, therefore $ a_{4}=1 $ and $ [2,5,9] $ form a 3-gon and then $ lk(5) $ is not possible.
	
	\textbf{If} $ \boldsymbol{(a_{1},b_{1})=(11,10)} $ then $ 11 $ will occur two times in $ lk(10) $, which is a contradiction.
	
	\hspace{-0.6cm}\underline{\textbf{If} $ \boldsymbol{(a,b,c,d)=(d,5,2,4)} $} then from $ lk(2) $ we see that $ [3,5] $ form an edge and non-edge in two 4-gon which make contradiction. Therefore for $ (a,b,c,d)=(3,5,2,4),(10,5,2,4) $, SEM will not complete.
	
	\hspace{-0.6cm}\underline{\textbf{If} $ \boldsymbol{(a,b,c,d)=(3,10,2,4)} $} then $ lk(2)=C_{9}([10,a_{1},3],[3,0,1],[1,a_{2},4],[4,7,8]) $. From $ lk(4) $ we see that $ a_{2}=5 $ and then $ a_{1}=11 $. Therefore $ lk(4)=C_{9}([3,11,7],[7,8,2],[2,1,5],[5,\allowbreak 6,0]) $ and then $ 11 $ will occur two times in $ lk(3) $, which is a contradiction.
	
	\hspace{-0.6cm}\underline{\textbf{If} $ \boldsymbol{(a,b,c,d)=(11,10,2,4)\approx(10,11,2,4)} $} then $ lk(2)=C_{9}([10,a_{1},3],[3,0,1],[1,a_{2},4],\allowbreak[4,7,8]) $. From $ lk(4) $ we see that $ a_{2}=5 $ and then $ a_{1}=6 $. Therefore $ lk(4)=C_{9}([3,11,7],[7,\allowbreak8,2],[2,1,5],[5,\allowbreak6,\allowbreak 0]) $ and $ lk(3)=C_{9}([4,7,11],[11,a_{3},6],[6,10,2],[2,1,0]) $. From here we see that $ [6,10] $ form an edge in a 3-gon i.e. $ 10 $ will occur in two 3-gon which is not possible.
	
	\hspace{-0.6cm}\underline{\textbf{If} $ \boldsymbol{(a,b,c,d)=(3,2,5,4)} $} then $ lk(2)=C_{9}([5,a_{1},a_{2}],[a_{2},a_{3},1],[1,0,3],[3,9,8]) $, $ lk(3)=C_{9}([4,a_{4},a_{5}],[a_{5},a_{6},9],[9,8,2],[2,1,0]) $, $ lk(5)=C_{9}([2,a_{2},a_{1}],[a_{1},a_{7},6],[6,0,4],[4,7,8]) $ and $ lk(4)=C_{9}([3,a_{5},a_{4}],[a_{4},a_{8},7],[7,8,5],[5,6,0]) $. From here we see that two of $ a_{1},a_{2},a_{3} $ are $ 10 $ and $ 11 $, two of $ a_{1},a_{2},a_{7} $ are $ 10 $ and $ 11 $, two of $ a_{4},a_{5},a_{6} $ are $ 10 $ and $ 11 $ and two of $ a_{4},a_{5},a_{8} $ are $ 10 $ and $ 11 $ which implies $ a_{1},a_{2}\in\{10,11\} $ and $ a_{4},a_{5}\in\{10,11\} $. With out loss of generality, let $ (a_{1},a_{2})=(10,11) $, then $ a_{3}\in\{4,6\} $. If $ a_{3}=4 $ then $ lk(4) $ is not possible, therefore $ a_{3}=6 $. Now $ a_{7}\in\{1,3,9\} $, therefore $ lk(6) $ is possible only if $ a_{7}=1 $ and hence $ lk(6)=C_{9}([11,2,1],[1,10,5],[5,4,0],[0,8,7]) $ which implies $ [1,9,10] $ form a 3-gon and $ a_{5}=11,a_{4}=10 $ and then $ a_{6}=7 $. Now $ lk(7)=C_{9}([11,3,9],[9,10,4],[4,5,8],[8,0,6]) $ which implies $ a_{8}=9 $. Therefore $ lk(1)=C_{9}([10,5,6],[6,11,2],[2,3,0],[0,8,9])\Rightarrow lk(9)=C_{9}([1,0,8],[8,2,3],[3,11,7],[7,4,10])\Rightarrow lk(10)=C_{9}([9,7,4],[4,3,11],[11,2,5],[5,6,1])\Rightarrow lk(11)=C_{9}([7,9,3],[3,4,10],[10,5,2],[2,1,6]) $. Let this be the map $ \boldsymbol{KNO_{2[(3,4^4)]}} $.
	
\begin{sloppypar}
\hspace{-0.6cm}\underline{\textbf{If} $ \boldsymbol{(a,b,c,d)=(10,2,5,4)} $} then $ lk(2)=C_{9}([5,a_{1},b_{1}],[b_{1},0,b_{2}],[b_{2},a_{2},10],[10,9,8]) $, $ lk(4)\allowbreak =C_{9}([3,a_{3},a_{4}],[a_{4},a_{5},7],[7,8,5],[5,6,0]) $ and $ lk(5)=C_{9}([2,b_{1},a_{1}],[a_{1},a_{6},6],[6,0,4],[4,7,\allowbreak8]) $ which implies $ a_{1}=11 $ and $ b_{1},b_{2}\in\{1,3\} $.
\end{sloppypar}	
	\begin{case}
		\textbf{If} $ \boldsymbol{(b_{1},b_{2})=(1,3)} $ then $ a_{2}\in\{6,7\} $. If $ a_{2}=6 $ then from $ lk(3) $ and $ lk(5) $ we see that face sequence is not followed in $ lk(6) $ and if $ a_{2}=7 $ then we get face sequence is not followed in $ lk(7) $ from $ lk(3) $ and $ lk(4) $.
	\end{case}
	\begin{case}
		\textbf{If} $ \boldsymbol{(b_{1},b_{2})=(3,1)} $ then $ a_{2}\in\{6,7\} $.
		\begin{subcase}
			\textbf{If} $ \boldsymbol{a_{2}=6} $ then $ a_{6}\in\{1,10\} $. \textbf{If} $ \boldsymbol{a_{6}=1} $ then $ lk(6)=C_{9}([10,2,1],[1,11,5],\allowbreak [5,4,0],[0,8,7]) $, $ lk(1)=C_{9}([9,8,0],[0,3,2],[2,10,6],[6,5,11]) $, $ lk(10)=C_{9}([7,a_{5},a_7],[a_{7},\allowbreak a_{8},\allowbreak 9],[9,8,2],[2,1,6]) $ and $ lk(7)=C_{9}([10,a_{7},a_{5}],[a_{5},a_{4},4],[4,5,8],[8,0,6]) $. Now we see that only one of $ a_{3},a_{4},a_{5} $ is $ 11 $, only one of $ a_{5},a_{7},a_{8} $ is $ 11 $ and only one of $ a_{4},a_{5},a_{7} $ is $ 11 $ which means either $ a_{5}=11 $ or $ a_{4}=11=a_{8} $ or $ a_{3}=11=a_{7} $. If $ a_{4}=11=a_{8} $ then from $ lk(11) $ we see that $ a_{5}=a_{7} $ which is not possible and if $ a_{3}=11=a_{7} $ then $ [9,11] $ form an edge and a non-edge in two 4-gon which is not possible, therefore $ a_{5}=11 $. Now $ lk(11)=C_{9}([9,4,7],[7,10,3],[3,2,5],[5,6,1]) $ which implies $ a_{7}=3,a_{4}=9 $ and then $ lk(7)=C_{9}([10,3,11],[11,9,4],[4,5,8],[8,0,6]) $. Now $ lk(4)=C_{9}([3,10,9],[9,11,7],[7,8,5],[5,6,0])\allowbreak\Rightarrow a_{3}=10,a_{4}=9 $ and $ lk(9)=C_{9}([11,7,4],[4,3,10],[10,2,8],[8,0,1])\Rightarrow a_{8}=4\Rightarrow lk(3)=C_{9}([4,9,10],[10,7,11],[11,5,2],[2,1,0]) $. Let this be the map $ \boldsymbol{K_{1}} $. $ \boldsymbol{K_1} $ is isomorphic to $ \boldsymbol{KNO_{1[(3,4^4)]}} $ under the map $ (0,1)(2,8)(3,9)(4,10,7,11) $. \textbf{If} $ \boldsymbol{a_{6}=10} $  then $ lk(6)=C_{9}([1,2,\allowbreak10],[10,11,5],[5,\allowbreak 4,0],[0,8,7]) $ and then $ [9,10,11] $ will be a 3-gon which implies $ lk(1)=C_{9}([7,11,9],[9,8,0],[0,\allowbreak 3,2],[2,10,6]) $ and $ lk(7)=C_{9}([1,9,11],[11,a_{7},4],[4,5,8],[8,\allowbreak 0,6]) $. We see that $ 3 $ is not a member of $ lk(6) $ and $ lk(8) $, therefore $ a_{7}=3 $ which implies $ 3 $ occur two times in $ lk(4) $ which is a contradiction.
		\end{subcase}
		\begin{subcase}
			\textbf{If} $ \boldsymbol{a_{2}=7} $ then $ lk(3)=C_{9}([4,a_{4},a_{3}],[a_{3},a_{7},11],[11,5,2],[2,1,0]) $ and $ lk(7)\allowbreak =C_{9}([a_{9},a_{8},a_{5}],[a_{5},a_{4},4],[4,5,8],[8,0,6]) $ which implies $ a_{8}=2;a_{5},a_{9}\in\{1,10\}\Rightarrow a_{4}=11 $ which implies $ 11 $ will occur two times in $ lk(3) $ which is a contradiction.
		\end{subcase}
	\end{case}
\begin{sloppypar}
	\hspace{-0.6cm}\underline{\textbf{If} $ \boldsymbol{(a,b,c,d)=(2,10,5,4)} $} then $ lk(4)=C_{9}([3,a_{1},a_{2}],[a_{2},a_{3},7],[7,8,5],[5,6,0]) $ and $ lk(5)\allowbreak =C_{9}([10,a_{4},a_{5}],[a_{5},a_{6},6],[6,0,4],[4,7,8]) $ where $ a_{1},a_{2}\in\{9,10,11\} $ and $ a_{4},a_{5}\in\{1,3,11\} $.
\end{sloppypar}
	\begin{case}
		\textbf{If} $ \boldsymbol{a_{2}=9} $ then $ [9,a_{1}] $ will form an edge in a 3-gon which implies $ a_{1}=11 $ and $ a_{3}\in\{1,2\} $. \textbf{If} $ \boldsymbol{a_{3}=1} $ then $ lk(9)=C_{9}([11,3,4],[4,7,1],[1,0,8],[8,10,2]) $ which implies $ [1,6,7] $ will be a 3-gon and then $ lk(1)=C_{9}([7,4,9],[9,8,0],[0,3,2],[2,11,6]) $ and $ lk(6)=C_{9}([7,8,0],[0,4,5],[5,10,11],[11,2,1]) $ (as $ 10 $ is not a member of $ lk(0) $ and $ lk(1) $) which implies $ a_{5}=10,a_{6}=11 $ and then $ 10 $ will occur two times in $ lk(5) $ which is a contradiction. \textbf{If} $ \boldsymbol{a_{3}=2} $ then $ lk(2)=C_{9}([1,0,3],[3,11,10],[10,8,9],[9,4,7]) $ (as $ 11 $ is not a member of $ lk(0) $ and $ lk(3) $) which implies $ 11 $ occur two times in $ lk(3) $ which is a contradiction.
	\end{case}
	\begin{case}
		\textbf{If} $ \boldsymbol{a_{2}=10} $ then $ lk(10)=C_{9}([5,a_{5},a_{4}],[a_{4},a_{7},a_{8}],[a_{8},a_{9},2],[2,9,8]) $ which implies $ a_{3}=2,a_{9}=7,a_{8}=4,a_{7}=3,a_{4}=1 $ which implies $ a_{1}=11=a_{4} $ and from $ lk(5) $ we see that $ a_{5}=1 $ which implies $ [1,a_{4}] $ form an edge in a 3-gon and then $ a_{4}=11 $ and $ a_{6}=9 $ as if $ a_{6}=2 $ then $ lk(2) $ will not possible. Therefore $ lk(1)=C_{9}([9,8,0],[0,3,2],[2,6,5],[5,10,11]) $ which implies $ [2,6,7] $ will form a 3-gon and then $ lk(2) $ is not possible.
	\end{case}
	\begin{case}
		\textbf{If} $ \boldsymbol{a_{2}=11} $ then one of $ a_{4},a_{5},a_{6} $ is $ 11 $. \textbf{If} $ \boldsymbol{a_{4}=11} $ then $ a_{5}=1 $. As $ [3,11] $ is a non-edge in a 4-gon then $ a_{6}\in\{2,9\} $. If $ a_{6}=2 $ then $ [1,9,11] $ and $ [2,6,7] $ form 3-gons and then $ lk(1)=C_{9}([9,8,0],[0,3,2],[2,6,5],[5,10,11]) $ which implies $ lk(2) $ is not possible. If $ a_{6}=9 $ then $ [1,2,11] $ and $ [6,7,9] $ form 3-gons and $ lk(1)=C_{9}([2,3,0],[0,8,9],[9,6,5],[5,10,\allowbreak11])\Rightarrow lk(9)=C_{9}([7,11,2],[2,10,8],[8,0,1],[1,5,6])\Rightarrow lk(2)=C_{9}([11,7,9],[9,8,10],[10,6,\allowbreak3],[3,\allowbreak0, 1])\Rightarrow lk(10)=C_{9}([5,1,11],[11,4,6],[6,3,2],[2,9,8]) $ which implies $ 5 $ occur two times in $ lk(6) $ which is a contradiction. \textbf{If} $ \boldsymbol{a_{5}=11} $ then $ a_{4}=1 $ as $ [3,11] $ is a non-edge in a 4-gon and then either $ lk(1)=C_{9}([2,3,0],[0,8,9],[9,a_{7},10],[10,5,11]) $ implies $ [9,10] $ form non-edge in two 4-gon or $ lk(1)=C_{9}([9,8,0],[0,3,2],[2,a_{7},10],[10,5,11]) $ implies $ [2,10] $ edge and non-edge in two 4-gon which is a contradiction. \textbf{If} $ \boldsymbol{a_{6}=11} $ then $ a_{4},a_{5}\in\{1,3\} $. We have $ [3,11] $ is a non-edge in a 4-gon, therefore $ a_{4}=3, a_{5}=1 $. Now $ lk(5)=C_{9}([10,3,1],[1,11,6],[6,0,4],[4,7,8]) $ which implies $ lk(1) $ is not possible.
	\end{case}
	\hspace{-0.6cm}\underline{\textbf{If} $ \boldsymbol{(a,b,c,d)=(3,10,5,4)} $} then $ lk(3)=C_{9}([4,a_{1},b_{1}],[b_{1},8,b_{2}],[b_{2},a_{2},2],[2,1,0]) $ and $ lk(5)\allowbreak =C_{9}([10,a_{3},a_{4}],[a_{4},a_{5},6],[6,0,4],[4,7,8]) $ where $ a_{1}\in\{7,11\} $ and $ b_{1}\in\{9,10\} $.
	
	\textbf{If} $ \boldsymbol{(a_{1},b_{1})=(7,9)} $ then $ a_{2}=11,b_{2}=10 $ and $ lk(10)=C_{9}([5,a_{4},a_{3}],[a_{3},a_{6},11], [11,2,\allowbreak3],\allowbreak[3,9,8]) $ which implies $ a_{5}=11 $ and then $ a_{3},a_{4}\in\{1,2\} $, therefore $ 2 $ occur two times in $ lk(10) $ which is not possible.
	
	\textbf{If} $ \boldsymbol{(a_{1},b_{1})=(7,10)} $ then $ a_{2}=11,b_{2}=9 $ and $ lk(10)=C_{9}([5,a_{4},a_{3}],[a_{3},a_{7},7],[7,\allowbreak 4,3],[3,9,8]) $, $ lk(5)=C_{9}([10,a_{3},a_{4}],[a_{4},a_{8},6],[6,0,4],[4,7,8]) $ and $ lk(6)=C_{9}([a_{7},a_{9},a_{8}],\allowbreak [a_{8},a_{4},5],[5,4,0],[0,8,7]) $. Now we see that one of $ a_{3},a_{7} $ is $ 11 $, one of $ a_{3},a_{8} $ is $ 11 $ and one of $ a_{7},a_{8},a_{9} $ is $ 11 $. From here we see that $ a_{3}=11 $ and $ a_{9}=11 $ and $ a_{4},a_{7}\in\{1,2\} $ which implies $ 6\notin V(lk(10)) $ and we have $ 6\notin V(lk(3)) $, therefore $ 6\in V(lk(a')) $ for each $ a'\in V\diagdown \{3,6\} $. Therefore $ a_{8}=9 $ and $ a_{7}=2 $ as $ [1,9] $ is an edge in a 4-gon. Now we see that $ a_{4}=1 $ which implies $ [1,9,11] $ form a 3-gon and then $ 11 $ will occur two times in $ lk(9) $ which is not possible.
	
	\textbf{If} $ \boldsymbol{(a_{1},b_{1})=(11,9)} $ then $ b_{2}=10 $ and $ a_{2}\in\{6,7\} $. If $ a_{2}=6 $ then $ lk(6) $ is not possible and if $ a_{2}=7 $ then $ lk(7)=C_{9}([6,0,8],[8,5,4],[4,11,10],[10,3,2]) $ and $ lk(4)=C_{9}([3,9,11],[11,10,7],[7,8,5],[5,6,0]) $ and then $ 11 $ occur two times in $ lk(9) $ which is a contradiction.
	
	\textbf{If} $ \boldsymbol{(a_{1},b_{1})=(11,10)} $ then $ b_{2}=9 $ and $ a_{2}\in\{5,6,7\} $. If $ a_{2}=5 $ then $ lk(5) $ will not possible. If $ a_{2}=6 $ then either $ lk(6)=C_{9}([9,3,2],[2,11,5],[5,4,0],[0,8,7]) $ then $ lk(9)=C_{9}([7,11,1],[1,0,8],[8,10,3],[3,2,6]) $ which implies $ 11 $ will occur two times in $ lk(1) $ OR $ lk(6)=C_{9}([2,3,9],[9,11,5],[5,4,0],[0,8,7]) $ and then $ lk(9)=C_{9}([1,0,8],[8,10,3],[3,2,6],\allowbreak [6,5,11]) $ and then after completing $ lk(5) $ we see that $ [10,11] $ form an edge and a non-edge in two 4-gon which is not possible. If $ a_{2}=7 $ then $ lk(7)=C_{9}([c_{1},3,c_{2}],[c_{2},11,4],[4,5,8],[8,0,\allowbreak6]) $ and $ lk(4)=C_{9}([3,c_{3},11],[11,c_{2},7],[7,8,5],[5,6,0]) $ which implies $ [3,11] $ form an edge and a non-edge in two 4-gon which is not possible.
	
	\hspace{-0.6cm}\underline{\textbf{If} $ \boldsymbol{(a,b,c,d)=(11,10,5,4)\approx(10,11,5,4)} $} then $ lk(5)=C_{9}([10,a_{1},a_{2}],[a_{2},a_{3},6],[6,0,\allowbreak4],[4,7,8]) $ and $ lk(4)=C_{9}([3,a_{4},a_{5}],[a_{5},a_{6},7],[7,8,5],[5,6,0]) $ where $ a_{1},a_{2}\in\{1,2,3\} $.
	
	\textbf{If} $ \boldsymbol{(a_{1},a_{2})=(1,2)} $ then $ lk(2)=C_{9}([a_{7},a_{8},3],[3,0,1],[1,10,5],[5,6,a_{3}]) $ and $ lk(1)=C_{9}([a_{9},a_{10},10],[10,5,2],[2,3,0],[0,8,9]) $ which implies $ a_{3}=11,a_{7}=7,a_{9}=6,a_{8}=9 $ and then $ lk(7) $ will not possible.
	
	\textbf{If} $ \boldsymbol{(a_{1},a_{2})=(2,1)} $ then $ lk(1)=C_{9}([9,8,0],[0,3,2],[2,10,5],[5,6,a_{3}]) $ which implies $ a_{3}=11 $ and then $ 11 $ will occur two times in $ lk(9) $ which is not possible.
	
	\textbf{If} $ \boldsymbol{(a_{1},a_{2})=(2,3)} $ then $ a_{3}\in\{9,11\} $ and $ lk(6)=C_{9}([a_{5},a_{6},7],[7,8,0],[0,4,5],\allowbreak [5,3,\allowbreak a_{3}]) $. If $ \boldsymbol{a_{3}=9} $ then $ lk(9)=C_{9}([1,0,8],[8,10,11],[11,4,3],[3,5,6]) $ which implies $ [2,7,11] $ form a 3-gon and $ lk(6)=C_{9}([1,11,7],[7,8,0],[0,4,5],[5,3,9]) $, $ lk(1)=C_{9}([9,8,0], [0,3,2],\allowbreak[2,4,11],\allowbreak[11,7,6]) $. From here we see that $ [2,11] $ form an edge and a non-edge in two different 4-gon which is not possible. If $ \boldsymbol{a_{3}=11} $ then $ a_{4}\in\{7,9\} $. If $ a_{4}=7 $ then either $ lk(11)=C_{9}([6,5,3],[3,4,7],[7,a_{5},10],[10,8,9]) $ which implies $ lk(7) $ is not possible OR $ lk(11)=C_{9}([7,\allowbreak4,3],[3,5,6],[6,a_{5},10],[10,8,9]) $ then from $ lk(6) $ we see that $ 7 $ will occur in two 3-gon, which is not possible. If $ a_{4}=9 $ then $ lk(11)=C_{9}([a_{5},a_{6},10],[10,8,9],[9,4,3],[3,5,\allowbreak 6]) $ and $ lk(9)=C_{9}([a_{7},a_{8},4],[4,3,11],[11,10,8],[8,0,1]) $. From here we see that $ a_{5}=7 $ and $ a_{7}=2 $ which implies $ 2 $ will occur two times in $ lk(1) $ which is a contradiction.
	
	\textbf{If} $ \boldsymbol{(a_{1},a_{2})=(3,2)} $ then either $ lk(6)=C_{9}([a_{4},a_{5},7],[7,8,0],[0,4,5],[5,2,a_{3}]) $ or $ lk(6)\allowbreak=C_{9}([7,8,0],[0,4,5],[5,2,a_{3}],[a_{3},a_{4},a_{5}]) $ and $ a_{3}\in\{9,11\} $. When $ lk(6)=C_{9}([a_{4},a_{5},7],  [7,\allowbreak8,0],\allowbreak[0,4,5],[5,2,a_{3}]) $ then $ lk(7)=C_{9}([a_{6},a_{7},4],[4,5,8],[8,0,6],[6,a_{4},a_{5}]) $. From here we see that $ a_{4},a_{5}\in\{1,9,11\} $ which implies one of $ a_{6},a_{7} $ is $ 3 $ as $ 3\notin V(lk(6)),V(lk(8)) $. But then $ 3 $ will occur two times in $ lk(4) $ which is a contradiction. When $ lk(6)=C_{9}([7,8,0],[0,4,\allowbreak5],[5,2, a_{3}],[a_{3},a_{4},a_{5}]) $ then $ lk(7)=C_{9}([a_{5},a_{6},a_{7}],[a_{7},a_{8},4],[4,5,8],[8,0,6]) $. We have $ [9,11] $ is an edge in a 4-gon, therefore $ a_{5}=1 $ and then $ [2,9,11] $ will form a 3-gon. Since $ [1,9] $ is an edge in a 4-gon, therefore $ a_{3}=11 $. Now $ lk(1)=C_{9}([7,a_{7},2],[2,3,0],[0,8,a_{4}],[a_{4},11,\allowbreak6]) $ which implies $ a_{4}=9 $ \& $a_{6}=2 $ and then $ a_{7}=10 $. From here we see that $ 10 $ will occur two times in $ lk(2) $ which is not possible.
	
	\hspace{-0.6cm}\underline{\textbf{If} $ \boldsymbol{(a,b,c,d)=(3,2,10,4)} $} then $ lk(4)=C{9}([3,a_{1},b_{1}],[b_{1},8,b_{2}],[b_{2},a_{2},5],[5,6,0]) $, $ lk(3)\allowbreak=C_{9}([4,b_{1},a_{1}],[a_{1},b_{3},9],[9,8,2],[2,1,0]) $ and $ lk(2)=C_{9}([10,a_{3},a_{4}],[a_{4},a_{5},1],[1,0,3],[3,9,\allowbreak8]) $. From this we see that $ a_{1}=11 $ \& $ b_{1}\in\{7,10\} $.
	
	\textbf{If} $ \boldsymbol{b_{1}=7} $ then $ b_{2}=10 $ and $ b_{3}\in\{5,6,10\} $. For $ \boldsymbol{b_{3}\in\{6,10\}} $, we see that $ a_{5}=5 $ \& $ a_{3},a_{4}\in\{6,11\} $. If $ \boldsymbol{(a_{3},a_{4})=(6,11)} $ then $ b_{3}=6 $ and after completing $ lk(11) $ we will see that $ 1 $ will occur two times in $ lk(9) $. If $ \boldsymbol{(a_{3},a_{4})=(11,6)} $ then from $ lk(6) $ we see that $ [6,7,11] $ will be a 3-gon and then $ 11 $ will occur two times in $ lk(7) $ which is a contradiction. If $ \boldsymbol{b_{3}=5} $ then $ 10\in V(lk(a')) $ for each $ a'\in V\diagdown\{0,3\} $. Now either $ lk(9)=C_{9}([c_{1},10,5],[5,11,3],[3,2,8],[8,0,1]) $ which implies $ [5,10] $ form an edge and a non-edge in two different 4-gon OR $ lk(9)=C_{9}([c_{1},10,1],[1,0,8],[8,2,3],[3,11,5]) $ which implies $ lk(10)=C_{9}([2,a_{4},a_{3}],[a_{3},a_{5},a_{2}],[a_{2},5,4],[4,7,8]) $. From here we see that $ a_{2}=1,a_{3}=c_{1} $. Then $ c_{1}=6 $ which implies $ [1,7,11] $ form a 3-gon \& $ a_{3}\in\{5,7\} $, then $ a_{4}=7 $ i.e. $ [7,10] $ form a non-edge in two 4-gon which is not possible.
	
	\textbf{If} $ \boldsymbol{b_{1}=10} $ then $ b_{2}=7 $ and $ lk(10)=C_{9}([2,a_{4},a_{3}],[a_{3},a_{6},11],[11,3,4],[4,7,8]) $ which implies $ a_{3},a_{4}\in\{5,6\} $. If $ \boldsymbol{(a_{3},a_{4})=(6,5)} $ then $ a_{6}=1 $ and $ [5,9,11] $ form a 3-gon which means $ [9,11] $ form an edge and a non-edge in two different 4-gon, which is not possible. If $ \boldsymbol{(a_{3},a_{4})=(5,6)} $ then $ lk(5)=C_{9}([a_{6},11,10],[10,2,6],[6,0,4],[4,7,a_{2}]) $. Then $ a_{6}=1, a_{2}=9 $ and $ [6,7,11] $ form a 3-gon. Now $ lk(9)=C_{9}([5,7,4],[4,11,3],[3,2,8],[8,0,1]) $ which implies $ b_{3}=4 $ which is not possible.
	
	\hspace{-0.6cm}\underline{\textbf{If} $ \boldsymbol{(a,b,c,d)=(5,2,10,4)} $} then $ lk(2)=C_{9}([10,a_{1},b_{1}],[b_{1},0,b_{2}],[b_{2},a_{2},5],[5,9,8]) $ and $ lk(4)=C_{9}([3,a_{3},b_{3}],[b_{3},8,b_{4}],[b_{4},a_{4},5],[5,6,0]) $. From here we see that $ a_{3}\in\{9,11\},b_{3}\in\{7,10\} $.
	
	$ \boldsymbol{(a_{3},b_{3})=(9,7)}\Rightarrow b_{4}=10,a_{4}=11 $. Now $ lk(10)=C_{9}([8,7,4],[4,5,11],[11,a_{5},a_{1}],\allowbreak [a_{1},b_{1},2]) $ which implies $ a_{1}=6 $ and then for any $ b_{1} $, $ lk(6) $ will not possible.
	
	$ \boldsymbol{(a_{3},b_{3})=(9,10)}\Rightarrow b_{4}=7,a_{4}=11 $ which implies $ lk(10)=C_{9}([2,b_{1},a_{1}],[a_{1},a_{5},9],\allowbreak [9,3,4],[4,7,8]) $. From here we see that $ b_{1}=1 $ \& $ a_{5}=5,a_{1}=1 $. Now $ lk(3)=C_{9}([4,10,9],\allowbreak[9,a_{6},a_{2}],[a_{2},5,2],[2,1,0]) $. Therefore $ a_{2} $ can not be $ 11 $, hence $ a_{6}=11 $ and then $ [9,11] $ form an edge and a non-edge in two different 4-gon which is a contradiction.
	
	$ \boldsymbol{(a_{3},b_{3})=(11,7)}\Rightarrow b_{4}=10 $. Then $ lk(10)=C_{9}([2,b_{1},a_{1}],[a_{1},a_{5},a_{4}],[a_{4},5,4],[4,7,\allowbreak8]) $ which implies $ a_{1}=6 $. To exist $ lk(6) $, $ a_{5}=7 $ \& $ b_{1}=3 $. Now $ lk(6)=C_{9}([1,2,10],[10,a_{4},\allowbreak 7],[7,8,0],[0,4,5]) $ which implies $ [7,10] $ form non-edge in two 4-gon which is not possible.
	
	$ \boldsymbol{(a_{3},a_{4})=(11,10)}\Rightarrow b_{4}=7 $. Then $ lk(10)=C_{9}([2,b_{1},a_{1}],[a_{1},a_{5},11],[11,3,4],[4,7,\allowbreak 8]) $ which implies $ a_{1}=6\Rightarrow a_{5}=5,b_{1}=1 $.Therefore $ lk(6)=C_{9}([7,8,0],[0,4,5],[5,11,\allowbreak 10], [10,2,1]) $ and then $ [5,9,11] $ form a 3-gon which implies $ lk(5)=C_{9}([11,10,6],[6,0,\allowbreak 4],[4,7,2],[2,8,9])\Rightarrow a_{4}=2 $ and then $ lk(2) $ will not possible.
	
	\hspace{-0.6cm}\underline{\textbf{If} $ \boldsymbol{(a,b,c,d)=(3,5,10,4)} $} then $ lk(4)=C_{9}([3,a_{1},b_{1}],[b_{1},8,b_{1}],[b_{1},a_{2},5],[5,6,0]) $ and $ lk(5)\allowbreak =C_{9}([10,b_{2},4],[4,0,6],[6,a_{4},3],[3,9,8]) $. We see that $ [10,b_{2}] $ form an edge and a non-edge in two 4-gon which is not possible.
	
	\hspace{-0.6cm}\underline{\textbf{If} $ \boldsymbol{(a,b,c,d)=(2,11,10,4)\approx(2,10,11,4)} $} then $ lk(4)=C_{9}([3,a_{1},b_{1}],[b_{1},8,b_{2}],[b_{2},a_{2},\allowbreak5],[5,6,0]) $ where $ a_{1}\in\{9,11\},b_{1}\in\{7,10\} $.
	
	$ \boldsymbol{(a_{1},b_{1})=(9,7)}\Rightarrow b_{2}=10,a_{2}\in\{1,2\} $ and either $ lk(7)=C_{9}([a_{3},a_{4},9],[9,3,4],\allowbreak [4,10,8],[8,0,6]) $ which implies one of $ a_{3},a_{4} $ will be $ 11 $ as $ 11\notin V(lk(0)),V(lk(4)) $; but for any one, it is not possible as $ [9,11] $ is a non-edge in a 4-gon OR $ lk(7)=C_{9}([a_{3},a_{4},6],[6,0,8],\allowbreak [8,10,4],[4,3,9]) $ which implies $ a_{4}=11 $ and $ a_{3}\in\{1,2\} $.  For $ a_{3}=1 $, $ lk(9)=C_{9}([1,0,8],[8,\allowbreak11,2],[2,5,3],[3,4,7])  $ i.e. $ [2,3] $ form an edge and a non-edge in two 4-gon which is not possible. For $ a_{3}=2 $, $ lk(9)=C_{9}([7,4,3],[3,5,1],[1,0,8],[8,11,2])\Rightarrow lk(1)=C_{9}([6,11,2],[2,3,\allowbreak0],[0,8,9],[9,3,5]) $. We see that $ [2,6] $ form a non-edge in two 4-gon which is not possible.
	
	$ \boldsymbol{(a_{1},b_{1})=(9,10)} \Rightarrow b_{2}=7 $ and $ lk(9)=C_{9}([2,11,8],[8,0,1],[1,a_{3},3],[3,4,10]) $ which implies $ [5,6,7] $ form a 3-gon which is a contradiction.
	
	$ \boldsymbol{(a_{1},b_{1})=(11,7)}\Rightarrow b_{2}=10,a_{2}\in\{1,2,9\} $. Now $ lk(7)=C_{9}([a_{3},a_{4},11],[11,3,4],\allowbreak [4,10,8],[8,0,6]) $. Therefore $ a_{3}=1 $ as $ [2,11] $ is an edge in a 4-gon, so $ [2,5,9] $ form a 3-gon. From $ lk(1) $ we see that $ a_{4}=2 $, therefore $ lk(1)=C_{9}([6,10,9],[9,8,0],[0,3,2],[2,\allowbreak 11,7]) $, $ lk(6)=C_{9}([1,9,10],[10,a_{5},5],[5,4,0],[0,8,7]) $ which implies $ [5,10] $ form non-edge in two 4-gon which is not possible.
	
	$ \boldsymbol{(a_{1},b_{1})=(11,10)}$ then $ 11 $ will occur two times in $ lk(10) $ which is a contradiction.
	
	\hspace{-0.6cm}\underline{\textbf{If} $ \boldsymbol{(a,b,c,d)=(3,11,10,4)\approx(3,10,11,4)} $} then $ lk(3)=C_{9}([4,a_{1},b_{1}],[b_{1},8,b_{2}],[b_{2},a_{2},\allowbreak2],[2,1,0]) $ where $ a_{1}\in\{7,10\},b_{1}\in\{9,11\} $.
	
	$ \boldsymbol{(a_{1},b_{1})=(7,9)}\Rightarrow b_{2}=11 $ and $ a_{2}\in\{5,6\} $ that means $ 10\in V(lk(a')) $ for each $ a'\in V\setminus \{0,3\} $. Now either $ lk(9)=C_{9}([a_{3},a_{4},1],[1,0,8],[8,11,3],[3,\allowbreak 4,7]) $ or $ lk(9)=C_{9}([a_{3},a_{4},7],\allowbreak[7,\allowbreak4,3],[3,11,8],[8,0,1]) $. For $ lk(9)=C_{9}([a_{3},a_{4},1],[1,0,8],[8,\allowbreak 11,3],[3,4,7]) $, $ lk(7)=C_{9}(\allowbreak[a_{3},\allowbreak a_{5},6],[6,0,8],[8,10,4],[4,3,9]) $ and then $ a_{3}=5 $ which implies $ [5,6] $ form an edge and a non-edge in two 4-gon which is not possible. For $ lk(9)=C_{9}([a_{3},a_{4},7],[7,4,3],\allowbreak [3,11,8],[8,0,1]) $, $ a_{3}=5 $ which means $ a_{4}=10 $ and then $ [7,10] $ form an edge and non-edge in two 4-gon which is not possible.
	
	$ \boldsymbol{(a_{1},b_{1})=(7,11)}\Rightarrow b_{2}=11 $ and $ lk(7)=C_{9}([a_{3},a_{4},11],[11,3,4],[4,10,8],[8,0,\allowbreak 6]) $ and $ lk(11)=C_{9}([10,a_{5},a_{4}],[a_{4},a_{3},7],[7,4,3],[3,9,8]) $. Then $ a_{3}\in\{1,2\} $ and $ a_{4}\in\{5,12\} $. For $ a_{3}=1 $, from $ lk(1) $ we see that $ a_{4}=2 $. Therefore after complete $ lk(1) $ we see that $ 5 $ will occur two times in $ lk(6) $. For $ a_{3}=2 $ then after completing $ lk(2) $ we see that $ lk(5) $ is not possible.
	
	$ \boldsymbol{(a_{1},b_{1})=(10,9)}\Rightarrow b_{2}=11 $. Now $ lk(9)=C_{9}([a_{3},a_{4},10],[10,4,3],[3,11,8],[8,0,\allowbreak 1]) $ which implies $ a_{3}\in\{5,6\} $ and $ a_{4}\in\{2,5,6\} $. When $ a_{3}=5 $, $ [2,6,7] $ form a 3-gon. Then $ lk(10)=C_{9}([11,a_{5},a_{4}],[a_{4},5,9],[9,3,4],[4,7,8]) $ which implies $ a_{4}=6 $. Now $ lk(6)=C_{9}([2,11,10],[10,9,5],[5,4,8],[8,0,7]) $ which implies $ a_{2}=5 $ and then $ [2,11] $ form an edge and a non-edge in two 4-gon which is not possible. For $ a_{3}=6 $, $ [2,5,6] $ form a 3-gon and then $ lk(10)=C_{9}([11,a_{5},a_{4}],[a_{4},6,9],[9,3,4],[4,7,8]) $ which implies $ a_{4}=5 $. Therefore $ lk(5)=C_{9}([2,a_{6},4],[4,0,6],[6,9,10],[10,11,7])\Rightarrow a_{5}=7 $ that means $ 7 $ occur two times in $ lk(10) $ which is a contradiction.
	
	$ \boldsymbol{(a_{1},b_{1})=(10,11)}|\Rightarrow $ $ 10 $ will occur two times in $ lk(11) $ which is a contradiction.
	
	\hspace{-0.6cm}\underline{\textbf{If} $ \boldsymbol{(a,b,c,d)=(5,11,10,4)\approx(5,10,11,4)} $} then $ lk(4)=C_{9}([3,a_{1},b_{1}],[b_{1},8,b_{2}],[b_{2},a_{2},\allowbreak5],[5,6,0]) $ which implies $ a_{1}\in \{9,11\},b_{1}\in\{7,10\} $.
	
	$ \boldsymbol{(a_{1},b_{1})=(9,7)}\Rightarrow b_{2}=10 $ and $ a_{2}\in\{1,2\} $. For $ a_{2}=1 $, $ lk(10)=C_{9}([11,a_{3},a_{4}],[a_{4},\allowbreak a_{5},1],[1,5,4],[4,7,8]) $ which implies $ a_{4}=6 $ and then to complete $ lk(6) $, one of $ a_{3},a_{5} $ has to be $ 5 $ or $ 7 $ which is not possible. For $ a_{2}=2 $, $ lk(10)=C_{9}([11,a_{3},a_{4}],[a_{4},a_{5},2],[2,5,4],\allowbreak [4,7,8]) $ which $ a_{4}=6 $ and then to complete $ lk(6) $, one of $ a_{3},a_{5} $ has to be $ 5 $ or $ 7 $ which is not possible.
	
	$ \boldsymbol{(a_{1},b_{1})=(9,10)}\Rightarrow b_{2}=7 $. We have $ [3,9] $ \& $ [9,10] $ can not be an edge of a 3-gon, therefore $ lk(9) $ is not possible.
	
	$ \boldsymbol{(a_{1},b_{1})=(11,7)}\Rightarrow b_{2}=10,a_{2}\in\{1,2,9\} $. Now $ lk)7 =C_{9}([a_{3},a_{4},11],[11,3,4],[4,10,\allowbreak 8],[8,0,6])$ and $ lk(11)=C_{9}([10,a_{3},3],[3,4,7],[7,a_{5},5],[5,9,8]) $ which implies $ a_{10}=10 $ which is a contradiction.
	
	$ \boldsymbol{(a_{1},b_{1})=(11,10)}\Rightarrow 11 $ will occur two times in $ lk(10) $ which is a contradiction.
	
	\hspace{-0.6cm}\underline{\textbf{If} $ \boldsymbol{(a,b,c,d)=(4,10,2,5)} $} then $ lk(2)=C_{9}([10,a_{1},b_{1}],[b_{1},0,b_{2}],[b_{2},a_{2},5],[5,7,8]) $ which implies $ a_{1}\in\{6,11\},b_{1}\in\{1,3\} $.
	
	For $ \boldsymbol{(a_{1},b_{1})=(6,1)} $, to exist $ lk(5) $ and $ lk(6) $, $ [1,5,6] $ will have to be a 3-gon. But then $ lk(5) $ will not complete which is a contradiction.
	
	$ \boldsymbol{(a_{1},b_{1})=(6,3)}\Rightarrow a_{2}=11,b_{2}=1 $ and then $ 3 $ will occur two in two 3-gon which is not possible.
	
	$ \boldsymbol{(a_{1},b_{1})=(11,1)}\Rightarrow b_{2}=3 $ and either $ lk(1)=C_{9}([a_{3},a_{4},11],[11,10,2], [2,3,0],[0,8,\allowbreak 9]) $ or $ lk(1)=C_{9}([a_{3},a_{4},9],[9,8,0],[0,3,2],[2,10,11]) $. $ lk(1)=C_{9}([a_{3},a_{4},11],[11,10,2],\allowbreak [2,3,0],[0,8,9])\Rightarrow lk(9)=C_{9}([a_{3},a_{5},a_{6}],[a_{6},a_{7},4],[4,10,8],[8,0,1]) $. As $ [11,a_{3}] $ form an edge in a 4-gon and $ 11\notin V(lk(0)),V(lk(8)) $, therefore $ a_{7}=11 $ and then $ lk(4) $ is not possible. $ lk(1)=C_{9}([a_{3},a_{4},9],[9,8,0],[0,3,2],[2,10,11])\Rightarrow lk(9)=C_{9}([a_{5},a_{6},4],[4,10,8],[8,0,\allowbreak 1],[1,a_{3},a_{4}]), lk(4)=C_{9}([3,a_{7},a_{9}],[a_{9},8,a_{10}],[a_{10},a_{8},5],[5,6,0]) $ which implies $ a_{7},a_{8}\in\{7,\allowbreak 11\} $ and one of $ a_{7},a_{8} $ is $ a_{5} $. Therefore $ a_{5}=7; a_{6}\in\{3,5\};a_{3},a_{4}\in\{5,6\};a_{8}\in \{7,a_{2}\};a_{2}\in\{2,9\} $ and $ [5,6] $ can not be an edge in a 3-gon, therefore $ a_{8}=a_{2} $ which is a contradiction.
	
	$ \boldsymbol{(a_{1},b_{1})=(11,3)}\Rightarrow b_{2}=1,a_{2}\in\{4,6\} $ and either $ lk(1)=C_{9}([a_{3},a_{4},a_{2}],[a_{2},5,2],[2,\allowbreak 3,0],[0,8,9]) $ or $ lk(1)=C_{9}([a_{3},a_{4},9],[9,8,0],[0,3,2],[2,5,a_{2}]) $. $ lk(1)=C_{9}([a_{3},a_{4},a_{2}],\allowbreak [a_{2},5,\allowbreak2],[2,3,0],[0,8,9]) $ implies one of $ a_{3},a_{4} $ is $ 11 $ and $ a_{3}\in\{6,7,11\} $. If $ a_{2}=6 $ then $ a_{3}=11,a_{4}=10 $ which implies $ 10 $ occur in two 3-gon which is not possible. If $ a_{2}=4 $ then $ lk(4) $ is not possible.  $ lk(1)=C_{9}([a_{3},a_{4},9],[9,8,0],[0,3,2],[2,5,a_{2}]) $ implies $ lk(9)=C_{9}([a_{4},a_{3},1],[1,0,8],[8,\allowbreak10,4],[4,a_{5},a_{6}]) $ and one of $ a_{3},a_{4} $ is $ 11; a_{3},a_{4}\in\{6,7,11\};a_{2}=6 $. Therefore $ a_{3}=11,a_{4}=7 $ and $ a_{6}=5 $ which implies $ [4,5] $ form an edge and a non-edge in two 4-gon which is a contradiction.
	
	\hspace{-0.6cm}\underline{\textbf{If} $ \boldsymbol{(a,b,c,d)=(3,2,10,5)} $} then $ lk(2)=C_{9}([10,a_{1},a_{2}],[a_{2},a_{3},1],[1,0,3],[3,9,8]) $ which implies $ a_{1},a_{2}\in\{4,6,11\} $.
	
	$ \boldsymbol{a_{2}=4}\Rightarrow a_{1}=11 $ and then $ lk(4)=C_{9}([3,a_{4},11],[11,10,2],[2,1,5],[5,6,0]) $ which implies $ a_{3}=5,a_{4}\in\{7,9\} $. Now $ lk(10)=C_{9}([2,4,11],[11,a_{5},a_{6}],[a_{6},a_{7},5],[5,7,8]) \Rightarrow a_{7}\in\{1,6\}$. But $ a_{7}=1\Rightarrow [5,6,7]$ make a 3-gon, therefore $ a_{7}=1 $ and then $ lk(5)=C_{9}([7,8,10],[10,a_{6},6],[6,0,4],[4,2,1]) $. From here we see that $ a_{6}=11 $ and then $ 11 $ occur two times in $ lk(10) $ which is not possible.
	
	$ \boldsymbol{a_{2}=6}\Rightarrow a_{1}=11,a_{3}\in\{5,7\} $. For $ a_{3}=5 $, $ lk(6)=C_{9}([7,8,0],[0,4,5],[5,1,2],[2,10,\allowbreak 11]) $ and then $ [1,5,9] $ form a 3-gon which implies $ 9 $ occur two times in $ lk(1) $ which is a contradiction. For $ a_{3}=7 $, $ lk(6)=C_{9}([5,4,0],[0,8,7],[7,1,2],[2,10,11]) $ and then $ [1,7,9] $ form a 3-gon which implies $ 9 $ occur two times in $ lk(1) $ which is a contradiction.
	
	$ \boldsymbol{a_{2}=11}\Rightarrow a_{1}\in\{4,6\} $. For $ a_{1}=4 $, $ lk(4)=C_{9}([3,9,10],[10,2,11],[11,7,5],[5,6,0]) $ and $ lk(10)=C_{9}([8,7,5],[5,1,9],[9,3,4],[4,11,2]) $ and after that $ lk(9) $ will not possible. For $ a_{1}=6 $, $ lk(6)=C_{9}([5,4,0],[0,8,7],[7,a_{4},10],[10,2,11]) $ which implies $ [1,7,9] $ form a 3-gon. Then $ lk(7)=C_{9}([1,11,5],[5,10,8],[8,0,6],[6,10,9]) $ and $ lk(9)=C_{9}([7,6,10],[10,11,3],[3,\allowbreak2,8],\allowbreak[8,0,1]) $ which implies $ 11 $ occur two times in $ lk(10) $ which is a contradiction.
	
	\hspace{-0.6cm}\underline{\textbf{If} $ \boldsymbol{(a,b,c,d)=(11,2,10,5)\approx(10,2,11,5)} $} then $ lk(2)=C_{9}([10,a_{1},b_{1}],[b_{1},0,b_{2}],[b_{2},a_{2},\allowbreak11],[11,9,8]) $ which implies $ a_{1}\in\{4,6\},b_{1}\in\{1,3\} $.
	
	$ \boldsymbol{(a_{1},b_{1})=(4,1)} \Rightarrow b_{2}=3 $ and $ lk(4)=C_{9}([3,a_{3},10],[10,2,1],[1,a_{4},5],[5,6,0]) $ which implies $ a_{4}=11 $. Now $ lk(1)=C_{9}([9,8,0],[0,3,2],[2,10,4],[4,5,11]) $ and then $ [5,6,7] $ will form a 3-gon which is a contradiction.
	
	$ \boldsymbol{(a_{1},b_{1})=(4,3)}\Rightarrow $ $ 4 $ will occur two times in $ lk(3) $ which is a contradiction.
	
	$ \boldsymbol{(a_{1},b_{1})=(6,1)}\Rightarrow a_{2}=4,b_{2}=3 $ and $ lk(6)=C_{9}([5,4,0],[0,8,7],[7,11,10],[10,2,1]) $. From here we see that $ [7,10] $ form non-edge in two 4-gon which is a contradiction.
	
	$ \boldsymbol{(a_{1},b_{1})=(6,3)} $ then $ lk(6) $ is not possible as for that either $ [6,3] $ or $ [6,10] $ have to form an edge in a 3-gon which is not possible.
	
	\hspace{-0.6cm}\underline{\textbf{If} $ \boldsymbol{(a,b,c,d)=(2,11,10,5)\approx(2,10,11,5)} $} then $ (0,8)(1,9)(3,11)(4,10)(6,7):(2,11,\allowbreak10,5)\approx (2,3,4,\allowbreak5) $ and $ (a,b,c,d)=(2,3,4,5) $ is not possible.
	
	\hspace{-0.6cm}\underline{\textbf{If} $ \boldsymbol{(a,b,c,d)=(3,11,10,5)\approx(3,10,11,5)} $} then $ lk(3)=C_{9}([4,a_{1},b_{1}],[b_{1},8,b_{2}],[b_{2},a_{2},\allowbreak2],[2,1,0]) $ which implies $ a_{1}\in\{7,10\},b_{1}\in\{9,11\} $.
	
	$ \boldsymbol{(a_{1},b_{1})=(7,9)} $ then $ lk(7) $ is not possible.
	
	$ \boldsymbol{(a_{1},b_{1})=(7,11)} $, similar to above.
	
	$ \boldsymbol{(a_{1},b_{1})=(10,9)}\Rightarrow b_{2}=11 $ and $ lk(9)=C_{9}([a_{3},a_{4},10],[10,4,3],[3,11,8],[8,0,1]) $. Now $ lk(10)=C_{9}([11,a_{5},4],[4,3,9],[9,a_{3},5],[5,7,8]) $ which implies $ a_{4}=5,a_{3}=6,a_{5}\in\{1,2\} $ and $ [2,5,7] $ form a 3-gon. Therefore $ lk(5)=C_{9}([2,a_{6},4],[4,0,6],[6,9,10],[10,8,\allowbreak 7]) $. We see that $ [4,11] $ is an edge in a 4-gon, therefore $ a_{6}\neq 11 $ i.e. $ a_{6}=1 $ and then from $ lk(1) $ we see that $ [4,6] $ form non-edge in two 4-gon which is a contradiction.
	
	$ \boldsymbol{(a_{1},b_{1})=(10,11)} $ implies that $ 10 $ will occur two times in $ lk(11) $ which is not possible.
	
	\hspace{-0.6cm}\underline{\textbf{If} $ \boldsymbol{(a,b,c,d)=(4,11,10,5)\approx(4,10,11,5)} $} then $ lk(4)=C_{9}([3,a_{1},b_{1}],[b_{1},8,b_{2}],[b_{2},a_{2},\allowbreak5],[5,6,0]) $ which implies $ a_{1}\in\{7,10\},b_{1}\in\{9,11\} $.
	
	$ \boldsymbol{(a_{1},b_{1})=(7,9)}\Rightarrow b_{2}=11 $. Now $ lk(9)=C_{9}([a_{3},a_{4},1],[1,0,8],[8,11,4],[4,3,7]) $. As $ a_{4}\neq 10 $, therefore $ a_{4}=10 $ and then $ a_{3}=6 $. Therefore $ [1,2,5] $ form a 3-gon. Now from $ lk(1) $ we see that $ [5,10] $ from a non-edge and we have $ [5,10] $ is an edge, which is a contradiction.
	
	$ \boldsymbol{(a_{1},b_{1})=(7,11)}\Rightarrow b_{2}=9 $. We see that to complete $ lk(7) $, either $ [7,11] $ or $ [7,3] $ has to be an edge in a 3-gon which is not possible.
	
	$ \boldsymbol{(a_{1},b_{1})=(10,9)}\Rightarrow b_{2}=11 $ and $ lk(9)=C_{9}([a_{3},a_{4},10],[10,3,4],[4,11,8],[8,0,1]) $ which implies $ a_{3}=6 $ and then $ [2,5,7] $ form a 3-gon. Now $ lk(6)=C_{9}([1,a_{5},7],[7,8,0],\allowbreak [0,4,5],[5,10,9]) $ which implies $ a_{4}=5,a_{5}=11 $. Now $ lk(7)=C_{9}([2,a_{6},11],[11,1,6],[6,\allowbreak 0,8],[8,10,5]) $ and $ lk(5)=C_{9}([2,11,4],[4,0,6],[6,9,10],[10,8,7]) $ which implies $ a_{2}=2 $ and then we see that $ [2,11] $ form an edge and a non-edge in two 4-gon which is a contradiction.
	
	$ \boldsymbol{(a_{1},b_{1})=(10,11)} $ implies $ 10 $ occur two times in $ lk(11) $ which is not possible.
	
	\hspace{-0.6cm}\underline{\textbf{If} $ \boldsymbol{(a,b,c,d)=(3,5,2,10)} $} then $ lk(3)=C_{9}([4,a_{1},9],[9,8,5],[5,a_{2},2],[2,1,0]) $. We see that $ [2,5] $ form an edge and a non-edge in two 4-gon which is not possible.
	
	\hspace{-0.6cm}\underline{\textbf{If} $ \boldsymbol{(a,b,c,d)=(4,5,2,10)} $} then $ lk(2)=C_{9}([5,a_{1},b_{1}],[b_{1},0,b_{2}],[b_{2},a_{2},10],[10,7,8]) $ and $ lk(5)=C_{9}([2,b_{1},a_{1}],[a_{1},a_{3},6],[6,0,4],[4,9,8]) $ which implies $ a_{1}=11,b_{1}\in\{1,3\} $.
	
	$ \boldsymbol{b_{1}=1} $ then $ b_{2}=3 $ and $ a_{2}\in\{6,9\} $. For $ a_{2}=6 $, $ lk(6)=C_{9}([7,8,0],[0,4,5],[5,11,\allowbreak 3],[3,2,10]) $ and then $ [1,9,11] $ form a 3-gon which implies $ 9 $ will occur two times in $ lk(1) $ which is not possible. If $ a_{2}=9 $ then $ lk(9)=C_{9}([1,0,8],[8,5,4],[4,11,3],[3,2,10]) $ and then $ [3,4] $ form an edge and a non-edge in two 4-gon which is a contradiction.
	
	$ \boldsymbol{b_{1}=3} $ then $ b_{2}=1 $ and $ a_{2}\in\{4,6\} $. If $ a_{2}=4 $ then $ lk(4) $ is not possible and if $ a_{3}=6 $ then $ a_{3}\in\{1,10\} $. If $ a_{3}=1 $ then $ lk(6)=C_{9}([7,8,0],[0,4,5],[5,11,1],[1,2,10]) $ which implies $ 2 $ occur two times in $ lk(10) $. If $ a_{3}=10 $ then $ lk(6)=C_{9}([7,8,0],[0,4,5],[5,11,10],[10,2,\allowbreak 1]) $ and then $ [9,10,11] $ form a 3-gon. Now $ lk(10)=C_{9}([9,a_{4},7],[7,8,2],[2,1,6],[6,5,11]) $ implies $ a_{4}=3 $ an $ 3\notin V(lk(8)),V(lk(6)) $. Now after completing $ lk(7) $ and $ lk(11) $ we see that $ [1,9] $ form non-edge in a 4-gon and we have $ [1,9] $ is an edge in a 4-gon which is a contradiction.
	\begin{sloppypar}
	\hspace{-0.6cm}\underline{\textbf{If} $ \boldsymbol{(a,b,c,d)=(3,2,5,10)} $} then $ lk(5)=C_{9}([2,11,b_{1}],[b_{1},0,b_{2}],[b_{2},a_{1},10],[10,7,8]) $ which implies $ b_{1}\in\{4,6\} $.
	\end{sloppypar}
	$ \boldsymbol{b_{1}=4} $ then $ lk(2)=C_{9}([8,9,3],[3,0,1],[1,a_{2},11],[11,4,5]) $. Now either $ lk(1)=C_{9}([a_{3},\allowbreak a_{4},a_{2}],[a_{2},11,2],[2,3,0],[0,8,1]) $ or $ lk(1)=C_{8}([a_{3},a_{4},9],[9,8,0],[0,3,2],[2,11,a_{2}]) $. For  $ lk(1)\allowbreak=C_{9}([a_{3},a_{4},a_{2}],[a_{2},11,2],[2,3,0],[0,8,1]) $, $ a_{2}=10 $ as for $ a_{2}\in\{6,7\} $, $ [6,7] $ will form a non-edge which will not possible. Therefore $ a_{4}\in\{7,a_{1}\} $. But $ a_{4}\neq a_{1} $, so $ a_{4}=7,a_{3}=6 $ and then $ [7,10,11] $ form a 3-gon which is a contradiction. For $ lk(1)=C_{8}([a_{3},a_{4},9],[9,8,0],[0,3,2],\allowbreak[2,11,a_{2}]) $, $ [9,11] $ form a edge in a 3-gon and then $ lk(9)=C_{9}([11,a_{5},3],[3,2,8],[8,0,1],[1,a_{3},\allowbreak a_{4}]) $. From here we see that $ a_{5}=4 $ and then $ 2 $ occur two times in $ lk(3) $ which is a contradiction.
	
	$ \boldsymbol{b_{1}=6}\Rightarrow b_{2}=4,a_{1}\in\{1,9\} $ and $ lk(2)=C_{9}([8,9,3],[3,0,1],[1,a_{2},11],[11,6,5]) $ which implies $ a_{2}\in\{4,7,10\} $. For $ a_{2}=4 $, $ lk(4)=C_{9}([3,7,11],[11,2,a_{1}],[a_{1},10,5],[5,6,0]) $ which implies $ a_{1}=1 $. Now $ lk(1)=C_{9}([10,5,4],[4,11,2],[2,3,0],[0,8,9]) $. Therefore $ [6,7,11] $ form a 3-gon and $ lk(7)=C_{9}([6,0,8],[8,5,10],[10,9,3],[3,4,11]) $ and then $ 3 $ will occur two times in $ lk(4) $ which is not possible. For $ a_{4}\neq 4 $ then $ lk(4)=C_{9}([3,a_{3},a_{4}],[a_{4},a_{5},a_{1}],[a_{1},10,5],\allowbreak [5,6,0]) $. We see that $ a_{4}=11 $ is not possible as $ [1,11] $ is a non-edge in a 4-gon, therefore $ a_{3}=11 $ and then $ a_{4}=7 $. Hence $ a_{5}\in\{6,10\} $ which is not possible.
	
	\hspace{-0.6cm}\underline{\textbf{If} $ \boldsymbol{(a,b,c,d)=(4,2,5,10)} $} then $ lk(2)=C_{9}([5,a_{1},3],[3,0,1],[1,a_{2},4],[4,9,8]) $, $ lk(5)=C_{9}([2,3,a_{1}],[a_{1},0,4],[4,11,10],[10,7,8]) $ which implies $ a_{1}=6,a_{2}=11 $ and then $ lk(4) $ will not possible.
	
	\hspace{-0.6cm}\underline{\textbf{If} $ \boldsymbol{(a,b,c,d)=(2,11,5,10)\approx(2,10,5,11)} $}  then $ lk(5)=C_{9}([11,a_{1},b_{1}],[b_{1},0,b_{2}],[b_{2},\allowbreak a_{2},10],[10,7,8]) $ which implies $ a_{1}\in\{1,3\},b_{1}\in\{4,6\} $.
	
	$ \boldsymbol{(a_{1},b_{1})=(1,4)}\Rightarrow b_{2}=6 $. We see that to complete $ lk(1) $, either $ [1,11] $ or $ [1,4] $ have to be an edge in a 3-gon which is not possible.
	
	$ \boldsymbol{(a_{1},b_{1})=(1,6)}\Rightarrow b_{2}=4 $ and $ lk(1)=C_{9}([9,8,0],[0,3,2],[2,a_{3},11],[11,5,6]) $ which implies $ [2,11] $ form an edge and a non-edge in two 4-gon which is not possible.
	
	$ \boldsymbol{(a_{1},b_{1})=(3,4)} $ implies $ 3 $ will occur two times in $ lk(4) $ which is a contradiction.
	
	$ \boldsymbol{(a_{1},b_{1})=(3,6)}\Rightarrow b_{2}=4 $ and $ lk(6)=C_{9}([a_{3},a_{4},3],[3,11,5],[5,4,0],[0,8,7]) $. From here we see that $ a_{3}=9 $ and $ [1,2,10] $ form a 3-gon. Now $ lk(9)=C_{9}([7,a_{5},1],[1,0,8],[8,\allowbreak 11,a_{4}],[a_{4},3,6]) $ which implies $ a_{4}=2 $. So $ lk(2)=C_{9}([10,a_{6},11],[11,8,9],[9,6,3],[3,0,\allowbreak 1]) $ and $ lk(1)=C_{9}([10,a_{7},4],[4,7,9],[9,8,0],[0,3,2]) $ which implies $ lk(4) $ is not possible.

	\hspace{-0.6cm}\underline{\textbf{If} $ \boldsymbol{(a,b,c,d)=(4,2,11,10)\approx(4,2,10,11)} $} then $ lk(2)=C_{9}([11,6,3],[3,0,1],[1,5,4],\allowbreak[4,9,8]) $, $ lk(4) =C_{9}([3,10,9],[9,8,2],[2,1,5],[5,6,0]) $ and $ lk(3)=C_{9}([4,9,a_{1}],[a_{1},a_{2},6],[6,\allowbreak11,2],[2,1,0]) $.  From here we see that $ a_{1}=10 $ and $ a_{2}=7 $ as $ 7\notin lk(2),lk(4) $. Now $ lk(6)=C_{9}([11,2,3],[3,\allowbreak10,7],\allowbreak [7,8,0],[0,4,5]) $, this implies $ 10 $ will occur two times in $ lk(7) $ which is a contradiction.
	
	\hspace{-0.6cm}\underline{\textbf{If} $ \boldsymbol{(a,b,c,d)=(5,2,11,10)\approx(5,2,10,11)} $} then $ lk(5)=C_{9}([11,a_{1},b_{1}],[b_{1},0,b_{2}],[b_{2},\allowbreak a_{2},2],[2,9,8]) $ which implies $ a_{1}\in\{1,3\},b_{1}\in\{4,6\} $.
	
	$ \boldsymbol{(a_{1},b_{1})=(1,4)} $, to complete $ lk(1) $, either $ [1,11] $ or $ [1,4] $ have to be an edge in a 4-gon which is not possible.
	
	$ \boldsymbol{(a_{1},b_{1})=(1,6)}\Rightarrow b_{2}=6 $ and $ lk(6)=C_{9}([a_{3},a_{4},7],[7,8,0],[0,4,5],[5,11,1]) $ and $ lk(7)=C_{9}([a_{5},a_{6},a_{4}],[a_{4},a_{3},6],[6,0,8],[8,11,10]) $. As $ 3\notin lk(5),lk(8) $, therefore $ a_{4}=3 $. Now we see that $ a_{5}=9,a_{3}=2,a_{6}=5 $ which implies $ [5,9] $ form an edge and a non-edge in two 4-gon which is a contradiction.
	
	$ \boldsymbol{(a_{1},b_{1})=(3,4)} $ implies $ 3 $ occur two times in $ lk(4) $ which is a contradiction.
	
	$ \boldsymbol{(a_{1},b_{1})=(3,6)}\Rightarrow b_{2}=4 $, $ lk(3)=C_{9}([4,a_{3},11],[11,5,6],[6,a_{4},2],[2,1,0]) $, $ lk(6)=C_{9}([a_{4},2,3],[3,11,5],[5,4,0],[0,8,7]) $ which implies $ [1,2,10] $ form a 3-gon and $ a_{4}=9 $. Now after completing $ lk(2) $ and $ lk(9) $, from $ lk(1) $ we see that $ [10.11] $ form a non-edge and we have $ [10,11] $ is an edge in a 4-gon which is a contradiction.

\hspace{-0.6cm}\underline{\textbf{If} $ \boldsymbol{(a,b,c,d)=(3,5,10,11)} $} then $ lk(3)=C_{8}([4,a_1,9],[9,8,5],[5,a_2,2],[2,1,0]) $ and $ lk(5)\allowbreak=C_{8}([10,a_3,4],[4,0,6],[6,2,3],[3,9,8]) $ which implies $ a_2=6 $ and from $ lk(10) $ we get $ a_3=1 $, therefore $ 11\notin V(lk(0)),V(lk(3)) $ i.e. $ a_1=11 $ and therefore $ V(lk(7))=\{0,1,2,4,6,8,9,\allowbreak10,11\} $. Now we have faces $ [0,1,2,3], [0,1,9,8],[1,4,5,10] $, therefore $ [1,10] $ has to be an edge of a 3-gon, which is a contradiction.

\hspace{-0.6cm}\underline{\textbf{If} $ \boldsymbol{(a,b,c,d)=(3,10,5,11)} $} then $ lk(3)=C_{8}([4,a_1,b_1],[b_1,8,b_2],[b_2,a_2,2],[2,1,0]) $ where $ \{b_1,b_2\}=\{9,10\} $ and $ a_1\in \{7,11\} $. If $ a_1=7 $ then as we have faces $ [5,11,7,8], [0,6,7,8] $ and $ [3,4,7,b_1] $, therefore $ [7,b_1] $ will form an edge in a 3-gon. Therefore $ b_1=9,b_2=10 $ and then $ lk(7)=C_8([6,0,8],[8,5,11],[11,a_3,4],[4,3,9]) $ and $ lk(9)=C_{8}([6,a_4,1],[1,0,8],[8,10,3],[3,\allowbreak4,7]) $ which implies $ [1,2,11] $ is a face and then $ a_4\neq 11 $ i.e. $ 11\notin V(lk(0)),V(lk(4)) $, therefore $ a_2=11, a_4=5 $ which implies $ C(4,0,8,7,9,1,5)\in lk(6) $, which make contradiction. Therefore $ a_1=11 $ which implies $ a_2\in \{6,7\} $ and $ lk(5)=C_{8}([10,a_3,4],[4,0,6],[6,a_4,11],[11,\allowbreak7,8]) $. Now, from $ lk(4) $, we get $ b_1=9,b_2=10 $ and then $ lk(4)=C_{8}([3,9,11],[11,a_5,a_3],[a_3,\allowbreak10,5],[5,6,0]) $ which implies $ a_3=1 $ and then $ [1,10] $ has be an edge of a 3-gon, is a contradiction.
\begin{sloppypar}
\hspace{-0.6cm}\underline{\textbf{If} $ \boldsymbol{(a,b,c,d)=(4,5,10,2)} $} then $ lk(4)=C_{8}([3,a_1,a_2],[a_2,a_3,9],[9,8,5],[5,6,0]) $ and $ lk(5)\allowbreak=C_{8}([10,a_4,a_5],[a_5,a_6,6],[6,0,4],[4,9,8]) $ where $ a_1,a_2\in\{7,10,11\} $. If $ \boldsymbol{a_2=7} $ then from $ lk(7) $ we get $ a_3=2 $ and $ [6,7,a_1] $ is a face which implies $ a_1=11 $. Now $ lk(	7)=C_{8}([11,3,4],[4,9,2],[2,10,8],[8,0,6]) $ and $ lk(6)=C_{8}([11,a_7,a_6],[a_6,a_5,5],[5,4,0],[0,8,7]) $ which implies $ a_4=11,a_7=10 $ and then $ lk(10)=C_{8}([8,a_8,a_9],[a_9,a_10,a_6],[a_6,6,11],[11,\allowbreak a_5,5]) $ which implies $ a_5,a_6\in \{1,2,3\} $ i.e. either $ \{a_5,a_6\}=\{1,2\} $ or $ \{a_5,a_6\}=\{2,3\} $ as $ [1,3] $ is a non-edge of a 3-gon. Now if $ \{a_5,a_6\}=\{2,3\} $ then $ 1\notin V(lk(5)),V(lk(6)),V(lk(7)) $ and if $ \{a_5,a_6\}=\{1,2\} $ then $ 3\notin V(lk(5)),V(lk(6)),V(lk(8)) $, which are not possible. If $ \boldsymbol{a_2=10} $ then from $ lk(10) $, we get $ a_3=2,a_1=11 $. Therefore $ lk(10)=C_{8}([5,a_5,11],[11,3,\allowbreak4],[4,9,2],[2,7,8]) $ which implies $ a_5=1 $ and then $ [1,11] $ has to be an edge of a 3-gon. Now from $ lk(1) $, we get $ a_6=2 $. Therefore $ lk(1)=C_{8}([9,8,0],[0,3,2],[2,6,5],[5,10,11]) $ which implies $ [2,6,7] $ is a face and then $ C(2,10,8,0,6,7)\in lk(7) $, which is a contradiction. Therefore $ \boldsymbol{a_2=11} $. Now if $ a_1=7 $ then $ [7,11] $ has to be an edge of a 3-gon and either $ [6,7] $ or $ [2,7] $ will be an edge of a 3-gon. If $ [6,7] $ is an edge of a 3-gon, then $ [1,2,9] $ will be a face which implies $ C(2,3,0,8,9,1)\in lk(9) $, is a contradiction, therefore $ [2,7,11] $ and $ [1,6,9] $ form faces and then $ lk(7)=C_{8}([2,10,8],[8,0,6],[6,9,4],[4,3,11]) $ which implies $ a_3=6 $, which is not possible, therefore $ a_1=10 $. Now $ lk(10)=C_{8}([8,7,2],[2,a_7,11],[11,4,a_4],[a_4,a_5,5]) $ which implies $ a_4=3,a_6=11 $. Now $ V(lk(11))=\{2,3,4,5,6,9,10,a_5,a_7\} \Rightarrow \{a_5,a_7\}=\{1,7\} $, is a contradiction.
\end{sloppypar}
\hspace{-0.6cm}\underline{\textbf{If} $ \boldsymbol{(a,b,c,d)=(4,5,10,11)} $} then $ lk(4)=C_{8}([3,a_1,a_2],[a_2,a_3,9],[9,8,5],[5,6,0]) $ and $ lk(5)=C_{8}([10,a_4,a_5],[a_5,a_6,6],[6,0,4],[4,9,8]) $ where $ a_1,a_2\in\{7,10,11\} $. If $ \boldsymbol{a_2=7} $ then one of $ a_1,a_3 $ is $ 11 $ and $ [6,7] $ is an edge of a 3-gon. If $ a_3=11 $ then $ a_1\neq 10 $, is a contradiction, therefore $ a_1=11 $ and then $ [6,7,a_3] $ is a face which implies $ [2,6,7] $ and $ [1,9,11] $ form faces which implies $ a_3=2 $. Now $ lk(9)=C_{8}([11,10,2],[2,7,4],[4,5,8],[8,0,1]) $ and then $ lk(2) $ will not possible. If $ \boldsymbol{a_2=10} $ then from $ lk(10) $ we get one of $ a_1,a_3 $ is 11 i.e. $ \{a_1,a_3\}=\{11,a_4\} $. If $ a_3=11 $ then $ a_1=a_4=7 $ which is a contradiction as $ [7,10] $ form edge and non-edge in two 4-gon. Therefore $ a_1=11 $, $ a_3=a_4=2 $ and then $ lk(10)=C_{8}([8,7,11],[11,3,4],[4,9,2],[2,a_5,5]) $ which implies $ a_5=1 $ and then $ lk(1)=C_{8}([9,8,0],[0,3,2],[2,10,5],[5,6,a_6]) $. From here we get $ a_6=11 $ which implies $ [2,6,7] $ is a face and then $ 7\notin V(lk(1)),V(lk(4)),V(lk(5)) $, is a contradiction. Therefore $ \boldsymbol{a_2=11} $. Now if $ a_1=7 $ then $ [6,7] $ will be an edge of a 3-gon. Therefore $ lk(7)=C_{8}([6,0,8],[8,10,11],[11,4,3],[3,b_1,b_2]) $ which implies $ b_2=9 $ and therefore $ [1,2,11] $ form a face. Now from $ lk(9) $, we get $ b_1=1 $, which is not possible from $ lk(3) $. Therefore $ a_1=10 $ and then $ lk(10)=C_{8}([5,a_5,a_4],[a_4,a_7,3],[3,4,11],[11,7,8]) $ which implies $ a_4=9 $ and then $ [5,9] $ form non-edge in two 4-gon, a contradiction.

\hspace{-0.6cm}\underline{\textbf{If} $ \boldsymbol{(a,b,c,d)=(4,10,5,11)} $} then $ lk(4)=C_{8}([3,a_1,10],[10,8,9],[9,a_2,5],[5,6,0]) $ which implies $ lk(5)=C_{8}([10,a_3,6],[6,0,4],[4,9,11],[11,7,8]) $ i.e. $ a_2=11,a_1=7 $. From $ lk(7) $, we see that $ [7,10] $ is an edge of a 3-gon, a contradiction.

\hspace{-0.6cm}\underline{\textbf{If} $ \boldsymbol{(a,b,c,d)=(5,2,10,3)} $} then $ lk(3)=C_{8}([4,a_1,10],[10,8,7],[7,a_2,2],[2,1,0]) $ which implies $ lk(2)=C_{8}([10,a_3,1],[1,0,3],[3,7,5],[5,9,8]) $. From here we see that $ a_2=5,a_1=11=a_3 $ which implies $ C(1,2,8,7,3,4,11)\in lk(10) $, a contradiction.

\hspace{-0.6cm}\underline{\textbf{If} $ \boldsymbol{(a,b,c,d)=(5,10,2,4)} $} then $ lk(2)=C_{8}([10,a_1,3],[3,0,1],[1,a_2,4],[4,7,8]) $ and it implies $ lk(4)=C_{8}([3,a_3,7],[7,8,2],[2,1,5],[5,6,0]) $. Therefore $ a_2=5,a_1=11=a_3 $ which implies $ V(lk(10))=\{1,2,3,5,6,7,8,9,11\} $. Now from $ lk(5) $, we get either $ [1,5,9] $  or $ [5,6,9] $ is a face. But if $ [1,5,9] $ is a face then $ C(1,0,8,10,5)\in lk(9) $, a contradiction. Therefore $ [5,6,9] $ is a face and then $ lk(5)=C_{8}([9,8,10],[10,11,1],[1,2,4],[4,0,6]) $ which implies $ C(1,5,9,8,2,3,11)\in lk(10) $ a contradiction.
\begin{sloppypar}
\hspace{-0.6cm}\underline{\textbf{If} $ \boldsymbol{(a,b,c,d)=(10,2,11,3)} $} then $ lk(2)=C_{8}([11,a_1,1],[1,0,3],[3,a_2,10],[10,9,8]) $ which implies $ lk(3)=C_{8}([4,a_4,11],[11,8,7],[7,10,2],[2,1,0]) $. Therefore $ a_2=7,a_3=9 $ which implies one of $ [4,9] $ and $ [9,11] $ has to be an edge of a 3-gon, a contradiction.
\end{sloppypar}
\hspace{-0.6cm}\underline{\textbf{If} $ \boldsymbol{(a,b,c,d)=(10,2,11,4)} $} then $ lk(2)=C_{8}([11,a_1,b_1],[b_1,0,b_2],[b_2,a_2,10],[10,9,8]) $ and $ lk(4)=C_{8}([3,a_3,b_3],[b_3,8,b_4],[b_4,a_4,5],[5,6,0]) $ where $ \{b_1,b_2\}=\{1,3\} $, $ \{b_3,b_4\}=\{7,11\} $, $ a_1\in\{5,6\} $ and $ a_3\in \{9,10\} $. If $ \boldsymbol{(b_1,b_2)=(1,3)} $ then $ a_3=9 $ which implies $ [9,b_3] $ will be an edge of a 3-gon and then $ (b_3,b_4)=(7,11) $. Now $ lk(9)=C_{8}([10,2,8],[8,0,1],[1,\allowbreak a_5,3],[3,4,7]) $ and $ lk(7)=C_{8}([10,1,6],[6,0,8],[8,11,4],[4,3,9]) $. From here we get, $ [1,5,6] $ is a face which implies $ lk(1) $ is not possible. Therefore $ \boldsymbol{(b_1,b_2)=(3,1)} $ which implies $ (b_3,b_4)=(7,11) $ as $ [3,11] $ can not be non-edge in two 4-gon. Now if $ a_3=9 $ then from $ lk(9) $, we get $ [1,7,9] $ is a face, therefore $ [5,6,10] $ is a face. Now $ lk(7)=C_{8}([1,a_5,6],[6,0,8],[8,11,\allowbreak4],[4,3,9]) $ which implies $ a_5=2 $, which contradict to $ lk(2) $. Therefore $ a_3=10 $ and then $ a_4\in\{1,9\} $. If $ a_4=1 $ then from $ lk(1) $, we get $ [1,5,9] $ and $ [6,7,10] $ form faces. Which implies $ C(3,10,6,0,8,11,4)\in lk(7) $, a contradiction. Therefore $ a_4=9 $ and then from $ lk(9) $, we get either $ [1,5,9] $ or $ [5,9,10] $ is a face. If $ [1,5,9] $ is a face, then $ [6,7,10] $ will form a face which implies $ C(3,10,6,0,8,11,4)\in lk(7) $, a contradiction. Therefore $ [5,9,10] $ and $ [1,6,7] $ form faces. Now $ lk(9)=C_{8}([10,2,8],[8,0,1],[1,a_5,11],11,4,5) $, $ lk(1)=C_{8}([7,10,2],[2,3,0],[0,8,9],[9,11,6]) $ which implies $ a_2=7,a_5=6 $ and then $ lk(7)=C_{8}([1,2,10],[10,3,4],[4,11,8],[8,0,6]) $. From here we see that $ 5\notin V(lk(1)),V(lk(7)),\allowbreak V(lk(8)) $, a contradiction.

\hspace{-0.6cm}\underline{\textbf{If} $ \boldsymbol{(a,b,c,d)=(10,11,2,3)} $} then $ lk(2)=C_{8}([11,a_1,a_2],[a_2,a_3,1],[1,0,3],[3,7,8]) $ and $ lk(3)=C_{8}([4,a_4,a_5],[a_5,a_6,7],[7,8,2],[2,1,0]) $ where $ a_2\in\{4,5,6\} $ and $ a_5\in\{9,10,11\} $. If $ \boldsymbol{a_5=9} $ then from $ lk(9) $ we get one of $ a_4,a_6 $ must be 10 and $ [1,9] $ is an edge of a 3-gon. Therefore either $ [1,5,9] $ or $ [1,6,9] $ is a face. If $ [1,5,9] $ is a face then $ [6,7,10] $ will form a face and then $ lk(7)=C_{8}([10,a_7,a_7],[a_6,9,3],[3,2,8],[8,0,6]) $ which implies $ a_4=10 $. Therefore $ lk(9)=C_{8}([5,7,3],[3,4,10],[10,11,8],[8,0,1]) $ which implies $ a_6=5 $. Now $ lk(10)=C_{8}([7,5,4],[4,3,9],[9,8,11],[11,a_8,6]) $ which implies $ C(5,6,0,3,9,10,7)\in lk(4) $, a contradiction. If $ [1,6,9] $ is a face then $ [5,7,10] $ will form a face and then $ lk(7)=C_{8}([10,a_7,6],[6,0,8],[8,2,3],[3,9,5]) $ which implies $ a_6=5,a_4=10 $. Now $ lk(9)=C_{8}([1,0,\allowbreak8],[8,11,10],[10,4,3],[3,7,6]) $ and then $ lk(6) $ will not possible. If $ \boldsymbol{a_5=11} $ then from $ lk(11) $, we get $ \{a_4,a_6\}=\{10,a_1\} $. Now if $ a_1=a_4 $ then $ a_1\notin V $, therefore $ (a_4,a_6)=(10,a_1) $, which implies $ lk(11)=C_{8}([8,9,10],[10,4,3],[3,7,a_1=a_6],[a_1,a_2,2]) $and $ a_1\in\{5,6\} $. But if $ a_1=6 $ then $ C(0,8,2,3,11,6)\in lk(7) $, a contradiction. Therefore $ a_1=a_6=5 $ which implies $ [5,6] $ is an edge of a 3-gon and either $ [5,6,7] $ or $ [5,6,a_2] $ is a face. But $ [5,6,a_2] $ can not be a face as $ a_2\in\{4,5,6\} $ and $ [5,6,7] $ is also cannot be a face as then $ C(3,11,5,6,0,8,2)\in lk(3) $. Therefore $ \boldsymbol{a_5=10} $. Now from $ lk(3) $ and $ lk(4) $, we get $ a_4\in\{9,11\} $. If $ a_4=9 $ then from $ lk(9) $, we get $ [1,9] $ is an edge of a 3-gon. Therefore either $ [1,5,9] $ or $ [1,6,9] $ is a face. In both cases, $ [7,10] $ will be an edge of a 3-gon, which is a contradiction as $ [7,10] $ is a non-edge of a 4-gon. Therefore $ a_4=11 $ which implies $ a_6=5 $. Now $ lk(11)=C_{8}([8,9,10],[10,3,4],[4,a_7,a_1],[a_1,a_2,2]) $ which implies $ a_1\notin V $.

\begin{sloppypar}
\hspace{-0.6cm}\underline{\textbf{If} $ \boldsymbol{(a,b,c,d)=(10,11,2,5)} $} then $ lk(2)=C_{8}([11,a_1,b_1],[b_10,b_2],[b_2,a_2,5],[5,7,8]) $ where $ \{b_1,b_2\}=\{1,3\} $. If $ \boldsymbol{(b_1,b_2)=(1,3)} $ then $ a_1\in\{4,6\} $ and either $ a_2=6 $ or $ a_2\neq 6 $. If $ a_2=6 $ then $ a_1=4 $ and $ [5,7],[1,9] $ are edges of 3-gon which implies $ [5,7,10] $ and $ [1,6,9] $ are faces. From here we see that $ [4,6] $ is a non-edge of two 4-gon, a contradiction, therefore $ a_2\neq 6 $. Now either $ [5,6,7] $ or $ [5,6,a_2] $ is a face. But $ [5,6,7] $ can not be a face as then $ C(5,2,8,0,6)\in lk(7) $, therefore $ [5,6,a_2] $ is a face. Now $ lk(5)=C_{8}([a_2,3,2],[2,8,7],[7,a_3,4],[4,0,6]) $ which implies $ a_2\in\{9,10\} $. Now if $ a_2=9 $ then $ lk(9) $ will not possible. Therefore $ a_2=10 $ which implies $ [1,7,9] $ is a face and $ a_3\in\{1,9\} $. If $ a_3=1 $ then $ V(lk(9))=\{0,1,3,4,6,7,8,10,11\} $ which implies $ a_6=6 $ and $ lk(7)=C_{8}([9,a_4,6],[6,0,8],[8,2,5],[5,4,1]) $, $ lk(9)=C_{8}([1,0,8],[8,11,10],[10,a_5,a_4],[a_4,6,7]) $. From here we see that $ a_4=3,a_5=4 $ and then $ lk(3) $ will not possible. If $ a_3=9 $ then after completing $ lk(9) $, we see that $ [2,10] $ form edge and non-edge in two 4-gon, a contradiction. If $ \boldsymbol{(b_1,b_2)=(3,1)} $ then $ a_1=6 $ and then $ lk(6) $ will not possible.
\end{sloppypar}

\hspace{-0.6cm}\underline{\textbf{If} $ \boldsymbol{(a,b,c,d)=(3,10,5,2)} $} then $ lk(3)=C_{8}([4,a_1,b_1],[b_1,8,b_2],[b_2,a_2,2],[2,1,0]) $ and $ lk(5)=C_{8}([10,a_3,b_3],[b_3,0,b_4],[b_4,a_4,2],[2,7,8]) $ where $ \{b_1,b_2\}=\{9,10\} $, $ \{b_3,b_4\}=\{4,6\} $, $ a_1\in\{7,11\} $ and $ a_3\in\{1,11\} $. Now if $ a_1=7 $ then $ a_2=11 $ and from $ lk(7) $ we see that $ [6,7,b_1] $ form a face. Which implies $ (b_1,b_2)=(9,10) $ and then $ [1,2,11] $ form a face. Therefore $ C(1,0,3,b_2,11)\in lk(2) $, a contradiction, therefore $ a_1=11 $. If $ a_3=1 $ then $ a_4=11 $ and then from $ lk(1) $, we see that $ [1,9,b_3] $ from a face, which implies $ (b_3,b_4)=(6,4) $. Therefore $ [2,7,11] $ form a face and then $ C(7,8,5,b_4,11)\in lk(2) $, a contradiction, therefore $ a_3=11 $. Now we have faces $ [0,1,2,3] $, $ [2,3,b_2,a_2] $, $ [2,5,8,7] $ and $ [2,5,b_4,a_4] $. Therefore, by considering $ lk(2) $, we get either $ a_2\in\{7,a_4\} $ or $ a_4\in\{1,a_2\} $. But if $ a_2=a_4 $, then $ a_2\notin V $. Therefore either $ a_2=7 $ or $ a_4=1 $. If $ a_2=7 $ then $ {1,2,a_4} $ form a face, which implies $ a_4=9 $ and then $ C(2,3,0,8,9)\in lk(1) $, a contradiction. If $ a_4=1 $ then $ [2,7,a_2] $ form a face which implies $ a_2=6 $ and then $ C(2,5,8,0,6)\in lk(7) $, a contradiction.
\\
	\\

	If \underline{$ \boldsymbol{ lk(8)=C_{9}([a,b,c],[c,d,9],[9,1,0],[0,6,7]) } $} then $ a\in\{2,5,10\} $. If $ a=2 $ then remaining possible of two incomplete 3-gon are $ \{[1,5,6],[9,10,11]\},\{[1,5,9],[6,10,11]\},\{[1,5,\allowbreak10],[6,9,11]\},\{[1,6,9],[5,10,11]\},\{[1,6,10],[5,9,11]\},\{[1,9,10],[5,6,11]\},\{[1,10,11],[5,6,\allowbreak 9]\} $.
	\begin{case}
		When remaining two 3-gons are $ \{[1,5,6],[9,10,11]\} $ then either $ lk(1)=C_{9}([6,a_{2},\allowbreak 2],[2,3,0],[0,8,9],[9,a_{1},5]) $ or $ lk(1)=C_{9}([5,a_{2},2],[2,3,0],[0,8,9],[9,a_{1},6]) $.
		\begin{subcase}
			For $ lk(1)=C_{9}([6,a_{2},2],[2,3,0],[0,8,9],[9,a_{1},5]) $, with out loss of generality, assume $ a_{2}=10 $ and then $ lk(6)=C_{9}([1,2,10],[10,a_{3},7],[7,8,0],[0,4,5]) $ and $ lk(2)=C_{9}([a_{6},a_{7},3],[3,0,1],[1,6,10],[10,a_{4},a_{5}]) $. From here we see that $ a_{5},a_{6}\in\{7,8\} $ and from $ lk(7) $ we see that $ a_{5}=8,a_{6}=7 $. Now $ lk(8)=C_{9}([2,10,a_{4}],[a_{4},a_{8},9],[9,1,0],[0,6,7]) $ and $ lk(10)=C_{9}([a_{9},a_{10},a_{4}],[a_{4},8,2],[2,1,6],[6,7,a_{3}]) $. From here we see that $ a_{3},a_{9}\in\{9,11\} $  \& $ a_{4}\in\{4,5\} $ which implies $ a_{4}=4 $ (from $ lk(9) $). As $ a_{10}\neq 8 $ implies $ a_{9}=3,a_{3}=9 $ which implies $ [3,4] $ form an edge and a non-edge which is a contradiction.
		\end{subcase}
		\begin{subcase}
			For $ lk(1)=C_{9}([5,a_{2},2],[2,3,0],[0,8,9],[9,a_{1},6]) $ then $ lk(6)=C_{9}([1,9,10],\allowbreak [10,a_{3},7],[7,8,0],[0,4,5]) $ which implies $ a_{1}\in\{10,11\} $. With out loss of generality, let $ a_{1}=10 $, then $ a_{2}=11,a_{3}=3 $. Now $ lk(2)=C_{9}([8,a_{5},3],[3,0,1],[1,5,11],[11,a_{4},7]),lk(7)=C_{9}([2,11,a_{4}],[a_{4},a_{6},3],[3,10,6],[6,0,8]), lk(8)=C_{9}([2,3,a_{5}],[a_{5},a_{7},9],[9,1,0],[0,6,7]) $ and $ lk(3)=C_{9}([4,a_{4},7],[7,6,a_{5}],[a_{5},8,2],[2,1,0]) $. From here we see that $ a_{6}=4,a_{5}=10,a_{4}=9 $ which implies $ [9,10] $ form an edge and a non-edge which is a contradiction.
		\end{subcase}
	\end{case}
	\begin{case}
		When remaining two 3-gons are $ \{[1,5,9],[6,10,11]\} $ then either $ lk(6)=C_{9}([10,a_{2},\allowbreak 5],[5,4,0],[0,8,7],[7,a_{1},11]) $ or $ lk(6)=C_{9}([11,a_{2},5],[5,4,0],[0,8,7],[7,a_{1},10]) $.
		\begin{subcase}
			For $ lk(6)=C_{9}([10,a_{2},5],[5,4,0],[0,8,7],[7,a_{1},11]) $ implies $ lk(7)=C_{9}([2,\allowbreak a_{3},a_{4}],[a_{4},a_{5},a_{1}],[a_{1},11,6],[6,0,8]) $ and $ a_{1}\in\{1,3,9\} $. If $ \boldsymbol{a_{1}=1} $ then $ lk(1) $ is not possible. If $ \boldsymbol{a_{1}=9} $ then $ lk(9)=C_{9}([5,a_{4},7],[7,6,11],[11,b_{1},8],[8,0,1]) $ which implies $ a_{5}=5,a_{4}=\allowbreak4 $ and then $ lk(5)=C_{9}([1,a_{6},a_{2}],[a_{2},10,6],[6,0,4],[4,7,9]) $ which implies $ a_{2}\notin V $. If $ \boldsymbol{a_{1}=3} $ then $ lk(3)=C_{9}([4,a_{4},7],[7,6,11],[11,a_{6},2],[2,1,0]) $ which implies $ a_{5}=4 $ \& $ a_{4}\in\{9,10\} $. If $ a_{4}=9 $ then $ lk(9)=C_{9}([5,2,7],[7,3,4],[4,a_{7},8],[8,0,1]) $ which implies $ a_{3}=5,a_{7}=10 $ and $ lk(4)=C_{9}([3,7,9],[9,8,10],[10,a_{8},5],[5,6,0]) $. From here we see that $ a_{8}=11 $ as $ 11 $ is not a member of $ lk(0) $ \& $ lk(9) $, and then $ lk(5) $ will not possible. If $ a_{4}=10 $ then $ lk(7)=C_{9}([2,a_{5},10],[10,4,3],[3,11,6],[6,0,8]) $, $ lk(10)=C_{9}([11,a_{7},4],[4,3,7],[7,2,a_{2}],[a_{2},5,6]) $ which implies $ a_{2}=a_{3} $ and $ a_{2},a_{7}\in\{1,9\} $. Now $ lk(4)=C_{9}([3,7,10],[10,11,a_{7}],[a_{7},a_{8},5],[5,\allowbreak6,0]) $ and $ lk(5)=C_{9}([a_{9},a_{10},a_{8}],[a_{8},a_{7},4],[4,0,6],[6,10,a_{2}]) $ which implies $ a_{2},a_{9}\in\{1,9\} $ which is a contradiction.
		\end{subcase}
		\begin{subcase}
			For $ lk(6)=C_{9}([11,a_{2},5],[5,4,0],[0,8,7],[7,a_{1},10]) $, $ lk(7)=C_{9}([2,a_{3},a_{4}],\allowbreak [a_{4},a_{5},a_{1}],[a_{1},10,6],[6,0,8]) $ which implies $ a_{1}\in\{1,3,9\} $. For $ \boldsymbol{a_{1}=1} $, $ lk(1) $ will not possible. If $ \boldsymbol{a_{1}=3} $ then $ lk(3)=C_{9}([4,a_{4},7],[7,6,10],[10,a_{6},2],[2,1,0]) $ which implies $ a_{5}=4 $ and $ a_{4}\in\{9,11\} $. If $ a_{4}=9 $ then $ lk(9)=C_{9}([5,2,7],[7,3,4],[4,a_{7},8],[8,0,1]) $ which implies $ a_{3}=5 $ and $ a_{7}=11 $. Now $ lk(4)=C_{9}([3,7,9],[9,8,11],[11,a_{8},5],[5,6,0]) $ which implies $ a_{8}=10 $ and then $ lk(5) $ will not possible. Therefore $ a_{4}=9 $ is not possible. Now if $ a_{4}=11 $ then $ lk(2)=C_{9}([8,a_{7},3],[3,0,1],[1,a_{8},a_{3}],[a_{3},11,7]) $ which implies $ a_{3}=4 $ and $ lk(1)=C_{9}([5,a_{9},a_{8}],[a_{8},4,2],[2,3,0],[0,8,9]) $ implies $ a_{8}=10 $ and then $ lk(4) $ will not possible. Therefore $ a_{4}=11 $ is not possible and then $ a_{1}=3 $ is not possible. If $ \boldsymbol{a_{1}=9} $ then $ lk(9)=C_{9}([1,0,8],[8,a_{6},10],[10,6,7],[7,a_{4},5]) $ which implies $ a_{5}=5,a_{4}=4 $ ($ a_{4}\neq 11 $ as $ [5,11] $ is a non-edge in a 4-gon) and $ a_{6}=3 $. Now $ lk(4)=C_{9}([3,a_{7},a_{3}],[a_{3},2,7],[7,9,5],[5,6,0]) $ which implies $ a_{3}=11 $ as $ [1,3] $ can not be non-edge in two 4-hon. We see that $ a_{7}\neq 1 $ and $ 1\notin lk(7) $, therefore $ a_{2}=1 $ and then $ lk(11) $ will not possible. Therefore $ a_{1}=9 $ is not possible.
		\end{subcase}
	\end{case}
	\begin{case}
		When remaining two 3-gons are $ \{[1,5,10],[6,9,11]\} $ then either $ lk(6)=C_{9}([9,a_{2},\allowbreak5], [5,4,0],[0,8,7],[7,a_{1},11]) $ or $ lk(6)=C_{9}([11,a_{2},5],[5,4,0],[0,8,7],[7,a_{1},9]) $.
		\begin{subcase}
			For $ lk(6)=C_{9}([9,a_{2},5],[5,4,0],[0,8,7],[7,a_{1},11]) $, $ a_{1}\in\{1,3,10\} $. If $ \boldsymbol{a_{1}=1} $, then $ lk(1) $ will not possible. If $ a_{1}=3 $ then $ lk(7)=C_{9}([2,a_{3},a_{4}],[a_{4},a_{5},3],[3,11,6],\allowbreak[6,0,8]) $ and $ lk(3)=C_{9}([4,a_{4},7],[7,6,11],[11,a_{6},2],[2,1,0]) $ which implies $ a_{5}=4 $ and $ a_{4}\in\{9,10\} $. When $ a_{4}=9 $, then to complete $ lk(9) $, edges of two consecutive 4-gons in $ lk(9) $ are $ [9,1],[9,4],\allowbreak [9,8] $ which implies $ a_{2}=1=a_{3} $ which is a contradiction. Now $ a_{4}=9 $ implies $ a_{3}\in\{1,5,9\} $ and $ lk(4)=C_{9}([3,7,10],[10,a_{7},a_{8}],[a_{8},a_{9},5],[5,6,0]) $. Now if $ a_{1}=1 $ then $ lk(2)=C_{9}([8,a_{7},5],[5,11,3],[3,0,1],[1,10,7]) $ which implies $ lk(5) $ will not possible. If $ a_{1}\in\{5,9\} $ then $ lk(2)=C_{9}([8,a_{7},3],[3,0,1],[1,a_{8},a_{3}],[a_{3},10,7]) $ which implies $ a_{6}=8,a_{7}=11 $. Now $ lk(1)=C_{9}([a_{8},a_{3},2],[2,3,0],[0,8,9],[9,a_{9},a_{10}]) $ which implies $ a_{8}=5,a_{10}=10,a_{3}=9 $ and then $ [5,9] $ form an edge and a non-edge in two 4-gon which is a contradiction. For $ \boldsymbol{a_{1}=10} $, $ a_{2}\in\{1,2,3\} $. If $ a_{1}=1 $ then $ lk(1)=C_{9}([10,a_{3},2],[2,3,0],[0,8,9],[9,6,5]) $ and either $ lk(2)=C_{9}([7,c_{1},3],[3,0,1],[1,10,a_{3}],[a_{3},a_{4},8]) $ which implies $ c_{1}\notin V $. Or $ lk(2)=C_{9}([8,a_{4},3],[3,0,1],[1,10,a_{3}],[a_{3},c_{1},7]) $ which implies $ a_{4}=11,a_{3}=4 $. Now $ lk(4)=C_{9}([3,a_{5},\allowbreak 10],[10,1,2],[2,7,5],[5,6,0]) $ implies $ c_{1}=5 $ and $ lk(3)=C_{9}([4,10,a_{5}],[a_{5},a_{6},11],\allowbreak[11,8,2],[2,\allowbreak 1,0]) $. We see that $ a_{5}\neq 11 $ i.e. $ 11\notin V(lk(0)),V(lk(1)),V(lk(4)) $ which is a contradiction. If $ a_{1}\in\{2,3\} $ then $ lk(5)=C_{9}([1,a_{4},4],[4,0,6],[6,9,a_{2}],[a_{2},a_{3},10])\Rightarrow lk(9)=C_{9}([11,a_{5},1],[1,0,8],\allowbreak [8,a_{6},a_{2}],[a_{2},5,6])\Rightarrow lk(1)=C_{9}([a_{7},a_{8},2],[2,3,0],[0,8,9],[9,11,a_{5}]) $. Therefore we see that $ a_{5}=10,a_{7}=5\Rightarrow a_{8}=4,a_{4}=2 $ and then $ a_{2}=3,a_{3}=7 $ which implies $ lk(7) $ will not possible.
		\end{subcase}
		\begin{subcase}
			For $ lk(6)=C_{9}([11,a_{2},5],[5,4,0],[0,8,7],[7,a_{1},9]) $, $ a_{1}\in\{1,3,10\} $. If $ \boldsymbol{a_{1}=1} $ then $ lk(1) $ will not possible. If $ \boldsymbol{a_{1}=3} $ then $ lk(3)=C_{9}([4,a_{3},7],[7,6,9],[9,a_{4},2],[2,\allowbreak1,0]) $. We see that $ a_{4}\in\{5,8,10\} $, but for $ a_{4}\in\{5,10\} $, $ lk(9) $ will not possible, therefore $ a_{4}=8 $. Now $ lk(2)=C_{9}([7,a_{6},a_{7}],[a_{7},a_{8},1],[1,0,3],[3,9,8]) \Rightarrow a_{2}=10$. Therefore $ lk(5)=C_{9}([1,a_{9},a_{10}],[a_{10},a_{11},4],[4,0,6],[6,11,10])\Rightarrow lk(1)=C_{9}([a_{8},a_{7},2],[2,3,0],[0,8,9],[9,a_{12},\allowbreak a_{13}])\Rightarrow lk(9)=C_{9}([11,a_{13},1],[1,0,8],[8,2,3],[3,7,6]) $ which implies $ a_{12}=11,a_{13}\neq10,5 $ which is a contradiction. If $ \boldsymbol{a_{1}=10} $ then $ lk(9)=C_{9}([11,a_{3},1],[1,0,8],[8,a_{4},10],[10,7,\allowbreak6]) $ and $ lk(1)=C_{9}([10,a_{5},2],[2,3,0],[0,8,9],[9,11,5]) $ which implies $ a_{3}=5 $ and then $ [5,11] $ form an edge and a non-edge which is a contradiction.
		\end{subcase}
	\end{case}
	\begin{case}
		When remaining two 3-gons are $ \{[1,6,9],[5,10,11]\} $ then either $ lk(6)=C_{9}([1,a_{2},\allowbreak5],\allowbreak [5,4,0],[0,8,7],[7,a_{1},9]) $ or $ lk(6)=C_{9}([9,a_{2},5],[5,4,0],[0,8,7],[7,a_{1},1]) $.
		\begin{subcase}
			$ lk(6)=C_{9}([1,a_{2},5],[5,4,0],[0,8,7],[7,a_{1},9])\Rightarrow lk(5)=C_{9}([a_{3},a_{4},a_{5}],[a_{5},\allowbreak a_{6},4],[4,0,6],[6,1,a_{2}]),lk(1)=C_{9}([6,5,a_{2}],[a_{2},a_{7},2],[2,3,0],[0,8,9]) $ which implies $ a_{2}\in\{10,11\} $. with out loss of generality, let $ a_{2}=10 $, then $ a_{1}=11,a_{7}=4 $. Now $ lk(4)=C_{9}([3,a_{8},10],[10,1,2],[2,a_{9},5],[5,6,0]) $ which implies $ a_{9}=11 $ and then $ [2,11] $ will form an edge in a 3-gon which is not possible.
		\end{subcase}
		\begin{subcase}
			$ lk(6)=C_{9}([9,a_{2},5],[5,4,0],[0,8,7],[7,a_{1},1]) $ implies $ lk(1)=C_{9}([6,7,a_{1}],\allowbreak[a_{1}, a_{3},2],[2,3,0],[0,8,9]) $. W.L.G., assume $ a_{1}=10 $, then $ a_{2}\in\{3,11\} $ and $ a_{3}\in\{4,5,11\} $. If $ \boldsymbol{a_{2}=3} $ then $ a_{3}=11 $ and $ lk(3)=C_{9}([4,a_{4},9],[9,6,5],[5,a_{5},2],[2,1,0]),lk(2)=C_{9}([a_{5},5,\allowbreak3],\allowbreak [3,0,1],[1,10,11],[11,a_{7},a_{6}]),lk(9)=C_{9}([6,5,3],[3,4,a_{4}],[a_{4},a_{8},8],[8,0,1]) $. From here we see that $ a_{5}\in\{7,8\}\Rightarrow a_{4}=11 $ and then $ a_{8}=10 $ and then $ [10,11] $ form an edge and a non-edge in two 4-gon which is a contradiction. $ \boldsymbol{a_{2}=11}\Rightarrow a_{3}\in\{4,5,11\} $. If $ a_{3}=4 $ then $ lk(4)=C_{9}([3,a_{4},10],[10,1,2],[2,a_{5},5],[5,6,0]), lk(5)=C_{9}([10,a_{6},a_{5}],[a_{5},2,4],[4,0,6],[6,\allowbreak9,\allowbreak 11]) $ which implies $ a_{5}=8,a_{6}=9 $ which is not possible. If $ a_{3}=5 $ then $ lk(2)=C_{9}([a_{4},4,5],[5,\allowbreak 10,1],[1,0,3],[3,a_{6},a_{5}]) $ which implies $ a_{6}=11 $. Now $ lk(4)=C_{9}([3,a_{7},a_{8}],\allowbreak[a_{8},a_{9},a_{4}],[a_{4},2,\allowbreak 5],[5,6,0]) $ and $ lk(3)=C_{9}([4,a_{8},a_{7}],[a_{7},a_{10},11],[11,a_{5},2],[2,1,0]) $ which implies $ a_{9}=11 $. Now from $ lk(8) $, we see that $ a_{4}\neq 8 $, therefore $ a_{4}=7,a_{5}=8 $ and then $ lk(7) $ will not possible. If $ a_{3}=11 $ then $ lk(5)=C_{9}([10,b{1},b_{2}],[b_{2},b_{3},4],[4,0,6],[6,9,11]) $, $ lk(10)=C_{9}([5,b_{2},b_{1}],[b_{1},\allowbreak b_{4},7],[7,6,1],[1,2,11]) $ which implies $ b_{2}=8 $ and $ b_{1},b_{3}\in\{2,9\} $ which is not possible.
		\end{subcase}
	\end{case}
	\begin{case}
		When remaining two 3-gons are $ \{[1,6,10],[5,9,11]\} $ then either $ lk(6)=C_{9}([1,a_{2},\allowbreak 5],[5,4,0],[0,8,7],[7,a_{1},10]) $ or $ lk(6)=C_{9}([10,a_{2},5],[5,4,0],[0,8,7],[7,a_{1},1]) $.
		\begin{subcase}
			\begin{sloppypar}
			For $ lk(6)=C_{9}([1,a_{2},5],[5,4,0],[0,8,7],[7,a_{1},10]) $, $ a_{2}\in\{2,9\} $. If $ \boldsymbol{a_{2}=2} $ then $ lk(1)=C_{9}([6,5,2],[2,3,0],[0,8,9],[9,a_{3},10]),lk(2)=C_{9}([b_{3},b_{4},3],[3,0,1],[1,6,5],[5,\allowbreak b_{1},\allowbreak b_{2}]),lk(5)=C_{9}([b_{5},b_{6},4],[4,0,6],[6,1,2],[2,b_{2},b_{1}]) $ which implies $ b_{1}\in\{9,11\} $ and then $ b_{2}=7,b_{3}=8 $. Now $ lk(7)=C_{9}([2,5,b_{1}],[b_{1},b_{7},a_{1}],[a_{1},10,6],[6,0,8]) $ which implies $ a_{1}=3 $ and then $ lk(3)=C_{9}([4,b_{1},7],[7,6,10],[10,8,2],[2,1,0]) $. Therefore $ b_{7}=4 $ and then $ b_{1}=11 $ which implies $ 9\notin V(lk(2)),V(lk(7)),V(lk(3)) $. If $ \boldsymbol{a_{2}=9} $ then $ lk(1)=C_{9}([6,5,9],[9,8,0],[0,3,2],[2,\allowbreak a_{3},10]),\allowbreak  lk(2)=C_{9}([b_{3},b_{4},3],[3,0,1],[1,10,a_{3}],[a_{3},b_{1},b_{2}]), lk(8)=C_{9}([2,c_{1},c_{2}],[c_{2},c_{3},9],\allowbreak[9,1,0],[0,\allowbreak 6,7]) $ and $ lk(7)=C_{9}([2,c_{4},c_{5}],[c_{5},c_{6},a_{1}],[a_{1},10,6],[6,0,8]) $. If $ a_{3}\neq 11 $ then $ a_{1}=11 $ and one of $ b_{1},b_{4} $ i.e. one of $ c_{2},c_{5} $ is $ 11 $. But $ c_{5}\neq 11 $ as then $ c_{2}=11 $ which make contradiction. Therefore $ a_{3}=11\Rightarrow a_{1}=3 $ and then $ b_{2}=7,b_{3}=8,c_{1}=11,b_{1}=c_{2},c_{4}=3,c_{5}=b_{4}\Rightarrow c_{6}=11 $ which implies $ lk(3) $ will not possible.
		\end{sloppypar}
		\end{subcase}
		\begin{subcase}
			\begin{sloppypar}
			For $ lk(6)=C_{9}([10,a_{2},5],[5,4,0],[0,8,7],[7,a_{1},1]) $, $ lk(9)=C_{9}([a_{5},a_{6},7],[7,\allowbreak 6,1],[1,0,8],[8,a_{3},a_{4}]) $ and from $ lk(1) $ we get $ a_{1}=9 $ and $ a_{2}\in\{2,3,11\} $. But $ a_{2}\neq 2 $ as then two 4-gon at $ 2 $ has no common vertex except $ 2 $ and no edge of these two 4-gon does not form an edge in a 3-gon, therefore $ a_{2}\in\{3,11\} $. If $ \boldsymbol{a_{2}=3} $ then $ lk(5)=C_{9}([b_{3},b_{4},3],[3,10,6],[6,0,4],[4,b_{1},b_{2}]), lk(7)=C_{9}([2,c_{1},a_{6}],[a_{6},a_{5},9],[9,1,6],[6,0,8])$, $\allowbreak lk(8) =C_{9}([2,c_{2},a_{3}],[a_{3},a_{4},9],[9,1,0],[0,6,7]),lk(3)=C_{9}([4,a_{7},10],[10,6,5],[5,b_{3},2],[2,\allowbreak1,0]) $ which implies $ b_{4}=2 $ and then $ lk(2)=C_{9}([8,a_{3},1],[1,0,3],[3,5,b_{3}],[b_{3},a_{6},7]) $. From here we see that $ c_{1}=b_{3},a_{5}=11 $ and $ b_{3}=11,b_{2}=9 $ which implies $ a_{4}=11,a_{5}=5,a_{6}=4\Rightarrow b_{4}=2 $ which is a contradiction. If $ \boldsymbol{a_{2}=11} $ then $ lk(7)=C_{9}([2,b_{1},b_{2}],[b_{2},b_{3},9],[9,1,\allowbreak6],[6,\allowbreak0,8]),\allowbreak lk(8)=C_{9}([2,b_{4},b_{5}],[b_{5},b_{6},9],[9,1,0],[0,6,7]),lk(9)=C_{9}([b_{6},b_{5},8],[8,0,1],[1,\allowbreak6,7,],\allowbreak[7,b_{2},\allowbreak b_{3}]) $ and $ lk(5)=C_{9}([9,c_{1},c_{2}],[c_{2},c_{3},4],[4,0,6],[6,10,11]) $ which implies $ c_{2}=b_{2}\text{ or } b_{5} $, that implies $ c_{2}=2 $ which is a contradiction.
		\end{sloppypar}
		\end{subcase}
	\end{case}
	\begin{case}
		When remaining two 3-gons are $ \{[1,9,10],[5,6,11]\} $ then $ lk(6)=C_9([11,a_1,a_2],\allowbreak[a_2,a_3,7],[7,8,0],[0,4,5]) $ and $ lk(1)=C_9([10,a_4,a_5],[a_5,a_6,2],[2,3,0],[0,8,9]) $ which implies $ a_2\in\{1,3,9,10\} $. If $ \boldsymbol{a_2=1} $, then by $ lk(1) $, $ \{a_1,a_3\}=2,10 $, but $ a_3\neq 2 $, therefore $ (a_1,a_3)=(2,10) $ i.e. $ a_5=6, a_6=11,a_4=7 $. Now considering $ lk(2) $, $ b\in \{3,11\} $, i.e. $ 10\notin V(lk(0)), V(lk(8)) $, therefore $ [2,7,10,d_1] $ will be a face for some $ d_1\in\{3,11\} $ which implies $ C(1,6,0,8,2,10)\in lk(11) $, a contradiction. If $ \boldsymbol{a_2=3} $ then considering $ lk(3) $, one of $ a_1,a_3 $ is 2, i.e. $ a_1=2 $. Now $ lk(3)=C_9([0,1,2],[2,11,6],[6,7,a_3],[a_3,b_1,4]) $ which implies $ a_3\in\{9,10\} $ and $ lk(2)=C_9([7,a_5,1],[1,0,3],[3,6,11],[11,c,8]) $ i.e. $ a_6=7,b=11 $, therefore $ a_5,c,d\neq 10 $ i.e. $ |v_{lk(10)}|\leq 8 $, a contradiction. If $ \boldsymbol{a_2=9} $ then considering $ lk(9) $, $ \{a_1,a_3\}=\{d,a_3\} $, which implies $ d=3 $, therefore $ [3,6,9,b_1] $ will be a face for some $ b_1\in\{7,11\} $ and then $ lk(3) $ will not possible. Therefore $ \boldsymbol{a_2=10} $. Now if $ b=10 $ then one of $ [10,a_1],[10,a_3] $ will be an adjacent edge of a 3-gon and a 4-gon and the other one will be one of $ [2,10],[10,c] $. If the other one is $ [10,c] $ then $ c\notin V $ and if it is $ [2,10] $ then considering $ lk(10) $, $ \{a_1,a_3\}=\{2,9\} $ i.e. $ (a_1,a_3)=(2,9) $ and then after completing $ lk(10) $, $ lk(2) $ will not possible. Therefore $ b\neq 10 $ i.e. $ 10\notin V(lk(0)), V(lk(8)) $ i.e. $ V(lk(10))=\{1,2,3,4,5,6,7,11\} $, therefore $ \{a_1,a_3\}=\{2,3\} $ i.e. $ (a_1,a_3)=(2,3) $ and then $ lk(3) $ will not possible.
	\end{case}
	\begin{case}
		When remaining two 3-gons are $ \{[1,10,11],[5,6,9]\} $ then $ lk(1)=C_{9}([10,a_{2},2],[2,\allowbreak 3,0],[0,8,9],[9,a_{1},11]) $ and $ lk(8)=C_{9}([2,b_{1},b_{2}],[b_{2},b_{3},9],[9,1,0],[0,6,7]) $ which implies $ b_{2}=4 $ and $ b_{1}\in\{5,11\} $. If $ \boldsymbol{b_{1}=5} $ then $ lk(4)=C_{9}([3,a_{3},b_{3}],[b_{3},9,8],[8,2,5],[5,6,0]) $ and from here we see that $ [9,b_{3}] $ will not form an edge in a 3-gon, therefore $ b_{3}=10 $ and then $ lk(9)=C_{9}([a_{1},11,1],[1,0,8],[8,4,10],[10,a_{4},a_{5}]) $ which implies $ a_{4}=7 $. Now $ lk(10)=C_{9}([a_{3},3,4],[4,8,9],[9,a_{5},7],[7,a_{6},a_{7}])\Rightarrow a_{3},a_{7}\in\{1,11\}\Rightarrow a_{3}=11,a_{7}=1\Rightarrow a_{2}=7,a_{6}=2\Rightarrow a_{5}=5,a_{1}=6 $ and then after completing $ lk(7) $ we see that $ [5,6] $ will norm a non-edge in a 4-gon which is a contradiction. $ \boldsymbol{b_{1}=11} $ implies $ lk(4)=C_{9}([3,a_{3},a_{4}],[a_{4},2,a_{5}],[a_{5},a_{6},5],\allowbreak [5,6,0])\Rightarrow b_{3}\in\{3,5\} $. But $ b_{3}\neq 5 $ as then face sequence will not follow in $ lk(5) $, therefore $ b_{3}=3 $ and then $ a_{3}=9,a_{4}=8,a_{5}=11,a_{6}=10 $. Now $ lk(9)=C_{9}([a_{1},11,1],[1,0,8],[8,4,3],\allowbreak [3,a_{7},a_{8}]) $. From $ lk(9) $ we see that $ a_{1}\in\{5,6\} $ and from $ lk(1)\text{ \& }lk(11) $ we see that $ a_{1}\in\{6,7\} $, therefore $ a_{1}=6 $ which implies $ a_{8}=5,a_{7}=10 $. Now $ lk(6)=C_{9}([9,1,11],[11,10,7],[7,8,0],\allowbreak [0,4,5]) $ and then $ lk(10) $ will not possible.
	\end{case}
	
	If $ a=5 $ then remaining possible of two incomplete 3-gon are $ \{[1,2,6],[9,10,11]\},\{[1,2,9],\allowbreak [6,10,11]\},\{[1,2,10],[6,9,11]\},\{[1,6,9],[2,10,11]\},\{[1,6,10],[2,9,11]\},\{[1,9,10],[2,6,11]\},\allowbreak \{[1,10,11],[2,6,9]\} $.
	\begin{case}
		When remaining two 3-gons are $ \{[1,2,6],[9,10,11]\} $ then either $ lk(6)=C_{9}([2,a_{2},\allowbreak 5],[5,4,0],[0,8,7],[7,a_{1},1]) $ or $ lk(6)=C_{9}([1,a_{2},5],[5,4,0],[0,8,7],[7,a_{1},2]) $. For $ lk(6)=C_{9}([2,a_{2},5],[5,4,0],[0,8,7],[7,a_{1},1]) $, $ a_{2}\in\{9,10\} $. If $ a_{2}=9 $ then $ lk(5)=C_{9}([a_{5},a_{6},4],[4,0,\allowbreak 6],[6,2,9],[9,a_{3},a_{4}]) $. As two 4-gons at $ 9 $ are $ [1,0,8,9],[2,6,5,9] $ and they have no common vertex except $ 9 $ and no edges of these 4-gon form an edge in a 3-gon which implies $ lk(9) $ will not possible. If $ a_{2}=10 $ then $ a_{1}=11 $ and then after completing $ lk(1) $ we see that $ [9,11] $ is a non-edge in a 4-gon which is a contradiction. If $ lk(6)=C_{9}([1,a_{2},5],[5,4,0],[0,8,7],[7,a_{1},\allowbreak 2]) $ then $ a_{2}=10 $ and then after completing $ lk(1) $ we see that $ [9,10] $ form a non-edge which is not possible.
	\end{case}
	\begin{case}
		When remaining two 3-gons are $ \{[1,2,9],[6,10,11]\} $ then $ 9 $ will occur two times in $ lk(1) $ which is a contradiction.
	\end{case}
	\begin{case}
		When remaining two 3-gons are $ \{[1,2,10],[6,9,11]\} $ then $ lk(1)=C_9([2,3,0],[0,8,\allowbreak9],[9,a_3.a_2],[a_2.a_1,10]) $ and $ lk(6)=C_9([c_1,d_1,7],[7,8,0],[0,4,5],[5,d_2,c_2]) $ which implies $ a_2\in\{4,5,7\} $. Now if $ \boldsymbol{a_2=4} $ then considering $ lk(4) $, one of $ a_1,a_3 $ is 5. If $ a_1=5 $ then by $ lk(5) $, $ d_2\in\{7,8\} $, a contradiction. Similarly $ a_3\neq 5 $ and therefore $ a_2\neq 4 $. If $ \boldsymbol{a_2=5} $ then considering $ lk(5) $, one of $ [5,a_1],[5,a_3] $ have to be adjacent with the face [0,4,5,6] and other one will be the edge [5,7], therefore $ b=4 $ i.e. one of $ a_1,a_6 $ is 6 and then $ (c_1,c_2)=(11,9) $ which implies $ d_2=1,a_1=7 $. Now $ lk(5)=C_9([8,c,4],[4,0,6],[6,9,1],[1,10,7]) $ which implies $ c=2 $ i.e. $ 11\notin V(lk(0)),V(lk(5)) $, therefore $ d=11 $ and then $ lk(2) $ will not possible. If $ \boldsymbol{a_2=7} $ then one of $ a_1,a_3 $ is one of 5, 6. If one of $ a_1,a_3 $ is 6 then $ a_3=6,d_1=1,c_1=9,c_2=11 $. Now $ lk(7)=C_9([5,b_1,a_1],[a_1,10,1],[1,9,6],[6,0,8]) $ which implies $ a_1=11 $ and then $ [5,11] $ form non-edge in two 4-gon, a contradiction. If one of $ a_1,a_3 $ is 5 then considering $ lk(5) $, $ b=4 $, therefore $ lk(5)=C_9([8,c,4],[4,0,6],[6,c_2,d_2],[d_2,1,7]) $ which implies $ d_2\in\{9,10\} $, therefore $ d_2=10,a_1=5 $ and then $ lk(7)=C_9([8,0,6],[6,c_1,d_1],[D_1,9,1],[1,10,5]) $ which implies $ (c_1,c_2)=(11,9),a_3=d_1 $, implies $ d_1\notin V $.
	\end{case}
	\begin{case}
		When remaining two 3-gons are $ \{[1,6,9],[2,10,11]\} $ then after completing $ lk(6) $ and $ lk(1) $ we see that $ [2,\alpha],\alpha\in\{10,11\} $, form a non-edge which is not possible.
	\end{case}
	\begin{case}
		When remaining two 3-gons are $ \{[1,6,10],[2,9,11]\} $ then either $ lk(6)=C_{9}([1,a_{2},\allowbreak 5],[5,4,0],[0,8,7],[7,a_{1},10]) $ or $ lk(6)=C_{9}([1,a_{2},5],[5,4,0],[0,8,7],[7,a_{1},10]) $.
		\begin{subcase}
			If $ lk(6)=C_{9}([1,a_{2}, 5],[5,4,0],[0,8,7],[7,a_{1},10]) $ then $ lk(5)=C_{9}([b_{3},b_{4},4],\allowbreak [4,0,6],[6,1,a_{2}],[a_{2},b_{1},b_{2}]) $ where $ b_{2},b_{3}\in\{7,8\} $. If $ \boldsymbol{(b_{2},b_{3})=(7,8)} $ then $ lk(8)=C_{9}([5,4,\allowbreak b_{4}],[b_{4},b_{5},9],[9,1,0],[0,6,7]) $ which implies $ b_{4}=10 $. Therefore $ lk(1)=C_{9}([10,a_{4},2],[2,3,0],\allowbreak [0,8,9],[9,a_{3},6]) $ and $ lk(10)=C_{9}([1,2,a_{4}],[a_{4},5,8],[8,9,a_{1}],[a_{1},7,6]) $ which implies $ a_{4}=4\text{ \& }b_{5}=a_{1}\in\{3,11\} $. $ a_{1}=3\Rightarrow 11\notin lk(0),lk(8),lk(10) $ which is a contradiction and $ a_{1}=11\Rightarrow a_{3}=9 $ which implies $ lk(9) $ is not possible which is a contradiction. If $ \boldsymbol{(b_{2},b_{3})=(8,7)} $ then $ lk(8)=C_{9}([5,a_{2},b_{1}],[b_{1},b_{5},9],[9,1,0],[0,6,7]) $ which implies $ a_{2}=2,b_{1}\in\{3,10\} $ and then from $ lk(2) $ we see that $ [2,b_{1}] $ is an edge in a 3-gon which is a contradiction.
		\end{subcase}
		\begin{subcase}
			If $ lk(6)=C_{9}([1,a_{2},5],[5,4,0],[0,8,7],[7,a_{1},10]) $ then $ lk(5)=C_{9}([c_{2},a_{4},4],\allowbreak [4,0,6],[6,1,a_{2}],[a_{2},a_{3},c_{1}]) $ where $ c_{1},c_{2}\in\{7,8\} $. If $ \boldsymbol{(c_{1},c_{2})=(7,8)} $ then $ lk(8)=C_{9}([5,4,\allowbreak a_{4}],[a_{4},a_{5},9],[9,1,0],[0,6,7]) $ which implies $ a_{4}=10 $. Now $ lk(1)=C_{9}([10,a_{6},2],[2,3,0],[0,\allowbreak8,\allowbreak 9],[9,5,6])\Rightarrow a_{2}=9 $ and then $ lk(10)=C_{9}([6,7,a_{1}],[a_{1},9,8],[8,5,a_{6}],[a_{6},2,1]) $ which implies $ a_{6}=4,a_{1}=a_{5} $. Now $ lk(9)=C_{9}([a_{1},10,8],[8,0,1],[1,6,5],[5,7,a_{3}]) $. From here we see that $ a_{1}=11,a_{3}=2 $ and then $ 3\notin V(lk(8)),V(lk(9)),V(lk(10)) $, which is a contradiction. If $ \boldsymbol{(c_{1},c_{2})=(8,7)} $ then $ lk(8)=C_{9}([5,a_{2},a_{3}],[a_{3},a_{5},9],[9,1,0],[0,6,7]) $ and $ lk(1)=C_{9}([b_{3},b_{4},\allowbreak2],[2,3,0],[0,8,9],[9,b_{1},b_{2}]) $ and from this we have $ a_{2}=2 $ and then $ lk(2)=C_{9}([a_{3},8,5],[5,6,\allowbreak 1],[1,0,3],[3,a_{6},a_{7}]) $ which implies $ a_{3}\notin V $.
		\end{subcase}
	\end{case}
	\begin{case}
		When remaining two 3-gons are $ \{[1,9,10],[2,6,11]\} $ then either $ lk(6)=C_{9}([2,a_{2},\allowbreak5], [5,4,0],[0,8,7],[7,a_{1},11]) $ or $ lk(6)=C_{9}([11,a_{2},5],[5,4,0],[0,8,7],[7,a_{1},2]) $.
		\begin{subcase}
			\begin{sloppypar}
			If $ lk(6)=C_{9}([2,a_{2},5],[5,4,0],[0,8,7],[7,a_{1},11]) $ then $ lk(5)=C_{9}([c_{2},a_4,4],\allowbreak [4,0,6],[6,2,a_{2}],[a_2,a_3,c_1]) $ where $ c_1,c_2\in\{7,8\} $. If $ \boldsymbol{(c_1,c_2)=\allowbreak (7,8)} $ then $ lk(8)=C_9([5,4,\allowbreak a_4],[a_4,a_5,9],[9,1,0],[0,6,7]) $ which implies $ a_4=11,a_2=10 $. Now $ lk(2)=C_9([11,b_{1},1],[1,\allowbreak0,\allowbreak 3],[3,b_2,10],[10,5,6]) $ which implies $ a_5=3 $ and then $ lk(3)=C_{9}([4,b_3,9],[9,8,11],[11,b_4,\allowbreak2],\allowbreak [2,1,0]) $ and from this we have $ a_1=9 $ which implies $ lk(9) $ will not possible. If $ \boldsymbol{(c_1,c_2)\allowbreak=\allowbreak(8,7)} $ then $ lk(8)=C_9([5,a_2,a_3],[a_3,a_5,9],[9,1,0],[0,6,7]) $ which implies $ a_2=10 $ and then after completing $ lk(10) $ and $ lk(2) $ we see that $ 1,9,10 $ occur in a 4-gon which is not possible as $ [1,9,10] $ is a 3-gon.
		\end{sloppypar}
		\end{subcase}
		\begin{subcase}
			If $ lk(6)=C_{9}([11,a_{2},5],[5,4,0],[0,8,7],[7,a_{1},2]) $ then $ lk(5)=C_9([c_2,a_4,4],\allowbreak [4,0,6],[6,11,a_2],[a_2,a_3,c_1]) $ where $ c_1,c_2\in\{7,8\} $. If $ \boldsymbol{(c_1,c_2)=(7,8)} $ then $ lk(8)=C_9([4,\allowbreak5,\allowbreak a_4],[a_4,a_5,9],[9,1,0],[0,6,7]) $. From here we get $ a_4=2 $. Therefore we have two 4-gons $ [0,1,2,3] $ and $ [2,4,5,8] $ which have no common vertex except $ 2 $ and no edges og these two 4-gon form an edge in a 3-gon which is a contradiction. If $ \boldsymbol{(c_1,c_2)=(8,7)} $ then $ lk(8)=C_9([5,a_2,a_3],[a_3,a_5,9],[9,1,0],[0,6,7]) $. From here we see that $ a_3\in\{2,3\} $. But if $ a_3=2 $ then to complete $ lk(2) $, $ a_2 $ have to be either $ 6 $ or $ 11 $ which is not possible, therefore $ a_3=3 $ and then after completing $ lk(10) $ and $ lk(1) $ we see that $ 3 $ occurs two times in $ lk(9) $ which is a contradiction.
		\end{subcase}
	\end{case}
	\begin{case}
		When remaining two 4-gons are $ \{[1,10,11],[2,6,9]\} $ then $ lk(1)=C_9([10,a_2,2],[2,\allowbreak 3,0],[0,8,9],[9,a_1,11]) $ where $ a_2\in\{4,5,6,7\} $. If $ \boldsymbol{a_2\neq 6} $ then $ lk(2)=C_9([b_3,b_4,3],[3,0,1],\allowbreak [1,10,a_2],[a_2,b_1,b_2]) $ which implies $ b_2=9,b_3=6 $. If $ a_2=4 $ then after completing $ lk(4) $ we have $ b_1=5 $ and then from $ lk(9) $ we see that $ a_1\in\{5,6\} $. But for any $ a_1 $, $ lk(a_1) $ will not possible. And similarly for $ a_2=5,6,7 $, $ lk(a_2) $ will not possible.
	\end{case}
	
	If $ a=10 $ then remaining possible of two incomplete 3-gon are $ \{[1,2,5],[6,9,11]\},\{[1,2,6],\allowbreak [5,9,11]\},\{[1,2,9],[5,6,11]\},\{[1,2,11],[5,6,9]\},\{[1,5,6],[2,9,11]\},\{[1,5,9],[2,6,11]\},\{[1,\allowbreak5,\allowbreak 11],[2,6,9]\},\{[1,6,9],[2,5,11]\},\{[1,6,11],[2,5,9]\},\{[1,9,11],[2,5,6]\} $.
	\begin{case}
		When remaining two 3-gons are $ \{[1,2,5],[6,9,11]\} $ then $ lk(1)=C_{9}([5,a_1,a_2],[a_2,\allowbreak a_3,9],[9,8,0],[0,3,2]) $ and $ lk(6)=C_9([c_2,b_2,5],[5,4,0],[0,8,7],[7,b_1,c_1]) $ where $ a_2\in\{7,10\} $ \& $ c_1,c_2\in\{9,11\} $. If $ \boldsymbol{(c_1,c_2)=(9,11)} $ then for $ a_2=7 $, $ lk(7)=C_9([10,b_3,a_1],[a_1,5,1],[1,9,\allowbreak 6],[6,0,8]) $ which implies $ a_1=4,b_3=11 $ and then considering $ lk(4) $ and $ lk(11) $ we see that $ [8,9] $ form an edge and a non-edge in two 4-gon which is not possible, and for $ a_2=10 $, after considering $ lk(7) $ and $ lk(5) $ we have $ a_1=4 $ which implies $ b_1\notin V $. If $ \boldsymbol{(c_1,c_2)=(11,9)} $ then $ a_2=10 $ and $ b_1,b_2\in\{2,3\} $ and considering $ lk(5) $, we have $ a_1\in\{4,b_2\} $. But $ a_{1}\neq b_2 $, therefore $ a_1=4 $ and then $ [2,b_2] $ form an edge and a non-edge which is not possible.
	\end{case}
	\begin{case}
		When remaining two 3-gons are $ \{[1,2,6],[5,9,11]\} $ then $ lk(1)=C_9([6,a_1,a_2],[a_2,\allowbreak a_3, 9],[9,8,0],[0,3,2]) $ which implies $ a_2=10 $ and considering $ lk(6) $ \& $ lk(7) $ we get $ a_1=5 $ and then $ lk(6)=C_9([2,11,7],[7,8,0],[0,4,5],[5,10,1]) $. Now after completing $ lk(5) $ \& $ lk(9) $ we see that $ [10,11] $ form an edge and a non-edge which is not possible.
	\end{case}
	\begin{case}
		When remaining two 3-gons are $ \{[1,2,9],[5,6,11]\} $ then $ C(9,8,0,3,2)\in lk(1) $ which is a contradiction.
	\end{case}
	\begin{case}
		When remaining two 3-gons are $ \{[1,2,11],[5,6,9]\} $ then $ lk(6)=C_9([9,a_1,a_2],[a_2,\allowbreak a_3, 7],[7,8,0],[0,4,5]), lk(1)=C_9([11,a_4,a_5],[a_5,a_6,9],[9,8,0],[0,3,2]),lk(8)=C_9([10,a_7,\allowbreak a_8], [a_8,a_9,9],[9,1,0],[0,6,7]) $ and $ lk(9)=C_9([c_2,a_5,1],[1,0,8],[8,a_8,a_9],[a_9,a_2,c_1]) $ where $ c_1,c_2\in\{5,6\} $. When $ \boldsymbol{(c_1,c_2)=(6,5)} $ then $ a_5\in\{7,10\} $. If $ a_5=7 $ then after completing $ lk(7) $ we see that $ a_1=9=a_9 $ which implies $ [8,10] $ form an edge and a non-edge in two 4-gon which is not possible. If $ a_5=10 $ then $ a_4\in\{4,7\} $. If $ a_4=4 $ then considering $ lk(4),lk(10) $ \& $ lk(5) $ we see that $ lk(2) $ will not possible, therefore $ a_4=7 $ and then after completing $ lk(7) $ we get $ [10,11] $ form an edge and a non-edge in two 4-gon which is not possible. When $ \boldsymbol{(c_1,c_2)=(5,6)} $ then $ a_4\in\{5,7\} $. If $ a_4=7 $ then $ lk(4) $ is not possible and if $ a_4=5 $ then after completing $ lk(4) $, we see that $ lk(9) $ will not possible.
	\end{case}
	\begin{case}
		When remaining two 3-gons are $ \{[1,5,6],[2,9,11]\} $ then $ lk(6)=C_9([1,a_1,a_2],[a_2,\allowbreak a_3,7],[7,8,0],[0,4,5]) $ and $ lk(1)=C_9([c_2,a_4,2],[2,3,0],[0,8,9],[9,a_2,c_1]) $ where $ c_1,c_2\in\{5,\allowbreak 6\} $. If $ (c_1,c_2)=(6,5) $ then after completing $ lk(3) $ we see that either $ [7,8] $ or $ [2,5] $ form an edge and a non-edge in two 4-gon, which is a contradiction and if $ (c_1,c_2)=(5,6) $ then considering $ lk(3),lk(7) $ \& $ lk(9) $ we see that $ [5,10] $ form an edge and a non-edge in two 4-gon which is not possible.
	\end{case}
	\begin{case}
		When remaining two 3-gons are $ \{[1,5,9],[2,6,11]\} $ then $ lk(1)=C_9([5,a_1,a_2],[a_2,\allowbreak a_3,2],[2,3,0],[0,8,9]) $ and $ lk(6)=C_9([c_2,b_2,5],[5,4,0],[0,8,7],[7,b_1,c_1]) $ where $ c_1,c_2\in\{2,\allowbreak 11\} $. If $ \boldsymbol{(c_1,c_2)=(11,2)} $ then $ lk(7)=C_9([10,b_3,b_4],[b_4,b_5,b_1],[b_1,11,6],[6,0,8]) $ which implies $ b_1\in\{3,9\} $. If $ b_1=3 $ then after completing $ lk(3),lk(2) $ \& $ lk(9) $ we see that $ [6,8] $ form an edge and a non-edge in two 4-gon which is not possible. If $ b_1=9 $ then after completing $ lk(5) $ and $ lk(9) $ we have $ [2,11] $ form an edge and a non-edge in two 4-gon which is a contradiction. If $ \boldsymbol{(c_1,c_2)=(2,11)} $ then $ b_1\in\{1,3,9\} $. If $ b_1=1 $ then $lk(7)=C_9([10,a_4,4],[4,5,1],[1,2,6],[6,0,8]) $. Therefore $ a_1=4,a_2=7,a_3=6 $ and $ b_2=10 $. Now $ lk(5)=C_9([9,b_3,10],[10,11,6],[6,0,4],[4,7,1]) $. Considering $ lk(10) $ we see that $ a_4 $ is either $ 11 $ or $ b_3 $, but if $ a_4=b_3 $ then $ a_4\notin V $, therefore $ a_4=11 $ and then $ b_3=3 $ as $ 3\notin lk(6),lk(7) $. Therefore $ lk(10)=C_9([2,8,11],[11,6,5],[5,9,3],[3,4,7]) $, $ lk(2)=C_9([11,10,8],[8,9,3],[3,0,1],[1,7,6]) $, $ lk(8)=C_9([10,3,2],[2,11,9],[9,1,0],[0,6,7]) $, $ lk(9)=C_9([5,10,3],[3,4,11],[11,2,8],[8,0,1]) $, $ lk(3)=C_9([0,1,2],[2,8,10],[10,5,9],[9,11,4]) $, $ lk(4)\allowbreak =C_9([3,9,11],[11,10,7],[7,1,5],[5,6,0]) $ and $ lk(11)=C_9([6,5,10],[10,7,4],[4,3,9],[9,8,2]) $. Let this be the map $ \boldsymbol{KO_{[(3,4^4)]}} $. If $ b_1=3 $ then considering $ lk(2) $ we see that $ a_1=7,a_2=10,a_3=4 $ and then completing $ lk(7) $ we see that $ lk(5) $ will not possible. If $ b_1=9 $ then after completing $ lk(2) $ we see that $ b_2=9 $ which implies $ [6,9] $ form non-edge in two 4-gon which is not possible.
	\end{case}
	\begin{case}
		When remaining two 3-gons are $ \{[1,5,11],[2,6,9]\} $ then either $ lk(1)=C_9([5,a_2,2],\allowbreak [2,3,0],[0,8,9],[9,a_1,11]) $ or $ lk(1)=C_9([11,a_2,2],[2,3,0],[0,8,9],[9,a_1,5]) $.
		\begin{subcase}
			If $ lk(1)=C_9([5,a_2,2],[2,3,0],[0,8,9],[9,a_1,11]) $ then $ a_2\in\{4,6,7,10\} $. If $ a_2=6 $ then after completing $ lk(2) $  \& $ lk(9) $ we see that $ [6,8] $ form an edge and a non-edge in two 4-gon which is not possible. If $ a_2\neq 6 $ then $ lk(2)=C_9([c_2,a_4,3],[3,0,1],[1,5,a_2],[a_2,a_3,\allowbreak c_1]) $ where $ c_1,c_2\in\{6,9\} $. For $ (c_1,c_2)=(6,9) $ we get $ a_2=10 $ and then after completing $ lk(6) $ \& $ lk(9) $ we get $ a_1\notin V $, which is a contradiction. For $ (c_1,c_2)=(9,6) $ then considering $ lk(2) $ \& $ lk(6) $ we see that $ 11\notin V(lk(0)),V(lk(2)),V(lk(6)) $ which is a contradiction.
		\end{subcase}
		\begin{subcase}
			If $ lk(1)=C_9([11,a_2,2],[2,3,0],[0,8,9],[9,a_1,5]) $ then $ lk(2)=C_9([c_2,a_4,3],\allowbreak [3,0,1],[1,11,a_2],[a_2,a_3,c_1]) $ where $ c_1,c_2\in\{6,9\} $. If $ (c_1,c_2)=(6,9) $ then after completing $ lk(6), lk(8) $ \& $ lk(9) $ we see that $ [1,11] $ form an edge and a non-edge in two 4-gon which is not possible. If $ (c_1,c_2)=(9,6) $ then either $ lk(6)=C_9([2,3,5],[5,4,0],[0,8,7],[7,a_5,9]) $ and then after completing $ lk(5),lk(9) $ we get $ C(0,8,9,6)\in lk(7) $ which is not possible, OR $ lk(6)=C_9([9,a_5,5],[5,4,0],[0,8,7],[7,3,2]) $ and then after completing $ lk(8),lk(9) $ we see that $ [10,11] $ form an edge and a non-edge in two 4-gon which is not possible.
		\end{subcase}
	\end{case}
	\begin{case}
		When remaining two 3-gons are $ \{[1,6,9],[2,5,11]\} $ then $ lk(1)=C_9([6,a_1,a_2],[a_2,\allowbreak a_3,2],[2,3,0],[0,8,9]) $ and either $ lk(6)=C_9([1,a_2,5],[5,4,0],[0,8,7],[7,a_4,9]) $ or $ lk(6)=C_9([9,a_4,5],[5,4,0],[0,8,7],[7,a_2,1]) $. If $ lk(6)=C_9([1,a_2,5],[5,4,0],[0,8,7],[7,a_4,9]) $ then after completing $ lk(5),lk(2),lk(7) $ we see that $ [4,6] $ form non-edge in two 4-gon which is not possible and if $ lk(6)=C_9([9,a_4,5],[5,4,0],[0,8,7],[7,a_2,1]) $ then $ a_2=10 $ which implies $ C(8,0,6,10) \in lk(7)$ which is a contradiction.
	\end{case}
	\begin{case}
		When remaining 3-gons are $ \{[1,6,11],[2,5,9]\} $ then either $ lk(6)=C_9([c_1,a_2,5],[5,\allowbreak 4,0],[0,8,7],[7,a_1,c_2]) $ where $ c_1,c_2\in\{1,11\} $.
		\begin{subcase}
			If $ \boldsymbol{(c_1,c_2)=(1,11)} $ then either $ lk(1)=C_9([6,5,2],[2,3,0],[0,8,9],[9,a_3,\allowbreak 11]) $ or $ lk(1)=C_9([11,a_3,2],[2,3,0],[0,8,9],[9,5,6]) $. If $ lk(1)=C_9([6,5,2],[2,3,0],[0,8,9],\allowbreak [9,a_3,11]) $ then after completing $ lk(5) $ and $ lk(7) $ we see that $ C(6,0,3,7,5)\in lk(4) $ which is not possible and if $ lk(1)=C_9([11,a_3,2],[2,3,0],[0,8,9],[9,5,6]) $ then $ a_1=3,a_3=10 $. Now $ lk(7)=C_9([10,b_1,b_2],[b_2,b_3,3],[3,11,6],[6,0,8]) $ which implies $ b_2=5 $ and then $ lk(5)=C_9([2,3,7],[7,10,4],[4,0,6],[6,1,9]) $. From here we see that $ b_1=4,b_3=2 $ and then after completing $ lk(2),lk(10) $ we see that $ 3 $ occur two times in $ lk(11) $ which is not possible.
		\end{subcase}
		\begin{subcase}
			If $ \boldsymbol{(c_1,c_2)=(11,1)} $ then either $ lk(1)=C_9([6,7,2],[2,3,0],[0,8,9],[9,a_3,\allowbreak 11]) $ or $ lk(1)=C_9([11,a_3,2],[2,3,0],[0,8,9],[9,7,6]) $. If $ lk(1)=C_9([6,7,2],[2,3,0],[0,8,9],\allowbreak [9,a_3,11]) $ then considering $ lk(2),lk(9) $ and completing $ lk(2),lk(5) $ we see that $ C(3,8,0,1,\allowbreak11,\allowbreak 10)\in lk(9) $ which is not possible. If $ lk(1)=C_9([11,a_3,2],[2,3,0],[0,8,9],[9,7,6]) $ then $ a_2\in\{3,10\} $. If $ a_2=3 $ then $ a_3=10 $ and $ lk(3)=C_9([4,a_4,11],[11,6,5],[5,10,2],[2,1,0]) $, $ lk(11)=C_9([6,5,3],[3,4,a_4],[a_4,a_5,10],[10,2,1]) $ which implies $ a_4 =9$ and then $ lk(9) $ will not possible. If $ a_2=10 $ then $ lk(2)=C_9([5,a_5,3],[3,0,1],[1,11,a_3],[a_3,a_4,9]) $ and $ lk(5)=C_9([9,a_6,4],[4,0,6],[6,11,10],[10,3,2]) $ which implies $ a_5=10,a_3=4 $ and then $ [4,9] $ form non-edge in two 4-gon which is not possible.
		\end{subcase}
	\end{case}
	\begin{case}
		When remaining 3-gons are $ \{[2,5,6],[1,9,11]\} $ then $ lk(1)=C_9([11,a_1,a_2],[a_2,a_3,\allowbreak 2],[2,3,0],[0,8,9]) $, $ lk(6)=C_9([2,a_4,a_5],[a_5,a_6,7],[7,8,0],[0,4,5]) $ and $ lk(8)=C_9([10,b_1,\allowbreak b_2],[b_2,b_3,9],[9,1,0],[0,6,7]) $ where $ a_2\in\{4,7,10\} $ and $ a_5\in\{9,11\} $. If $ \boldsymbol{a_5=9} $ then considering $ lk(9) $, we see that $ b_3 $ is either $ a_4 $ or $ a_6 $, therefore $ a_3=3 $ and then $ a_4,a_6\in\{3,11\} $. So $ 10\notin V(lk(0)), V(lk(6)) $ which means $ 10\in V(lk(9)) $ and therefore $ b_2=10 $ which implies $ C(b_1,10)\in lk(8) $ which is a contradiction. Therefore $ \boldsymbol{a_5=11} $. Considering $ lk(11) $ we see that either $ a_1 $ is equal to one of $ a_4,a_6 $ OR $ a_1\neq a_4,a_6 $. If $ a_1 $ is one of $ a_4,a_6 $ then $ a_1=10 $ and as $ a_6\neq10 $, therefore $ a_1=a_4=10,a_6=3 $. Therefore $ [3,7,6,11] $ and $ [11,1,a_2,10] $ form faces which implies $ a_2=4 $ and then after completing $ lk(4) $ we see that $ C(2,1,4,0,6)\in lk(5) $ which is a contradiction. If $ a_1\neq a_4,a_6 $ then either $ a_4 $ or $ a_6 $ is $ 9 $. If $ a_4=9 $ then completing $ lk(2) $, we see that $ b_3\notin V $. If $ a_6=9 $ then considering $ lk(2) $, we get $ a_4=10,a_2=4 $. Now $ lk(11)=C_9([9,7,6],[6,2,10],[10,d_1,a_1],[a_1,4,1]) $ which implies $ a_1=5 $ and $ lk(2)=C_9([5,d_2,3],[3,0,1],[1,4,10],[10,11,6]) $ and from here we get $ a_3=10 $. As $ [4,11] $ is an adjacent edge of two 4-gon, therefore $ [10,d_1] $ will be an adjacent edge of a 3-gon and a 4-gon which implies $ d_1=8 $ and therefore $ b_1=11,b_2=5 $ and then the number of faces at $ 5 $ is more that $ 5 $ which is a contradiction.
	\end{case}
	
	When $ \boldsymbol{\underline{ lk(8)=C_{9}([a,b,c],[c,d,7],[7,6,0],[0,1,9]) }} $ then $ a\in\{2,5,10\} $. \\
	For $ \boldsymbol{a=2} $, remaining possible of two incomplete 3-gons are $ \{[1,5,6],[7,10,11]\} $, $ \{[1,5,\allowbreak 7],[6,10,11]\} $, $ \{[1,5,10],[6,7,11]\} $, $ \{[1,6,7],[5,10,11]\} $, $ \{[1,6,10],[5,7,11]\} $, $ \{[1,7,10],[5,6,\allowbreak 11]\} $.
	\begin{case}
		When remaining two 3-gons are $ \{[1,5,6],[7,10,11]\} $ then after completing $ lk(1) $ and $ lk(6) $ we see that $ [7,10] $ form an edge and a non-edge in two 4-gon, which is a contradiction.
	\end{case}
	\begin{case}
		When remaining two 3-gons are $ \{[1,5,7],[6,10,11]\} $ then $ lk)(1)=C_9([c_1,a_2,2],[2,\allowbreak 3,0],[0,8,9],[9,a_1,c_2]) $ and $ lk(6)=C_9([10,a_4,5],[5,4,0],[0,8,7],[7,a_3,11]) $ where $ c_1,c_2 \in\{5,7\}$. If $ \boldsymbol{(c_1,c_2)=(5,7)} $ then $ a_2\in \{4,a_4\} $. If $ a_2=4 $ then considering $ lk(5) $ we see that $ a_4\in\{3,9\} $. If $ a_4=9 $ then considering $ lk(9) $ we see that $ [7,9] $ form  non-edge in two 4-gon which is a contradiction and if $ a_4=3 $ then after competing $ lk(3) $ we see that number of faces at $ 7 $ is more that 5 which is a contradiction. If $ a_2=a_4 $ then $ a_2\notin V $. If $ \boldsymbol{(c_1,c_2)=(7,5)} $ then $ a_1\in\{4,a_4\} $. If $ a_1=4 $ then after completing $ lk(3) $ and $ lk(5) $ we see that $ 11\notin V(lk(0)),V(lk(3)),V(lk(5)) $ which is a contradiction and if $ a_1 =a_4$ then completing $ lk(5) $ we get $ a_2=11 $ which implies $ [7,11] $ form an edge and a non-edge in two 4-gon which is a contradiction.
	\end{case}
	\begin{case}
		When remaining two 3-gons are $ \{[1,5,10],[6,7,11]\} $ then either $ lk(1)=C_9([5,a_2,\allowbreak 2],[2,3,0],[0,8,9],[9,a_1,10]) $ or $ lk(1)=C_9([10,a_2,2],[2,3,0],[0,8,9],[9,a_1,5]) $.
		\begin{subcase}
			When $ lk(1)=C_9([5,a_2,2],[2,3,0],[0,8,9],[9,a_1,10]) $ then $ lk(2)=C_9([c_1,\allowbreak a_4,\allowbreak 3],[3,0,1],[1,5,a_2],[a_2,a_3,c_2]) $ where $ c_1,c_2\in\{8,9\} $. If $ (c_1,c_2)=(8,9) $ then after completing $ lk(8) $ we see that $ 11 $ occur two times in $ lk(2) $ which is a contradiction. If $ (c_1,c_2)=(9,8) $ then $ lk(8)=C_9([2,a_2,a_3],[a_3,a_5,7],[7,6,0],[0,1,9]) $ and $ lk(9)=C_9([2,3,a_4],[a_4,a_6,\allowbreak a_1],[a_1,\allowbreak 10,1],[1,0,8]) $ which implies $ a_2\in\{4,11\} $. Now if $ a_2=4 $ then $ a_3=10 $ and $ a_1=11=a_4 $ i.e. $ C(11,a_6)\in lk(9) $, which is not possible, therefore $ a_2=11 $ and then after completing $ lk(11) $ and $ lk(5) $ we see that $ [3,6,5,10] $ and $ [6,11,10,*] $ form faces which is not possible.
		\end{subcase}
		\begin{subcase}
			When $ lk(1)=C_9([10,a_2,2],[2,3,0],[0,8,9],[9,a_1,5]) $ then $ lk(2)=C_9([c_1,\allowbreak a_4,\allowbreak 3],[3,0,1],[1,10,a_2],[a_2,a_3,c_2]) $ where $ c_1,c_2\in\{8,9\} $. If $ (c_1,c_2)=(8,9) $ then after completing $ lk(8) $ and $ lk(5) $ we see that $ [1,0,3,2] $ and $ [1,5,3,*] $ form faces which is not possible. If $ (c_1,c_2)=(9,8) $ then $ lk(8)=C_9([2,a_2,a_3],[a_3,a_5,7],[7,6,0],[0,1,9]) $ which implies $ a_2\in\{4,11\},a_3\in\{4,5\} $. If $ a_2=4 $ then $ a_3=5 $ and $ a_1=11=a_4 $ which is not possible, therefore $ a_2=11 $. Now if $ a_3=5 $ then number of faces at $ 5 $ is more than $ 5 $, therefore $ a_3=4 $ and then after completing $ lk(9) $ we see that $ [4,9,7,*] $ and $ [4,8,7,*] $ are faces, which is a contradiction.
		\end{subcase}
	\end{case}
	\begin{case}
		When remaining two 3-gons are $ \{[1,6,7],[5,10,11]\} $ then $ lk(6)=C_9([1,b_1,b_2],[b_2,\allowbreak b_3,5],[5,4,0],[0,8,7]) $ which implies $ b_2\notin V $.
	\end{case}
	\begin{case}
		When remaining two 3-gons are $ \{[1,6,10],[5,7,11]\} $ then $ lk(6)=C_9([d_1,a_2,5],[5,\allowbreak 4,0],[0,8,7],[7,a_1,d_2]) $ where $ d_1,d_2\in\{1,10\} $.
		\begin{subcase}
			When $ \boldsymbol{(d_1,d_2)=(1,10)} $ then $ a_2\in\{2,9\} $. If $ \boldsymbol{a_2=2} $ then $ lk(1)=C_9([6,5,\allowbreak 2],[2,3,0],[0,8,9],[9,a_3,10]), lk(5)=C_9([c_3,c_4,4],[4,0,6],[6,1,2],[2,c_1,c_2]) $ and $ lk(2)=C_9\allowbreak ([c_1,c_2,5],[5,6,1],[1,0,3],[3,c_5,c_6]) $. From here we see that $ c_1\in\{8,9\} $. Now $ c_1=8\Rightarrow c_6=9,c_2=11,c_5=7,c_3=7 $. Therefore we have $ [2,3,7,9],[0,6,7,8] $ are faces, but among these two faces has no common edge and no edges is an adjacent edge of a 3-gon and a 4-gon which is a contradiction. $ c_1=9\Rightarrow c_6=9 $ and then after completing $ lk(9) $ and $ lk(8) $ we see that $ C(7,a_1)\in lk(10) $ which is not possible. If $ \boldsymbol{a_2=9} $ then $ lk(1)=C_9([10,a_3,2],[2,3,0],[0,8,9],[9,5,6]) $ and $ a_1\in\{3,11\},a_3\in\{4,11\} $. Now $ a_3=4\Rightarrow a_1=11 $ and then $ lk(4)=C_9([3,a_4,10],[10,1,2],[2,a_5,5],[5,6,0]) $ which implies $ a_5=11 $ and considering $ lk(4) $ we get $ a_4=9 $. Now $ lk(10)=C_9([1,2,4],[4,3,9],[9,a_6,11],[11,7,6]) $ and then for any $ a_6 $, $ lk(9) $ will not possible. $ a_3=11\Rightarrow a_1=3 $ and then similarly (for $ a_3=4 $) after completing $ lk(3) $ and $ lk(10) $ we see that $ lk(9) $ will not possible.
		\end{subcase}
		\begin{subcase}
			When $ \boldsymbol{(d_1,d_2)=(10,1)} $ then $ a_1\in\{2,9\} $. If $ \boldsymbol{a_1=2} $ then $ lk(1)=C_9([10,\allowbreak a_3,9],[9,8,0],[0,3,2],[2,7,6]) $ where $ a_3\in\{4,11\} $. $ a_3=4\Rightarrow a_2=11 $ and $ lk(4)=C_9([3,a_4,\allowbreak 10],[10,1,9],[9,a_5,5],[5,6,0]) $ which implies $ a_5=11 $ and considering $ lk(8) $ we get $ a_4=7 $ and then $ lk(7) $ will not possible. $ a_3=11\Rightarrow a_2=3 $ and then similarly after completing $ lk(3) $ we see that $ lk(7) $ will not possible. Is $ \boldsymbol{a_1=9} $ then $ lk(1)=C_9([10,a_3,2],[2,3,0],[0,8,9],[9,\allowbreak7,6]) $ which implies $ a_3\in\{4,11\} $. Now $ a_3=4\Rightarrow a_2=11 $ and $ lk(4)=C_9([3,a_4,10],[10,1,2],\allowbreak[2,11,\allowbreak 5],[5,6,0]) $, but for any $ a_4 $, $ lk(a_4) $ will not possible. Similarly $ a_3=11 $ is also not possible.
		\end{subcase}
	\end{case}
	\begin{case}
		When remaining two 3-gons are $ \{[1,7,10],[5,6,11]\} $ then $ lk(6)=C_9([11,b_1,b_2],\allowbreak[b_2,b_3,7],[7,8,0],[0,4,5]) $ and $ lk(1)=C_9([c_1,a_1,9],[9,8,0],[0,3,2],[2,a_2,c_2]) $ which implies $ b_2\in\{2,3,9\} $ and $ a_1\in\{4,5,6,11\} $. Now considering $ lk(2) $, we get $ b\in\{3,a_2\} $. If $ b=3 $ then $ lk(2)=C_9([8,c,3],[3,0,1],[1,c_2,a_2],[a_2,a_3,9]) $ and then $ b_2=2 $ implies $ a_2=6,c_2=7,a_3=11,b_1=9,b_3=1 $ which implies $ |v_{lk(11)}|\leq 8 $ and for $ b_2=3,9 $, one of $ b_1,b_3 $ will be $ 4,a_1 $, respectively, is a contradiction. Similarly if $ b=a_2 $ then $ lk(2)=C_9([8,c,a_2],[a_2,c_2,1],[1,0,3],\allowbreak[3,a_3,9]) $ which implies for $ b_2=2,3,9 $, one of $ b_1,b_3 $ will be $ 8,4,a_1 $ respectively, is a contradiction.
	\end{case}
	
	For $ \boldsymbol{a=5} $, remaining possible of two incomplete 3-gons are $ \{[1,2,6],[7,10,11]\} $, $ \{[1,2,7],\allowbreak [6,10,11]\} $, $ \{[1,2,10],[6,7,11]\} $, $ \{[1,6,7],[2,10,11]\} $, $ \{[1,6,10],[2,7,11]\} $, $ \{[1,7,10],[2,6,11]\} $, $ \{[1,10,11],[2,6,7]\} $.
	\begin{case}
		When remaining two 3-gons are $ \{[1,2,6],[7,10,11]\} $ then $ lk(1)=C_9([6,b_1,b_2],[b_2,\allowbreak b_3,9],[9,8,0],[0,3,2]) $ and $ lk(6)=C_9([c_1,10,5],[5,4,0],[0,8,7],[7,a_1,2]) $, this implies $ b_2=10 $ and $ c_1,c_2\in\{1,2\} $.
		\begin{subcase}
			When $ \boldsymbol{(c_1,c_2)=(1,2)} $ then $ lk(5)=C_9([9,a_3,4],[4,0,6],[6,1,10],[10,a_2,\allowbreak 8]) $, $ lk(8)=C_9([5,10,a_2],[a_2,a_4,7],[7,6,0],[0,1,9]) $, $ lk(10)=C_9([b_3,9,1],[1,6,5],[5,8,a_2],\allowbreak [a_2,b_4,b_5]) $ and $ lk(9)=C_9([5,4,a_3],[a_3,a_5,b_3],[b_3,10,1],[1,0,8]) $. We see that $ a_2\neq 11 $, therefore at least one of $ a_3,b_3 $ is $ 11 $. But if $ a_3=11 $ then $ b_3\neq 7,11\Rightarrow lk(10) $ is not possible, therefore $ b_3=11,b_5=7,a_1=11,a_4 $. Now $ a_2\in\{2,3\} $ and we have $ a_3\neq 3 $, therefore if $ a_2\neq 3 $ implies $ 3\notin V(lk(5)),V(lk(6)),V(lk(8)) $ which is a contradiction, hence $ a_2=3 $ and then $ lk(3) $ will not possible.
		\end{subcase}
		\begin{subcase}
			When $ \boldsymbol{(c_1,c_2)=(2,1)} $ then $ a_1=11,b_3=4 $ and then $ lk(4)=C_9([3,a_2,9],\allowbreak [9,1,10],[10,a_3,5],[5,6,0]) $ which implies $ a_2=11 $. Now $ lk(5)=C_9([a_3,10,4],[4,0,6],[6,2,\allowbreak 11],[11,a_4,a_5]) $. From here we see that $ a_3=8,a_5=9 $, therefore $ [3,4,9,11],[5,9,a_4,11] $ are faces which is a contradiction.
		\end{subcase}
	\end{case}
	\begin{case}
		When remaining two 3-gons are $ \{[1,2,7],[6,10,11]\} $ then $ lk(1)=C_9([7,b_1,b_2],[b_2,\allowbreak b_3,9],[9,8,0],[0,3,2]) $ and $ lk(6)=C_9([10,a_2,5],[5,4,0],[0,8,7],[7,a_1,11]) $ where $ b_2\in\{4,\allowbreak10\} $. Now If $ b_2=4 $ then considering $ lk(4) $ and $ lk(7) $, we get $ b_3=5 $ which implies $ C(5,4,1,0,8)\in lk(9) $ which is a contradiction. If $ b_2=10 $ and if $ b_3=11 $ then after completing $ lk(10) $ and $ lk(7) $, we see that $ a_2\notin V $ and $ b_3\neq 11 $ then also considering $ lk(10) $, we get $ a_2\notin V $.
	\end{case}
	\begin{case}
		When remaining two 3-gons are $ \{[1,2,10],[6,7,11]\} $ then $ lk(1)=C_9([10,a_1,a_2],\allowbreak [a_2, a_3,9],[9,8,0],[0,3,2]) $ and $ lk(6)=C_9([11,b_1,b_2],[b_2,b_3,5],[5,4,0],[0,8,7]) $ where $ b_2\in \{4,6,\allowbreak
		7,11\} $.\\
		\underline{\textbf{If} $ \boldsymbol{a_2=4} $} then $ lk(4)=C_9([0,6,5],[5,10,1],[1,9,a_3],[a_3,a_4,3]) $ which implies $ a_1=5, a_3\in \{7,11\} $ and $ lk(5)=C_9([8,a_5,10],[10,1,4],[4,0,6],[6,b_2,9]) $ i.e. $ b_3=9 $. Now if $ a_3=7 $ then $ a_4=11=a_5 $ and then $ lk(11) $ will not possible, therefore $ a_3=11 $ and then $ a_4=7 $. Now $ lk(11)=C_9([6,b_2,b_1],[b_1,b_4,9],[9,1,4],[4,3,7]) $ which implies $ b_2=2 $ and then after completing $ lk(2) $, we see that $ b_1=3 $ i.e. $ C(3,2,6,7)\in lk(11) $ which is not possible.\\
		\underline{\textbf{If} $ \boldsymbol{a_2=6} $} then $ a_1,a_3\in\{5,11\} $. If $ (a_1,a_3)=(11,5) $ then $ C(5,6,1,0,8)\in lk(9) $ which is not possible, therefore $ (a_1,a_3)=(5,11) $ and then $ lk(5)=C_9([c_1,a_5,4],[4,0,6],[6,1,10],[10,a_4,\allowbreak c_2]) $ where $ c_1,c_2\in \{8,9\} $. When $ (c_1,c_2)=(8,9) $ then $ lk(8)=C_9([5,4,a_5],[a_5,a_6,7],[7,6,0],\allowbreak [0,1,9]) $ which implies $ a_5=2 $ and $ a_6\neq 11 $ i.e. $ a_4=11 $. Now we have $ 3\notin V(lk(5)),V(lk(6)) $, therefore $ a_6=3 $ and then after completing $ lk(2) $ we see that $ C(2,*)\in lk(10) $ which is not possible. When $ (c_1,c_2)=(9,8) $ then $ lk(8)=C_9([5,10,a_4],[a_4,a_6,7],[7,6,0],[0,1,9]) $ which implies $ a_4=3,a_5=11,a_6=2 $ and then after completing $ lk(3) $ we see that $ C(2,*)\in lk(4) $ which is not possible.\\
		\underline{\textbf{If} $ \boldsymbol{a_2=7} $} then either $ lk(7)=C_9([11,10,1],[1,9,a_3],[a_3,a_4,8],[8,0,6]) $ or $ lk(7)=C_9([11,9,\allowbreak 1],[1,10,a_1],[a_1,a_4,8],[8,0,6]) $. When $ lk(7)=C_9([11,10,1],[1,9,a_3],[a_3,a_4,8],[8,0,6]) $ then $ a_3=4,a_1=11 $ and then $ lk(8)=C_9([5,a_5,a_4],[a_4,4,7],[7,6,0],[0,1,9]) $ and $ lk(4)=C_9([3,8,\allowbreak 7],[7,1,9],[9,a_6,5],[5,6,0]) $ which implies $ [5,8,9] $ and $ [4,5,a_6,9] $ are faces which is a contradiction. When $ lk(7)=C_9([11,9,1],[1,10,a_1],[a_1,a_4,8],[8,0,6]) $ then $ a_1=4,a_3=11 $ and then $ lk(8)=C_9([5,a_5,a_4],[a_4,4,7],[7,6,0],[0,1,9]) $ and $ lk(4)=C_9([3,8,7],\allowbreak[7,1,10],[10,a_6,\allowbreak 5],[5,6,0]) $ which implies $ a_4=3 $ and then $ lk(3)=C_9([4,7,8],[8,5,a_5],[a_5,\allowbreak a_7,2],[2,1,0]) $. From here we see that $ a_5=11,a_7=10 $ and then $ lk(11) $ will not possible.\\
		\underline{\textbf{If} $ \boldsymbol{a_2=11} $} then $ b_1 $ is either $ a_1 $ or $ a_3 $ and then $ b_1\notin V $.
	\end{case}
	\begin{case}
		When remaining two 3-gons are $ \{[1,6,7],[2,10,11]\} $ then $ lk(6)=C_9([1,b_1,b_2],[b_2,\allowbreak b_3,5],[5,4,0],[0,8,7]) $ and either $ lk(1)=C_9([6,b_2,2],[2,3,0],[0,8,9],[9,a_1,7]) $ or $ lk(1)=C_9([7,a_1,2],[2,3,0],[0,8,9],[9,b_2,6]) $.
		\begin{subcase}
			When $ lk(1)=C_9([6,b_2,2],[2,3,0],[0,8,9],[9,a_1,7]) $ then $ b_1=2,b_2=10 $ and $ a_1=11 $. Now $ lk(8)=C_9([5,c_1,c_2],[c_2,c_3,7],[7,6,0],[0,1,9]) $ which implies $ c_1=11 $. Now we have $ [1,9,7,11] $ and $ [5,8,c_2,11] $ are faces and none of $ [1,11],[7,11],[5,11] $ are not an adjacent edge of a 3-gon and a 4-gon, therefore to complete $ lk(11) $, $ [11,c_2] $ must be an adjacent edge of a 3-gon and a 4-gon i.e. $ c_2\in \{2,10\} $. Now $ lk(7)=C_9([1,9,11],[11,c_4,c_3],[c_3,c_2,8],\allowbreak[8,0,6]) $ and $ lk(5)=C_9([9,10,6],[6,0,4],[4,b_4,11],[11,c_2,\allowbreak8]) $ which implies $ b_3=9,c_2=2,b_4=7,c_3=3 $ and then number of faces at $ 7 $ is more that $ 5 $ which is a contradiction.
		\end{subcase}
		\begin{subcase}
			\begin{sloppypar}
			When $ lk(1)=C_9([7,a_1,2],[2,3,0],[0,8,9],[9,b_2,6]) $ then $ b_2=10 $ and $ lk(5)=C_9([d_1,a_3,4],[4,0,6],[6,10,b_3],[b_3,a_2,d_2]) $ where $ d_1,d_2\in \{8,9\} $. If $ (d_1,d_2)=(8,9) $ then $ lk(8)=C_9([5,4,a_3],[a_3,a_4,7],[7,6,0],\allowbreak[0,1,9]) $ and then $ a_3=11 $ (as if $ a_3=2 $ then $ [2,4] $ will form an edge in a 3-gon which is not possible) and then $ 11\notin V(lk(0)),V(lk(1)),V(lk(6)) $ which is a contradiction. If $ (d_1,d_2)=(9,8) $ then $ lk(8)=C_9([5,b_3,a_2],[a_2,a_4,7],[7,6,0],\allowbreak[0,1,9]) $ and $ lk(7)=C_9([1,2,a_1],[a_1,a_5,a_4],[a_4,a_2,8],[8,0,6]) $ where $ a_2\in \{2,3,11\} $. If $ a_2=2 $ then $ [2,8,7,a_4] $ and $ [2,1,7,a_1] $ are two faces of the polyhedron, which is not possible. If $ a_2=3 $ then after completing $ lk(2) $ we see that $ 10\notin V(lk(0)),V(lk(7)),V(lk(8)) $ which is a contradiction and $ a_2=11 $ implies $ b_3=11 $ and then $ C(8,9,a_3,4,0,6,10,11)\in lk(5) $ which is not possible.
		\end{sloppypar}
		\end{subcase}
	\end{case}
	\begin{case}
		When remaining two 3-gons are $ \{[1,6,10],[2,7,11]\} $ then either $ lk(1)=C_9([6,a_2,\allowbreak 2],[2,3,0],[0,8,9],[9,a_1,10]) $ or $ lk(1)=C_9([10,a_2,2],[2,3,0],[0,8,9],[9,a_1,6]) $.
		\begin{subcase}
			When $ lk(1)=C_9([6,a_2,2],[2,3,0],[0,8,9],[9,a_1,10]) $ then $ a_2\in\{5,7\} $.\\
			\underline{If $ \boldsymbol{a_2=5} $} then $ lk(6)=C_9([1,2,5],[5,4,0],[0,8,7],[7,a_3,10]) $ and $ lk(2)=C_9([c_1,a_5,3],[3,0,\allowbreak 1],[1,6,5],[5,a_4,c_2]) $ where $ c_1,c_2\in\ {7,11} $. If $ (c_1,c_2)=(7,11) $ then $ a_3 $ is either $ a_5 $ or $ 8 $ or $ 11 $. If $ a_3=11 $ then $ a_5=10 $ and then $ [2,3,10,7],[10,6,7,11] $ form faces which is not possible. If $ a_3=a_5 $ then $ a_3=9 $ and $ a_4=8 $ and then after complete $ lk(8) $ we see that $ [7,8,11,*] $ and $ [2,7,11] $ are faces which is not possible. If $ a_3=8 $ then after complete $ lk(7) $ we see that $ [6,7,9,10] $ and $ [1,9,a_1,10] $ form faces which is not possible. Therefore $ (c_1,c_2)=(11,7) $ and then $ a_4\in\{8,9\} $. If $ a_4=8 $ then after completing $ lk(7) $ we see that $ C(5,9,1,0,6,7,2)\in lk(8) $ which is a contradiction. If $ a_4=9 $ then after completing $ lk(7) $ we see that $ [6,7,9,10] $ and $ [1,9,a_1,10] $ are form faces which is not possible.\\
			\underline{If $ \boldsymbol{a_2=7} $} then $ lk(6)=C_9([10,a_3,5],[5,4,0],[0,8,7],[7,2,1]) $ where $ a_3\in\{3,11\} $. If $ a_3=3 $ then $ lk(3)=C_9([4,a_4,10],[10,6,5],[5,a_5,2],[2,1,0]) $. We see that $ a_1,a_4\in lk(10) $ and $ a_1\neq a_4 $, so if $ a_4=11 $ then $ a_1\neq 11 $ and then $ 11\notin V(lk(0)),V(lk(1)),V(lk(6)) $ which is a contradiction, therefore $ a_1=11 $ and it implies $ a_5=11,a_4\in\{7,8\} $. Now considering $ lk(8) $, we see that $ a_4=7 $ and then $ lk(7) $ will not possible. If $ a_3=11 $ then $ a_1=4 $ and $ lk(5)=C_9([8,a_5,4],[4,0,6],[6,10,11],[11,a_4,9]),lk(8)=C_9([5,4,a_5],[a_5,a_6,7],[7,6,0],[0,1,9]) $ and then $ a_5\notin V $.
		\end{subcase}
		\begin{subcase}
			When $ lk(1)=C_9([10,a_2,2],[2,3,0],[0,8,9],[9,a_1,6]) $ then $ lk(6)=C_9([10,\allowbreak a_3, 5],[5,4,0],[0,8,7],[7,9,1]) $ which implies $ a_1=7,a_2\in\{4,11\},a_3\in\{3,11\} $. Now $ a_2=4\Rightarrow a_3=11 $ and after completing $ lk(4) $ we get $ C(2,4,0,6,10,11)\in lk(5) $ and $ a_2=11\Rightarrow a_3=3 $ and after completing $ lk(3) $ we get $ C(5,3,0,1,10,11)\in  lk(2)  $ which are not possible.
		\end{subcase}
	\end{case}
	\begin{case}
		When remaining two 3-gons are $ \{[1,7,10],[2,6,11]\} $ then $ c\in\{2,3,11\} $ and $ lk(1)=C_9([c_1,a_1,9],[9,8,0],[0,3,2],[2,a_2,c_2]) $ and $ lk(6)=C_9([c_3,a_3,7],[7,8,0],[0,4,5],[5,\allowbreak a_4,c_4]) $ where $ \{c_1,c_2\}=\{7,10\} $ and $ \{c_3,c_4\}=\{2,11\} $. Now $ c=2 $ implies one $ b,d $ is 3 by considering $ lk(2) $. $ b=3 $ implies $ \{d,a_2\}=\{6,11\} $, by considering $ lk(2) $, but $ d\neq 6 $, therefore $ (d,a_2)=(11,6) $ i.e. $ 10\notin V(lk(0)), V(lk(8)) $, therefore one of $ a_3,a_4 $ is 10. As [2,8,7,11] and [2,3,5,8] are faces, therefore neither $ c_3 $ nor $ c_4 $ is 2, is a contradiction. Similarly $ d\neq 3 $. If $ c=11 $ then $ c_3=11 $ implies $ d=6 $ and $ c_4=11 $ implies $ b=8 $, a contradiction, therefore $ c=3 $. Now considering $ lk(5) $, either $ a_4=9 $ or $ b\in\{4,a_4\} $. $ b=a_4 $ implies $ b=10 $ and then $ a_3\in\{1,9\} $. Now $ a_3=9 $ implies $ c_1=10 $ and then $ lk(9)=C_9([5,a_5,7],[7,6,11],[11,10,1],[1,0,8]) $ i.e. $ a_1=11=c_3 $ and then considering $ lk(7) $, we get $ \{d,a_5\}=\{1,10\} $, is a contradiction. $ a_3=1 $ implies $ c_3=2,c_2=7,a_2=6,c_1=10,c_4=11 $ and then $ lk(7)=C_9([10,a_5,d],[d,3,8],[8,0,6],[6,2,1]) $ and $ lk(3)=C_9([4,7,8],[8,5,10],[10,a_6,2],[2,1,0]) $, therefore $ d=4 $ i.e. $ 11\notin V(lk(0)), V(lk(8)) $, therefore $ a_1=11,a_5=11,a_6=11 $ and then [4,7,10,11], [2,3,10,11] and [1,10,11,9] are faces, is a contradiction. $ b=4 $ implies $ lk(5)=C_9([8,3,4],[4,0,6],[6,c_4,a_4],[a_4,a_5,9]) $ and $ lk(3)=C_9([0,1,2],[2,a_6,d],[d,7,8],[8,5,4]) $ which implies $ d\neq 11 $ i.e. $ 11\notin V(lk(0)),V(lk(8)) $ i.e. $ a_6=11 $. Now $ lk(2)=C_9([6,a_7,a_2],[a_2,c_2,1],[1,0,3],[3,d,11]) $ which implies $ a_1=11,d=10 $ and then $ c_2=7,c_1=10 $ and therefore $ a_2\notin V $. $ a_4=9 $ implies $ lk(5)=C_9([9,c_4,6],[6,0,4],[4,a_5,b],[b,3,8]) $ and $ lk(9)=C_9([5,6,c_4],[c_4,a_6,a_1],[a_1,c_1,1],[1,0,8]) $. Now $ c_4=2 $ implies $ lk(2)=C_9([6,5,9],[9,a_1,3],[3,0,1],[1,c_2,11]) $ i.e. $ a_6=3,a_2=11 $ and then $ a_1\notin V $, therefore $ c_4=11,c_3=2 $. Now for $ b=2 $, considering $ lk(2) $, $ \{a_2,a_5\}=\{6,11\} $, a contradiction, therefore $ b=10 $ and $ a_1=4 $ and then considering $ lk(4) $, $ c_1=a_5=7, a_6=3 $ and then $ lk(3) $ will not possible.
	\end{case}
	\begin{case}
		When remaining two 4-gons are $ \{[1,10,11],[2,6,7]\} $ then $ lk(1)=C_9([10,a_2,2],[2,\allowbreak 3,0],[0,8,9],[9,a_1,11]) $ and $ lk(6)=C_9([2,b_1,b_2],[b_2,b_3,5],[5,4,0],[0,8,7]) $ where $ a_2\in\{4,5,\allowbreak 6,7\} $.\\
		\underline{If $ \boldsymbol{a_2=4} $} then $ lk(2)=C_9([6,b_2,3],[3,0,1],[1,10,4],[4,b_4,7]) $ and $ lk(4)=C_9([3,b_5,10],[10,\allowbreak1,\allowbreak 2],[2,7,5],[5,6,0]) $ which implies $ b_1=3, b_4=5,b_2=11 $ and then after completing $ lk(5) $ we see that $ [5,7,*,8] $ and $ [0,6,7,8] $ are faces of polygon, which is not possible.\\
		\underline{If $ \boldsymbol{a_2=5} $} then after completing $ lk(2) $, $ lk(5) $ will not possible.\\
		\underline{If $ \boldsymbol{a_2=6} $} then $ b_1=1,b_2=10 $ and $ b_3\in\{3,9\} $. When $ b_3=3 $ then after completing $ lk(3) $, $ lk(5) $ will not possible and if $ b_3=9 $ then $ lk(9)=C_9([5,6,10],[10,b_4,a_1],[a_1,11,1],[1,0,8]) $ which implies $ a_1\in\{4,7\} $. But if $ a_1=4 $ then after completing $ lk(4) $ and $ lk(5) $ we see that $ [2,11] $ is an adjacent edge of a 3-gon and a 4-gon which is a contradiction and if $ a_1=7 $ then after completing $ lk(7) $, we get $ [1,2,6,10] $ and $ [2,7,9,10] $ are faces of polyhedron, which is a contradiction.\\
		\underline{If $ \boldsymbol{a_2=7} $} then $ a_1=4 $ and $ lk(4)=C_9([3,a_3,9],[9,1,11],[11,a_4,5],[5,6,0]),lk(9)=C_9([5,a_5,\allowbreak a_3],[a_3,3,4],[4,11,1],[1,0,8]) $ which implies $ a_3=10,a_5=2 $ and then $ [2,5,9,10],[1,2,7,10] $ are two faces of polyhedron which is a contradiction.
	\end{case}
	
	For $ \boldsymbol{a=10} $, remaining possible of two incomplete 3-gons are $ \{[1,2,5],[6,7,11]\} $, $ \{[1,2,6],\allowbreak [5,7,11]\} $, $ \{[1,2,7],[5,6,11]\} $, $ \{[1,5,6],[2,7,11]\} $, $ \{[1,5,7],[2,6,11]\} $, $ \{[1,5,11],[2,6,7]\} $, $ \{[1,\allowbreak6,7],[2,5,11]\} $, $ \{[1,6,11],[2,5,7]\} $, $ \{[1,7,11],[2,5,6]\} $.
	\begin{case}
		When remaining two 3-gons are $ \{[1,2,5],[6,7,11]\} $ then $ lk(1)=C_9([5,a_1,a_2],[a_2,\allowbreak a_3,9],[9,8,0],[0,3,2]) $ and $ lk(6)=C_9([11,b_1,b_2],[b_2,b_3,5],[5,4,0],[0,8,7]) $ where $ a_2\in\{7,11\} $.\\
		\underline{If $ \boldsymbol{a_2=7} $} then one of $ a_1,a_3 $ must e $ 11 $. If $ a_1=11 $ then after completing $ lk(7) $ and $ lk(4) $ we get $ C(8,0,1,7,4,5,10)\in lk(9) $ and if $ a_2=11 $ then similarly $ C(10,4,7,6,0,1,9)\in lk(8) $ which are not possible.\\
		\underline{If $ \boldsymbol{a_2=11} $} then $ a_3\in 4,7 $. If $ a_3=4 $ then completing $ lk(4) $, we get $ [4,5,*,11],[5,1,11,a_1] $ are faces of polyhedron which is a contradiction. If $ a_3=7 $ then either $ lk(5)=C_9([2,b_2,6],\allowbreak[6,0,\allowbreak 4],[4,b_4,a_1],[a_1,11,1]) $ or $ lk(5)=C_9([2,b_4,b_3],[b_3,b_2,6],[6,0,4],\allowbreak[4,11,1]) $. If $ lk(5)=C_9([2,\allowbreak b_2,6],[6,0,4],[4,b_4,a_1],[a_1,11,1]) $ then $ b_2=9 $ which implies $ [1,9,7,11],[6,9,b_1,11] $ are two faces of polyhedron which is not possible. If $ lk(5)=C_9([2,b_4,b_3],[b_3,b_2,6],[6,0,4],\allowbreak[4,11,1]) $ then $ b_4=7 $ and to complete $ lk(9) $, $ b_3=9,b_2=10 $ and then after completing $ lk(7),lk(11) $, we see that $ lk(2) $ will not possible.
	\end{case}
	\begin{case}
		When remaining two 3-gons are $ \{[1,2,6],[5,7,11]\} $ then $ lk(1)=C_9([6,b_1,b_2],[b_2,\allowbreak b_3, 9],[9,8,0],[0,3,2]) $ and $ lk(6)=C_9([a_3,a_4,5],[5,4,0],[0,8,7],[7,a_1,a_2]) $ which implies $ b_1\in \{5,7\} $ and $ b_2=11 $. \\
		\underline{If $ \boldsymbol{b_1=5} $} then $ b_3=4 $ and then $ lk(4)=C_9([3,c_1,c_2],[c_2,1,c_3],[c_3,c_4,5],[5,6,0]) $. We have $ [0,4,5,6] $ and $ [1,9,4,11] $ are faces of polyhedron, therefore $ 4\notin V(lk(7)),V(lk(8)) $, i.e. $ c_2,c_3\in \{9,11\} $ and $ c_1,c_4\in\{2,10\} $. But $ c_1\neq 2 $, therefore $ c_1=10,c_4=2 $. Now $ lk(9)=C_9([10,c_5,c_6],[c_6,c_7,4],[4,11,1],[1,0,8]) $ which implies $ c_2=11,c_2=9,c_6=2,c_7=5 $ and then $ lk(2)=C_9([6,10,9],[9,4,5],[5,c_8,3],[3,0,1]) $ which implies $ c_5=6 $ and then after completing $ lk(5) $ we see that $ c_8=7 $ and then $ lk(7) $ will not possible.\\
		\underline{If $ \boldsymbol{b_1=7} $} then $ b_3=4 $ and $ lk(4)=C_9([3,c_1,c_2],[c_2,1,c_3],[c_3,c_4,5],[5,6,0]) $ and $ c_2,c_3\in\{9,11\} $. If $ (c_2,c_3)=(9,11) $ then $ (c_1,c_4)=(7,10) $ which implies $ lk(7) $ is not possible. If $ (c_2,c_3)=(11,9) $ then $ c_1=10 $ and $ c_4\in \{2,7\} $. But if $ c_4=7 $ then considering $ lk(7) $ we see that $ [0,1,9,8] $ and $ [7,8,*,9] $ are two face of the polyhedron which is not possible, therefore $ c_4=2 $. Now $ lk(9)=C_9([10,c_5,2],[2,5,4],[4,11,1],[1,0,8]) $ and $ lk(2)=C_9([6,10,9],[9,4,5],[5,c_6,3],[3,0,1]) $ which implies $ c_5=6 $ and then $ lk(5)=C_9([c_6,3,2],[2,9,\allowbreak 4],[4,0,6],[6,c_7,c_8]) $. From here we see that $ c_6=7,c_8=11,c_7=a_3=1 $ and then after completing $ lk(7) $ we get $ c(3,7,6,0,1,9,10)\in lk(8) $ which is a contradiction.
	\end{case}
	\begin{case}
		When remaining two 3-gons are $ \{[1,2,7],[5,6,11]\} $ then $ lk(1)=C_9([7,a_1,a_2],[a_2,\allowbreak a_3,9],[9,8,0],[0,3,2]) $ and $ lk(6)=C_9([11,b_1,b_2],[b_2,b_3,7],[7,8,0],[0,4,5]) $ which implies $ a_2\in \{4,5,11\} $. \\
		\underline{If $ \boldsymbol{a_2=4} $} then $ a_1\in\{5,10,11\} $ and considering $ lk(4) $ we see that one of $ a_1,a_3 $ is $ 5 $. Now if $ a_1=5 $ then $ lk(4)=C_9([3,10,11],[11,9,1],[1,7,5],[5,6,0]) $ and $ lk(7)=C_9([2,b_2,6],[6,0,8],\allowbreak [8,a_6,5],[5,4,1]) $. Now considering $ lk(8) $, we see that $ b_2=10 $ and then $ [3,4,11,10],[10,6,11,\allowbreak *] $ are faces of the polyhedron, which is not possible. If $ a_1\neq 5 $ then $ lk(4)=C_9([3,a_4,a_1],[a_1,\allowbreak 7,1],[1,9,5],[5,6,0]) $ and $ lk(7)=C_9([2,a_5,8],[8,0,6],[6,a_6,a_1],[a_1,4,1]) $ which implies $ a_1=b_3,a_6=b_2 $ and then $ a_1=10,a_6\in \{3,9\} $, but considering $ lk(9) $ we get $ a_6=3 $ and then considering $ lk(3) $ we see that $ [3,4,*,10],[3,4,10,a_4] $ are faces of the polyhedron, which is not possible.\\
		\underline{If $ \boldsymbol{a_2=5} $} then $ a_1\in\{4,10,11\} $. But for $ a_1=4,11 $, we see that $ a_3\notin V $. Now if $ a_1=10 $ then either $ lk(5)=C_9([11,9,1],[1,7,10],[10,a_4,4],[4,0,6]) $ or $ lk(5)=C_9([11,a_4,10],[10,7,1],[1,\allowbreak 9,4],[4,0,6]) $. When $ lk(5)=C_9([11,9,1],[1,7,10],[10,a_4,4],[4,0,6]) $ then $ a_3=11,a_4\in\{2,8\} $ and $ lk(4)=C_9([3,c_1,c_2],[c_2,c_3,a_4],[a_4,10,5],[5,6,0]) $ and then considering $ lk(2)$, $lk(8) $ we see that for any $ a_4, c_3=7 $ and then considering $ lk(9) $, we get $ c_1=9,c_2=11 $. Now $ lk(9)=C_9([10,c_4,3],[3,4,11],[11,5,1],[1,0,8]) $. Now if $ a_4=2 $ then $ C(2,1,0,4,11,9,10)\in lk(3) $ which is not possible and if $ a_4=8 $ then after completing $ lk(8) $ we get $ c_4=6 $ and then $ lk(6) $ will not possible. When $ lk(5)=C_9([11,a_4,10],[10,7,1],[1, 9,4],[4,0,6]) $ then $ a_3=4 $ and $ lk(9)=C_9([10,c_1,c_2],[c_2,11,4],[4,5,1],[1,0,8]) $ which implies $ c_2=2 $ and then after completing $ lk(2) $ and $ lk(8) $ we get $ [4,11,6,*],[4,0,6,5] $ are two face of the polyhedron which is not possible.\\
		\underline{If $ \boldsymbol{a_2=11} $} then $ a_3\in\{4,5,6\} $. If $ \boldsymbol{a_3=4} $ then $ lk(4)=C_9([3,a_4,11],[11,1,9],[9,a_5,5],[5,6,\allowbreak 0]) $ which implies $ a_5\in\{2,7\} $ and from $ lk(9) $ we see that $ [10,a_5] $ will form a non-edge in a 4-gon, therefore one of $ a_1,a_4 $ must be $ 10 $. Now $ lk(9)=C_9([10,a_6,a_5],[a_5,5,4],[4,11,1],[1,0,\allowbreak8]) $ and considering $ lk(7) $ we get $ a_5=2 $ and then $ lk(2)=C_9([7,a_7,5],[5,4,9],[9,10,3],[3,0,\allowbreak1]) $. From here we see that $ a_6=3 $ and then $ a_1=10,a_7\in\{6,8\} $. Now $ lk(5)=C_9([11,a_8,a_7],[a_7,7,\allowbreak 2],[2,9,4],[4,0,6]) $ which implies $ a_7=8,a_8=10 $ and then after completing $ lk(7) $ we see that $ [3,4,*,6] $ and $ [4,0,6,5] $ are faces of the polyhedron which is not possible. If $ \boldsymbol{a_3=5} $ then $ a_1\in\{4,10\} $. When $ a_1=10 $ then $ lk(7)=C_9([2,a_4,8],[8,0,6],[6,\allowbreak b_2,10],[10,11,1]) $ which implies $ b_2=3,b_3=10 $ as if $ b_2=9 $ then $ C(9,6,5)\in lk(11) $ which is not possible. Now $ lk(2) $. Therefore $ b_1=2 $ and then completing $ lk(2),lk(4) $ we see that $ C(4,2,3,6,5)\in lk(11) $ which is a contradiction. If $ a_1=4 $ then $ lk(4)=C_9([3,a_4,11],[11,1,7],[7,a_5,5],[5,6,0]) $ which implies $ a_4=10 $ and $ lk(11)=C_9([6,b_2,10],\allowbreak[10,3,4],[4,7,1],[1,9,5]) $. From here we see that $ b_1=10,b_2=2 $ and then after completing $ lk(5) $, we se that $ lk(2) $ will not possible. If $ \boldsymbol{a_3=6} $ then $ b_1=1,b_2=9,a_1=10 $ and then after completing $ lk(7) $, we see that $ [8,9,10],[7,8,*,10] $ are two faces of the polyhedron, which is a contradiction.
	\end{case}
	\begin{case}
		When remaining two 3-gons are $ \{[1,5,6],[2,7,11]\} $ then $ lk(6)=C_9([1,b_1,b_2],[b_2,\allowbreak b_3, 7],[7,8,0],[0,4,5]) $ and $ lk(1)=C_9([a_3,a_4,2],[2,3,0],[0,8,9],[9,a_1,a_2]) $ where $ a_2,a_3\in\{5,6\},\allowbreak  b_1\in\{2,9\} $ and $ b_2 $ is either $ a_1 $ or $ a_4 $. In both cases $ b_2=10 $, therefore $ b_2=10 $. Now when $ \boldsymbol{b_1=2} $ then $ a_1\in\{7,11\} $, but $ a_1 $ can not be $ 7 $ as then $ [0,6,7,8], [1,5,7,9] $ form two faces of the polyhedron which has not common vertex except $ 7 $ and on edges of these two faces is an adjacent edge with a 3-gon and that implies $ lk(7) $ will not complete, therefore $ a_1=11 $ and then after completing $ lk(2) $ and $ lk(7) $ we see that $ lk(9) $ will not possible. When $ \boldsymbol{b_1=9} $ then $ (a_1,a_2,a_3)=(10,6,5) $ which implies $ C(8,0,1,6,10)\in lk(9) $ which is not possible.
	\end{case}
	\begin{case}
		When remaining two 3-gons are $ \{[1,5,7],[2,6,11]\} $ then either $ lk(1)=C_9([5,a_2,\allowbreak2],\allowbreak [2,3,0],[0,8,9],[9,a_1,7]) $ or $ lk(1)=C_9([7,a_2,2],[2,3,0],[0,8,9],[9,a_1,5]) $. When $ lk(1)=C_9([5,a_2,2],[2,3,0],[0,8,9],[9,a_1,7]) $ then $ lk(6) $ has two possibilities. In both cases, after completing $ lk(7) $ we see that either $ a_1=10 $ or $ a_1\notin V $, which is a contradiction. When $ lk(1)=C_9([7,a_2,2],[2,3,0],[0,8,9],[9,a_1,5]) $ then $ lk(6)=C_9([11,a_3,5],[5,4,0],[0,8,7],[7,\allowbreak1,\allowbreak 2]) $ which implies $ a_2=6,a_3=10 $ and then $ lk(5)=C_9([1,9,4],[4,0,6],[6,11,10],[10,a_4,\allowbreak7]) $. From there we see that $ a_1=4 $ and then after completing $ lk(7) $ we get $ a_4=3 $ which implies $ lk(3) $ is not possible.
	\end{case}
	\begin{case}
		When remaining two 3-gons are $ \{[1,5,11],[2,6,7]\} $ then $ lk(6)=C_9([2,b_1,b_2],[b_2,\allowbreak b_3, 5],[5,4,0],[0,8,7]) $ and $ lk(1)=C_9([c_1,a_2,2],[2,3,0],[0,8,9],[9,a_1,c_2]) $ where $ c_1,c_2\in\{5,\allowbreak 11\} $.
		\begin{subcase}
			When $ \boldsymbol{(c_1,c_2)=(5,11)} $ then $ a_1\in\{4,7\} $. If $ a_1=4 $ then $ lk(4)=C_9([3,a_3,\allowbreak 11],[11,1,9],[9,a_4,5],[5,6,0]) $ and $ lk(5)=C_9([11,a_4,6],[6,0,4],[4,9,7],[7,2,1]) $ which implies $ a_4\notin V $. Therefore $ a_1=7 $ and then $ lk(7)=C_9([2,a_3,9],[9,1,11],[11,a_4,8],[8,0,6])$, $  lk(8)=C_9([10,a_5,a_4],[a_4,11,7],[7,6,0],[0,1,9]) $ and $ lk(9)=C_9([10,a_6,a_3],[a_3,2,7],[7,11,\allowbreak 1],[1,0,8]) $ which implies $ a_3\in\{3,4,5\} $. Now if $ a_3=3 $ then we will not get 9 vertices to complete $ lk(3) $, if $ a_3=4 $ then $ a_4=3 $ and then after complete $ lk(4) $, we see that $ [3,4,9,10],[3,8,10,a_5] $ form faces of the polyhedron which is not possible and if $ a_3=5 $ then $ lk(5) $ will not possible.
		\end{subcase}
		\begin{subcase}
			\begin{sloppypar}
			When $ (c_1,c_2)=(11,5) $ then $ a_1\in\{4,6,7\} $. Now if \underline{$ \boldsymbol{a_1=4} $} then $ lk(4)=C_9([3,a_3,a_4],[a_4,a_5,9],[9,1,5],[5,6,0]) $, $ lk(5)=C_9([11,a_6,a_6],[a_7,a_8,6],[6,0,4],[4,9,1]) $ and $ lk(9)=C_9([10,a_9,a_5],[a_5,a_4,4],[4,5,1],[1,0,8]) $ which implies $ a_7\in\{3,10\} $. Now if $ a_7=3 $ then after completing $ lk(3) $ we get $ [2,3,5,11],[2,1,11,a_2] $ form faces of the polyhedron which is not possible. If $ a_7=10 $ then $ b_3=11,a_4=11,a_5=7 $ and then after completing $ lk(7),lk(11) $ we get $ a_2=10 $ which is a contradiction. \underline{If $ \boldsymbol{a_1=6} $} then  $ lk(6)=C_9([2,b_1,9],[9,1,5],[5,4,0],[0,8,7]) $ and from here we see that $ b_1=3 $. Now $ lk(9)=C_9([10,b_4,\allowbreak 3],[3,2,6],[6,5,1],[1,0,8]) $ which implies $ b_4=11,a_2=10 $ and then $ lk(3)=C_9([4,b_5,11],[11,\allowbreak 10,9],[9,6,2],[2,1,0]) $, $ lk(1)=C_9([5,6,9],[9,8,0],[0,3,2],[2,10,11]) $ and $ lk(11)=C_9([5,b_6,\allowbreak b_5],[b_5,4,3],[3,9,10],[10,2,1]) $ which implies $ b_5=7 $. Now $ lk(2)=C_9([7,b_7,10],[10,11,1],[1,\allowbreak 0,3],[3,9,6]) $, $ lk(7)=C_9([2,10,4],[4,3,11],[11,5,8],[8,0,6]) $. From here we see that $ b_6=8,b_7=4 $. Therefore $ lk(2)=C_9([7,4,10],[10,11,1],[1,0,3],[3,9,6]) $, $ lk(10)=C_9([8,b_8,4],[4,\allowbreak 7,2],[2,1,11],[11,3,9]) $, $ lk(8)=C_9([10,4,5],[5,11,7],[7,6,0],[0,1,\allowbreak9]) $ which implies $ b_8=5 $ and then $ lk(5)=C_9([1,9,6],[6,0,4],[4,10,8],[8,7,11]) $, $ lk(9)=C_9([10,11,3],[3,2,6],[6,5,1],\allowbreak [1,0,8]) $, $ lk(4)=C_9([3,11,7],[7,2,10],[10,8,5],[5,6,0]) $. Let this be the map $ \boldsymbol{K_2} $. $ \boldsymbol{K_2} $ is isomorphic to $ \boldsymbol{KO_{[(3,4^4)]}} $ under the map $ (1,6)(2,5)(3,4)(7,9) $. \underline{If $ \boldsymbol{a_1=7} $} then $ lk(7)=C_9([2,b_4,9],[9,\allowbreak 1,5],[5,b_5,8],[8,0,6]) $ and then after completing $ lk(2) $, we see that $ b_2\notin V $.
		\end{sloppypar}
		\end{subcase}
	\end{case}
	\begin{case}
		When remaining two 3-gons are $ \{[1,6,7],[2,5,11]\} $ then $ lk(6)=C_9([1,b_1,b_2],[b_2,\allowbreak b_3, 5],[5,4,0],[0,8,7]) $ and $ lk(1)=C_9([c_1,b_2,2],[2,3,0],[0,8,9],[9,a_1,c_2]) $ where $ c_1,c_2\in\{6,7\} $. When $ (c_1,c_2)=(6,7) $ then $ b_2=10 $ and after completing $ lk(2) $ we see that $ b_3\notin V $ and if $ (c_1,c_2)=(7,6) $ then $ b_1=9,b_2=11 $ and then $ [2,5,11],[5,6,11,b_3] $ forms faces of the polyhedron which is not possible.
	\end{case}
	\begin{case}
		When remaining two 3-gons are $ \{[1,6,11],[2,5,7]\} $ then $ lk(6)=C_9([d_1,a_2,5],[5,4,\allowbreak 0],[0,8,7],[7,a_1,d_2]) $ where $ d_1,d_2\in\{1,11\} $.
		\begin{subcase}
			When $ (d_1,d_2)=(1,11) $ then $ lk(1)=C_9([a_5,a_6,2],[2,3,0],[0,8,9],[9,a_3,\allowbreak a_4]) $ where $ a_4,a_5\in\{6,11\} $ and $ a_2\in\{2,9\} $. If $ \boldsymbol{a_2=2} $ then $ (a_1,a_4,a_5,a_6)=(10,11,6,5) $ and $ lk(2)=C_9([7,c_1,c_2],[c_2,c_3,3],[3,0,1],[1,6,5]) $ which implies $ c_2=9 $ and then $ lk(7)=C_9([5,c_4,8],[8,0,6],[6,11,10],[10,9,2]) $. From here we see that $ c_1=10,c_4=3 $ and after completing $ lk(8),lk(9) $ we see that $ c_3\notin V $. If $ \boldsymbol{a_2=9} $ then $ (a_3,a_4,a_5)=(5,6,11) $ and then $ lk(5)=C_9([e_1,a_6,4],[4,0,6],[6,1,9],[9,a_5,e_2]) $ where $ e_1,e_2\in\{2,7\} $. If $ (e_1,e_2)=(2,7) $ then $ lk(7)=C_9([2,a_7,8],[8,0,6],[6,11,a_1],[a_1,9,5]) $ which implies $ a_1=a_5=3, a_6=10=a_7 $ and $ a_4=10 $ and then $ lk(6) $ will not possible. If $ (e_1,e_2)=(7,2) $ then after completing $ lk(7) $ we see that $ C(0,1,5,2,10,8)\in lk(9) $ which is not possible.
		\end{subcase}
		\begin{subcase}
			When $ (d_1,d_2)=(11,1) $ then $ a_1\in\{2,9\} $. If $ \boldsymbol{a_1=2} $ then $ a_2=10,a_3=4 $ and then $ lk(4)=C_9([3,10,11],[11,1,9],[9,a_4,5],[5,6,0]) $ and considering $ lk(5) $ we see that $ a_4\notin V $. If $ \boldsymbol{a_1=9} $ then $ a_3\in\{4,10\} $. But if $ a_3=4 $ then $ [2,5,7] $ \& $ [2,4,5,*] $ form faces of the polyhedron which is not possible and similarly $ a_3\neq 10 $.
		\end{subcase}
	\end{case}
	\begin{case}
		When remaining two 3-gons are $ \{[1,7,11],[2,5,6]\} $ then $ lk(6)=C_9([2,b_1,b_2],[b_2,\allowbreak b_3, 7],[7,8,0],[0,4,5]) $ and $ lk(1)=C_9([d_1,a_2,2],[2,3,0],[0,8,9],[9,a_1,d_2]) $ where $ d_1,d_2\in\{7,11\} $.
		\begin{subcase}
			When $ (d_1,d_2)=(7,11) $ then $ lk(7)=C_9([11,b_2,6],[6,0,8],[8,a_3,a_2],[a_2,2,\allowbreak 1]) $ which implies $ b_2=10,b_3=11 $. We see that $ a_1,a_2\in\{4,5\} $, therefore $ b_1\in\{3,9\} $. Now $ lk(8)=C_9([10,a_4,a_3],[a_3,a_2,7],[7,6,0],[0,1,9]) $ and $ lk(2)=C_9([5,c_1,c_2],[c_2,c_3,c_4],[c_4,c_5,\allowbreak b_1],[b_1,10,\allowbreak 6]) $. From here we see that either $ c_2=3 $ or $ c_4=3 $ or $ b_1=3 $. If $ c_2=3 $ then $ b_1=a_2 $ which is a contradiction, if $ c_4=3 $ then $ a_2=5,b_1=9,c_5=a_1=4, a_3=3 $ which implies $ C(0,1,2,9,4)\in lk(3) $ which is a contradiction and if $ b_1=3 $ then $ c_2=a_2=4,a_1=5 $ which implies $ lk(4) $ is not possible.
		\end{subcase}
		\begin{subcase}
			When $ (d_1,d_2)=(11,7) $ then $ lk(7)=C_9([11,c_1,c_2],[c_2,c_3,c_4],[c_4,c_5,a_1],[a_1,\allowbreak 9,1]) $ which implies either $ c_2=8 $ or $ c_4=8 $. But if $ c_2=8 $ then $ a_1=b_3\notin V $, therefore $ c_4=8 $ and then $ c_1=10,c_2=6,c_3=0 $. Now considering $ lk(2) $, we get $ a_2\in\{5,b_1\} $, but if $ a_2=b_1 $ then $ b_1\notin V $, therefore $ a_2=5 $ and then $ lk(2)=C_9([5,11,1],[1,0,3],[3,b_4,b_1],[b_1,10,6]) $. From here we get $ b_1=9 $ and then considering $ lk(9) $ we see that $ b_4\notin V $.
		\end{subcase}
	\end{case}
\end{proof}
%\end{proof}

\begin{proof}[\textbf{Proof of theorem \ref{thm5}}]
	Let $K$ be a map of type $(3^4,4^2)$ on the surface of $\chi =-2$. Let $V(K)=\{0,1,2,3,4,5,6,\allowbreak 7,8,9,10,11\}$. We express $lk(v)$ by the notation $lk(v)=C_{8}(v_{1},v_{2},v_{3},\allowbreak [v_{4},v_{5},v_{6}],[v_{6},v_{7},v_{8}])$ where $[v,v_{8},v_{1}],[v,v_{1},v_{2}],[v,v_{2},v_{3}],[v,v_{3},v_{4}]$ form 3-gon face and $[v,v_{4},v_{5},v_{5}],[v,v_{6},v_{7},v_{8}]$ form 4-gon face.
	With out loss of generality let $ lk(0)=C_{8}(2,3,4,[5,6,7],[7,8,1]) $. Let $ lk(7)=C_{8}(a,b,c,[8,1,0],[0,5,6]) $, where $ a,b,c\in \lbrace 2,3,4,9,\allowbreak10,11\rbrace$. For any $ a\in \lbrace 2,3,4\rbrace $, $ [a,6] $ form an edge in a 4-gon which implies face sequence is not followed in $ lk(6) $, therefore $ a\neq 2,3,4 $. Similarly $ c\neq 2,3,4 $. If $ b=2 $ then either $ a=3 $ or $ c=3 $. If $ a=3 $ then $ [3,6] $ form an edge in a 4-gon, which implies face sequence is not followed in $ lk(6) $, similarly if $ c=3 $ then face sequence will not follow in $ lk(8) $. Similarly $ b\neq 4 $. If $ b=3 $ one of $ a,c $ is either $ 2 $ or $ 4 $. For any case, face sequence will not follow in either $ lk(6) $ or $ lk(8) $. Therefore $ (a,b,c)=(9,10,11) $. Then $ lk(5)=C_{8}(4,a_{1},a_{2},[a_{3},a_{4},6],[6,7,0]) $ where $ a_{1}\in \lbrace 1,2,3,8,9,10,11\rbrace $. \\
	\underline{\textbf{If} $ \boldsymbol{a_{1}=1} $} then either $ lk(1)=C_{8}(2,5,4,[b_{1},b_{2},8],[8,7,0]) $ or $ lk(1)=C_{8}(2,b_{1},5,[b_{2},b_{3},\allowbreak 8],[8,7,0]) $. If $ lk(1)=C_{8}(2,5,4,[b_{1},b_{2},8],[8,7,0]) $ then from $ lk(4) $ we see that $ [4,b_{1}] $ form an edge in a 4-gon, which implies face sequence is not followed in $ lk(b_{1}) $. Therefore $ lk(1)=C_{8}(2,b_{1},5,[b_{2},b_{3},8],[8,7,0]) $ and then either $ b_{1}=4 $ or $ b_{3}=4 $.
	\begin{case}
		If $ b_{1}=4 $ then $ lk(4)=C_{8}(0,5,1,[2,c_{1},c_{2}],[c_{2},c_{3},3]) $ where $ c_{2}\in \lbrace 9,10,11\rbrace $.
		
		\textbf{If} $ \boldsymbol{c_{2}=9} $ then four incomplete 4-gons are $ [2,4,9,c_{1}],[3,4,9,c_{3}],[8,1,b_{2},b_{3}]  $ and $ [6,5,\allowbreak c_{4},c_{5}] $. We see that one of $ c_{4},c_{5} $ is $ 2 $ and $ b_{2},b_{3}\neq 11 $, therefore one of $ c_{1},c_{4},c_{5} $ is $ 11 $. For any case, $ [2,11] $ form an edge in a 4-gon, therefore $ c_{1}=11 $ and one of $ c_{4},c_{5} $ is $ 11 $ and hence $ c_{3}=10 $ and $ b_{2},b_{3}\in\lbrace 3,10\rbrace $. Therefore $ lk(9)=C_{8}(7,6,8,[11,2,4],[4,3,10]) $ and $ lk(8)=C_{8}(11,9,6,[b_{3},b_{2},1],[1,0,7])\Rightarrow lk(6)=C_{8}(9,8,b_{3},[c_{5},c_{4},5],[5,0,7]) $.
		\begin{subcase}
			\textbf{If} $ \boldsymbol{(b_{3},b_{2})=(3,10)} $ then $ lk(3)=C_{8}(0,2,6,[8,1,10],[10,9,4])\Rightarrow lk(10)=C_{8}(7,11,5,[1,8,3],[3,4,9]) $ which implies $ c_{4}=11,c_{5}=2 $. Therefore $ lk(6)=C_{8}(9,8,3,[2,11,\allowbreak 5],[5,0,7])\Rightarrow lk(11)=C_{8}(8,7,10,[5,6,2],[2,4,9])\Rightarrow lk(1)=C_{8}(2,\allowbreak 4,5,[10,3,8],[8,7,10])\allowbreak\Rightarrow lk(2)=C_{8}(1,0,3,[6,5,11],[11,9,4])\Rightarrow lk(5)=C_{8}(4,1,\allowbreak 10,[11,2,6],[6,7,0]) $. Let this be the map $ \boldsymbol{KNO_{[(3^4,4^2)]}} $.
		\end{subcase}
		\begin{subcase}
			\textbf{If} $ \boldsymbol{(b_{3},b_{2})=(10,3)} $ then $ lk(10)=C_{8}(7,11,6,[8,1,3],[3,4,9])\Rightarrow lk(6)=C_{8}(9,8,10,[11,2,5],[5,0,7])\Rightarrow lk(11)=C_{8}(8,7,10,[6,5,2],[2,4,9])\Rightarrow lk(1)\allowbreak =C_{8}(2,4,5,[3,\allowbreak 10,8],[8,7,0])\Rightarrow lk(5)=C_{8}(4,1,3,[2,11,6],[6,7,0])\Rightarrow lk(3)=C_{8}(5,2,0,[4,9,10],[10,8,1])\allowbreak\Rightarrow lk(2)=C_{8}(1,0,3,[5,6,11],[11,9,4]) $. Let this be the map $ \boldsymbol{KO_{1[(3^4,4^2)]}} $.
		\end{subcase}
		\textbf{If} $ \boldsymbol{c_{2}=10} $ then four incomplete 4-gons are $ [2,4,10,c_{1}],[3,4,10,c_{3}], [1,8,b_{3},b_{2}] $ and $ [5,6,\allowbreak c_{5},c_{4}] $. We see that one of $ c_{4},c_{5} $ is $ 2 $ and $ b_{2},b_{3}\neq 11 $ which implies $ [2,11] $ form an edge in a 4-gon. We have $ [2,4] $ is not an adjacent edge of two 4-gon, therefore $ [2,11] $ will form an adjacent edge of two 4-gon which implies $ c_{1}=11 $ and one of $ c_{4},c_{5} $ is $ 11 $ and then face sequence will not follow in $ lk(10) $.
		
		\textbf{If} $\boldsymbol{ c_{2}=11} $ then four incomplete 4-gons are $ [2,4,11,c_{1}],[3,4,11,c_{3}],[1,8,b_{3},b_{2}] $ and $ [5,6,\allowbreak c_{5},c_{4}] $. We see that one of $ c_{4},c_{5} $ is $ 2 $ and $ c_{4},c_{5}\neq 9 $. Therefore $ [9,11] $ form an edge in a 4-gon. If $ c_{1}=9 $ then $ [2,9] $ will form an adjacent edge of two 4-gon which implies one of $ c_{4},c_{5} $ is $ 9 $ which is a contradiction. If $ c_{3}=9 $ then $ [3,9] $ will form an adjacent edge of two 4-gon which implies $ b_{2},b_{3}\in \lbrace 3,9\rbrace $ and $ c_{1}=10 $, one of $ c_{4},c_{5} $ is 10. Then this case is isomorphic to the case $ (a,b,c)=(11,9,10) $ under the map $ (9,11,10) $ and again this case is isomorphic to the case $ (a,b,c)=(9,10,11) $ under the map $ (9,10,11) $. Therefore the case $ \boldsymbol{c_{2}=11} $ is isomorphic to the case $ \boldsymbol{c_{2}=9} $.
	\end{case}
	\begin{case}
		If $ b_{3}=4 $ then $ b_{1}\in \lbrace 9,10,11\rbrace $. But for any $ b_{1} $, fase sequence will not follow in $ lk(b_{1}) $.
	\end{case}
	\hspace{-0.5cm}\underline{\textbf{If} $\boldsymbol{a_{1}=2}  $} then $ a_{2}\in \lbrace 1,3\rbrace $.
	\begin{case}
		If $ a_{2}=1 $ then $ lk(2)=C_{8}(5,1,0,[3,b_{2},b_{3}],[b_{3},b_{1},4]) $ where $ b_{1},b_{2},b_{3}\in \lbrace 9,10,11\rbrace $.
		\begin{subcase}
			When $ b_{3}=9 $ then either $ (b_{1},b_{2})=(10,11) $ or $ (b_{1},b_{2})=(11,10) $.
			
			If $\boldsymbol{(b_{1},b_{2})=(10,11)}  $ then two incomplete 4-gons are $ [5,6,a_{4},a_{3}] $ and $ [1,8,c_{1},c_{2}] $. As we have $ [3,11] $ is an edge of two adjacent 4-gon and $ c_{1},c_{2}\neq 11 $ therefore $ a_{3},a_{4}\in \lbrace 3,11\rbrace $ and $ c_{1},c_{2}\in \lbrace 4,10\rbrace $. Now we see that is $ a_{3}=11 $ then $ lk(11) $ will not possible, therefore $ a_{3}=3 $ and $ a_{4}=11 $. Now $ lk(3)=C_{8}(0,4,1,[5,6,11],[11,9,2]) $ which implies $ c_{1}=10 $ and $ c_{2}=4 $. Now $ lk(9)=C_{8}(7,6,d_{1},[11,3,2],[2,4,10]) $ and $ lk(11)=C_{8}(d_{1},7,10,[6,5,3],[3,2,9]) $ which implies $ d_{1}=8 $. Now $ lk(6)=C_{8}(9,8,10,[1,3,5],[5,0,7])\Rightarrow lk(8)=C_{8}(11,9,6,[10,4,1],[1,\allowbreak0,7])\allowbreak\Rightarrow lk(10)=C_{8}(7,\allowbreak 11,6,[8,1,4],[4,2,9])\Rightarrow lk(3)=C_{8}(0,4,1,[5,6,11],[11,9,2])\Rightarrow lk(4)=C_{8}(5,0,3,\allowbreak [1,8,10],[10,9,2])\Rightarrow lk(1)=C_{8}(2,5,3,[4,10,8],[8,7,0]) $. Let this be the map $ \boldsymbol{KO_{2[(3^4,4^2)]}} $.
			
			If $ \boldsymbol{(b_{1},b_{2})=(11,10)} $ then $ 8 $ and $ 11 $ occur in same 4-gon, which is a contradiction.
		\end{subcase}
		\begin{subcase}
			If $ b_{3}=10 $ then for any $ (b_{1},b_{2}) $ face sequence is not followed in $ lk(10) $.
		\end{subcase}
		\begin{subcase}
			If $ b_{3}=11 $ then either $ (b_{1},b_{2})=(9,10) $ or $ (b_{1},b_{2})=(10,9) $.\\ If $\boldsymbol{(b_{1},b_{2})=(9,10)}  $ then $ lk(11)=C_{8}(6,8,7,[10,3,2],[2,4,9]) $. Now remaining two incomplete 4-gons are $ [5,6,c_{1},c_{2}] $ and $ [1,8,c_{3},c_{4}] $. $ c_{1},c_{2} $ can not be $ 4 $, therefore one of $ c_{3},c_{4} $ is 4 and one of $ c_{1},c_{2} $ is 3 and therefore one of $ c_{3},c_{4} $ is 9 and then one of $ c_{1},c_{2} $ is $ 10 $. \textbf{If} $ \boldsymbol{c_{1}=3}\Rightarrow c_{2}=10 $. Therefore $ lk(3)=C_{8}(8,4,0,[2,11,10],[10,5,6])\Rightarrow lk(8)=C_{8}(11,6,3,[4,9,1],[1,0,7]) $ which implies $ c_{3}=4, c_{4}=9 $. Now $ lk(4)=C_{8}(3,0,5,[2,11,9],[9,\allowbreak1,8])\Rightarrow lk(5)=C_{8}(1,2,4,\allowbreak [0,7,6],[6,3,10])\Rightarrow lk(1)=C_{8}(10,\allowbreak 5,2,[0,7,8],[8,4,9])\Rightarrow lk(10)=C_{8}(1,9,7,[11,2,3],[3,6,\allowbreak 5])\Rightarrow lk(9)=C_{8}(10,7,6,\allowbreak [11,2,4],[4,8,1]) $. Let this be the map $ \boldsymbol{K_{1}} $. We see that $ (1, 5)(2, 4)\allowbreak(6, 8)(9, 11):\boldsymbol{KNO_{[(3^4,4^2)]}}\cong \boldsymbol{K_{1}} $. \textbf{If} $ \boldsymbol{c_{1}=10}\Rightarrow c_{2}=3 $. Now $ lk(5)=C_{8}(4,2,1,[3,10,6],[6,\allowbreak 7,0])\Rightarrow lk(1)=C_{8}(2,5,3,[4,9,8],[8,7,0]) $ which implies $ c_{3}=9,c_{4}=4 $. Now $ lk(3)=C_{8}(0,4,1,[5,6,10],[10,11,2])\Rightarrow lk(2)=C_{8}(0,1,5,[4,9,11],\allowbreak [11,10,3])\Rightarrow lk(4)=C_{8}(5,0,3,\allowbreak [1,8,9],[9,11,2])\Rightarrow lk(8)=C_{8}(11,6,\allowbreak10,[9,4,1],[1,\allowbreak 0,7])\Rightarrow lk(9)=C_{8}(6,7,10,[8,1,4],[4,\allowbreak 2,11])\Rightarrow lk(10)=C_{8}(7,9,8,[6,5,3],\allowbreak[3,2,11]) $. Let this be the map $ \boldsymbol{K_{2}} $ and then $ (0, 2, 5, 3, 1,\allowbreak 4)(6, 10, 8, 9, 7, 11):\boldsymbol{KO_{1[(3^4,4^2)]}}\cong \boldsymbol{K_{2}} $.\\
			If $ \boldsymbol{(b_{1},b_{2})=(10,9)} $ then $ lk(11)=C_{8}(7,8,5,[9,3,2],[2,4,10]) $ and then $ lk(5) $ will not be possible.
			
		\end{subcase}
	\end{case}
	
	\hspace{-0.5cm}\underline{\textbf{If} $ \boldsymbol{a_{1}=3} $} then $ 3 $ will occur two times in $ lk(3) $ which make contradiction.
	
	\hspace{-0.5cm}\underline{\textbf{If} $ \boldsymbol{a_{1}=8} $} then $ a_{2}\in \lbrace 2,3,9,10\rbrace $.
	\begin{case}
		If $ a_{2}=2 $ then $ a_{3}\in \lbrace 1,3\rbrace $. If $ a_{3}=1 $ then face sequence is not followed in $ lk(1) $ and id $ a_{3}=3 $ then $ 1 $ occur two times in $ lk(2) $. Therefore this case is not possible.
	\end{case}
	\begin{case}
		If $ a_{2}=3 $ then $ lk(8)=C_{8}(11,4,5,[3,b_{1},1],[1,0,7]) $ and $ lk(4)=C_{8}(0,5,8, [11,b_{4},\allowbreak b_{2}],\allowbreak b_{2},b_{3},3) $. Now four incomplete 4-gons are $ [1,8,3,b_{1}],[3,4,b_{2},b_{3}],[4,11,b_{4},\allowbreak b_{2}] $ and $ [2,5,\allowbreak6,a_{4}] $. We see that $ [3,4] $ and $ [3,8] $ are adjacent edges of a 3-gon and a 4-gon, therefore $ [3,b_{1}] $ will form an adjacent edge of two 4-gon which implies $ b_{1}=b_{3} $ and $ b_{4}=2 $. As $ [4,b_{2}] $ is an adjacent edge of two 4-gon, therefore $ [2,11] $ will form an adjacent edge of two 4-gon hich implies $ a_{4}=11 $. We see that $ b_{2}\neq 10 $, therefore $ b_{1}=10=b_{3} $ and $ b_{2}=9 $. Now $ lk(2)=C_{8}(3,0,1,[9,4,11],[11,6,5])\Rightarrow lk(11)=C_{8}(8,7,10,[6,5,2],[2,9,4])\Rightarrow lk(5)=C_{8}(4,8,3,[2,11,6],[6,7,0])\Rightarrow lk(9)=C_{8}(7,6,\allowbreak 1,[2,11,4],[4,3,10])\Rightarrow lk(6)=C_{8}(9,1,10,[11,2,5],[5,0,7])\Rightarrow lk(1)=C_{8}(6,9,2,[0,\allowbreak 7,8],[8,3,10])\Rightarrow lk(3)=C_{8}(0,2,5,[8,\allowbreak1,\allowbreak 10],[10,9,4])\Rightarrow lk(10)=C_{8}(7,11,6,[1,8,\allowbreak 3],[3,4,9]) $. Let this be the map $ \boldsymbol{K_{3}} $. Then $ (0, 3, 2)(1, 4, 5, 8, 9, 6)(7, 10, 11):\boldsymbol{KNO_{[(3^4,4^2)]}} \cong \boldsymbol{K_{3}} $.
	\end{case}
	\begin{case}
		If $ a_{2}=9 $ then face sequence is not followed in $ lk(6) $.
	\end{case}
	\begin{case}
		If $ a_{2}=10 $ then $ lk(8)=C_{8}(11,4,5,[10,b_{1},1],[1,0,7]), lk(10)=C_{8}(5,11,7,[9,\allowbreak b_{2},b_{1}],[b_{1},1,8]) $ and $ lk(11)=C_{8}(10,7,8,[4,b_{3},a_{4}],[a_{4},6,5]) $. Now four incomplete 4-gons are $ [5,6,a_{4},11],[1,8,10,b_{1}],[9,10,b_{1},b_{2}] $ and $ [4,11,a_{4},b_{3}] $. Therefore $ a_{4}=2,b_{1}=3\Rightarrow b_{2}=4,b_{3}=9 $. Now $ lk(4)=C_{8}(0,5,8,[11,2,9],[9,10,3])\Rightarrow lk(9)=C_{8}(7,6,1,[2,11,4],[4,3,10])\allowbreak\Rightarrow lk(1)=C_{8}(2,9,6,[3,10,8],[8,7,0])\Rightarrow lk(3)=C_{8}(0,\allowbreak 2,6,[1,8,10],[10,9,4])\Rightarrow lk(6)=C_{8}(9,1,3,[2,11,5],[5,0,7])\Rightarrow lk(2)=C_{8}(1,0,3,\allowbreak [6,5,11],[11,4,9]) $. Let this be the map $ \boldsymbol{K_{4}} $ and then $ (0, 1, 3, 9, 11, 5)(2, 6, 7, 8, 10, 4):\boldsymbol{KNO_{[(3^4,4^2)]}}\cong \boldsymbol{K_{4}} $
	\end{case}
	\hspace{-0.5cm}\underline{\textbf{If} $ \boldsymbol{a_{1}=9} $} then $ a_{3}\in \lbrace 1,2,3,8\rbrace $ and $ [10,a_{3}] $ form an edge in a 4-gon which implies face sequence will not follow in $ lk(a_{3}) $.
	
	\hspace{-0.5cm}\underline{\textbf{If} $ \boldsymbol{a_{1}=10} $} then $ a_{2}\in \lbrace 9,11\rbrace $. If \textbf{$ a_{2}=9 $} then $ [6,9] $ will form an edge in a 4-gon and if \textbf{$ a_{2}=11 $} then $ [8,11] $ will form an edge in a 4-gon which are not possible.
	
	\hspace{-0.5cm}\underline{\textbf{If} $ \boldsymbol{a_{1}=11} $} then $ a_{2}\in \lbrace 8,10\rbrace $. If $ a_{2}=10 $ then $ [10,a_{3}] $ will form an edge in a 4-gon, which is not possible. Therefore $ a_{2}=8 $ and then $ lk(11)=C_{8}(5,8,7,[10,b_{1},2],[2,b_{2},\allowbreak 4]) $. Now four incomplete 4-gons are $ [5,6,a_{4},a_{3}], [1,8,c_{1},c_{2}], [2,11,4,b_{2}] $ and $ [2,11,10,\allowbreak b_{1}] $. We see that $ b_{2}=9 $ and $ b_{1}=3 $. Now as $ [4,5] $ and $ [6,9] $ are adjacent edges of two 4-gons, therefore $ a_{3},a_{4}\in \lbrace 3,10\rbrace $ and $ c_{1},c_{2}\in \lbrace 4,9\rbrace $.
	\begin{case}
		If $ (a_{3},a_{4})=(3,10) $ then $ lk(3)=C_{8}(8,4,0,[2,11,10],[10,6,5])\Rightarrow lk(8)=C_{8}(3,5,11,\allowbreak [7,0,1],[1,9,4]) $. Therefore $ c_{1}=4 $ and $ c_{2}=9 $. Now $ lk(2)=C_{8}(0,1,6,[9,\allowbreak 4,11],[11,10,3])\Rightarrow lk(6)=C_{8}(9,2,1,[10,3,5],[5,0,7])\Rightarrow lk(1)=C_{8}(10, 6,2,[0,7,\allowbreak 8],[8,4,\allowbreak9])\Rightarrow lk(10)=C_{8}(1,9,7,[11,2,3],[3,5,6])\Rightarrow lk(9)=C_{8}(6,7, 10,[1,8,4],[4,\allowbreak 11,2]) $. Let this be the map $ \boldsymbol{K_{5}} $. Then $ (0,5)(1, 3, 11)(2, 8, 10)(6, 7):\boldsymbol{KNO_{[(3^4,4^2)]}}\cong \boldsymbol{K_{5}} $
	\end{case}
	\begin{case}
		If $ (a_{3},a_{4})=(10,3) $ then $ lk(8)=C_{8}(11,5,10,[9,4,1],[1,0,7]) $. Therefore $ c_{1}=9 $ and $ c_{2}=4 $. Now $ lk(4)=C_{8}(5,0,3,[1,8,9],[9,2,11])\Rightarrow lk(10)=C_{8}(7,9,8,[5,6,3],[3,2,\allowbreak11])\Rightarrow lk(3)=C_{8}(0,4,1,[6,5,10],[10,11,2])\Rightarrow lk(9)=C_{8}(6,\allowbreak 7,10,[8,1,4],[4,11,2])\Rightarrow lk(6)=C_{8}(9,2,1,[3,10,5],[5,0,7])\Rightarrow lk(1)=C_{8}(2,6,3,\allowbreak [4,9,8],[8,7,0])\Rightarrow lk(2)=C_{8}(6,\allowbreak1,0,[3,10,\allowbreak 11],[11,4,9]) $. Let this be the map $ \boldsymbol{K_{6}} $. We see that $ (0, 3, 4, 1, 2)(5, 6)(7, 10, 9, 8,\allowbreak 11):\boldsymbol{KNO_{[(3^4,4^2)]}}\cong \boldsymbol{K_{6}} $
	\end{case}
\end{proof}
\begin{proof}[\textbf{Proof of Theorem \ref{thm6}}]
	Let $K$ be a map of type $(3^3,4,3,4)$ on the surface of $\chi =-2$. Let $V(K)=\{0,1,2,3,4,5,6,\allowbreak 7,8,9,10,11\}$. We express $lk(v)$ by the notation $lk(v)=C_{8}(v_2,v_3,[v_4,v_5,v_6],\allowbreak [v_7,v_8,v_1])$ where $[v,v_1,v_2]$, $[v,v_2,v_3]$, $[v,v_3,v_4]$, $[v,v_6,v_7]$ form 3-gon faces and $[v,v_4,v_5,v_6],[v,\allowbreak v_1,v_8,v_7]$ form 4-gon faces in $ v $.
	With out loss of generality, let us consider $lk(0)=C_{8}(2,3,[4,\allowbreak 5,6],[7,8,1])$. Then $lk(1)=C_{8}(2,a,[b,c,d],[8,7,0])$ where $a\in \lbrace 4,5,6,9,10,11\rbrace$ and $b,c,d\in \lbrace 3,4,5,6,9,10,11\rbrace$ or $lk(1)=C_{8}(c,d,[8,7,0],[2,a,b])$, where $a,b,c,d\in \lbrace 4,5,6,9,10,$ $11\rbrace$.
	
	Let $lk(1)=C_{8}(2,a,[b,c,d],[8,7,0])$.
	By isomorphic, we get (4,3,6,5), (4,3,6,9)$\approx\lbrace$ (4,3,10,5), (4,3,11,5), (4,3,9,5),\allowbreak  (4,3,6,10), (4,3,6,11) $\rbrace$,  $ (4,3,9,6) \approx \lbrace$ (4,3,10,6), (4,3,11,6) $\rbrace$, $ (4,3,5,9) \approx\lbrace (4,3,5,10), (4,\allowbreak 3,5,11) \rbrace$,  $ (4,3,9,10) \approx\lbrace$ (4,3,9,11), (4,3,11,10), (4,3,10,9), (4,3,11,9),  (4,3,10,11) $\rbrace$,  $ (4,5,\allowbreak 6,9) \approx\lbrace$ (9,3,4,5), (10,3,4,5), (11,3,4,5), (4,5,6,10), (4,5,6,11) $\rbrace$,  $ (4,5,6,3) \approx (6,3,4,5),\allowbreak (4,5,9,3) \approx\lbrace$(6,3,4,9), (6,3,4,10), (6,3,4,11), (4,5,10,3), (4,5,11,3) $\rbrace$,  $ (4,5,9,6) \approx\lbrace(9,3,\allowbreak 4,6), (10,3,4,6), (11,3,4,6), (4,5,10,6), (4,5,11,6) \rbrace$, $ (4,5,9,10) \approx\lbrace$ (9,3,4,10), (9,3,4,11),\allowbreak (10,3,4,9), (10,3,4,11), (11,3,4,9), (11,3,4,10), (4,5,9,11), (4,5,10,9), (4,5, 10,11), (4,5,11,9), (4,5,11,10)$ \rbrace$, $ (4,5,3,6) \approx  (5,3,4,6), (4,5,3,9) \approx\lbrace$ (5,3,4,11), (5,3,4, 10), (4,5,3,9), (4,5,3,10), (4,5,3,11)$\rbrace$, $ (4,6,3,5) \approx (5,3,6,4)$, $ (4,6,3,9) \approx \lbrace$ (5,3,9,4), (5,3,10,4), (5,3,11,4), (4,6,3,10), (4,6,3,11)$\rbrace$, $ (4,6,5,3) \approx (6,3,5,4)$, $ ( 4,6,5,9)  \approx\lbrace$ (9,3,5,4), (10,3,5,4), (11,3,5,4), (4,6,5,10), (4,6,5,11)$ \rbrace$, $ (4,6,9,3) \approx\lbrace$ (6,3,9,4), (6,3,10, 4), (6,3,11,4), (4,6,10,3), (4,6,11,3) $\rbrace$, $ (4,6,9,5)\allowbreak \approx\lbrace$ (9,3,6,4), (10,3,6, 4), (11,3,6,4), (4,6,10,5), (4,6,11,5)$\rbrace$, (4,9,5,3) $\approx\lbrace$ (6,3,5,9), (6,3,5,10), (6,3,5,11), (4,10,5,3), (4,11, 5,3) $\rbrace$, (4,9,5,6) $\approx\lbrace$ (9,3,5,6), (10,3,5,6), (11,3,5,6), (4,10,5,6), (4,11,5,6) $\rbrace$,  $ (4,9,5,10) \approx\lbrace$ (9,3,5,10), (9,3,5,11), (10,3,5,9), (10,3,5,11), (11,3,5,9), (11,3,5, 10), (4,9,5,11), (4,10,5,9), (4,10,5,11), (4,11,5,9), (4,11,5, 10)$ \rbrace$,  (4,9,6,3) $\approx\lbrace$ (6,3,9,5), (6,3,10,5), (6,3,11,5), (4,10,6,3), (4,11,6,3) $\rbrace$, $ (4,9,6,5) \approx\lbrace$ (9,3,6,5), (10,3,6,5), (11,3, 6,5), (4,10,6,5), (4,11,6,5) $\rbrace$, $ (4,9,6,10) \approx\lbrace$ (9,3,10,5), (9,3,11,5), (10,3,9,5), (10,3,11, 5), (11,3,9,5), (11,3,10,5), (4,9,6,11), (4,10,6,9), (4,10,6,11), (4,11,6,9), (4,11,6,10) $\rbrace$,(4,9,10,3) $\approx\lbrace$ (6,3,9,10), (6,3,9,11), (6,3,10,9), (6,3,10,11), (6,3,11,9), (6,3,11,\allowbreak10), (4,9,11,3), (4,10,9,3), (4,10,11,3), (4,11,9,3), (4,11,10,3) $\rbrace$, $ (4,9,10,6) \approx$ $\lbrace$ (9,3,10,6), (9,3,11,6), (10,3,9,6), (10,3,11,\allowbreak 6), (11,3,9,6), (11,3,10,6), (4,9,11,6), (4,10,9,6), (4,10, 11,6), (4,11,9,6), (4,11,10,6)$\rbrace$, (4,6,9,\allowbreak 10) $\approx$ $\lbrace$ (9,3,10,4), (9,3,11,4), (10,3,9,4), (10,3, 11,4), (11,3,9,\allowbreak4), (11,3,10,4), (4,6,9,11), (4,6,10,9), (4,6,10,11), (4,6,11,9), (4,6,11,10) $\rbrace$, (4,9,3,5) $\approx$ $\lbrace$ (5,3,6,9), (5,3,6,10), (5,3,6,11), (4,10,3,5), (4,11,3,5) $\rbrace$, (4,9,3,6) $\approx$ $\lbrace$ (5,3,9,6), (5,3,10,6), (5,3,11,6), (4,10,3,6), (4,11,3,6)$\rbrace$, (4,9,3,10) $\approx$ $\lbrace$ (5,3,9,10), (5,3,9,11), (5,3,10,9), (5,3,10,11), (5,3,11,9), (5,3,11,10), (4,9,3,11), (4,10,3,9), (4,10, 3,11), (4,11,3,9), (4,11,3,10) $\rbrace$, (4,9,5,3) $\approx$ $\lbrace$ (6,3,5,9), (6,3,5,10), (6,3,5,11), (4,10,5,3), (4,11,5,3) $\rbrace$, (4,9,10,11) $\approx$ $\lbrace$ (9,3,10,11), (9,3,11,10), (10,3,9,11), (10,3,11,9), (11,3,\allowbreak 9,10), (11,3,10,9), (4,9,11,10), (4,10,9,11), (4,10,\allowbreak11,9), (4,11,9,10), (4,11,10,9) $\rbrace$, (4,9, 10,5) $\approx$ $\lbrace$ (9,3,6,10), (9,3,6,11), (10,3,6,9), (10,3,6,11), (11,3,6,9), (11,3,6,10), (4,9,11, 5), (4,10,9,5), (4,10,11,5), (4,11,9,5), (4,11,10,5) $\rbrace$, (5,4,3,6), (5,4,3,9) $\approx$ $\lbrace$ (5,4,3,10), (5,4,3,11) $\rbrace$, (5,4,6,3) $\approx$  (6,4,3,5), (5,4,6,9) $\approx$ $\lbrace$ (9,4,3,5), (10,4,3,5), (11,4,3,5), (5,4,6, 10), (5,4,6,11) $\rbrace$, (5,4,9,3) $\approx$ $\lbrace$ (6,4,3,9), (6,4,3,10), (6,4,3,11), (5,4,10,3), (5,4,11,3) $\rbrace$, (5,4,9,6) $\approx$ $\lbrace$ (9,4,3,6), (10,4,3,6), (11,4,3,6), (5,4,10,6), (5,4,11,6) $\rbrace$, (5,4,10,9) $\approx$ $\lbrace$ (9,4,3,10), (9,4,3,11), (10,4,3,9), (10,4,3,11), (11,4,3,9), (11,4,3,9), (5,4,10,11), (5,4,9,10), (5,4,9,11), (5,4,11,9), (5,4,11,10) $\rbrace$, (5,6,3,4), (5,6,3,9) $\approx$ $\lbrace$ (5,9,3,4), (5,10, 3,4), (5,11,3,4), (5,6,3,10), (5,6,3,11) $\rbrace$, (5,6,4,3) $\approx$ (6,5,3,4), (5,6,4,9 )$\approx$ $\lbrace$ (9,5,3,4), (10,5,3,4), (11,5,3,4), (5,6,4,10), (5,6,4,11) $\rbrace$, (5,6,9,3) $\approx$ $ \lbrace$(6,9,3,4), (6,10,3,4), (6,11, 3,4), (5,6,10,3), (5,6,11,3) $\rbrace$, (5,6,9,4) $\approx$ $\lbrace$ (9,6,3,4), (10,6,3,4), (11,6,3,4), (5,6,10,4), (5,6,11,4) $\rbrace$, (5,6,9,10) $\approx$ $\lbrace$ (9,10,3,4), (9,11,3,4), (10,9,3,4), (10,11,3,4), (11,9,3,4), (11,10,3,4), (5,6,9,11), (5,6,10,9), (5,6,10,11), (5,6,11,9), (5,6,11,10) $\rbrace$, (5,9,3,6) $\approx$ $\lbrace$ (5,10,3,6), (5,11,3,6) $\rbrace$, (5,9,3,10) $\approx$ $\lbrace$(5,9,3,11), (5,10,3,9), (5,10,3,11), (5,11,3,9), (5,11,3,10) $\rbrace$, (5,9,4,3) $\approx$ $ \lbrace$ (5,10,4,3), (5,11,4,\allowbreak3), (6,5,3,9), (6,5,3,10), (6,5,3,11) $\rbrace$, (5,9,4,6) $\approx$ $\lbrace$ (9,5,3,6), (10,5,3,6), (11,5,3,6), (5,10,4,6), (5,11,4,6) $\rbrace$, (5,9,4,10) $\approx$ $\lbrace$ (9,5,3,10), (9,5,3,11), (10,5,3,9), (10,5,3,11), (11,5,3,9), (11,5,3,\allowbreak10), (5,9,3,11), (5,10,3,9), (5,10,3,11), (5,11,3,9), (5,11,3,10) $\rbrace$, (5,9,6,3) $\approx$ $\lbrace$ (6,9,3,5), (6,10,3,5), (6,11,3,5), (5,10,6,3), (5,11,6,3) $\rbrace$, (5,9,6,4) $\approx$ $\lbrace$ (9,6,3,5), (10,6,3,5), (11,6,3,5), (5,10, 6,4), (5,11,6,4) $\rbrace$, (5,9,6,10) $\approx$ $\lbrace$ (9,10,3,5), (9,11,3,5), (10,9,3,5), (10,11,3,5), (11,9,3, 5), (11,10,3,5), (5,9,6,11), (5,10,6,9), (5,10,6,11), (5,11,6,9), (5,11,6,10) $\rbrace$, (5,9,10,3) $\approx$ $ \lbrace$ (6,9,3,10), (6,9,3,11), (6,10,3,9), (6,10,3,11), (6,11,3,9), (6,11,3,10), (5,9,11,3), (5,10,9,3), (5,10,11,3), (5,11,9,3), (5,11,10,3) $\rbrace$, (5,9,10,4) $\approx$ $ \lbrace$ (9,6,3,10), (9,6,3,11), (10,6,3,9), (10,6,3,\allowbreak 11), (11,6,3,9), (11,6,3,9), (5,9,11,4), (5,10,9,4), (5,10,11,4), (5,11, 9,4), (5,11,10,4) $\rbrace$, (5,9,10,\allowbreak 6) $\approx$ $\lbrace$ (9,10,3,6), (9,11,3,6), (10,9,3,6), (10,11,3,6), (11,9, 3,6), (11,10,3,6), (5,9,11,6), (5,10,\allowbreak 9,6), (5,10,11,6), (5,11,9,6), (5,11,10,6) $\rbrace$, (5,9,10, 11) $\approx$ $ \lbrace$ (9,10,3,11), (9,11,3,10), (10,9,3,11), (10,11,3,9), (11,9,3,10), (11,10,3,9), (5,9,11,10), (5,10,9,11), (5,10,11,9), (5,11,9,\allowbreak10), (5,11,10,\allowbreak9) $\rbrace$, (6,4,5,3) $\approx$ (6,5,4,3), (6,4,5,9) $\approx$ $ \lbrace$ (9,4,5,3), (10,4,5,3), (11,4,5,3), (6,4,5,\allowbreak10), (6,4,5,11) $\rbrace$, (6,4,9,3) $\approx$ $\lbrace$ (6,4,10,3), (6,4,11,3) $\rbrace$, (6,4,9,5) $\approx$ $\lbrace$ (9,4,6,3), (10,4,6,3), (11,4,6,3), (6,4,10,5), (6,4,11,5) $\rbrace$, (6,4,9,10) $\approx$ $\lbrace$ (10,4,9,3), (10,4,11,3), (9,4,10,3), (9,4,11,3), (11,4,9,3), (11,4,10,3), (6,4,9,11), (6,4,10,9), (6,4,10,11), (6,4,11,9), (6,4,11,10) $\rbrace$, (6,5,4,9) $\approx$ $\lbrace$ (9,5,4,3), (10,5,4,3), (11,5,4,3), (6,5,4,10), (6,5,4,11) $\rbrace$, (6,5,9,3) $\approx$ $\lbrace$ (6,9,4,3), (6,10,4,3), (6,11,4,3), (6,5,10,3), (6,5,11,3), (9,5,6,3), (10,5,6,3), (11,5,6,3) $\rbrace$, (6,5,9,4) $\approx$ $\lbrace$ (9,6,4,3), (10,6,4,3), (11,6,4,3), (6,5,10,4), (6,5,11,4) $\rbrace$, (6,5,9,10) $\approx$ $\lbrace$ (9,10,4,3), (9,11,4,3), (10,9,4,3), (10,11,4,3), (11,9,4,3), (11,10,4,3), (6,5,9,11), (6,5,10,9), (6,5, 10,11), (6,5,11,9), (6,5,11,10) $\rbrace$, (6,9,4,5) $\approx$ $ \lbrace$ (9,5,6,3), (10,5,6,3), (11,5,6,3), (6,10,4, 5), (6,11,4,5) $\rbrace$, (6,9,4,10) $\approx$ $\lbrace$ (9,5,10,3), (9,5,11,3), (10,5,9,3), (10,5,11,3), (11,5,9,3), (11,5,10,3), (6,9,4,11), (6,10,4,9), (6,10,4,11), (6,11,4,9), (6,11,4,10) $\rbrace$, (6,9,5,3) $\approx$ $\lbrace$ (6,10,5,3), (6,11,5,3) $\rbrace$, (6,9,5,4) $\approx$ $ \lbrace$ (9,6,5,3), (10,6,5,3), (11,6,5,3), (6,10,5,4), (6,11,5,4) $\rbrace$, (6,9,5,10) $\approx$ $\lbrace$ (9,10,5,3), (9,11,5,3), (10,9,5,3), (10,11,5,3), (11,9,5,3), (11,10,5,3), (6,9,5,11), (6,10,5,9), (6,10,5,11), (6,11,5,9), (6,11,5,10)$\rbrace$, (6,9,10,3) $\approx$ $\lbrace$ (6,9,11,3), (6,10,9,3), (6,10,11,3), (6,11,9,3), (6,11,10,3) $\rbrace$, (6,9,10,4) $\approx$ $ \lbrace$ (9,6,10,3), (9,6,11,3), (10,6,9,3), (10,6,11,\allowbreak3), (11,6,9,3), (11,6,10,3), (6,9,11,4), (6,10,9,4), (6,10, 11,4), (6,11,9,4), (6,11,10,4)$\rbrace$, (6,9,10,\allowbreak5) $\approx$ $\lbrace$ (9,10,6,3), (9,11,6,3), (10,9,6,\allowbreak3), (10,11, 6,3), (11,9,6,3), (11,10,6,3), (6,9,11,5), (6,10,9,\allowbreak5), (6,10,11,5), (6,11,9,5), (6,11,\allowbreak10,5) $\rbrace$, (6,9,10,11) $\approx$ $\lbrace$ (9,10,11,3), (9,11,10,3), (10,9,11,3), (10,11,9,3), (11,9,10,3), (11,10, 9,3) $\rbrace$, (9,4,5,6) $\approx$ $ \lbrace$ (10,4,5,6), (11,4,5,6) $\rbrace$, (9,4,5,10) $\approx$ $\lbrace$ (9,4,5,11), (10,4,5,9), (10,4,5,11), (11,4,5,9), (11,4,5,10) $\rbrace$, (9,4,6,5) $\approx$ $\lbrace$ (10,4,6,5), (11,4,6,5) $\rbrace$, (9,4,6,10) $\approx$ $\lbrace$ (9,4,10,5), (9,4,11,5), (10,4,9,5), (10,4,11,5), (11,4,9,5), (11,4,10,5), (9,4,6,\allowbreak11), (10,4,6,9), (10,4,6,11), (11,4,6,9), (11,4,6,10) $\rbrace$, (9,4,10,6) $\approx$ $\lbrace$ (9,4,11,6), (10,4,9,6), (10,4,11,6), (11,4,9,6), (11,4,\allowbreak10,6) $\rbrace$, (9,4,10,11)$ \approx\lbrace $ (9,4,11,10), (10,4,9,11), (10,4,11,9), (11,4,\allowbreak9,10), (11,9,10,9) $\rbrace$, (9,5,4,\allowbreak6) $\approx$ $\lbrace$ (10,5,4,6), (11,5,4,6) $\rbrace$, (9,5,4,10) $\approx$ $\lbrace$ (9,5,4, 11), (10,5,4,9), (10,5,4,11), (11,5,4,9), (11,5,4,10) $\rbrace$, (9,5,6,4) $\approx$ $\lbrace$ (9,6,4,5), (10,6,4,5), (11,6,4,5), (10,5,6,4), (11,5,6,4) $\rbrace$, (9,5,6,10)\allowbreak$ \approx\lbrace $ (9,5,6,11), (10,5,6,9), (10,5,6,11), (11,5, 6,9), (11,5,6,10), (9,10,4,5), (9,11,4,5), (11,9,4,\allowbreak5), (11,10,4,5), (10,9,4,5), (10,11,4,5) $\rbrace$, (9,5,10,4) $\approx$ $\lbrace$ (9,6,4,10), (9,6,4,11), (10,6,4,9), (10,6,4,11), (11,6,4,9), (11,6,4,10), (9,5,11,4), (10,5,9,4), (10,5,11,4), (11,5,9,4), (11,5,10,\allowbreak4) $\rbrace$, (9,5,10,6) $\approx$ $\lbrace$ (9,10,4,6), (9,11,4,6), (10,9,4,6), (10,11,4,6), (11,9,4,6), (11,10,4,\allowbreak6), (9,5,11,6), (10,5,9,6), (10,5, 11,6), (11,5,9,6), (11,5,10,6)$\rbrace$, (9,5,10,11) $\approx$ $\lbrace$ (9,10,4,11), (9,11,4,10), (10,9,4,11), (10,11,4,9), (11,9,4,10), (11,10,4,9), (9,5,11,10), (10,5,9,11), (10,5,\allowbreak11,\allowbreak9), (11,5,9,10), (11,5,10,9) $\rbrace$, (9,6,5,4) $\approx$ $\lbrace$ (10,6,5,4), (11,6,5,4) $\rbrace$, (9,6,5,10) $\approx$ $\lbrace$ (9,10,5,4), (9,11,5,4), (10,9,5,4), (10,11,5,4), (11,9,5,4), (11,10,5,4), (9,5,6,11), (10,5,6,9), (10,5,6,11), (11,5, 6,9), (11,5,6,10) $\rbrace$, (9,6,10,4) $\approx$ $\lbrace$ (9,6,11,4), (10,6,9,4), (10,6,11,4), (11,6,9,4), (11,6, 10,4) $\rbrace$, (9,6,10,5) $\approx$ $\lbrace$ (9,10,6,4), (9,11,6,4), (10,9,6,4), (10,11,6,4), (11,9,6,4), (11,10, 6,4), (9,6,11,5), (10,6,9,5), (10,6,11,5), (11,6,9,5), (11,6,10,5) $\rbrace$, (9,6,10,11) $\approx$ $\lbrace$ (9,6, 11,10), (10,6,9,11), (10,6,11,9), (11,6,9,10), (11,6,10,9), (9,10, 11,4), (9,11,10,4), (10,9, 11,4), (10,11,\allowbreak9,4), (11,9,10,4), (11,10,9,4) $\rbrace$, (9,10,5,6) $\approx$ $\lbrace$ (9,11,5,6), (10,9,5,6), (10,11,5,6), (11,9,5,6), (11,10,5,6)$\rbrace$, (9,10,5,11) $\approx$ $\lbrace$ (9,11,5,10), (10,9,5,11), (10,11,5, 9), (11,9,5,10), (11,10,5,9) $\rbrace$, (9,10,6,11) $\approx$ $\lbrace$ (9,10,11,5), (9,11,10,5), (10,9,11,5), (10,11,9,5), (11,9,10,5), (11,10,9,5), (9,11,6,10), (10,9,6,11), (10,11,6,9), (11,9,6,10), (11,10,6,9) $\rbrace$, (9,10,11,6) $\approx$ $\lbrace$ (9,11,10,6), (10,9,11,6), (10,11,9,6), (11,9,10,6), (11,10, 9,6) $\rbrace$, (9,10,6,5) $\approx$ $\lbrace$ (9,11,6,5), (10,9,6,5), (10,11,6,5), (11,9,6,5), (11,10,6,5)$\rbrace$.\\

	Now, $(a,b,c,d)$ can not be a combination of 3,4,5,6, so for any $(a,b,c,d)\in \lbrace$ (4,3,5,6), (4,3,6,5), (4,5,3,6),  (4,5,6,3), (4,6,3,5), (4,6,5,3), (5,4,3,6), (5,4,6,3), (5,6,3,4), (5,6,4,3), (6,4,\allowbreak5,3) $\rbrace$ the SEM will not complete.
	
	Since [2,5], [5,6] are edges of 4-gon, therefore, for $(a,b,c,d)=(5,6.c.d)$, face sequence will not follow in $lk(5)$. Therefore (5,6,3,9), (5,6,9,3), (5,6,9,10) are not possible.
	
	If $b=4$, then $c\notin \lbrace 5,6\rbrace$ and $d\notin \lbrace 3,5,6\rbrace$, otherwise it will not follow the face sequence, so for any $(a,b,c,d)\in \lbrace$(5,4,6,9), (5,4,9,3), (5,4,9,6), (6,4,5,9), (6,4,9,3), (6,4,9,5), (9,4,5,6), (9,4,5,10), (9,4,6,5), (9,4,6,10), (9,4,10,6)$\rbrace$ the SEM will not complete. Similarly, if $b=5$, then $c,d\notin \lbrace 4,6\rbrace$, which implies for any $(a,b,c,d)\in \lbrace$(4,5,6,9), (4,5,9,6), (6,5,4,9), (6,5,9,4), (9,5,4,6), (9,5,4,10), (9,5,6,4), (9,5,6,10), (9,5,10,4), (9,5,10,6)$\rbrace$ the SEM will not complete. If $b=6$, then $c,d\notin \lbrace 4,5\rbrace$, which implies for any $(a,b,c,d)\in \lbrace$(4,6,5,9), (4,6,9,5), (5,6,4,9), (5,6,9,4), (9,6,5,4), (9,6,5,10), (9,6,10,4), (9,6,10,5)$\rbrace$ the SEM will not complete.If $d=4$, then $b\notin \lbrace 3,5,6\rbrace$ and $d\notin \{5,6\}$, which implies for any $(a,b,c,d)\in \lbrace$(5,9,6,4), (6,9,5,4)$\rbrace$ the SEM will not complete. If $d=5$, then $b,c\notin \lbrace 4,6\rbrace$, which implies for any $(a,b,c,d)\in \lbrace$(4,9,6,5), (6,9,4,5), (9,10,6,5)$\rbrace$ the SEM will not complete. If $d=6$, then $b,c \notin \lbrace 4,5\rbrace$, which implies for any $(a,b,c,d)\in \lbrace$(4,9,5,6), (5,9,4,6), (9,10,5,6)$\rbrace$ the SEM will not complete.
	
	For any $(a,b,c,d)\in \lbrace$(4,3,6,9), (4,3,9,6), (4,3,5,9), (4,3,9,10), (5,4,3,9), (5,4,10,9), (5,9,4,3), (6,5,9,3), (6,5,9,10), (9,4,10,11)$\rbrace$, the SEM will not orientable.
	
	\begin{sloppypar}
	For any $(4,b,c,d)$ with $b\neq 3$, then $lk(4)$ will not complete. So for any $(a,b,c,d)\in \lbrace$(4,5,3,9), (4,5,9,3), (4,5,9,10), (4,6,3,9), (4,6,9,3), (4,9,5,3), (4,9,5,10), (4,9,6,3), (4,9,6,5), (4,9,6,10), (4,9,10,3), (4,9,10,6), (4,6,9,10), (4,9,3,5), (4,9,3,6), (4,9,3,10), (4,9,5,3), (4,9,10,5), (4,9,10,\allowbreak11)$\rbrace$, SEM will not complete.
\end{sloppypar}
	If $d=6$, then $lk(6)=C_{8}(7,8,[1,b,c],[5,4,0])$, which implies 8 will appear two times in $lk(7)$, therefore $(a,b,c,d)\notin \lbrace$(5,9,3,6), (5,9,10,6), (9,10,11,6)$\rbrace$.
	\begin{case}
		When $(a,b,c,d)=(5,9,3,10)$, then either $lk(3)=C_{8}(0,4,[10,1,9],[a_{1},b_{1},2])$ or $lk(3)=C_{8}(0,4,[9,1,10],[a_{1},b_{1},2])$ where $a_{1}, b_{1}\in V$. If $lk(3)=C_{8}(0,4,[10,1,9],[a_{1},$ $ a_{2},2])$, then $lk(4)=C_{8}(3,10,[a_{2},b_{2},9],[5,6,0])$ which implies face sequence is not follow in $lk(9)$, where $a_{2},b_{2}\in V$. If $lk(3)=C_{8}(0,4,[9,1,10],[a_{1},b_{1},2])$,then $lk(4)=C_{8}(3,9,[a_{2},b_{2},c_{2}],[5,6,\allowbreak0])$, which implies face sequence is not follow in $lk(9)$, where $a_{2},b_{2},c_{2}\in V$.
	\end{case}
	\begin{case}
		When $(a,b,c,d)=(5,9,4,10)$, then $lk(4)=C_{8}(3,a_{1},[9,1,10],[5,6,0])$, $a_{1}\in V$, which implies in $lk(3)$ has four consecutive 3-gon, which is not possible.
	\end{case}
	\begin{case}
		When $(a,b,c,d)=(5,9,6,3)$, the $lk(3)=C_{8}(0,4,[6,9,1],[8,a_{1},2])$, $a_{1}\in V$, which implies 6 will appear two times in $lk(4)$, which is not possible.
	\end{case}
	\begin{case}
		When $(a,b,c,d)=(5,9,6,10)$, we have total six 4-gon, and these are $[0,4,5,6]$, $[0,1,8,7]$, $[1,9,6,10]$, $[2,3,a_{1},b_{1}]$, $[2,5,a_{2},b_{2}]$, $[3,a_{3},b_{3},c_{3}]$, where $a_{1}$, $a_{2}$, $a_{3}$, $b_{1}$, $b_{2}$, $b_{3}$, $c_{3}\in V$. From these six 4-gon we see that 4 will appear in any one of last three 4-gon. But 4 can not be appear in $4^{\text{th}}$ as [3,4] is an edge, so $b_{1}$ can not be 4 and in $lk(3)$ [3,4] is adjacent with a 3-gon, so $a_{1}$ can not be 4. 4 will not appear in $5^{\text{th}}$ 4-gon as 4,5 appear in $1^{\text{st}}$ 4-gon, so it can not be appear in another 4-gon. 4 will not appear in last 4-gon also as [3,4] is an adjacent edge of 3-gon. So in $lk(4)$, there is only one 4-gon, which is not possible.
	\end{case}
	\begin{case}
		When $(a,b,c,d)=(5,9,10,3)$, then $lk(3)=C_{8}(0,4,[10,9,1],[8,a_{1},2])$, where $p_{1}\in \lbrace 6,8\rbrace$.
		
		If $p_{1}=6$, then six 4-gon are $[0,4,5,6]$, $[0,1,8,7]$, $[1,3,10,9]$, $[2,3,8,6]$, $[2,5,a_{1},b_{1}]$, $[4,a_{2},b_{2},c_{2}]$, where $a_{1}$, $a_{2}$, $b_{1}$, $b_{2}$, $c_{2}\in V$. 11 will appear in last two 4-gon. 9 will not appear in $5^{\text{th}}$ 4-gon as [5,9] is an adjacent edge of a 3-gon in $lk(5)$, so 9 will appear in last 4-gon. If 7 will appear in $5^{\text{th}}$ 4-gon, then 9,10 will appear in last 4-gon, which is not possible. So 10 will appear in $5^{\text{th}}$ 4-gon and 7 will appear in last 4-gon. Therefore $lk(2)$ is either $lk(2)=C_{8}(1,0,[3,8,6],[10,11,5])$ or $lk(5)=C_{8}(1,0,[3,8,6],[11,10,5])$. If $lk(2)=C_{8}(1,0,[3,8,6],[10,11,5])$ then $lk(6)=C_{8}(7,10,[2,3,8],[5,4,$ $0])$, $lk(10)=C_{8}(6,7,[3,1,9],\allowbreak[11,5,2])$, which implies [3,7] is an edge of a 4-gon, which is not possible. If $lk(5)=C_{8}(1,0,[3,8,6],[11,10,5])$, then $lk(6)=C_{8}(7,11,[2,3,8],[5,4,0])$ after that $lk(5)$ will not complete.
		
		If $p_{1}=11$ then six 4-gon are $[0,4,5,6]$, $[0,1,8,7]$, $[1,3,10,9]$, $[2,3,8,11]$, $[2,5,a_{3},$ $b_{3}]$, $[4,a_{4},b_{4},c_{4}]$, where $a_{3}$, $a_{4}$, $b_{3}$, $b_{4}$, $c_{4}\in V$. We can see that 6 will not appear $5^{\text{th}}$ 4-gon as 5,6 appear in $1^{\text{st}}$ 4-gon, so it can not appear in another 4-gon and also not appear in last 4-gon as 4,6 appear in $1^{\text{st}}$ 4-gon, so it can not appear in another 4-gon.
	\end{case}
	\begin{case}
		When $(a,b,c,d)=(5,9,10,4)$, then after completing $lk(4)$ and $lk(3)$, $lk(8)$ is not possible.\\
		When $(a,b,c,d)=(6,4,9,10)$, then face sequence will not follow in $lk(6)$.
	\end{case}
	\begin{case}
		When $(a,b,c,d)=(6,9,4,10)$, then $lk(6)=C_{8}(1,9,[5,4,0],[7,a_{1},2])$. Six 4-gon are $[0,4,5,6]$, $[0,1,8,7]$, $[1,9,4,10]$, $[2,6,7,b_{1}]$, $[2,3,b_{2},b_{3}]$, $[4,b_{4},b_{5},_{6}]$, where $b_{1}$, $b_{2}$, $b_{3}$, $b_{4}$, $b_{5}$, $b_{6}\in V$. Now $b_{1}\neq 3$ as if $b_{1}=3$ then 6 will appear two times in $lk(3)$ and 3 will not appear in last 4-gon as [3,4] is an edge of two 3-gon, which implies at 3 has one 4-gon which is not possible.
	\end{case}
	\begin{case}
		When $(a,b,c,d)=(6,9,5,3)$, then $lk(6)=C_{8}(1,9,[5,4,0],[7,10,2])$, $lk(3)=C_{8}(0,4,[5,9,$ $1],[8,a_{1},2])$, where $a_{1}\in V$ which implies 4 will appear two times in $lk(5)$.
	\end{case}
	\begin{case}
		When $(a,b,c,d)=(6,9,5,10)$, then $lk(6)=C_{8}(1,9,[5,4,0],[7,a)_{1},2])$, where $a_{1}\in V$. Six 4-gon are $[0,4,5,6]$, $[0,1,8,7]$, $[1,9,5,10]$, $[2,6,7,a_{1}]$, $[2,3,a_{2},a_{3}]$, $[4,a_{4},a_{5},a_{6}]$, where $a_{2}$, $a_{3}$, $a_{4}$, $a_{5}$, $a_{6}\in V$. 3 will not appear will not appear in $4^{\text{th}}$ 4-gon as 2,3 will not appear in two 4-gon an 3 will not appear in last 4-gon as [3,4] is an adjacent edge of two 3-gon.
	\end{case}
	\begin{case}
		When $(a,b,c,d)=(6,9,10,3)$, then $lk(6)=C_{8}(1,9,[5,4,0],[7,a_{1},2])$ and $lk(3)=C_{8}(0,4,[10,9,1],[8,a_{2},2])$, where $a_{1}$, $a_{2}\in V$. Now six 4-gon are $[0,4,5,6]$, $[0,1,8,7]$, $[1,3,10,\allowbreak9]$, $[2,6,7,a_{1}]$, $[2,3,8,a_{2}]$, $[4,a_{3},a_{4},a_{5}]$, where $a_{3}$, $a_{4}$, $a_{5}\in V$. Now 5 will not appear in $4^{\text{th}}$ 4-gon as 5,6 will not appear in two 4-gon and 5 will also not appear in last 4-gon as 4,5 will not appear in two 4-gon, so 5 will appear in $5^{\text{th}}$ 4-gon i.e. $a_{2}=5$. Therefore 11 will appear in $4^{\text{th}}$ and last 4-gon, therefore $a_{1}=11$ and 9,10 will appear in last 4-gon, which contradict as 9,10 can not appear in two 4-gon.
	\end{case}
	\begin{case}
		When $(a,b,c,d)=(6,9,10,5)$, then $lk(6)=C_{8}(1,9,[5,4,0],[7,11,2])$, $lk(4)=C_{8}(8,3,[0,$ $6,5],[10,9,1])$, which implies $lk(8)=C_{8}(3,4,[a_{1},a_{2},a_{3}],[7,0,1])$, $lk(3)=C_{8}(0,4,\allowbreak[8,b_{1},b_{2}],[b_{3},b_{4},3])$, where $a_{1}$, $a_{2}$, $a_{3}$, $b_{1}$, $b_{2}$, $b_{3}$, $b_{4}\in V$. In $lk(8)$ we see that [3,8] in adjacent edge of two 3-gon and in $lk(3)$ we see that [3,8] is an adjacent edge of a 3-gon an a 4-gon, which is a contradiction.
	\end{case}
	\begin{case}
		When $(a,b,c,d)=(6,9,10,4)$, then $lk(6)=C_{8}(1,9,[5,4,0],[7,a_{1},2])$ where $a_{1}\in V$ and $lk(5)=C_{8}(8,9,[6,0,4],[10,9,1])$, which implies $lk(9)=C_{8}(6,5,[8,$  $a_{2},a_{3}],[10,5,1])$ and $lk(8)=C_{8}(5,9,[b_{1},b_{2},b_{3}],[7,0,4])$ where $a_{2}$, $a_{3}$, $b_{1}$, $b_{2}$, $b_{3}\in V$. From $lk(9)$ we see that [8,9] is an adjacent edge of a 3-gon and a 4-gon and from $lk(8)$ we see that [8,9] is an adjacent edge of two 3-gon which make contradiction.
	\end{case}
	\begin{case}
		When $(a,b,c,d)=(6,9,10,11)$, then $lk(6)=C_{8}(1,9,[5,4,0],[7,a_{1},2])$, where $a_{1}\in V$. Now all six 4-gon are $[0,4,5,6]$, $[0,1,8,7]$, $[1,9,10,11]$, $[2,6,7,a_{1}]$, $[2,3,a_{2},a_{3}]$, $[4,a_{4},a_{5},a_{6}]$, where $a_{2}$, $a_{3}$, $a_{4}$, $a_{5}$, $a_{6}\in V$. Now 3 will not appear in $4^{\text{th}}$ 4-gon as 2,3 can not be appear in two 4-gon and 3 will also not appear in last 4-gon as [3,4] is an adjacent edge of two 3-gon in $lk(3)$, so in $lk(3)$ has only one 4-gon, which is not possible.
	\end{case}
	\begin{case}
		When $(a,b,c,d)=(9,5,10,11)$, then either $lk(5)=C_{8}(9,a_{1},[4,0,6],[10,$  $11,1])$ where $a_{1}\in V$ or $lk(5)=C_{8}(9,8,[6,0,4],[10,11,1])$. If $lk(5)=C_{8}(9,8,[6,0,4],[10,11,1])$ then $lk(8)=C_{8}(5,6,[7,0,1],[11,b_{1},9])$, where $b_{1}\in V$ which implies 8 will appear two times in $lk(7)$, which is a contradiction. If $lk(5)=C_{8}(9,a_{1},[4,0,6],[10,11,1])$, form $lk(0)$ and $lk(3)$, we see that [3,4] is an adjacent edge of two 3-gon, therefore [4,5] is an adjacent edge of a 3-gon and a 4-gon, therefore $[a_{1},4]$, $[a_{1},9]$ are adjacent edge of a 3-gon and a 4-gon, which implies $lk(a_{1})$ is not following the face sequence.
	\end{case}
	\begin{case}
		When $(a,b,c,d)=(9,6,10,11)$, then $lk(6)=C_{8}(9,7,[0,4,5],[10,11,1])$. From $lk(6)$ and $lk(9)$ we see that [0,7] and [7,9] respectively are adjacent edge of a 3-gon and a 4-gon, which implies face sequence is not following in $lk(7)$.
	\end{case}
	\begin{case}
		When $(a,b,c,d)=(9,10,6,11)$, then either $lk(9)=C_{8}(7,9,[10,1,11],[5,4,$  $0])$ or $lk(6)=C_{8}(7,9,[11,1,10],[5,4,0])$. In both cases $lk(9)$ will not complete.
	\end{case}
	\begin{case}
		When $(a,b,c,d)=(9,10,5,11)$ then $lk(2)=C_{8}(1,0,[3,c_{1},c_{2}],[d_{2},d_{1},9])$, $lk(9)=C_{8}(10,$ $1,[2,d_{2},d_{1}],[a_{8},a_{7},a_{1}]),lk(10) =C_{8}(9,a_{1},[a_{2},b_{2},b_{1}],[5,11,1])$, $lk(a_{2}) =C_{8}(a_{1},a_{3},\allowbreak[a_{4},\allowbreak a_{5},a_{6}],[b_{2},$ $b_{1},10])$, where $a_{1}$, $a_{2}$, $a_{3}$, $a_{4}$, $a_{5}$, $a_{6}$, $a_{7}$, $a_{8}$, $b_{1}$, $b_{2}$, $c_{1}$, $c_{2}$, $d_{1}$, $d_{2}\in V$. Now all six 4-gon are $[0,4,5,6]$, $[0,1,8,7]$, $[1,10,5,11]$, $[2,3,c_{1},c_{2}]$, $[2,9,d_{1},d_{2}]$, $[9,a_{1},a_{7},a_{8}]$. Now we see that 9 is in $lk(10)$, so 10 will not appear in last two 4-gon, therefore 10 will appear in $4^{\text{th}}$ 4-gon. Now in $4^{\text{th}}$ 4-gon, if $c_{2}=10$ and $d_{2}=5$ which implies 5 appear in three 4-gon, which is a contradiction. therefore $b_{1}=3$, $b_{2}=2$. 6 will not appear in $4^{\text{th}}$ 4-gon as if 6 appear in $4^{\text{th}}$ 4-gon then $a_{3}=7$, $a_{4}=0$, $a_{5}=4$, $a_{6}=5$, which implies [2,5,6] is a 3-gon i.e. $c_{2}=6$, $d_{2}=5$ which implies 5 appear in three 4-gon, which make contradiction. Now remaining possible value of $a_{2}$ are 7,8. We see that 3 will appear in last 4-gon and [3,4] is an adjacent edge of two 3-gon, therefore 4 will appear in $5^{\text{th}}$ 4-gon, which implies 6 will appear in last 4-gon as 4,6 will appear in only one 4-gon. Now if $a_{2}=7$, then after completing $lk(7)$ we see that $a_{3}=6$ and we have 6 appear in last 4-gon, which is a contradiction. If $a_{2}=8$, then $a_{3}=11$ and 11 appear in last 4-gon, which is not possible.
	\end{case}
	\begin{case}
		When $(a,b,c,d)=(5,9,10,11)$, then all six 4-gon are $[0,4,5,6]$, $[0,1,8,7]$, $[1,9,10,\allowbreak11]$, $[2,3,a_{1},a_{2}]$, $[2,5,a_{3},a_{4}]$, $[3,a_{5},a_{6},a_{7}]$ where $a_{1}$, $a_{2}$, $a_{3}$, $a_{4}$, $a_{5}$, $a_{6}$,$a_{7}\in V$. From $lk(3)$ we see that [3,4] is an adjacent edge of two 3-gon, therefore 4 will not appear in $4^{\text{th}}$ and last 4-gon. 4,5 will appear only in one 4-gon, so 4 will also not appear in $5^{\text{th}}$ 4-gon.
	\end{case}
	\begin{case}
		When $(a,b,c,d)=(9,4,10,11)$, then $lk(4)=C_{8}(9,3,[0,6,5],[10,11,1])$ and $lk(3)=C_{8}(0,4,[9,a_{1},a_{2}],[a_{3},a_{4},2])$, $lk(9)=C_{8}(1,4,[3,a_{2},a_{1}],[a_{5},a_{6},2])$, where $a_{1},a_{2},a_{3},\allowbreak a_{4},\allowbreak a_{5},a_{6}\in P$, where $P=\lbrace 5,6,7,8,10,11\rbrace$. All six 4-gon are [0,4,5,6], [0,1,8,7], [1,4,10,11], $[2,3,a_{3},a_{4}]$, $[2,9,a_{5},a_{6}]$, $[3,9,a_{1},a_{2}]$. If $a_{1}=5$, then $a_{6}\neq 6$ as $(5,6)$ can not appear in two 4-gon. Similarly $(a_{1},a_{2})\neq $ (6,5), (7,8), (8,7), (10,11), (11,10), and $(a_{3},a_{4})\neq$ (5,6), (6,5), (7,8), (8,7), (10,11), (11,10).\\
		%		\justify
		Let $A=\lbrace$(5,7,8,10), (5,7,8,11), (5,7,10,6), (5,7,10,8), (5,7,11,6), (5,7,11,8), (5,8,6,10), (5,8,6,\allowbreak11), (5,8,10,6), (5,8,10,7), (5,8,11,6), (5,8,11,7), (5,11,6,7), (5,11,6,8), (5,11,7, 10), (5,11,10,7), (5,11,10,8), (6,7,5,11), (6,7,8,10), (6,7,8,11), (6,7,10,5), (6,7,10,8), (6,7,11,5), (6,8,5,11), (6,8,\allowbreak10,5), (6,8,10,7), (6,8,11,5), (6,8,11,7), (6,10,5,7), (6,10,5, 8), (6,10,7,5), (6,10,7,11), (6,10,8,5), (6,10,8,11), (6,11,5,7), (6,11,5,8), (6,11,7,5), (6,11,7,10), (6,11,10,7), (6,11,10,8), (7,5,6,10), (7,5,6,11), (7,5,8,10), (7,5,8,11), (7,5, 11,6), (7,5,11,8), (7,10,5,8), (7,10,5,11), (7,10,6,8), (7,10,6,11), (7,10,8,5), (7,11,5,8), (7,11,5,10), (7,11,6,8), (7,11,6,10), (7,11,10,5), (7,11,10,6), (8,5,6,10), (8,5,6,11), (8,5, 7,10), (8,5,7,11), (8,5,11,6), (8,5,11,7), (8,6,7,5), (8,6,7,10), (8,6,7,\allowbreak11), (8,6,10,5), (8,6,10,7), (8,6,11,5), (8,6,11,7), (8,10,5,7), (8,10,\allowbreak5,11), (8,10,6,7), (8,10,6,11),  (8,11, 5,7),  (8,11,5,10),  (8,11,6,7),  (8,11,6,10),  (8,11,7,5), (8,11,10,5), (8,10,7,5), (8,11,10,6), (10,5,6,7), (10,5,6,8), (10,5,7,11), (10,5,8,6), (10,5,8,11), (10,5,11,7), (10,5,11,8), (10, 6,7,5), (10,6,7,11), (10,6,8,5), (10,6,8,11), (10,6,11,7), (10,7,5,\allowbreak8), (10,7,5,11), (10,7,8, 5), (10,7,8,6), (10,7,11,5), (10,7,11,6), (10,8,5,7), (10,8,5,11), (10,8,\allowbreak6,7), (10,8,6,11), (10,8,7,5), (10,8,11,5), (10,8,11,6), (10,8,11,7), (11,5,6,7), (11,5,6,8), (11,5,\allowbreak7,10), (11, 5,8,6), (11,5,8,10), (11,6,7,5), (11,6,7,10), (11,6,8,5), (11,6,8,10), (11,6,10,7), (11,6,10, 8), (11,7,5,8), (11,7,6,8), (11,7,6,10), (11,7,8,5), (11,7,8,6), (11,7,10,5), (11,7,10,6) $\rbrace$,\\
		$B = \{$ (5,7,6,8), (5,7,6,10), (5,7,6,11). (5,8,7,6), (5,8,7,10), (5,8,7,11), (5,10,6,7), (5,10,6,8), (5,10,6,11), (5,10,7,6), (5,10,7,11), (5,10,8,6), (5,10,8,11), (5,10,11,6), (5, 10,11,7), (5,10,11,\allowbreak8), (5,11,7,6), (5,11,8,6), (5,11,8,10), (6,7,5,10), (6,7,11,8), (6,8,5, 10), (6,8,7,5), (6,8,7,10), (6,8,7,\allowbreak11), (6,10,11,5), (6,10,11,7), (6,10,11,8), (6,11,5,10), (6,11,8,5), (6,11,8,10), (7,5,10,6), (7,5,\allowbreak10,8), (7,6,5,8), (7,6,5,10), (7,6,5,11), (7,6,8,5), (7,6,8,10), (7,6,8,11), (7,6,10,5), (7,6,\allowbreak10,8), (7,6,11,5), (7,6,11,8), (7,10,11,5), (7,10,11, 6), (7,10,11,8), (7,11,8,5), (7,11,8,6), (7,11,8,10), (8,5,7,6), (8,5,10,6), (8,5,10,7), (8,6, 5,7), (8,6,5,10), (8,6,5,11), (8,10,11,5), (8,10,11,6), (8,10,\allowbreak11,7), (8,11,7,6), (8,10,7,6), (10,5,7,6), (10,6,5,7), (10,6,5,8), (10,6,5,11), (10,6,11,8), (10,7,6,\allowbreak8), (10,7,6,11), (10,7, 11,8), (10,8,7,6), (10,8,7,11), (11,5,7,6), (11,5,10,6), (11,5,10,7), (11,5,\allowbreak10,8), (11,6,5,7), (11,6,5,8), (11,6,5,10), (11,7,5,10), (11,8,5,7), (11,8,5,10), (11,8,6,7), (11,8,6,\allowbreak10), (11,8, 7,5), (11,8,7,6), (11,8,7,10), (11,8,10,5), (11,8,10,6), (11,8,10,7) $\} $.\\
		$C=\lbrace$(5,7,8,6), (5,8,6,7), (5,11,6,10), (5,11,10,6), (6,7,5,8), (6,7,8,5), (6,8,5,7),(6,10,5, 11), (6,11,10,5), (7,5,6,8), (7,5,8,6), (7,10,8,11), (7,11,10,8), (8,5,6,7), (8,11,7,10), (8,11,10,7), (8,10,7,11), (10,5,6,11), (10,5,11,6), (10,6,11,5), (10,7,8,11), (11,5,6,10), (11,6,10,5), (11,7,8,\allowbreak10), (11,7,10,8)$\rbrace$. Therefore $(a_{1},a_{2},a_{3},a_{4})\in A\cup B\cup C$.
		
		Now for any $(a_{1},a_{2},a_{3},a_{4})\in B$, SEM will not orientable.
		
		For any $(a_{1},a_{2},a_{3},a_{4})$, $a_{5},a_{6}\in P\setminus\lbrace a_{1},a_{2},a_{3},a_{4}\rbrace$ and $\lbrace a_{5},a_{6}\rbrace\neq\lbrace 5,6\rbrace$ or $\lbrace 7,8\rbrace$ or $\lbrace 10,11\rbrace$. Therefore, for any $(a_{1},a_{2},a_{3},a_{4})\in C$, SEM will not complete.
		
		If $(a_{1},a_{2},a_{3},a_{4})=(5,7,8,10)$, them $lk(5)=C_{8}(10,b,[7,3,9],[6,0,4])$, $lk(7)=C_{8}(6,b,[5,\allowbreak9,3],[8,1,0])$, $lk(10)=C_{8}(5,b,[8,3,2],[11,1,4])$. From $lk(5)$, $b\in \{8,11\}$, from $lk(7)$, $b\in\{10,11\}$ and from $lk(10)$, $b\in\{6,7,9\}$, which implies $b$ does not exists.
		
		If $(a_{1},a_{2},a_{3},a_{4})=(5,7,8,11)$, then $lk(5)=C_{9}(10,b,[7,3,9],[6,0,4])$, $lk(7)=C_{8}(6,b,[5,\allowbreak9,3],[8,1,0])$. From $lk(5)$, $b\in\{8,11\}$ and from $lk(7)$, $b\in\{10,11\}$, which implies $b=11$, this implies $lk(11)=C_{8}(5,7,[6,c,8],[1,4,10])$, $c\in V$, which implies $[6,11]$ form an edge in a 4-gon, which is a contradiction.
		
		If $(a_{1},a_{2},a_{3},a_{4})=(5,7,10,a_{4})$, then $lk(5)=C_{8}(10,8,[7,3,9],[6,0,4])$ and $lk(7)=C_{8}(6,10,[3,9,$ $5],[8,1,0])$, which implies face sequence is not followed in $lk(10)$. Therefore for (5,7,10,6), (5,7,10,8), SEM is not possible.
		
		If $(a_{1},a_{2},a_{3},a_{4})=(5,7,11,6)$, then $lk(5)=C_{8}(10,8,[7,3,9],[6,0,4])$ and $lk(7)=C_{8}(6,\allowbreak11,[3,9,5],[8,1,0])$, which implies $lk(8)=C_{8}(5,10,[b_{1},b_{2},11],[1,0,$ $7])$, this implies $[8,11]$ occur in a 4-gon, which is not possible by above six 4-gon.
		
		If $(a_{1},a_{2},a_{3},a_{4})=(5,7,11,8)$, then $lk(5)=C_{8}(10,8,[7,3,9],[6,0,4])\Rightarrow lk(7)=C_{8}(6,11,\allowbreak[3,9,5],[8,1,0])\Rightarrow lk(11)=C_{8}(6,7,[3,2,8],[1,4,10])\Rightarrow lk(8)=C_{8}(5,10,\allowbreak [2,3,\allowbreak11],[1,0,7])\allowbreak\Rightarrow lk(6)=C_{8}(7,11,[10,2,9],[5,4,0])\Rightarrow lk(2)=C_{8}(0,1,[9,6,10],\allowbreak [8,11,3])\Rightarrow lk(10)=C_{8}(5,8,[2,9,6],[11,1,$ $4])\Rightarrow lk(9)=C_{8}(1,4,[3,7,5],[6,10,2])$. This map is same as $ \boldsymbol{KNO_{1[(3^3,4,3,4)]}} $, given in figure \ref{33434non}.
		
		If $(a_{1},a_{2},a_{3},a_{4})=(5,8,6,b)$, where $b\in\{10,11\}$, then $lk(8)=C_{8}(11,6,[3,9,5],[7,\allowbreak 0,1])\Rightarrow lk(6)=C_{8}(7,8,[3,2,b],[5,4,0])$. We see that $[6,8,11]$ is in $lk(8)$, but is not in $lk(6)$, which is a contradiction. Therefore for $(5,8,6,10),(5,8,6,11)$, SEM will not complete.
		
		If $(a_{1},a_{2},a_{3},a_{4})=(5,8,10,b)$, where $b\in \{6,7\}$ then $lk(10)=C_{8}(5,8,[3,2,b],[11,\allowbreak 1,4])$. But we have $[4,5]$ and $[5,8]$ form an edge in two 4-gon, which implies face sequence is not followed in $lk(5)$. Therefore for $(5,8,10,6),(5,8,10,7)$, SEM will not complete.
		
		For $(a_{1},a_{2},a_{3},a_{4})=(a_{1},8,11,a_{4})$, $[8,1]$ and $[8,3]$ form an edge in two 4-gon, which implies face sequence will not followed in $lk(8)$. Therefore, for (5,8,11,6), (5,8,11,7), (6,8,11,5), (6,8,11,7), (10,8,11,5), (10,8,11,7), SEM will not complete.
		
		If $(a_{1},a_{2},a_{3},a_{4})=(5,11,6,7)$, then $lk(11)=C_{8}(8,6,[3,9,5],[10,4,1])\Rightarrow lk(6)=C_{8}(8,\allowbreak11,[3,2,7],[0,4,5])\Rightarrow lk(5)=C_{8}(8,10,[11,3,9],[4,0,6])$, which implies 4 will occur two times in $lk(9)$.
		
		If $(a_{1},a_{2},a_{3},a_{4})=(5,11,6,8)$, then $lk(8)=C_{8}(11,5,[6,3,2],[7,0,1])$ and we have $[1,11]$ and $[5,11]$ form an edge of two 4-gon, which implies face sequence will not followed in $lk(11)$.
		
		If $(a_{1},a_{2},a_{3},a_{4})=(5,11,7,10)$, then $lk(11)=C_{8}(8,7,[3,9,5],[10,4,1])$ and we have $[1,8]$ and $[7,8]$ form an edge of two 4-gon, which implies face sequence will not followed in $lk(8)$.
		
		If $(a_{1},a_{2},a_{3},a_{4})=(5,11,10,7)$, then $lk(11)=C_{8}(8,b,[5,9,3],[10,4,1])\Rightarrow lk(10)\allowbreak =C_{8}(5,b,[7,2,$ $3],[11,1,4])$. From $lk(11)$ we have $b\in\{6,7\}$ and from $lk(10)$ we have $b\in\lbrace 6,8,9\rbrace$, which implies $b=6$ and we have $[6,8]$ and $[1,8]$ form an edge of two 4-gon, which implies face sequence will not followed in $lk(8)$.
		
		If $(a_{1},a_{2},a_{3},a_{4})=(5,11,10,8)$, then $lk(11)=C_{8}(8,b_{1},[5,9,3],[10,4,1])$. Now $[5,b_{1}]$ form an edge of a 4-gon and $b_{1}$ can not be 9, therefore $b_{1}=6$. And $lk(10)=C_{8}(5,b_{2},[8,2,3],\allowbreak[11,1,\allowbreak4])$, where $[8,b_{2}]$ form an edge of a 4-gon and $b_{2}$ can not be 2, therefore $b_{2}=7$. These implies $lk(8)=C_{8}(11,6,[2,3,10],[7,0,1])\Rightarrow lk(2)=C_{8}(0,1,[9,7,6],[8,10,3])\Rightarrow lk(6)=C_{8}(11,8,[2,9,7],[0,4,5])$ $\Rightarrow lk(7)=C_{8}(10,5,[9,\allowbreak 2,6],[0,1,8])\Rightarrow lk(9)=C_{8}(1,4,[3,11,5],[7,\allowbreak6,2])\Rightarrow lk(5)=C_{8}(10,7,[9,3,11],[6,0,\allowbreak 4])$. This is the SEM $ \boldsymbol{KO_{1[(3^3,4,3,4)]}} $, given in figure \ref{33434or}
		
		If $(a_{1},a_{2},a_{3},a_{4})=(6,7,5,11)$, then $lk(5)=C_{8}(10,7,[3,2,11],[6,0,4])\Rightarrow lk(11)\allowbreak =C_{8}(8,6,[5,3,$ $2],[10,4,1])\Rightarrow lk(6)=C_{8}(11,8,[9,3,7],[0,4,5])\Rightarrow lk(10)=C_{8}(5,7,\allowbreak [8,9,2],\allowbreak[11,1,4])\Rightarrow lk(7)=C_{8}(10,5,[3,9,6],[0,1,8])\Rightarrow lk(8)=C_{8}(11,6,[9,2,10],\allowbreak [7,0,1])\Rightarrow lk(9)=C_{8}(1,4,[3,7,6],[8,10,2])$ $\Rightarrow lk(2)=C_{8}(0,1,[9,8,10],[11,5,3])$. This SEM isomorphic to $ \boldsymbol{KO_{1[(3^3,4,3,4)]}} $, given in figure \ref{33434or}, under the map $ (1,4)(2,3)(5,8)(6,7)(10,11) $.
		
		For $(a_{1},a_{2},a_{3},a_{4})=(6,7,8,a_{4}), a_{4}\in\{10,11\}$, 6 will appear two times in $lk(7)$. Therefore, for $(6,7,8,10),(6,7,8,11)$, SEM is not possible.
		
		If $(a_{1},a_{2},a_{3},a_{4})=(6,7,10,5)$, then $lk(10)=C_{8}(3,b_{1},[11,1,4],[5,2,3])$ and $[11,b_{1}]$ form an edge in a 4-gon, therefore, $b_{1}=8$ and we have $[7,8]$ and $[3,7]$ are two edges of two 4-gon, which implies face sequence is not followed in $lk(7)$.
		
		If $(a_{1},a_{2},a_{3},a_{4})=(6,7,10,8)$, then $lk(10)=C_{8}(5,7,[3,2,8],[11,1,4])$, which implies 8 will appear two times in $lk(11)$.
		
		If $(a_{1},a_{2},a_{3},a_{4})=(6,7,11,5)$, then $lk(11)=C_{8}(8,7,[3,2,5],[10,4,1])$, which implies 5 will appear two times in $lk(10)$.
		
		For $(a_{1},a_{2},a_{3},a_{4})=(6,8,a_{3},a_{4}),a_{3}\neq 7$, considering $lk(8)$, we see that $[6,7,8]$ is a face, which is not possible as then 6 will appear two times in $lk(7)$. Therefore for (6,8,5,11), (6,8,10,5), (6,8,10,7), SEM will not complete.
		
		For $(a_{1},a_{2},a_{3},a_{4})=(a_{1},10,5,a_{4})$, face sequence will not followed in $lk(10)$. Therefore, for $(6,10,5,7)$, $(6,10,5,8)$, $(7,10,5,8)$, $(7,10,5,11)$, $(8,10,5,7)$, $(8,10,5,\allowbreak 11)$, SEM will not complete.
		
		For $(a_{1},a_{2},a_{3},a_{4})=(6,10,7,a_{4})$, $lk(10)=C_{8}(5,7,[3,9,6],[11,1,4])$, and we have [5,7], [3,7] are two edges of two 4-gon, which implies face sequence is not followed in $lk(7)$. Therefore, for $(6,10,7,5),(6,10,7,11)$, SEM will not complete.
		
		If $(a_{1},a_{2},a_{3},a_{4})=(6,10,8,5)$, then $lk(10)=C_{8}(5,8,[3,9,6],[11,1,4])$ and we have $[5,8],[5,4]$ are two edges of two 4-gon, which implies face sequence will not followed in $lk(5)$.
		
		If $(a_{1},a_{2},a_{3},a_{4})=(6,10,8,11)$, then $lk(10)=C_{8}(5,8,[3,9,6],[11,1,4])\Rightarrow lk(8)=\allowbreak C_{8}(10,5,[7,0,1],[11,2,3])\Rightarrow lk(6)=C_{8}(11,7,[0,4,5],[9,3,10])\Rightarrow lk(11)=C_{8}(6,7,[2,3,8],\allowbreak[1,4,10])\Rightarrow lk(5)=C_{8}(10,8,[7,2,9],[6,0,4])\Rightarrow lk(7)=C_{8}(11,6,[0,1,8],[5,\allowbreak 9,2])\Rightarrow lk(9)=C_{8}(1,4,[3,10,6],[5,7,2])\Rightarrow lk(2)=C_{8}(0,1,[9,5,7],[11,8,3])$. This map isomorphic to $ \boldsymbol{KO_{1[(3^3,4,3,4)]}} $, given in figure \ref{33434or}, under the map $ (0,9)(1,3)(2,4)(5,6,7)(8,11,10) $.
		
		For $(a_{1},a_{2},a_{3},a_{4})=(6,11,5,a_{4})$, $lk(11)=C_{8}(8,5,[3,9,6],[10,4,1])\Rightarrow lk(10)=C_{8}(6,\allowbreak b_{1},\allowbreak[b_{2},b_{3},5],[4,1,11])$, which implies $[5,10]$ form an edge of a 4-gon, which is not possible by above six 4-gon. Therefore for $(6,11,5,7), (6,11,5,8)$, SEM will not complete.
		
		If $(a_{1},a_{2},a_{3},a_{4})=(6,11,7,5)$, then $lk(7)=C_{8}(6,11,[3,2,5],[8,1,0])$, and we have $[0,6], [6,11]$ form two edges of two 4-gon, which implies face sequence will not followed in $lk(6)$.
		
		If $(a_{1},a_{2},a_{3},a_{4})=(6,11,7,10)$, then 8 will occur two times in $lk(11)$.
		
		If $(a_{1},a_{2},a_{3},a_{4})=(6,11,10,7)$, then $lk(10)=C_{8}(5,6,[7,2,3],[11,1,4])$, which implies 7 will appear two times in $lk(6)$.
		
		If $(a_{1},a_{2},a_{3},a_{4})=(6,11,10,8)$, then $lk(11)=C_{8}(b_{1},8,[1,4,10],[3,9,6])$. We have $[1,8]$ is an edge in a 4-gon, therefore $[6,b_{1}]$ form an edge in a 4-gon, which implies $b_{1}=5$. Therefore $lk(5)=C_{8}(11,8,[b_{2},b_{3},10],[4,0,6])$, which implies $[5,10]$ form an edge in a 4-gon, which is not possible.
		
		If $(a_{1},a_{2},a_{3},a_{4})=(7,5,6,10)$, then $lk(5)=C_{8}(10,b_{1},[7,9,3],[6,0,4])$. We have [4,10] is an edge in a 4-gon, therefore $[7,b_{1}]$ form an edge in a 4-gon, therefore $b_{1}=8$. Now, $lk(6)=C_{8}(7,b_{2},[10,2,3],[5,4,0])$. We have $[0,7]$ form an edge in a 4-gon, therefore $[10,b_{2}]$ form an edge in a 4-gon, therefore $b_{2}=11$. Which implies $lk(7)=C_{8}(6,11,[9,3,5],[8,1,0])\Rightarrow lk(10)=C_{8}(5,8,[2,3,6],[11,1,4])\Rightarrow lk(8)=C_{8}(5,10,[2,9,11],[1,0,7])\Rightarrow lk(11)=C_{8}(6,\allowbreak7,\allowbreak[9,2,8],[1,4,10])\Rightarrow lk(2)=C_{8}(0,1,[9,11,8],[10,6,3])\Rightarrow lk(9)=C_{8}(1,4,[3.5.7],[11,8,\allowbreak2])$. This is the map $ \boldsymbol{KO_{2[(3^3,4,3,4)]}} $, given in figure \ref{33434or}.
		
		If $(a_{1},a_{2},a_{3},a_{4})=(7,5,6,11)$, then $lk(6)=C_{8}(7,b_{1},[11,2,3],[5,4,0])$. [0,7] form an edge in a 4-gon, therefore $[11,b_{1}]$ will form an edge in a 4-gon, therefore $b_{1}=10$, which implies $lk(10)=C_{8}(6,7,[b_{2},b_{3},5],[4,1,11])$, which implies [5,10] form an edge in a 4-gon, which is not possible.
		
		For $(a_{1},a_{2},a_{3},a_{4})=(7,5,a_{3},a_{4})$, if $a_{3}\neq 6$, then $lk(5)=C_{8}(10,a_{3},[3,9,7],[6,0,\allowbreak 4])$, which implies 7 will occur two times in $lk(6)$. Therefore (7,5,8,10), (7,5,8,11), (7,5,11,6), (7,5,11,8) are not possible.
		
		For $(a_{1},a_{2},a_{3},a_{4})=(7,10,6,a_{4})$, $lk(10)=C_{8}(5,6,[3,9,7],[11,1,4])$, which implies 6 will appear two times in $lk(5)$. Therefore $(7,10,6,8),(7,10,6,11)$ is not possible.
		
		If $(a_{1},a_{2},a_{3},a_{4})=(7,10,8,5)$, then $lk(8)=C_{8}(11,10,[3,2,5],[7,0,1])$, and we have [1,11],[10,11] form an edge in same 4-gon, therefore face sequence in $lk(11)$ is not followed.
		
		If $(a_{1},a_{2},a_{3},a_{4})=(7,11,5,8)$, then $lk(11)=C_{8}(8,5,[3,9,7],[10,4,1])$. We have [1,8], [5,8] form an edge in 4-gon, which implies face sequence is not followed in $lk(11)$.
		
		If $(a_{1},a_{2},a_{3},a_{4})=(7,11,5,10)$, then $lk(11)=C_{8}(8,5,[3,9,7],[10,4,1])\Rightarrow lk(5)\allowbreak =C_{8}(11,8,[b_{1},0,b_{2}],[10,2,3])$, where $b_{1},b_{2}\in \lbrace 4,6\rbrace$. We have $[3,11]$ is an edge in a 4-gon, therefore $[8,b_{1}]$ will form an edge in a 4-gon, therefore $b_{1}=6\Rightarrow b_{2}=4\Rightarrow lk(7)=C_{8}(6,10,[11,3,9],[8,1,0])\Rightarrow lk(10)=C_{8}(7,6,[2,3,5],[4,1,11])\Rightarrow lk(6)=C_{8}(7,10,[2,9,8],\allowbreak[5,4,0]),\Rightarrow lk(8)=C_{8}(5,11,[1,0,7],$ $[9,2,6]),\Rightarrow lk(2)=C_{8}(0,1,[9,8,6],[10,5,3]),\Rightarrow lk(9)\allowbreak=C_{8}(1,4,[3,11,7],[8,1,0])$. This map isomorphic to $ \boldsymbol{KNO_{1[(3^3,4,3,4)]}} $, given in figure \ref{33434non}, under the map $ (1,4)(2,3)(5,8)(6,7)(10,11) $.
		
		If $(a_{1},a_{2},a_{3},a_{4})=(7,11,6,8)$, then $lk(11)=C_{8}(8,6,[3,9,7],[10,4,1])\Rightarrow lk(6)=C_{8}(7,\allowbreak11,[3,2,8],[5,4,0])$. We have [0,7] and [7,11] form edges in 4-gon, therefore face sequence is not followed in $lk(7)$.
		
		If $(a_{1},a_{2},a_{3},a_{4})=(7,11,6,10)$, then $lk(11)=C_{8}(8,6,[3,9,7],[10,4,1])$, which implies $lk(10)=C_{8}(7,b_{1},[b_{2},b_{3},5],[4,1,11])$, which implies [5,10] form an edge in a 4-gon, which is a contradiction.
		
		If $(a_{1},a_{2},a_{3},a_{4})=(7,11,10,5)$, then 5 will occur two times in $lk(10)$.
		
		If $(a_{1},a_{2},a_{3},a_{4})=(7,11,10,6)$, then $lk(10)=C_{8}(b,5,[4,1,11],[3,2,6])$. We have [4,5] is an edge in a 4-gon, which implies $[6,b]$ form an edge in a 4-gon, which implies $b\in \lbrace 0,5\rbrace$, which is not possible.
		
		For $(a_{1},a_{2},a_{3},a_{4})=(8,5,6,a_{4})$, then $lk(5)=C_{8}(10,b,[8,9,3],[6,0,4])$. We have [4,10] form an edge in a 4-gon, therefore $[8,b]$ will form an edge in a 4-gon, which implies $b\in \lbrace 7,1\rbrace$, $b\neq 1$ as 10,1 appear in a same 4-gon, therefore $b=7$, which implies $lk(7)=C_{8}(5,10,[b_{1},b_{2},6],[0,1,8])$, which implies [6,7] form an edge in a 4-gon, which is a contradiction. Therefore $(8,5,6,10),(8,5,6,11)$ are not possible.
		
		For $(a_{1},a_{2},a_{3},a_{4})=(8,5,7,10)$, $lk(5)=C_{8}(10,7,[3,9,8],[6,0,4])$. We have [4,10], [7,10] form edges in 4-gon, which implies face sequence is not followed in $lk(10)$.
		
		If $(a_{1},a_{2},a_{3},a_{4})=(8,5,7,11)$, then $lk(5)=C_{8}(10,7,[3,9,8],[6,0,4])$, then $lk(7)$ will not complete.
		
		If $(a_{1},a_{2},a_{3},a_{4})=(8,5,11,a_{4})$, $lk(5)=C_{8}(10,11,[3,9,8],[6,0,4])$. We have $[4,10],[10,\allowbreak11]$ form edges in 4-gon, which implies face sequence is not followed in $lk(10)$. Therefore $(8,5,11,6),(8,5,11,7)$ are not possible.
		
		For $(a_{1},a_{2},a_{3},a_{4})=(8,6,7,a_{4})$, face sequence is not followed in $lk(6)$. Therefore (8,6,7,5), (8,6,7,10), (8,6,7,11) are not possible.
		
		If $(a_{1},a_{2},a_{3},a_{4})=(8,6,10,5)$, then $lk(6)=C_{8}(7,10,[3,9,8],[5,4,0]),\Rightarrow lk(10)=C_{8}(6,7,\allowbreak [11,1,4],[5,2,3])\Rightarrow lk(8)=C_{8}(11,5,[6,3,9],[7,0,1])\Rightarrow
		lk(5)=C_{8}(8,11,[2, 3,10],\allowbreak[4,0,6])\allowbreak \Rightarrow lk(11)=C_{8}(8,5,[2,9,7],[10,4,1])\Rightarrow
		lk(2)=C_{8}(0,1,[9,7,11],[5,\allowbreak 10,3])\Rightarrow
		lk(9)=C_{9}(1,4,[3,6,8],[7,11,2])$ $\Rightarrow lk(7)=C_{8}(6,10,[11,2,9],[8,1,0])$. This map is isomorphic to $ \boldsymbol{KO_{1[(3^3,4,3,4)]}} $, given in figure \ref{33434or}, under the map $ (0,1,4)(2,9,3)(5,7,11)(6,8,10) $.
		
		If $(a_{1},a_{2},a_{3},a_{4})=(8,6,10,7)$, then $lk(6)=C_{8}(7,10,[3,9,8],[5,4,0])$. We have [0,7], [7,10] are edges in 4-gon, therefore face sequence is not followed in $lk(7)$.
		
		If $(a_{1},a_{2},a_{3},a_{4})=(8,6,11,a_{4})$, then $lk(11)=C_{8}(8,6,[3,2,a_{4}],[10,4,1])$. We have [1,8], [6,8] are edges in 4-gon, which implies face sequence is not followed in $lk(8)$. Therefore (8,6,11,5), (8,6,11,7) are not possible.
		
		If $(a_{1},a_{2},a_{3},a_{4})=(8,10,6,7)$, then $lk(6)=C_{8}(10,b_{1},[5,4,0],[7,2,3])$. Since [3,10] is an edge in a 4-gon, therefore $b_{1}\in \lbrace 2,9,11\rbrace$. $b_{1}\neq 2$. Since (9,10) occur in same 4-gon and does not form an edge, therefore $b_{1}\neq 9$. $b_{1}\neq 11$ as [10,11] is an edge in a 4-gon.
		
		If $(a_{1},a_{2},a_{3},a_{4})=(8,10,6,11)$, then $lk(10)=C_{8}(5,6,[3,9,8],[11,1,4])$, which implies 6 will appear two times in $lk(5)$.
		
		For $(a_{1},a_{2},a_{3},a_{4})=(8,11,5,a_{4})$, $lk(5)=C_{8}(10,11,[3,2,a_{4}],[6,0,4])$. We have 4,10,11 occur in same 4-gon, therefore 11 will occur two times in $lk(10)$. Therefore (8,11,5,7), (8,11,5,10) are not possible.
		
		If $(a_{1},a_{2},a_{3},a_{4})=(8,11,6,10)$, then $lk(6)=C_{8}(7,11,[3,2,10],[5,4,0])$, which implies 10 will occur two times in $lk(5)$. $(a_{1},a_{2},a_{3},a_{4})=(8,11,6,7)$ then $ lk(6) $ will not possible.
		
		If $(a_{1},a_{2},a_{3},a_{4})=(8,11,7,5)$, then $lk(7)=C_{8}(6,11,[3,2,5],[8,1,0])\Rightarrow  lk(11)=C_{8}(7,6,\allowbreak[10,4,1],[8,9,3])\Rightarrow
		lk(5)=C_{8}(8,10,[4,0,6],[2,3,7])\Rightarrow
		lk(8)=C_{8}(5,10,[9,3,11],\allowbreak[1,0,7])\allowbreak \Rightarrow lk(10)=C_{8}(8,5,[4,1,11],[6,2,9])\Rightarrow
		lk(2)=C_{8}(0,1,[9,10,6],[5,7,\allowbreak 3])\Rightarrow
		lk(9)=C_{8}(1,4,[3,11,8],[10,6,2])$ $\Rightarrow
		lk(6)=C_{8}(11,7,[0,4,5],[2,9,10])$. This map isomorphic to $ \boldsymbol{KO_{1[(3^3,4,3,4)]}} $, given in figure \ref{33434or}, under the map $ (0,4,1)(2,3,9)(5,11,7)(6,10,8) $.
		
		If $(a_{1},a_{2},a_{3},a_{4})=(8,11,10,a_{4})$, then 8 will appear two times in $lk(11)$. Therefore (8,11,10,5), (8,11,10,6) are not possible.
		
		If $(a_{1},a_{2},a_{3},a_{4})=(8,10,7,5)$, then $lk(10)=C_{8}(5,7,[3,9,8],[11,1,4])$. We have [4,5], [5,7] are edges of 4-gon, therefore face sequence is not followed in $lk(5)$.
		
		If $(a_{1},a_{2},a_{3},a_{4})=(10,5,6,7)$ then $ C(0,4,5,3,2,7)\in lk(6) $ which is a contradiction.
		
		If $(a_{1},a_{2},a_{3},a_{4})=(10,5,6,8)$, then $lk(6)=C_{8}(7,b_{1},[8,2,3],[5,4,0])$. Since [0,7] is an edge in a 4-gon, therefore $[8,b_{1}]$ will form an edge in a 4-gon, which implies $b_{1}=1$, which is a contradiction as 1,7 appear in same 4-gon.
		
		If $(a_{1},a_{2},a_{3},a_{4})=(10,5,7,11)$, then $lk(7)=C_{8}(6,5,[3,2,11],[8,1,0])$. which implies 5 will appear two times in $lk(6)$.
		
		If $(a_{1},a_{2},a_{3},a_{4})=(10,5,8,6)$, then $lk(8)=C_{8}(11,5,[3,2,6],[7,0,1])$, which implies 6 will appear two times in $lk(7)$.
		
		If $(a_{1},a_{2},a_{3},a_{4})=(10,5,8,11)$, then $lk(5)=C_{8}(b_{1},8,[3,9,10],[4,0,6])$. We have [3,8] is an edge of a 4-gon, therefore $[6,b_{1}]$ will form an edge in a 4-gon, which implies face sequence is not followed in $lk(6)$.
		
		If $(a_{1},a_{2},a_{3},a_{4})=(10,5,11,7)$, then $lk(11)=C_{8}(8,5,[3,2,7],[10,4,1])\Rightarrow
		lk(7)\allowbreak =C_{8}(6,10,[11,3,2],[8,1,0])\Rightarrow
		lk(10)=C_{8}(7,6,[9,3,5],[4,1,11])\Rightarrow
		lk(5)=C_{8}(11,8,[6,0,4],\allowbreak[10,9,3])\Rightarrow
		lk(8)=C_{8}(11,5,[6,9,2],[7,0,1])\Rightarrow
		lk(2)=C_{8}(0,1,[9,6,8],\allowbreak [7,11,3])\Rightarrow
		lk(9)=C_{8}(1,4,[3,5,10],[6,8,2])\Rightarrow
		lk(6)=C_{8}(10,7,[0,4,5],[8,2,9])$. This map isomorphic to $ \boldsymbol{KO_{2[(3^3,4,3,4)]}} $, given in figure \ref{33434or}, under the map $ (0,9)(1,2)(3,4)(6,7,11,10) $.
		
		If $(a_{1},a_{2},a_{3},a_{4})=(10,5,11,8)$, then $lk(11)=C_{8}(b_{1},5,[3,2,8],[1,4,10])$. We have [3,5] is an edge in a 4-gon, therefore $[10,b_{1}]$ will form an edge in a 4-gon, which implies $b_{1}\in \lbrace 5,9\rbrace$ and any $b_{1}$ is not possible as 5,9 appear in same 4-gon and does not form an edge.
		
		For $(a_{1},a_{2},a_{3},a_{4})=(10,6,7,a_{4})$, face sequence is not followed in $lk(6)$. Therefore (10,6,7,5), (10,6,7,11) are not possible.
		
		For $(a_{1},a_{2},a_{3},a_{4})=(10,6,8,a_{4})$, $lk(6)=C_{8}(7,8,[3,9,10],[5,4,0])$, which implies 8 will occur two times in $lk(7)$. Therefore (10,6,8,5), (10,6,8,11) are not possible.
		
		If $(a_{1},a_{2},a_{3},a_{4})=(10,6,11,7)$, then $lk(11)=C_{8}(8,6,[3,2,7],[10,4,1])$, which implies $lk(6)$ is not possible.
		
		If $(a_{1},a_{2},a_{3},a_{4})=(10,7,5,a_{4})$, then $lk(5)=C_{8}(10,7,[3,2,a_{4}],[6,0,4])$. We have [4,10], [7,10] are edges in 4-gon, which implies face sequence is not followed in $lk(10)$. Therefore (10,7,5,8), (10,7,5,11) are not possible.
		
		If $(a_{1},a_{2},a_{3},a_{4})=(10,7,8,5)$, then $lk(8)=C_{8}(11,b,[5,2,3],[7,0,1])\Rightarrow lk(7)=C_{8}(b_{1},6,\allowbreak[0,1,8],[3,9,10])$. We have [0,6] is an edge in a 4-gon, therefore $[10,b_{1}]$ will form an edge in a 4-gon, which implies $b_{1}\in \lbrace 4,11\rbrace$. $b_{1}\neq 4$ as 4,6 occur in same 4-gon and does not form an edge, therefore $b_{1}=11$, which implies $lk(10)=C_{8}(5,b_{2},[9,3,7],[11,1,4])$. Since [4,5] form an edge in a 4-gon, therefore $[9,b_{2}]$ will form an edge in a 4-gon, which implies $b_{2}\in \lbrace 2,6\rbrace$. If $b_{2}=2$, then face sequence is not followed in $lk(2)$ and if $b_{2}=6$, then 6 will appear two times in $lk(5)$.
		
		If $(a_{1},a_{2},a_{3},a_{4})=(10,7,8,6)$, then $lk(7)=C_{8}(6,b_{1},[10,9,3],[8,1,0])$. We have [0,6] is an edge in a 4-gon, therefore $[10,b_{1}]$ will form an edge in a 4-gon, which implies $b_{1}\in \lbrace 4,11\rbrace$. $b_{1}\neq 4$ as 4,6 appear in a same 4-gon and it does not form an edge, therefore $b_{1}=11$. Therefore $lk(11)=C_{8}(7,6,[b_{2},b_{3},8],[1,4,10])$, from there we see that [8,11] is an edge in a 4-gon, which is not possible.
		
		If $(a_{1},a_{2},a_{3},a_{4})=(10,7,11,a_{4})$, then $lk(7)=C_{8}(6,11,[3,9,10],[8,1,0])$, which implies $lk(11)$ is not possible. Therefore (10,7,11,5), (10,7,11,6) are not possible.
		
		If $(a_{1},a_{2},a_{3},a_{4})=(10,8,5,7)$, then $lk(8)=C_{8}(11,5,[3,9,10],[7,0,1])$, which implies $lk(5)$ is not possible.
		
		For $(a_{1},a_{2},a_{3},a_{4})=(10,8,a_{3},11)$, if $a_{3}\neq 7$, then $lk(8)=C_{8}(11,a_{3},[3,9,10],[7,0,\allowbreak 1])$. We have [1,11], [$a_{3}$,11] are edges in 4-gon, therefore face sequence is not followed in $lk(11)$. Therefore (10,8,5,11), (10,8,6,11) are not possible.
		
		If $(a_{1},a_{2},a_{3},a_{4})=(10,8,7,5)$, then $lk(8)=C_{8}(b_{1},11,[1,0,7],[3,9,10])\Rightarrow lk(7)=C_{8}(6,b_{2},[5,2,$ $3],[8,1,0])$. Since [0,6] is an edge in a 4-gon, therefore $[5,b_{2}]$ will form an edge in a 4-gon, which implies $b_{2}\in\lbrace 4,6\rbrace$, which is not possible.
		
		If $(a_{1},a_{2},a_{3},a_{4})=(10,8,11,6)$ then face sequence will not follow in $ lk(8) $.		
		
		If $(a_{1},a_{2},a_{3},a_{4})=(11,5,6,7)$, then 0 will appear two times in $lk(6)$.
		
		If $(a_{1},a_{2},a_{3},a_{4})=(11,5,6,8)$, then $lk(5)=C_{8}(10,b_{1},[11,9,3],[6,0,4])$. Since [4,10] is an edge in a 4-gon, therefore $[11,b_{1}]$ will form an edge in a 4-gon, which implies $b_{1}\in \lbrace 1,10\rbrace$. $b_{1}=10$ is not possible and $b_{1}=1$ is not possible as then 4 will occur two times in $lk(10)$.
		
		If $(a_{1},a_{2},a_{3},a_{4})=(11,5,7,10)$, then $lk(5)=C_{8}(10,7,[3,9,11],[6,0,4])$, which implies 3 will occur two times in $lk(7)$.
		
		For $(a_{1},a_{2},a_{3},a_{4})=(11,5,8,a_{4})$, $lk(5)=C_{8}(10,8,[3,9,11],[6,0,4])$ and then $lk(8)$ will not possible. Therefore (11,5,8,6), (11,5,8,10) are not possible.
		
		For $(a_{1},a_{2},a_{3},a_{4})=(11,6,7,a_{4})$, then face sequence in not followed in $lk(6)$. Therefore (11,6,7,5), (11,6,7,10) are not possible.
		
		For $(a_{1},a_{2},a_{3},a_{4})=(11,6,8,a_{4})$, then $lk(6)=C_{8}(7,8,[3,9,11],[5,4,0])$, which implies 8 will occur two times in $lk(7)$. Therefore (11,6,8,5), (11,6,8,10) are not possible.
		
		If $(a_{1},a_{2},a_{3},a_{4})=(11,6,10,7)$, then $lk(6)=C_{8}(7,10,[3,9,11],[5,4,0])$. Since [0,7], [7,10] are edges of 4-gon, therefore face sequence is not followed in $lk(7)$.
		
		If $(a_{1},a_{2},a_{3},a_{4})=(11,6,10,8)$, then $lk(6)=C_{8}(7,10,[3,9,11],[5,4,0])$, which implies $lk(10)$ is not possible.
		
		If $(a_{1},a_{2},a_{3},a_{4})=(11,7,5,8)$, then $lk(7)=C_{8}(6,5,[3,9,11],[8,1,0])$, which implies 5 will appear two times in $lk(6)$.
		
		For $(a_{1},a_{2},a_{3},a_{4})=(11,7,6,a_{4})$, face sequence will not followed in $lk(6)$. Therefore (11,7,6,8), (11,7,6,10) are not possible.
		
		If $(a_{1},a_{2},a_{3},a_{4})=(11,7,8,5)$, then $lk(7)=C_{8}(6,b_{1},[11,9,3],[8,1,0])$. Since [0,6] is an edge in a 4-gon, therefore $[11,b_{1}]$ will form an edge in a 4-gon, which implies $b_{1}\in \lbrace 4,10\rbrace$. We see that for each $b_{1}$, 6, $b_{1}$ will appear in same 4-gon, which is a contradiction.
		
		If $(a_{1},a_{2},a_{3},a_{4})=(11,7,8,6)$, then $lk(7)=C_{8}(6,b_{1},[11,9,3],[8,1,0])$. Since [0,6] is an edge in a 4-gon, therefore $[11,b_{1}]$ will form an edge in a 4-gon, which implies $b_{1}=10$. Then $lk(8)=C_{8}(11,5,[6,2,3],[7,0,1])\Rightarrow
		lk(6)=C_{8}(7,10,[2,3,8],[5,4,0])\Rightarrow
		lk(10)=C_{8}(6,7,[11,1,4],[5,9,2])$ $\Rightarrow
		lk(5)=C_{8}(8,11,[9,2,10],[4,0,6])\Rightarrow
		lk(11)=C_{8}(8,5,[9,3,7],\allowbreak [10,4,1])\Rightarrow
		lk(9)=C_{8}(1,4,[3,7,11],[5,10,2])\Rightarrow
		lk(2)=C_{8}(0,1,\allowbreak [9,5,10],[6,8,3])$. This map is isomorphic to $ \boldsymbol{KO_{1[(3^3,4,3,4)]}} $, given in figure \ref{33434or}, under the map $ (0,2,1,9,4,3)(5,10,11)\allowbreak(6,8,7) $.
		
		For $(a_{1},a_{2},a_{3},a_{4})=(11,7,10,a_{4})$, $lk(6,10,[3,9,11],[8,1,0])\Rightarrow lk(10)=C_{8}(7,6, [11,1,\allowbreak4],[a_{4},2,3])$. Therefore $a_{4}=6$ is not possible and if $a_{4}=5$, then from $lk(6)$ we will see that [6,11] will form an edge in a 4-gon, which is not possible. Therefore (11,7,10,5), (11,7,10,6) are not possible.
		
		If $(a_{1},a_{2},a_{3},a_{4})=(10,8,6,7)$, then $lk(8)=C_{8}(11,6,[3,9,10],[7,0,1])\Rightarrow lk(6)=C_{8}(8,\allowbreak11,[5,4,0],[7,2,3])\Rightarrow lk(7)=C_{8}(b_{1},10,[8,1,0],[6,3,2])$. Since [8,10] is an edge in a 4-gon, therefore $[2,b_{1}]$ eill form an edge in a 4-gon, which implies $b_{1}\in \lbrace 5,9,11\rbrace$. $b_{1}$ can not be 2 and $b_{1}\neq 11$ as [10,11] is an edge in a 4-gon. Therefore $b_{1}=5$, which implies $lk(5)=C_{8}(7,10,[4,0,6],[11,9,2])\Rightarrow lk(10)=C_{8}(5,7,[8,3,9],[11,1,4])\Rightarrow
		lk(11)=C_{8}(8,6,[5,2,9],[10,4,1])\Rightarrow
		lk(2)=C_{8}(0,1,[9,11,5],[7,6,3])$ $\Rightarrow
		lk(9)=C_{8}(1,4,[3,8,10],\allowbreak[11,5,2])\Rightarrow
		lk(7)=C_{8}(10,5,[2,3,6],[0,1,8])$. This map isomorphic to $ \boldsymbol{KO_{2[(3^3,4,3,4)]}} $, given in figure \ref{33434or}, under the map $ (0,3,4,9,1,2)(5,7,6)(8,10,11) $.
		
		\textbf{Non-orientable cases:}
		
		We see that for any $ (a_1,a_2,a_3,a_4) $, $ a_5,a_6 $ will appear in same 4-gon, therefore the following cases are not possible: (5,7,6.8), (5,8,7,6), (5,10,11,6), (5,10,6,11), (6,8,7,5), (6,10,11,5), (6,11,5,10), (7,6,5,8), (7,6,8,5), (7,10,11,8), (7,11,8,10), (8,6,5,7), (8,10,11,7), (8,5,7,6), (10,6,\allowbreak 5,11), (10,8,7,11), (10,7,11,8), (11,6,5,10), (11,8,7,10), (11,8,10,7), (11,5,10,\allowbreak6).
		
		If $ (a_{1},a_{2},a_{3},a_{4})=(5,7,6,p) $ then after completing $ lk(9) $, $ [5,6,7] $ form a 3-gon in $ lk(5) $ which is not possible. Therefore for $ (5,7,6,10), (5,7,6,11) $ SEM will not exist.
		
		If $ (a_{1},a_{2},a_{3},a_{4})=(5,8,7,p) $ then $ lk(9)=C_{8}(1,4,[3,8,5],[b_{1},b_{2},2]) $ where $ b_{1}\in\lbrace 6,11\rbrace $. If $ b_{1}=6 $ then $ lk(5)=C_{8}(10,b_{3},[8,3,9],[6,0,4]) $. From here we see that $ [8,b_{3}] $ will form an edge in a 4-gon, which implies $ b_{3}=7 $, which implies $ [6,7] $ form an edge in a 4-gon, which is not possible. If $ b_{1}=11 $ then $ lk(5)=C_{8}(10,11,[9,3,8],[6,0,4]) $ which implies $ lk(11) $ is not possible. Therefore for $ (5,8,7,10),(5,8,7,11) $ SEM will not exist.
		
		If $ (a_{1},a_{2},a_{3},a_{4})=(5,10,7,6) $ then $ lk(7)=C_{8}(10,b_{1},[8,1,0],[6,2,3]) $. Since $ [8,b_{1}] $ form an edge in a 4-gon, therefore $ b_{1}=9 $ which implies face sequence is not followed in $ lk(9) $.
		
		If $ (a_{1},a_{2},a_{3},a_{4})=(5,10,11,b_{1}) $ then 5 will occur two times in $ lk(10) $. Therefore for $ (5,10,11,7),(5,10,11,8) $ SEM is not possible.
		
		If $ (a_{1},a_{2},a_{3},a_{4})=(5,11,7,6) $ then $ lk(11)=C_{8}(8,7,[3,9,5],[10,4,1]) $ which implies face sequence is not followed in $ lk(8) $.
		
		For $ (a_{1},a_{2},a_{3},a_{4})=(5,11,8,b_{1}) $, face sequence is not followed in $ lk(11) $. Therefore for $ (5,11,8,6), (5,11,8,10) $, SEM will not possible.
		
		If $ (a_{1},a_{2},a_{3},a_{4})=(5,10,6,7) $ then $ lk(10)=C_{8}(6,b_{1}),[11,1,4],[5,9,3] $. From here we see that $ [11,b_{1}] $ is an adjacent edge of two 3-gon which implies $ b_{1}\neq V $. Therefore in this case SEM will not exist.
		
		If $ (a_{1},a_{2},a_{3},a_{4})=(5,10,6,8) $ then $ lk(10)=C_{8}(b_{1},6,[3,9,5],[4,1,11]) $. We see that $ [11,b_{1}] $ form an edge in a 4-gon, therefore $ b_{1}=7 $. Now $ lk(8)=C_{8}(11,5,\allowbreak [6,3,2],[7,0,1])\Rightarrow lk(7)=C_{8}(6,10,[11,9,2],[8,1,0])\Rightarrow lk(5)=C_{8}(8,11,[9,3,10],\allowbreak [4,0,6])\Rightarrow lk(11)=C_{8}(5,8,\allowbreak[1,4,10],[7,2,9])\Rightarrow lk(2)=C_{8}(0,1,[9,11,7],[8,6,3])\Rightarrow lk(9)=C_{8}(1,4,[3,0,5],[11,7,2]) $. This map is same as the map $ \boldsymbol{KNO_{2[(3^3,4,3,4)]}} $, given in figure \ref{33434non}.
		
		If $ (a_{1},a_{2},a_{3},a_{4})=(5,10,7,11) $ then $ lk(10)=C_{8}(7,b_{1},[11,1,4],[5,9,3]) $. We see that $ [11,b_[1]] $ form an edge in a 4-gon, therefore $ b_{1}\in \lbrace 2,7\rbrace $ which is not possible.
		
		If $ (a_{1},a_{2},a_{3},a_{4})=(5,10,6,p) $ then $ lk(10)=C_{8}(8,b_{1},[11,1,4],[5,9,3]) $. We see that $ [11,b_{1}] $ form an edge in a 4-gon which implies $ b_{1}\in\lbrace 2,7,9\rbrace $. But $ b_{1}\neq 2,9 $ and for $ b_{1}=7 $ face sequence will not follow in $ lk(8) $. Therefore for $ (5,10,8,6),(5,10,8,11) $, SEM is not possible.
		
		If $ (a_{1},a_{2},a_{3},a_{4})=(6,8,7,b_{1}) $ then $ lk(8)=C_{8}(11,b_{2},[6,9,3],[7,b_{1},2]) $. As we have $ [1,11] $ is an edge in a 4-gon, therefore $ [6,b_{2}] $ will form an edge in a 4-gon, which implies $ b_{2}\in\lbrace 0,5\rbrace $. But $ b_{2}\neq 0 $ and for $ b_{2}=5 $ face sequence will not followed in $ lk(5) $. Therefore $ (a_1,a_2,a_3,a_4)\neq$ (6,8,7,10), (6,8,7,11).
		
		If $ (a_{1},a_{2},a_{3},a_{4})=(6,10,11,b_{1}) $ then $ lk(10)=C_{8}(b_{2},5,[4,1,11],[3,9,6]) $. As $ [4,5] $ is an edge in a 4-gon, therefore $ [6,b_{2}] $ form an edge in a 4-gon which implies $ b_{2}\in\lbrace 0,5\rbrace $, which is not possible. Therefore $ (6,10,11,7) $ and $ (6,10,11,8) $ are not possible.
		
		If $ (a_{1},a_{2},a_{3},a_{4})=(6,7,5,10) $ then $ lk(7)=C_{8}(5,b_{1},[8,1,0],[6,9,3]) $. As $ [3,5] $ is an edge in a 4-gon, therefore $ [8,b_{1}] $ will form an edge in a 4-gon, which implies $ b_{1}=11 $. Therefore $ lk(5)=C_{8}(7,11,[6,0,4],[10,2,3]) $, this implies $ [6,11] $ form an edge in a 4-gon,which is not possible.
		
		If$ (a_{1},a_{2},a_{3},a_{4})=(6,8,5,10) $ then $ [6,7,8] $ form a 3-gon in $ lk(8) $ which is not possible.
		
		If $ (a_{1},a_{2},a_{3},a_{4})=(6,7,11,8) $ then $ lk(7)=C_{8}(11,b_{1},[8,1,0],[6,9,3]) $. As $ [11,3] $form an edge in a 4-gon, therefore $ [8,b_{1}] $ will form an edge in a 4-gon, which implies $ b_{}=2 $, which implies 11 occur two times in $ lk(2) $ which is a contradiction.
		
		For $ (a_{1},a_{2},a_{3},a_{4})=(6,11,8,a_{4}) $, face sequence will not follow in $ lk(11) $. Therefore $ (6,11,8,5) $ and $ (6,11,8,10) $ is not possible.
		
		For $ (a_{1},a_{2},a_{3},a_{4})=(7,6,5,a_{4}) $, 7 will occur two times in  $ lk(6) $ which is not possible. Therefore $ (7,6,5,10) $ and $ (7,6,5,11) $ are not possible.
		
		If $ (a_{1},a_{2},a_{3},a_{4})=(7,6,8,10) $ then $ lk(8)=C_{8}(11,6,[3,2,10],[7,0,1])\Rightarrow lk(6)=C_{8}(8,11,\allowbreak[5,4,0],[7,9,3])\Rightarrow lk(7)=C_{8}(10,b_{1},[9,3,6],[0,1,8]) $. We have $ [8,10] $ is an edge in a 4-gon, therefore $ [9,b_{1}] $ will form an edge in a 4-gon which implies $ b_{1}=5 $. Now $ lk(5)=C_{8}(7,10,[4,0,6],[11,2,9])\Rightarrow lk(10)=C_{8}(7,5,[4,1,11],[2,3,8])\Rightarrow lk(11)=C_{8}(6,8,[1,4,\allowbreak10],[2,9,5])\Rightarrow lk(9)=C_{8}(1,4,[3,6,7],[5,11,2])\Rightarrow lk(2)=C_{8}(0,1,[9,5,11],[10,8,3]) $. This map is isomorphic to the map $ \boldsymbol{KNO_{1[(3^3,4,3,4)]}} $, given in figure \ref{33434non}, under the map $ (0,1,4)(2,9,3)\allowbreak(5,7,11)(6,8,10) $.
		
		If $ (a_{1},a_{2},a_{3},a_{4})=(7,6,8,11) $ then $ lk(8)=C_{8}(6,b_{1},[7,0,1],[11,2,3]) $. As $ [3,6] $ is an edge in a 4-gon, therefore $ [7,b_{1}] $ form an edge in a 4-gon, therefore $ b_{1}\in \lbrace 6,9\rbrace $,which is not possible. Similarly $ (7,6,10,5) $ and $ (7,6,10,8) $ are not possible.
		
		If $ (a_{1},a_{2},a_{3},a_{4})=(7,6,11,5) $ then $ lk(11)=C_{8}(8,6,[3,2,5],[10,4,1])\Rightarrow lk(6)=C_{8}(11,\allowbreak8,[5,4,0],[7,9,3]) $ which implies $ [5,8] $ form an edge in a 4-gon, which is not possible.
		
		If $ (a_{1},a_{2},a_{3},a_{4})=(7,6,11,8) $ then $ lk(6)=C_{8}(11,b_{1},[5,4,0],[7,9,3]) $, which implies $ b_{1}\notin V $.
		
		For $ (a_{1},a_{2},a_{3},a_{4})=(7,5,10,a_{4}) $ then face sequence is not followed in $ lk(10) $. Therefore $ (7,5,10,6) $ and $ (7,5,10,8) $ are not possible.
		
		If $ (a_{1},a_{2},a_{3},a_{4})=(7,10,11,5) $ then $ lk(11)=C_{8}(8,b_{1},[5,2,3],[10,4,1]) $. As $ [1,8] $ form an edge in a 4-gon, therefore $ [5,b_{1}] $ form an edge in a 4-gon which implies $ b_{1}\in \lbrace 2,11\rbrace $ which is not possible.
		
		If $ (a_{1},a_{2},a_{3},a_{4})=(7,10,11,6) $ then $ lk(11)=C_{8}(8,b_{1},[6,2,3],[10,4,1]) $. As $ [1,8] $ form an edge in a 4-gon, therefore $ [6,b_{1}] $ form an edge in a 4-gon, which implies $ b_{1}=5 $ and then $ [5,10] $ form an edge in a 4-gon, which is not possible.
		
		%		For $ (a_{1},a_{2},a_{3},a_{4})=(7,10,11,8) $, 8 will occur two times in $ lk(11) $.
		
		For $ (a_{1},a_{2},a_{3},a_{4})=(7,11,8,a_4) $, face sequence will not follow in $ lk(11) $, therefore $ (a_{1},a_{2},\allowbreak a_{3},a_{4})\neq (7,11,8,5),(7,11,8,6) $.
		
		For $ (a_{1},a_{2},a_{3},a_{4})=(8,6,5,a_{4}) $ then $ lk(6)=C_{8}(7,b_{1},[8,9,3],[5,4,0]) $. As $ [0,7] $ is an edge in a 4-gon, therefore $ [8,b_{1}] $ form an edge in a 4-gon, which implies $ b_{1}\in \lbrace 1,7\rbrace $ which is not possible. Therefore $ (8,6,5,10) $ and $ (8,6,5,11) $ are not possible.
		
		For $ (a_{1},a_{2},a_{3},a_{4})=(8,5,10,a_{4}) $, face sequence is not followed in $ lk(5) $. Therefore $ (8,5,10,6) $ and $ (8,5,10,7) $ are not possible.
		
		If $ (a_{1},a_{2},a_{3},a_{4})=(8,10,11,5) $ then $ lk(8)=C_{8}(b_{1},5,[4,1,11],[3,9,8]) $. As $ [4,5] $ is an edge in a 4-gon, therefore $ [8,b_{1}] $ will form an edge in a 4-gon, which implies $ b_{1}=7 $. Now $ lk(8)=C_{8}(11,b_{2},[9,3,10],[7,0,1]) $. As $ [1,11] $ is an edge in a 4-gon, therefore $ [9,b_{2}] $ will form an edge in a 4-gon which implies $ b_{2}=6 $. Now $ lk(9)=C_{8}(1,4,[3,10,8],[6,7,2])\Rightarrow lk(6)=C_{8}(8,11,[5,4,0],[7,2,9])\Rightarrow lk(5)=C_{8}(10,7,[2,3,11],[6,0,4])\Rightarrow lk(7)=C_{8}(10,5,\allowbreak[2,9,6],\allowbreak[0,1,8])\Rightarrow lk(11)=C_{8}(8,6,\allowbreak [5,2,3],[10,4,1])\Rightarrow lk(2)=C_{8}(0,1,[9,6,7],[5,11,3]) $. This map isomorphic to the map $ \boldsymbol{KNO_{1[(3^3,4,3,4)]}} $, given in figure \ref{33434non}, under the map $ (0,1)(3,\allowbreak9)(5,10)\allowbreak(6,11)(7,8) $.
		
		If $ (a_{1},a_{2},a_{3},a_{4})=(8,10,11,6) $ then $ lk(10)=C_{8}(5,b_{1},[8,9,3],[11,1,4]) $. As $ [4,5] $ is an edge in a 4-gon, therefore $ [8,b_{1}] $ will form an edge in a 4-gon, which implies $ b_{1}=7 $ and then $ [6,7] $ will form an edge in a 4-gon, which is not possible.
		
		If $ (a_{1},a_{2},a_{3},a_{4})=(8,11,7,6) $ then $ lk(7)=C_8(b_1,11,[3,2,6],[0,1,8]) $. Therefore $ [8,b_1] $ form an edge in a 4-gon, which implies $ b_1=9 $ and then $ C(7,8,11)\in lk(9) $ which is a contradiction.
		
		If $ (a_{1},a_{2},a_{3},a_{4})=(8,10,7,6) $ then after completing $ lk(10) $ we see that $ C(1,4,10,8)\in lk(11) $ which is a contradiction.
		
		If $ (a_{1},a_{2},a_{3},a_{4})=(10,6,5,7) $ then $ lk(6)=C_8(7,b_1,[10,9,3],[5,4,0]) $. We see that $ [10,b_1] $ form an edge in a 4-gon, therefore $ b_1=11 $. Now $ lk(10)=C_8(5,b_2,[9,3,6],[11,1,4]) $ and $ lk(9)=C_8(1,4,[3,6,10],[b_1,11,2]) $, then $ [9,b_2] $ form an edge in a 4-gon, therefore $ b_2=8 $. Now $ lk(8)=C_8(10,5,[7,0,1],[11,2,9]) $, $ lk(7)=C_8(6,11,[2,3,5],[8,1,0]) $, $ lk(11)=C_8(6,7,[2,9,8],[1,4,10]) $, $ lk(2)=C_8(0,1,[9,8,11],[7,5,3]) $, $ lk(5)=C_8(10,8,[7,2,3],[6,0,\allowbreak4]) $. This map isomorphic to the map $ \boldsymbol{KNO_{2[(3^3,4,3,4)]}} $, given in figure \ref{33434non}, under the map $ (1,4)(2,3)\allowbreak(5,8)(6,7)(10,11) $.
		
		If $ (a_{1},a_{2},a_{3},a_{4})=(10,6,5,8) $ then $ lk(6)=C_8(7,b_1,[10,9,3],[5,4,0]) $ and $ lk(5)=C_8(10,\allowbreak b_2,[8,2,3],[6,0,4]) $. Form here we see that $ [10,b_1] $ and $ [8,b_2] $ are adjacent edges of a 3-gon and a 4-gon, therefore $ b_1=11,b_2=7 $ and then face sequence will not follow in $ lk(7) $.
		
		If $ (a_{1},a_{2},a_{3},a_{4})=(10,5,7,6) $ then $ lk(5)=C_8(7,b_1,[6,0,4],[10,9,3]) $ which implies $ [6,b_1] $ is an adjacent edge of a 3-gon and a 4-gon, therefore $ b_1=2 $ and then $ C(5,6,7)\in lk(2) $ which is a contradiction.
		
		If $ (a_{1},a_{2},a_{3},a_{4})=(10,7,6,8), (10,7,6,11) $ then face sequence will not follow in $ lk(7) $.
		
		If $ (a_{1},a_{2},a_{3},a_{4})=(10,8,7,6) $ then $ C(2,3,8,1,0,6)\in lk(7) $ which is not possible.
		
		If $ (a_{1},a_{2},a_{3},a_{4})=(10,6,11,8) $ then $ lk(6)=C_8(7,11,[3,9,10],[5,4,0]) $, $ lk(11)=C_8(6,7,\allowbreak [10,4,1],[8,2,3]) $ which implies $ [7,10] $ is an adjacent edge if a 3-gon and a 4-gon which is not possible.
		
		If $ (a_{1},a_{2},a_{3},a_{4})=(11,5,7,6) $ then $ lk(5)=C_8(10,7,[3,9,11],[6,0,4]) \Rightarrow lk(7)=C_8(5,\allowbreak 10,[8,1,0],[6,2,3])\Rightarrow lk(11)=C_8(8,6,[5,3,9],[10,4,1])\Rightarrow lk(9)=C_8(1,4,[3,5,11],[10,8,\allowbreak 2])\Rightarrow lk(10)=C_8(7,5,[4,1,11],[9,2,8])\Rightarrow lk(6)=C_8(11,8,[2,3,7],[0,4,5])\Rightarrow lk(8)=C_8(6,11,[1,0,7],[10,9,2])\Rightarrow lk(2)=C_8(0,1,[9,10,8],[6,7,3])$. This map is isomorphic to the map $ \boldsymbol{KNO_{2[(3^3,4,3,4)]}} $, given in figure \ref{33434non}, under the map $ (0,4,1)(2,3,9)(5,11,7)(6,10,8) $.
		
		If $ (a_{1},a_{2},a_{3},a_{4})=(11,6,5,7) $ then $ lk(5)=C_8(10,b_1,[7,2,3],[6,0,4]) $ and $ lk(6)=C_8(7,\allowbreak b_2,[11,9,3],[5,4,0]) $. From here we see that $ [7,b_1], [11,b_2] $ are adjacent edges of a 3-gon and a 4-gon, therefore $ b_1=8,b_2=10 $ which implies face sequence will not follow in $ lk(10) $. Similarly $ (a_{1},a_{2},a_{3},a_{4})=(11,6,5,8) $ is also not possible.
		
		If $ (a_{1},a_{2},a_{3},a_{4})=(11,8,5,7) $ then after completing $ lk(5) $ we see that $ C(0,4,5,7)\in lk(6) $ which is a contradiction.
		
		If $ (a_{1},a_{2},a_{3},a_{4})=(11,8,5,10) $ then $ lk(5)=C_8(8,b_1,[6,0,4],[10,2,3]) $ which implies $ [6,b_1] $ is an adjacent edge of a 3-gon and therefore $ b_1=7 $. This implies $ C(0,1,8,5,6)\in lk(7) $ which is a contradiction.
		
		If $ (a_{1},a_{2},a_{3},a_{4})=(11,8,6,7) $ then $ lk(8)=C_8(6,b_1,[7,0,1],[11,9,3]) $ which implies $ [7,b_1] $ is an adjacent edge of a 3-gon and a 4-gon, therefore $ b_1=2 $ and then face sequence will not follow in $ lk(2) $.
		
		If $ (a_{1},a_{2},a_{3},a_{4})=(11,8,6,10) $ then $ lk(8)=C_8(b_1,6,[3,9,11],[1,0,7]) $ which implies $ [7,b_1] $ is an adjacent edge of a 3-gon and a 4-gon, therefore $ b_1\in\{2,5\} $. But for $ b_1=2 $, face sequence will not follow in $ lk(2) $ and for $ b_1=5 $ then face sequence will not follow in $ lk(6) $. Similarly $ (a_{1},a_{2},a_{3},a_{4})=(11,8,10,5) $ is not possible.
		
		If $ (a_{1},a_{2},a_{3},a_{4})=(11,8,7,a_4) $ then face sequence will not follow in $ lk(8) $, Therefore $ (a_{1},a_{2},a_{3},a_{4})=(11,8,7,5), (11,8,7,6) $ are not possible.
		
		If $ (a_{1},a_{2},a_{3},a_{4})=(11,8,10,6) $ then $ lk(8)=C_8(b_1,10,[3,9,11],[1,0,7]) $ which implies $ [7,b_1] $ is an adjacent edge of a 3-gon and a 4-gon, therefore $ b_1=5 $. Now $ lk(10)=C_8(8,5,[4,1,\allowbreak 11],[6,2,3]) $ and $ lk(11)=C_8(6,b_2,[9,3,8],[1,4,10]) $. From here we see that $ [9,b_2] $ is an adjacent edge of a 3-gon and a 4-gon, therefore $ b_2=7 $. Now $ lk(6)=C_8(11,7,[0,4,5],[2,3,10])\Rightarrow lk(9)=C_8(1,4,[3,8,11],[7,5,2])\Rightarrow lk(7)=C_8(6,11,[9,2,5],\allowbreak[8,1,0])\Rightarrow lk(5)=C_8(8,10,\allowbreak [4,0,6],[2,9,7])\Rightarrow lk(2)=C_8(0,1,[9,7,5],[6,10,3]) $. This map isomorphic to the map $ \boldsymbol{KNO_{1[(3^3,4,3,4)]}} $, given in figure \ref{33434non}, under the map $ (0,2,1,9,4,\allowbreak3)\allowbreak(5,11)(6,8)(7,10) $.
		
		If $ (a_{1},a_{2},a_{3},a_{4})=(11,5,10,a_4) $ then face sequence will not follow in $ lk(5) $. Therefore $ (a_{1},a_{2},a_{3},a_{4})=(11,5,10,8), (11,5,10,7)  $ are not possible.
		
		If $ (a_{1},a_{2},a_{3},a_{4})=(11,7,5,10) $ then after completing $ lk(7)$ we see that $ C(3,2,10,4,0,6,\allowbreak 7) \in lk(5) $ which is a contradiction.
	\end{case}
	\begin{case}
		When $ (a,b,c,d)=(4,3,6,9) $ then $ lk(3)=C_8(0,4,[1,9,6],[a_1,b_1,2]) $. Now if $ a_1,b_1\notin\{10,11\} $ then vertices in $ lk(10) $ i.e. $ V(lk(10))=\{2,4,5,6,7,8,9,u\}=V(lk(11)) $, where $ u=11 $ for $ V(lk(10)) $ and $ u=10 $ for $ V(lk(11)) $, then $ V(lk(6))=\{0,1,3,4,5,7,9,10,\allowbreak 11,a_1\} $. If $ a_1\in V\setminus\{2,8\} $ then number of vertex in $ lk(6) $ i.e. $ |v_{lk(6)}| =9$ and otherwise $ |v_{lk(6)}|=10 $, both cases are not possible. Therefore at least one of $ a_1,b_1 $ is 10. Now if $ a_1\neq 10 $ then $ b_1=10 $ and as $ 4\in V(lk(10)),V(lk(11)) $ i.e. $ 10,11\in V(lk(4)) $, therefore either $ [2,10] $ is an adjacent edge of two 4-gon or 10 occur two times in $ lk(4) $ which are not possible. Therefore $ a_1=10 $ and then $ 10\notin V(lk(4)) $. Now if $ b_1\neq 11 $ then $ V(lk(10))=\{2,3,5,6,7,8,9,11\} $ and $ V(lk(11))=\{2,4,5,6,7,8,9,10\} $, therefore $ V(lk(6))=\{0,1,3,4,5,7,9,10,11\} $ i.e. $ |v_{lk(6)}|=9 $ which make contradiction. Therefore $ b_1=11 $ and then $ 10,11\notin V(lk(4)) $ i.e. $ V(lk(10))=\{2,3,5,6,7,8,9,u\}=V(lk(11)) $ where $ u=11 $ for $ V(lk(10)) $ and $ u=10 $ for $ V(lk(11)) $ and then $ V(lk(6))=\{0,1,3,4,5,7,9,10,11\} $ i.e. $ |v_{lk(6)}|=9 $ which make contradiction. Therefore $ (a,b,c,d)=(4,3,6,9) $ is not possible.
	\end{case}	
	If $ (a,b,c,d)=(4,3,9,6) $ then after completing $ lk(6) $, we see that $ C(6,0,1,8)\in lk(7) $, which is a contradiction.
	\begin{case}
		When $ (a,b,c,d)=(4,3,5,9) $ then $ lk(3)=C_8(0,4,[1,9,5],[a_1,b_1,2]) $, $ lk(4)=C_8(1,3,[0,6,5],[a_2,b_2,2]) $ and $ lk(5)=C_8(a_1,a_2,[4,0,6],[9,1,3]) $. Now if $ a_1,b_1\notin\{10,11\} $ then $ 10,11\in V(lk(5)) $ i.e. $ |v_{lk(5)}|=9 $ which make contradiction. Therefore at least one of $ a_1,b_1 $ is 10, say $ a_1=10 $, then $ 10\notin V(lk(4)) $ as $ a_2\neq 10 $ or form an adjacent edge of two 4-gon. Now if $ b_1= 11 $ then $ 11\notin V(lk(4)) $ as then either $ [2,11] $ is an adjacent edge of two 4-gon or 11 occur two times in $ lk(2) $, therefore $ 11\in V(lk(5)) $ i.e. $ V(lk(5))=\{0,1,3,4,6,9,10,11,a_2\} $ i.e. $ |v_{lk(5)}|=9 $ which make contradiction. If $ b_1\neq 11 $ then $ 11\in V(lk(4)) $ i.e. $ a_2=11 $ and $ [4,11],[10,11] $ are adjacent edges of two 4-gon, which implies face sequence will not follow in $ lk(11) $. Therefore $ (a,b,c,d)=(4,3,5,9) $ is not possible.
	\end{case}
	\begin{case}
		When $ (a,b,c,d)=(4,3,9,10) $ then $ lk(2)=C_8(0,1,[4,a_1,a_2],[a_3,a_4,3]) $, $ lk(3)=C_8(0,4,[1,10,9],[a_4,a_3,2]) $ and $ lk(4)=C_8(3,1,[2,a_2,a_1],[5,6,0]) $. We see that exactly one of $ a_1,a_2,a_3,a_4 $ is 11 and $ a_1\in\{7,8,9,10,11\} $. \underline{\textbf{If} $ \boldsymbol{a_1=7} $} then $ lk(4)=C_8(1,3,[0,6,5],[7,a_2,\allowbreak2]) $ and $ lk(7)=C_8(6,5,[4,2,a_2],8,1,0) $ which implies $ C(0,6,7,4)\in lk(5) $ which is a contradiction, therefore $ a_1\neq 7 $. \underline{\textbf{If} $ \boldsymbol{a_1=8} $} then $ lk(8)=C_8(10,5,[4,2,a_2],[7,0,1]) $. If $ a_2\neq 11 $ then $ 0,1,4,8\notin V(lk(11)) $ i.e. $ |v_{lk(11)}|\leq 7 $ which is not possible, therefore $ a_2=11 $ and then $ [9,10] $ will form an adjacent edge of two 4-gon, which is a contradiction. Therefore $ a_1\neq 9. $ Now all possible values of $ (a_1,a_2,a_3,a_4) $ are: (9,7,5,11), (9,7,6,11), (9,7,8,11), (9,7,11,5), (9,7,11,6), (9,7,11,8), (9,8,5,11), (9,8,6,11), (9,8,7,11), (9,8,11,5), (9,8,11,6), (9,11,5,7), (9,11,5,8), (9,11,\allowbreak6,7), (9,8,6,8), (9,11,7,5), (9,11,7,6), (9,11,8,5), (9,11,8,6), (10,7,5,11), (10,7,6,11), (10,7,8,11), (10,7,11,5), (10,7,11,6), (10,7,11,8), (10,8,5,11), (10,8,6,11), (10,8,\allowbreak7,11), (10,8,11,5), (10,8,11,\allowbreak6), (10,8,11,7), (10,11,5,7), (10,11,5,8), (10,11,6,7), (10,11,6,8), (10,11,7,5), (10,11,7,6), (10,\allowbreak11,8,5), (10,11,8,6), (11,7,5,8), (11,7,6,8), (11,7,8,5), (11,7,8,6), (11,8,5,7), (11,8,6,7), (11,8,\allowbreak7,5), (11,8,7,6), (11,9,5,7), (11,9,5,8), (11,9,6,7), (11,9,6,8), (11,9,7,5), (11,9,7,6), (11,9,8,5), (11,9,8,6), (11,\allowbreak10,5,7), (11,10,5,8), (11,10,6,7), (11,10,6,8), (11,10,7,5), (11,10,7,6), (11,10,8,5). \par Let $ P=\{5,6,7,8,9,10,11\} $, then for any $ (a_1,a_2,a_3,a_4) $, vertices of a 4-gon are $ P\setminus\{a_1,a_2,\allowbreak a_3,a_4\}\cup\{11\} $, therefore the followings can not be the value of $ (a_1,a_2,a_3,a_4) $: (9,7,8,11), (9,7,11,8), (9,8,7,11), (10,7,8,11), (10,8,7,11), (10,8,11,7), (10,7,11,8), (11,7,5,8), (11,7,6,8), (11,7,8,5), (11,7,8,6), (11,8,5,7), (11,8,6,7), (11,8,7,5), (11,8,7,6). \\
		If $ (a_1,a_2,a_3,a_4)=(9,7,5,11) $ then after completing $ lk(7) $, we see that $ 0,1,4,7\notin V(lk(11)) $ i,e, $ |v_{lk(11)}|\leq 7 $ which is not possible. If $ (a_1,a_2,a_3,a_4)=(9,7,6,11) $ then face sequence will not follow in $ lk(7) $. If $ a_1=9,a_4=5 $ then face sequence will not follow in $ lk(5) $, therefore $ (a_1,a_2,a_3,a_4)\neq (9,7,11,5),(9,11,8,5),(9,117,5) $. If $ (a_1,a_2,a_3,a_4)=(9,7,11,6) $ then $ 11\notin V(lk(9)) $, therefore $ |v_{lk(11)}|\leq 7 $ which is not possible. For $ (a_1,a_2,a_3,a_4)=(9,10,a_3,a_4) $, face sequence will not follow in $ lk(10) $, therefore $ (a_1,a_2,a_3,a_4)\neq$ (9,8,5,11), (9,8,6,11), (9,8,11,5), (9,8,11,6) . For $ (a_1,a_2,a_3,a_4)=(9,11,5,7),(9,11,5,8) $, after completing $ lk(9) $, we see that face sequence will not follow in $ lk(5) $. If $ (a_1,a_2,a_3,a_4)=(9,11,6,a_4) $ then $ lk(9)=C_8(5,a_4,[3,1,10],[11,2,4]) $, so if $ a_4 =7$ then $ 2,3,4,9\notin V(lk(8)) $ i.e. $ |v_{lk(8)}|\leq 7 $, similarly, if $ a_4=8 $ then $ |v_{lk(7)}|\leq 7 $, which is a contradiction, therefore $ (a_1,a_2,a_3,a_4)\neq (9,11,6,7),(9,11,6,8) $. For $ a_1=9,a_4=6 $, after completing $ lk(9) $, face sequence will not follow in $ lk(6) $, therefore $ (a_1,a_2,a_3,a_4)\neq (9,11,7,6),(9,11,8,6) $. If $ (a_1,a_2,a_3,a_4)=(10,7,5,11) $ then after completing $ lk(10) $, face sequence will not follow in $ lk(5) $. If $ a_1=10 $ and al least one of $ a_2,a_3,_4 $ is not 8, then after completing $ lk(10) $, face sequence will not follow in $ lk(5) $, therefore $ (a_1,a_2,a_3,a_4)\neq $ (10,7,6,11), (10,7,11,5), (10,7,11,6), (10,11,5,7), (10,11,6,7), (10,11,7,5), (10,11,7,6). If $ (a_1,a_2,a_3,a_4)=(10,8,a_3,a_4) $ then $ lk(10)=C_8(b_1,5,[4,2,8],[1,3,9]) $ which implies $ b_1\neq 11 $. Therefore $ 0,1,4,10\notin V(lk(11)) $ i.e. $ |v_{lk(11)}|\leq 7 $ which is a contradiction. Therefore $ (a_1,a_2,a_3,a_4)\neq$ (10,8,5,11), (10,8,6,11), (10,8,11,6), (10,8,11,5). If $ (a_1,a_2,a_3,a_4)=(10,11,a_3,a_4) $ then after completing $ lk(10) $, we see that face sequence will not follow in $ lk(8) $, therefore $ (a_1,a_2,a_3,a_4)\neq $ (10,11,5,8), (10,11,8,5), (10,11,6,8), (10,11,8,6). If $ (a_1,a_2,a_3,a_4)=(11,9,5,7) $ then $ lk(9)=C_8(5,7,[3,1,10],[11,4,2]) $ which implies face sequence in $ lk(7) $ is not followed. Similarly $ (a_1,a_2,a_3,a_4)\neq$ (11,9,5,8), (11,9,6,7), (11,9,6,8). If $ (a_1,a_2,a_3,a_4)=(11,9,a_3,a_4) $ then $ lk(9)=C_8(a_3,a_4,[3,1,10],[11,4,2]) $ which implies $ C(2,9,a_4,3)\in lk(a_3) $, which make contradiction. Therefore $ (a_1,a_2,a_3,a_4)\neq $ (11,9,7,5), (11,9,7,6), (11,9,8,5), (11,9,8,6). If $ (a_1,a_2,a_3,\allowbreak a_4)=(11,10,5,7) $ then $ lk(7)=C_8(9,6,[0,1,\allowbreak8],[5,2,3]) $ which implies $ 0,1,3,7\notin V(lk(11)) $ i.e. $ |v_{lk(11)}|\leq 7 $ which is a contradiction. Similarly $ (a_1,a_2,a_3,a_4)\neq (11,10,7,5),\allowbreak(11,10,7,6),(11,10,5,8),(11,10,6,7),(11,10,6,8) $. If $ (a_1,a_2,a_3,a_4)=(11,10,6,7) $ then $ lk(7)=C_8(b_1,9,[3,2,6],[0,1,8]) $ which implies $ b_1\neq 11 $, therefore $ |v_{lk(11)}|\leq 7 $, which is not possible. If $ (a_1,a_2,a_3,a_4)=(11,10,8,5) $ then face sequence will not follow in $ lk(10) $.
	\end{case}
	If $ (a,b,c,d)=(5,4,c,d) $ then face sequence will not follow in $ lk(5) $. Similarly if $ (a,b,c,d)=(6,5,c,d) $ then face sequence will not follow in $ lk(6) $. Therefore $ (a,b,c,d)\neq$ (5,4,3,9), (5,4,10,9), (6,5,9,3), (6,5,9,10). If $ (a,b,c,d)=(5,9,4,3) $ then in $ lk(3) $, face sequence will not follow.
	
	When $lk(1)=C_{8}(c,d,[8,7,0],[2,a,b])$, then after isomorphism,  all possible values of $(a,b,c,d)$ are as follows:\\
	(4,5,6,9)$\approx$ $\lbrace$ (4,5,6,10), (4,5,6,11) $\rbrace$, (4,5,9,6) $\approx$ $\lbrace$ (4,5,10,6), (4,5,11,6) $\rbrace$, (4,5,9,10) $\approx$ $\lbrace$ (4,5,11,9), (4,5,10,9), (4,5,11,10), (4,5,10,11), (4,5,9,11) $\rbrace$, (4,6,5,9) $\approx$ $\lbrace$ (4,6,5,10), (4,6,5,11) $\rbrace$, (4,6,9,5) $\approx$ $\lbrace$ (4,6,10,5), (4,6,11,5) $\rbrace$, (4,6,9,10) $\approx$ $\lbrace$ (4,6,9,11), (4,6,10,9), (4,6,10,11), (4,6,11,9), (4,6,11,10) $\rbrace$, (4,9,5,6) $\approx$ $\lbrace$ (4,10,5,6), (4,11,5,6) $\rbrace$, (4,9,5,10) $\approx$ $\lbrace$ (4,9,5,11), (4,10,5,9), (4,10,5,11), (4,11,5,9), (4,11,5,10) $\rbrace$, (4,9,6,5) $\approx$ $\lbrace$ (4,10,6,5), (4,11,6,5) $\rbrace$, (4,9,6,\allowbreak10) $\approx$ $\lbrace$ (4,9,6,11), (4,10,6,9), (4,10,6,11), (4,11,6,9), (4,11,6,10) $\rbrace$, (4,9,10,5) $\approx$ $\lbrace$ (4,9,11,5), (4,10,9,5), (4,10,11,5), (4,11,9,5), (4,11,10,5) $\rbrace$, (4,9,10,6) $\approx$ $\lbrace$ (4,9,11,6), (4,10,9,6), (4,10,11,\allowbreak6), (4,11,9,6), (4,11,10,6) $\rbrace$, (4,9,10,11) $\approx$ $\lbrace$ (4,9,11, 10), (4,10,9,11), (4,10,11,9), (4,11,9,10), (4,11,10,9) $\rbrace$, (5,4,6,9) $\approx$ $\lbrace$ (5,4,6,10), (5,4,6, 11) $\rbrace$, (5,4,9,6) $\approx$ $\lbrace$ (5,4,10,6), (5,4,11,6) $\rbrace$, (5,4,9,10) $\approx$ $\lbrace$ (5,4,9,11), (5,4,10,9), (5,4,10,11), (5,4,11,9), (5,4,11,10) $\rbrace$, (5,6,4,9) $\approx$ $\lbrace$ (5,6,4,11), (5,6,4,10) $\rbrace$, (5,6,9,4) $\approx$ $\lbrace$ (5,6,10,4), (5,6,11,4) $\rbrace$, (5,6,9, 10) $\approx$ $\lbrace$ (5,6,9,11), (5,6,10,9), (5,6,10,11), (5,6,11,9), (5,6,11,10) $\rbrace$, (5,9,4,6) $\approx$ $\lbrace$ (5,10,4,6), (5,11,4,6) $\rbrace$, (5,9,4,\allowbreak10) $\approx$ $\lbrace$ (5,9,4,11), (5,10,4,9), (5,10,4,11), (5,11,4,9), (5,11,4,10) $\rbrace$, (5,9,6,4) $\approx$ $\lbrace$ (5,10,6,4), (5,11,6,4) $\rbrace$, (5,9,6,10) $\approx$ $\lbrace$ (5,9,6,11), (5,10,6,9), (5,10,6,11), (5,11,6,9), (5,11,6,10) $\rbrace$, (5,9,10,\allowbreak4) $\approx$ $\lbrace$ (5,9,11,4), (5,10,9,4), (5,10,11,4), (5,11,9,4), (5,11,10,4) $\rbrace$, (5,9,10,6) $\approx$ $\lbrace$ (5,9,11,6), (5,10,9,6), (5,10,11,6), (5,11,9,6), (5,11,10,6) $\rbrace$, (5,9,10,11) $\approx$ $\lbrace$ (5,9,11,10), (5,10,9,11), (5,10,\allowbreak11,9), (5,11,9,10), (5,11,10,9) $\rbrace$, (6,4,5,9) $\approx$ $\lbrace$ (6,4,5,10), (6,4,5,11) $\rbrace$, (6,4,9,5) $\approx$ $\lbrace$ (6,4,10,5), (6,4,11,5) $\rbrace$, (6,4,9,10)$ \approx\lbrace $ (6,4,9,11), (6,4,10,9), (6,4,10,11), (6,4,11,\allowbreak9), (6,4,11,10) $\rbrace$, (6,5,4,9) $\approx$ $\lbrace$ (6,5,4,10), (6,5,4,11) $\rbrace$, (6,5,9,4)$ \approx\lbrace $ (6,5,10,4), (6,5,11,4) $\rbrace$, (6,5,9,10) $\approx$ $\lbrace$ (6,5,9,11), (6,5,10,9), (6,5,10,11), (6,5,11,9), (6,5,11,10) $\rbrace$, (6,9,4,5) $\approx$ $\lbrace$ (6,10,4,5), (6,11,4,5)$\rbrace$, (6,9,4,10) $\approx$ $\lbrace$ (6,9,4,11), (6,10,4,9), (6,10,4,11), (6,11,4,9), (6,11,4,\allowbreak10) $\rbrace$, (6,9,5,4) $\approx$ $\lbrace$ (6,10,5,4), (6,11,5,4) $\rbrace$, (6,9,5,10) $\approx$ $\lbrace$ (6,9,5,11), (6,10,5,9), (6,10,5,11), (6,11,5,9), (6,11,5,10) $\rbrace$, (6,9,\allowbreak10,4) $\approx$ $\lbrace$ (6,9,11,4), (6,10,9,4), (6,10,11,4), (6,11,9,4), (6,11,10,\allowbreak4) $\rbrace$, (6,9,10,5) $\approx$ $\lbrace$ (6,9,11,5), (6,10,9,5), (6,10,11,5), (6,11,9,5), (6,11,10,5) $\rbrace$, (6,9,10,11) $\approx$ $\lbrace$ (6,9,11,10), (6,10,9,11), (6,10,\allowbreak11,9), (6,11,9,10), (6,11, 10,9) $\rbrace$, (9,4,5,6) $\approx$ $\lbrace$ (10,4,5,6), (11,4,5,6) $\rbrace$, (9,4,5,10) $\approx$ $\lbrace$ (9,4,5,11), (10,4,5,9), (10,4,5,11), (11,4,5,9), (11,4,5,10) $\rbrace$, (9,4,6,5) $\approx$ $\lbrace$ (10,4,6,5), (11,4,6,5) $\rbrace$, (9,4,6,10) $\approx$ $\lbrace$ (9,4,6,11), (10,4,6,9), (10,4,6,11), (11,4,6,9), (11,4,6,10) $\rbrace$, (9,4,10,5) $\approx$ $\lbrace$ (9,4,11,5), (10,4,9,5), (10,4,11,5), (11,4,9,5), (11,4,\allowbreak10,5) $\rbrace$, (9,4,10,6) $ \approx\lbrace $ (9,4,11,6), (10,4,9,6), (10,4,11,\allowbreak6), (11,4,9,6), (11,4,10,6)$ \rbrace $. (9,4,10,11) $\approx$ $\lbrace$ (9,4,11,10), (10,4,9,11), (10,4,\allowbreak11,9), (11,4,9,10), (11,4,10,9) $\rbrace$, (9,5,4,6) $\approx$ $\lbrace$ (10,5,4,6), (11,5,4,6) $\rbrace$, (9,5,4,10) $\approx$ $\lbrace$ (9,5,4,11), (10,5,4,9), (10,5,4,11), (11,5,4,9), (11,5,4,10) $\rbrace$, (9,5,6,4) $\approx$ $\lbrace$ (10,5,6,4), (11,5,6,4) $\rbrace$, (9,5,6,10) $\approx\allowbreak\lbrace$ (9,5,6,11), (10,5,6,9), (10,5,6,11), (11,5,6,9), (11,5,6,10) $\rbrace$, (9,5,10,4) $\approx$ $\lbrace$ (9,5,11,4), (10,5,9,4), (10,5,11,4), (11,5,9,4), (11,5,\allowbreak10,4) $\rbrace$, (9,5,10,6) $\approx$ $\lbrace$ (9,5,11,6), (10,5,9,6), (10,5,11,\allowbreak6), (11,5,9,\allowbreak6), (11,5,10,6) $\rbrace$, (9,5,10,11) $\approx$ $\lbrace$ (9,5,11, 10), (10,5,9,11), (10,5,11,9), (11,5,9,10), (11,5,10,9) $\rbrace$, (9,6,4,5) $\approx$ $\lbrace$ (10,6,4,5), (11,6, 4,5) $\rbrace$, (9,6,4,10) $\approx$ $\lbrace$ (9,6,4,11), (10,6,4,9), (10,6,4,11), (11,6,4,9), (11,6,4,10) $\rbrace$, (9,6,5,4) $\approx$ $\lbrace$ (10,6,5,4), (11,6,5,4) $\rbrace$, (9,6,5,10) $\approx$ $\lbrace$ (9,6,5,11), (10,6,5,9), (10,6,5,11), (11,6,5,9), (11,6,5,10) $\rbrace$, (9,6,10,4) $\approx$ $\lbrace$ (9,6,11,4), (10,6,9,4), (10,6,11,\allowbreak4), (11,6,9,4), (11,6,\allowbreak10,4) $\rbrace$, (9,6,10,5) $\approx$ $\lbrace$ (9,6,11,5), (10,6,9,5), (10,6,11,5), (11,6,9,\allowbreak5), (11,6,10,5) $\rbrace$, (9,6,10,11) $\approx$ $\lbrace$ (9,6,11,10), (10,6,9,11), (10,6,11,9), (11,6,9,10), (11,6,10,9) $\rbrace$, (9,10,4,5) $\approx$ $\lbrace$ (9,11,4,5), (10,9,4,5), (10,11,4,5), (11,9,4,5), (11,10,4,5) $\rbrace$, (9,10,4,6) $\approx$ $\lbrace$ (9,11,4,6), (10,9,4,6), (10,11,\allowbreak4,6), (11,9,4,6), (11,10,4,6) $\rbrace$, (9,10,5,4)$ \approx\lbrace $ (9,11,5,4), (10,9,5,4), (10,11,5,4), (11,9,5,4), (11,10,5,4)$ \rbrace $. (9,10,5,6)$ \approx\lbrace $ (9,11,5,6), (10,9,5,6), (10,11,5,6), (11,9,5,6), (11,10,5,6)$ \rbrace $. (9,10,\allowbreak4,11) $ \approx\lbrace $(9,11,4,10), (10,9,4,11), (10,11,4,9), (11,9,4,10), (11,10,4,9)$ \rbrace $. (9,10,5,11) $\approx$ $\lbrace$ (9,11,5, 10), (10,9,5,11), (10,11,5,9), (11,9,5,10), (11,10,5,9) $\rbrace$, (9,10,6,4) $\approx$ $\lbrace$ (9,11,6,4), (10,9,6,4), (10,11,\allowbreak6,4), (11,9,\allowbreak6,4), (11,10,6,4) $\rbrace$, (9,10,6,5) $\approx$ $\lbrace$ (9,11,6,5), (10,9,6,5), (10,11,\allowbreak6,5), (11,9,6,5), (11,10,6,5) $\rbrace$, (9,10,6,11) $\approx$ $\lbrace$ (9,11,6,10), (10,9,6,11), (10,\allowbreak11,6,9), (11,9,6,10), (11,10,6,9) $\rbrace$, (9,10,11,4) $\approx$ $\lbrace$ (9,11,10,4), (10,9,11,4), (10,11,9,4), (11,9,\allowbreak10,4), (11,10,9,4) $\rbrace$, (9,10,11,5) $\approx$ $\lbrace$ (9,11,10,5), (10,9,11,5), (10,11,9,5), (11,9,10, 5), (11,10,9,\allowbreak5) $\rbrace$, (9,10,11,6) $\approx$ $\lbrace$ (9,11,10,6), (10,9,11,6), (10,11,9,6), (11,9,10,6), (11,10,9,6) $\rbrace$.
	
	We see that if $a\in \lbrace 4,5,6\rbrace$, then $b\notin \lbrace 4,5,6\rbrace$ as then $(a,b)$ will appear in two 4-gon. Therefore following the following values of $ (a,b,c,d) $ are not possible: (4,5,6,9), (4,5,9,6), (4,5,9,10), (4,6,5,9), (4,6,9,5), (4,6,9,10), (5,4,6,9), (5,4,9,6), (5,4,9,10), (5,6,4,9), (5,6,9,4), (5,6,9,10), (6,4,5,9), (6,4,9,5), (6,4,9,10), (6,5,4,9), (6,5,9,4), (6,5,9,10).
	
	Since $[0,4,5,6]$ is a 4-gon, therefore $ (a,b,c,d)\neq $ (5,9,4,6), (5,9,6,4), (9,4,6,10), (9,5,4,6), (9,5,6,4), (9,6,4,5), (9,10,5,4), (9,6,4,10), (9,10,4,6), (9,10,6,4). (As in these cases [4,6] form an edge.)
	
	For $ (a,b,c,d)= $(4,9,5,6), (4,9,5,10), (4,9,6,10), (4,9,10,5), (4,9,10,6), (4,9,10,11), (9,4,5,\allowbreak6), (9,4,5,10), (9,4,10,5), (9,4,10,6), (9,4,10,11), face sequence will not follow in $ lk(4) $. For $ (a,b,c,d)=(6,9,10,11) $, we can not find eight vertices for $ lk(6) $.
	
	For (4,9,6,5), (6,9,5,4), (9,4,6,5), (9,6,5,4), (9,10,6,5), (9,6,5,10), (9,5,4,10), SEM will not oriantable.
	\begin{case}
		When $(a,b,c,d)=(5,9,4,10)$, then either $lk(4)=C_{8}(10,1,[9,b_{1},3],[0,6,\allowbreak 5])$ or $lk(4)=C_{8}(9,1,[10,c_{1},3],[0,6,5])$. For $lk(4)=C_{8}(10,1,[9,b_{1},3],[0,6,5])$, $lk(10)=C_{8}(1,4,[5,b_{2},b_{3}],[b_{4},b_{5},$ $8])$, which implies [5,10] form an edge in a 4-gon, which is not possible. For $lk(4)=C_{8}(9,1,[10,c_{1},3],$ $[0,6,5])$, 5 will appear two times in $lk(9)$.
	\end{case}
	\begin{case}
		When $(a,b,c,d)=(5,9,6,10)$, then either $lk(6)=C_{8}(10,1,[9,b_{1},7],[4,0,\allowbreak 5])$ or $lk(6)=C_{8}(9,1,[10,c_{1},7],[4,0,5])$. For $lk(6)=C_{8}(10,1,[9,b_{1},7],[4,0,5])$, $lk(10)=C_{8}(1,6,[5,b_{2},b_{3}],[b_{4},b_{5},$ $8])$, which implies [5,10] form an edge in a 4-gon, which is not possible. For $lk(6)=C_{8}(9,1,[10,c_{1},7],$ $[4,0,5])$, 5 will occur two times in $lk(9)$.
	\end{case}
	\begin{case}
		When $(a,b,c,d)=(5,9,10,4)$, then either $lk(4)=C_{8}(10,1,[8,a_{1}],[0,6,5])$ or $lk(4)=C_{8}(8,1,[10,c_{1},3],[0,6,5])$. For $lk(4)=C_{8}(10,1,[8,a_{1}],[0,6,5])$, we see that $[1,10]$ is not an edge in a 4-gon, therefore [5,10] will form an edge in a 4-gon, which is not possible.
		
		For $lk(4)=C_{8}(8,1,[10,c_{1},3],[0,6,5])$, remaining three  4-gon are $[3,4,10,c_{2}]$, $[2,11,c_{3},\allowbreak c_{4}]$, $[3,11,c_{5},c_{6}]$. Now $lk(8)=C_{8}(4,5,[a_{1},a_{2},a_{3}],[7,0,1])$ and $lk(5)=C_{8}(8,a_{1},[b_{1},b_{2},b_{3}],\allowbreak[6,0,4])$, where $a_{1}\in \lbrace 10,11\rbrace$. If $a_{1}=10$, then $lk(10)$ is not possible. If $a_{1}=11$, then either $lk(5)=C_{8}(8,11,[9,1,2],[6,0,4])$ or $lk(5)=C_{8}(8,11,[2,1,9],[6,0,4])$. For $lk(5)=C_{8}(8,11,[9,1,2],\allowbreak[6,0,4])$, then $lk(9)=C_{8}(10,\allowbreak d_{1},[d_{2},d_{3},11],[5,2,1])$. From $lk(8)$ we see that [8,11] is an edge in a 4-gon and from $lk(9)$ we see that [9,11] is an edge in a 4-gon, therefore from $lk(5)$ we see that face sequence is not followed in $lk(11)$. For $lk(5)=C_{8}(8,11,[2,1,9],[6,0,4])$, from $lk(8)$ we see that $[8,11]$ is an edge in a 4-gon and we have [2,11] is an edge in a 4-gon, therefore from $lk(5)$ we see that sace sequence is not followed in $lk(11)$.
	\end{case}
	\begin{case}
		When $(a,b,c,d)=(5,9,10,6)$, then $lk(6)=C_{8}(8,1,[10,a_{1},7],[0,4,5])$, where $a_{1}\in \lbrace 2,3,9,11\rbrace$. Remaining three 4-gons are $[3,4,c_{1},c_{2}]$, $[2,a_{1},c_{3},c_{4}]$, $[3,a_{1},c_{5},\allowbreak c_{6}]$. Therefore $a_{1}\notin \lbrace 9,11\rbrace$ as then above three 4-gons are not hold. Now $lk(5)=C_{8}(8,b_{1},[b_{2},b_{3},b_{4}],\allowbreak[4,0,6])$ and $lk(8)=C_{8}(6,5,[b_{1},b_{5},b_{6}],[7,0,1])$, where $b_{1}\in \lbrace 10,\allowbreak 11\rbrace$. If $b_{1}=10$ then face sequence is not followed in $lk(10)$. If $b_{1}=11$, then from $lk(8)$ we see that [8,11] form an edge in a 4-gon. From the above three 4-gon, we see that one of $c_{1},c_{2}$ is 11 as otherwise $10,11$ will occur two times in two 4-gon, and one of $c_{1},c_{2}$ is 9 as we have [6,10] is an edge in a 4-gon, therefore [9,10] will not form an edge in a 4-gon and therefore $9,10$ can not appear in same 4-gon. Now $b_{2}\in \lbrace 2,9\rbrace$. If $b_{2}=2$, then $c_{5},c_{6}\in \lbrace 8,11\rbrace$ and therefore $c_{3}=6$, $c_{4}=7$ and then  $lk(2)=C_{8}(3,0,[1,9,5],[7,6,10])$, which implies $lk(7)=C_{8}(5,11,[8,1,0],[6,10,2])$. From $lk(8)$ we see that [5,8,11] is a 3-gon, therefore from $lk(7)$, we see that 8 will occur two times in $lk(11)$. If $b_{2}=9$, then [9,11] form an edge in a 4-gon, and we have [8,11] is an edge in a 4-gon, therefore from $lk(5)$ we see that face sequence is not followed in $lk(11)$.
	\end{case}
	\begin{case}
		When $ (a,b,c,d)=(5,9,10,11) $ then $ [2,3,a_1] $ will be a face where $ a_1\in \{10,11\} $. If $ \boldsymbol{a_1=10} $ then three 4-gons are $ [3,4,c_1,c_2] $, $ [2,10,c_3,c_4] $, $ [3,10,c_5,c_6] $. Therefore one of $ c_1,c_2 $ is 11 and one of $ c_3,c_4 $ is 11 which implies [2,10], [3,10], [10,11] are adjacent edges of a 3-gon and a 4-gon. [8,11] and [9,10] are adjacent edges of two 3-gon, therefore $ c_1,c_2\in\{9,11\} $, $ c_5,c_5\in\{6,8\} $ and $ c_3=11,c_4=7 $. Now $ lk(2)=C_8(0,3,[10,11,7],[5,9,1]) $ and $ lk(7)=(5,6,[0,1,8],[11,10,2]) $ which implies $ C(0,1,11,7)\in lk(8) $ which is a contradiction. If $ \boldsymbol{a_1=11} $ then three 4-gons are $ [3,4,c_1,c_2] $, $ [2,11,c_3,c_4] $, $ [3,11,c_5,c_6] $. Therefore one of $ c_1,c_2 $ is 10 and one of $ c_3,c_4 $ is 10 i.e. [2,11], [3,11] and [10,11] are adjacent edges of a 3-on and a 4-gon. As [8,11] and [9,10] are adjacent edges od two 3-gon, therefore, $ c_1,c_2\in\{8,10\} $ and one of $ c_5,c_6 $ is 9. From $ lk(2) $, $ [2,5,c_4] $ is a face, so if $ c_4=7 $ then $ [5,6,7] $ will be a face and then $ C(4,0,7,5)\in lk(6) $ and if $ c_4=6 $ form $ lk(6) $, we get $ c_5=9,c_6=7 $ i.e. $ C(7,3,11,6)\in lk(9) $, which is a contradiction.
	\end{case}
	\begin{case}
		When $(a,b,c,d)=(6,9,4,5)$, then $lk(4)=C_{8}(1,9,[a_{1},b_{1},3],[0,6,5])$, where $a_{1},b_{1}\in \lbrace 7,8,10,11\rbrace$.
		
		If $a_{1}=7$, then $lk(7)=C_{8}(9,6,[0,1,8],[b_{1},3,4])$. Since [0,6] and [6,9] are edges of two 4-gon, therefore from $lk(7)$, face sequence is not followed in $lk(6)$.
		
		If $a_{1}=8$, then $lk(8)=C_{8}(9,5,[1,0,7],[b_{1},3,4])$, which implies $lk(9)=C_{8}(4,8,[5,\allowbreak a_{2},a_{3}],[6,2,1])$ and  $lk(5)=C_{8}(1,8,[9,a_{3},a_{2}],[6,0,4])$ and then $lk(6)=C_{8}(7,a_{4},[a_{5}, a_{6},a_{2}],\allowbreak[5,4,0])$. From $lk(6)$, we have $a_{2}\in \lbrace 2,9\rbrace$, but it is not possible in $lk(9)$.
		
		If $a_{1}=10$, then either $lk(6)=C_{8}(7,a_{2},[9,1,2],[5,4,0])$ or $lk(6)=C_{8}(7,a_{2},[2,1,9],\allowbreak [5,4,0])$.
		\begin{subcase}
		When $lk(6)=C_{8}(7,a_{2},[9,1,2],[5,4,0])$, remaining three 4-gons are $[3,4,10,\allowbreak c_{1}]$, $[2,11,c_{2},c_{3}]$, $[3,11,c_{4},c_{5}]$. Now $lk(5)=C_{8}(1,8,[b_{2},b_{3},2],[6,0,4])$, therefore, $c_{3}=5$, $b_{3}=11$ and $b_{2}=c_{2}$. Now $lk(9)=C_{8}(4,10,[b_{4},b_{5},a_{2}],[6,2,1])$. From $lk(6)$ and $lk(9)$ we see that $a_{2}=11$. Since $3,10$ occur in same 4-gon, therefore $c_{2}=10$ and then $c_{4}=9$. Since [0,7] is an edge in a 4-gon, therefore from $lk(6)$ we see that [7,11] will not form an edge in a 4-gon, therefore $c_{1}=7$ and hence $c_{5}=8$. Therefore $lk(4)=C_{8}(1,9,[10,7,3],[0,6,5]), lk(6)=C_{8}(7,11,[9,1,2],[5,4,0]), lk(5)=C_{8}(1,8,\allowbreak [10,11,2],[6,0,4]), lk(9)=C_{8}(4,10,[8,3,11],[6,2,\allowbreak1])$, which implies $lk(8)=C_{8}(10,5,\allowbreak [1,0,7],[3,11,9])\Rightarrow lk(7)=C_{8}(6,11,[10,4,3],[8,1,0])\allowbreak
		\Rightarrow lk(3)=C_{8}(0,2,[11,9,8],\allowbreak [7,10,4])
		\Rightarrow lk(2)=C_{8}(3,0,[1,9,6],[5,10,11])
		\Rightarrow lk(10)=C_{8}(9,8,\allowbreak[5,2,11],[7,3,4])\allowbreak
		\Rightarrow lk(11)=C_{8}(7,6,[9,8,3],[2,5,$ $10])$. This map isomorphic to the map $ \boldsymbol{KO_{1[(3^3,4,3,4)]}} $, given in figure \ref{33434or}, under the map $ (0,8,3,11,5)(1,2,6,7,10,4) $.
	\end{subcase}
		\begin{subcase}
		When $lk(6)=C_{8}(7,a_{2},[2,1,9],[5,4,0])$ where $a_{2}\in \lbrace 10,11\rbrace$, then $lk(5)=C_{8}(1,8,$ $[b_{2},b_{3},9],[6,0,4])$. Now remaining three 4-gon are $[3,4,10,c_{1}]$, $[2,11,c_{2},c_{3}]$, $[3,11,c_{3},\allowbreak c_{4}]$. From $lk(5)$ we see that $b_{2},b_{3}\in \lbrace 3,11\rbrace$ and $c_{4},c_{5}\in \lbrace 5,9\rbrace$. Therefore one of $c_{2},c_{3}$ will be 10. Since [0,7] is an edge in a 4-gon, therefore $[2,a_{2}]$ will corm an edge in a 4-gon. Now if $a_{2}=10$, then $c_{3}=10$, which implies one of $c_{1},c_{2}$ will be 7, but in both cases face sequence will not followed in $lk(10)$. If $a_{2}=11$, and we have [2,3,11] is a 3-gon, therefore face sequence is not followed in $lk(11)$.
	\end{subcase}
		
		If $a_{1}=11$, then remaining three 4-gons are $[3,4,11,b_{1}]$, $[2,10,c_{2},c_{3}]$, $[3,10,c_{4},c_{5}]$. Since $3,11$ appear in a 4-gon, therefore one of $c_{2},c_{3}$ must be 11. From $lk(4)$, we see that [1,9] is an edge of a 4-gon, therefore [9,11] will not form an edge in a 4-gon and therefore one of $c_{4},c_{5}$ must be 9. Now $lk(6)$ is either $lk(6)=C_{8}(7,b_{2},[2,1,9],[5,4,0])$ or $lk(6)=C_{8}(7,b_{2},[9,1,2],[5,4,0])$. If $lk(6)=C_{8}(7,b_{2},[2,1,9],[5,4,0])$ then $lk(9)=C_{8}(4,11,[10,3,5],\allowbreak [6,2,1])$, but we have $10,11$ occur in same 4-gon, which implies face sequence is not followed in $lk(11)$. If  $lk(6)=C_{8}(7,b_{2},[9,1,2],[5,4,0])$ where $b_{2}\in \lbrace 10,11\rbrace$, we see that [0,7] is an edge in a 4-gon, therefore $[9,b_{2}]$ will form an edge in a 4-gon, which implies $b_{2}=10=c_{4}$. $c_{5}\neq 7$ as then 7 will occur two times in $lk(10)$, therefore $c_{5}=8$ and hence $b_{1}=7$. Now $lk(5)=C_{8}(1,8,[b_{3},b_{4},2],[6,0,4])$, from here we see that [2,5] is an edge in a 4-gon, therefore $c_{3}=5$, which implies $b_{4}=10$ and $b_{3}=c_{2}=11$. Now $lk(9)=C_{8}(4,11,[8,3,10],[6,2,1])
		\Rightarrow lk((8)=C_{8}(5,11,[9,10,3],[7,0,1])
		\Rightarrow lk(7)=C_{8}(6,10,[11,4,5],[8,1,$ $0])
		\Rightarrow lk(10)=C_{8}(6,7,\allowbreak [11,5,2],[3,8,9])
		\Rightarrow lk(11)=C_{8}(9,8,[5,2,10],[7,3,4])
		\Rightarrow lk(2)=C_{8}(3,0,[1,9,6],[5,\allowbreak 11,10])
		\Rightarrow lk(3)=C_{8}(0,2,[10,9,8],\allowbreak[7,11,4])$.This map isomorphic to the map $ \boldsymbol{KO_{2[(3^3,4,3,4)]}} $, given in figure \ref{33434or}, under the map $ (0,8,3,11,4,1,2,6,7,10,5) $.
	\end{case}
	\begin{case}
		When $(a,b,c,d)=(6,9,5,10)$, then either $lk(6)=C_{8}(7,11,[9,1,2],[5,4,0])$ or $lk(6)=C_{8}(7,a_{1},[2,1,9],[5,4,0])$. If $lk(6)=C_{8}(7,a_{1},[2,1,9],[5,4,0])$, then 5 will appear two times in $lk(9)$. If $lk(6)=C_{8}(7,11,[9,1,2],[5,4,0])$, remaining three 4-gons are $[3,4,c_{1},c_{2}]$, $[2,a_{1},c_{3},c_{4}]$, $[3,a_{1},c_{5},c_{6}]$. Since [0,7] is an edge in a 4-gon, therefore [9,11] will form an edge in a 4-gon, and by considering $lk(2)$ we see that [2,5] will form an edge in a 4-gon, therefore $c_{3}=10, c_{4}=5$ and therefore $a_{1}=11$, which implies $c_{5}=9$. Since $10,11$ can not appear in two 4-gon, therefore one of $c_{1},c_{2}$ must be 10. Since [5,10] is an edge in a 4-gon, therefore [8,10] will not form in a 4-gon, which implies $c_{6}=8$ and hence one of $c_{1},c_{2}$ must be 7. Now $lk(2)=C_{8}(3,0,[1,9,6],[5,10,11])\Rightarrow lk(11)=C_{8}(6,7,[10,5,2],[3,8,9])\Rightarrow lk(9)=C_{8}(b_{1},5,[1,2,6],[11,3,8])$. Since [1,5] does not form an edge in a 4-gon, therefore $[5,b_{1}]$ will form an edge in a 4-gon, therefore $b_{1}\in \lbrace 4,6\rbrace$. But $b_{1}\neq 6$, therefore $b_{1}=4$. Therefore $lk(5)=C_{8}(9,1,[10,11,2],[6,0,4])\Rightarrow lk(8)=C_{8}(10,4,[9,11,3],[7,0,1])$, which implies $c_{2}=7$ which implies $c_{1}=10$. Now $lk(3)=C_{8}(0,2,[11,9,8],[7,10,4])\allowbreak\Rightarrow lk(4)=C_{8}(9,8,[10,7,3],[0,6,5])\Rightarrow lk(7)=C_{8}(6,11,[10,4,3],[8,1,0])\Rightarrow lk(10)\allowbreak =C_{8}(1,8,[4,3,7],[11,2,5])$. This map isomorphic to the map $ \boldsymbol{KO_{2[(3^3,4,3,4)]}} $, given in figure \ref{33434or}, under the map $ (0,7,8,1)(2,6,5,3,11,10)(4,9) $.
	\end{case}
	\begin{case}
		When $(a,b,c,d)=(9,5,6,10)$, then $lk(6)=C_{8}(1,10,[a_{1},a_{2},7],[0,4,5])$ which implies $lk(2)=C_{8}(0,3,[b_{1},b_{2},b_{3}],[9,5,1])$. Since [1,5] is an edge in a 4-gon, therefore from $lk(6)$ we see that $[6,a_{1}]$ and $[10,a_{1}]$ form an edge in two distinct 4-gon, theefore $a_{1}=11$. Therefore remaining three 4-gons are $[6,7,a_{2},11]$, $[10,11,c_{1},c_{2}]$ and $[10,8,c_{3},c_{4}]$. Since [0,1] is an edge in a 4-gon, therefore $[2,b_{1}]$ and $[3,b_{1}]$ form an edge in two distinct 4-gon. Therefore [3,4], $[2,b_{1}]$ and $[3,b_{1}]$ form an edge in three distince 4-gon, which implies $b_{1}=11$ and then $a_{2}=2$, $c_{1}=3$ and $c_{2}=9$ and $c_{3},c_{4}\in \lbrace 3,4\rbrace$. Now $lk(7)=C_{8}(4,9,[2,11,6],[0,1,8])\Rightarrow lk(4)=C_{8}(7,9,[5,6,0],[3,10,8])$ which implies 9 will occur two times in $lk(5)$.
	\end{case}
	\begin{case}
		When $(a,b,c,d)=(9,5,10,4)$, then $lk(4)=C_{8}(8,1,[10,c_{1},3],[0,6,5])\Rightarrow lk(5)=C_{8}(10,8,[4,0,6],[9,2,1])\Rightarrow lk(10)=C_{8}(1,5,[8,a_{1},a_{2}],[c_{1},3,4])$. Now incomplete three 4-gons are $[3,4,10,c_{1}]$, $[2,11,c_{2},c_{3}]$ and $[3,11,c_{4},c_{5}]$. Since $3,10$ can not be occur in two 4-gon, therefore one of $c_{2},c_{3}$ must be 10. Considering $lk(10)$ we will see that [8,10] will form an edge in a 4-gon, therefore one of $c_{2},c_{3}$ must be 8. Now $4,6$ can not be appear in two 4-gon, therefore one of $c_{4},c_{5}$ must be 6. Now $c_{1}\in \lbrace 7,9\rbrace$. If $c_{1}=7$, then $lk(7)=C_{8}(6,b_{1},[10,4,3],[8,1,0])$. Since [0,6] form an edge in a 4-gon, therefore $[10,b_{1}]$ will form an edge in a 4-gon, which implies $b_{1}\in \lbrace 2,11\rbrace$, but form any $b_{1}$ is not possible as then face sequence is not followed in $lk(b_{1})$. If $c_{1}=9$ then $lk(9)=C_{8}(6,11,[3,4,10],[2,1,5])$, but we have [3,11] is an edge in a 4-gon and $6,11$ appear in same 4-gon, therefore face sequence is not followed in $lk(11)$.
	\end{case}
	\begin{case}
		When $(a,b,c,d)=(9,5,10,6)$ then three incomplete 4-gons are $[3,4,c_{1},c_{2}]$, $[2,a_{1},\allowbreak c_{3},c_{4}]$ and $[3,a_{1},c_{5},c_{6}]$. Now $lk(6)=C_{8}(8,1,[10,b_{1},7],[0,4,5])$. As $[10,6,7,b_{1}]$ is a 4-gon, therefore $a_{1}=10$. Now $(4,5)$ and $(4,6)$ can not appear in same 4-gon, therefore $(5,10)$ and $(6,10)$ will appear in two different 4-gon, which implies face sequence is not followed in $lk(10)$.
	\end{case}
	\begin{case}
		When $(a,b,c,d)=(9,5,10,11)$, we see that [10,11] form an edge in a 4-gon, therefore [8,11] will not form an edge in a 4-gon. Now $lk(5)$ is either $lk(5)=C_{8}(10,b_{1},[4,0,6],[9,2,\allowbreak1])$ or $lk(5)=C_{8}(10,b_{1},[6,0,4],[9,2,1])$, where for both cases $b_{1}\in \lbrace 7,8\rbrace$. In both cases, $b_{1}=8$ is not possible as then $lk(8)$ not possible. If $b_{1}=7$, then $lk(7)=C_{8}(5,4,[8,1,0],[6,d_{1},10])$. Now remaining incomplete 4-gons are $[3,4,c_{1},c_{2}]$, $[2,a_{1},c_{3},c_{4}]$ and $[3,a_{1},c_{5},c_{6}]$. As $[10,7,6,d_{1}]$ is a 4-gon, therefore $a_{1}=10$. Now $d_{1}\in \lbrace 2,3\rbrace$. Since $10,11$ can not appear in two distinct 4-gon, therefore one of $c_{1},c_{2}$ must be 11 and since $3,11$ can not appear in two different 4-gon, therefore one of $c_{3},c_{4}$ must be 11. If $d_{1}=3$ then [8,11] will form an egde in a 4-gon, which is not possible. If $d_{1}=2$ then $[10,7,6,d_{1}]$ is not possible.
	\end{case}
	\begin{case}
		When $(a,b,c,d)=(9,6,10,d)$, then $lk(6)=C_{8}(7,10,[1,2,9],[5,4,0])$ and then 1 will appears two times in $ lk(8) $, is a contradiction. Therefore $ (a,b,c,d)\neq (9,6,10,4),(9,6,\allowbreak10,5),(9,6,10,11) $.
	\end{case}
	\begin{case}
		When $(a,b,c,d)=(9,10,4,5)$, then remaining three 4-gons are $[3,4,b_{1},b_{2}],\allowbreak [2,11,\allowbreak b_{3},b_{4}],[3,$ $11,b_{5},b_{6}]$. Now $lk(4)=C_{8}(1,10,[b_{1},b_{2},3],[0,6,5])$, therefore $b_{1},b_{2}\notin \lbrace 5,6,\allowbreak10\rbrace$. 2 and 10 can not be appear in same 4-gon, therefore one of $b_{5},b_{6}$ must be 10 and therefore one of $b_{1},b_{2}$ must be 9. Since [1,10] is an edge in a 4-gon, therefore $[10,b_{1}]$ will not form an edge in a 4-gon and hence $b_{2}=9$. Now $b_{1}\in \lbrace 7,10\rbrace$. If $b_{1}=7$ then $lk(7)=C_{8}(10,6,[0,1,8],[9,3,4])$, from here we see that [6,10] form an edge in a 4-gon and we know [0,6] is an edge in a 4-gon, this implies face sequence will not followed in $lk(6)$. If $b_{1}=8$ then $lk(8)=C_{8}(10,5,[1,0,7],[9,3,4])$, this implies [5,10] will form an edge in a 4-gon, which implies one of $b_{5},b_{6}$ will be 5 and therefore $b_{3},b_{4}\in \lbrace 6,7\rbrace$. Now $lk(3)=C_{8}(0,2,[11,b_{6},b_{5}],[9,8,4])$ and $lk(9)=C_{8}(7,c_{1},[c_{2},c_{3},b_{5}],[3,4,8])$. From $lk(3)$, $b_{5}\in \lbrace 5,10\rbrace$ and from $lk(9)$, $b_{5}\in \lbrace 1,2,10\rbrace$. Therefore $b_{5}=10$ and then $b_{6}=5, c_{1}=1,c_{2}=2$. Since [7,8] is an edge in a 4-gon, therefore $[2,c_{1}]$ form an edge in a 4-gon, which implies $c_{1}\in \lbrace 6,7,11\rbrace$, but for any $c_{1}$ face sequence will not follow in $lk(7)$.
	\end{case}
	\begin{case}
		When $(a,b,c,d)=(9,10,4,11)$, then remaining three 4-gons are $[3,4,b_{1},b_{2}]$, $[2,11,b_{3},b_{4}]$ and $[3,11,b_{5},b_{6}]$. As [4,11] is not an edge in a 4-gon, therefore [4,10] will form an edge in a 4-gon, therefore $lk(4)=C_{8}(11,1,[10,b_{2},3],[0,6,5])$, where $b_{2}\in \lbrace 7,8,\rbrace$ and $b_{1}=10$. Now since [4,11] is not an edge in a 4-gon, therefore [8,11] will form an edge in a 4-gon and hence $b_{2}=7$. Now we see that [5,11] and [8,11] will form an edge in two distinct 4-gon. Now $lk(7)=C_{8}(6,c_{1},[c_{2},4,c_{3}],[8,1,0])$, where $c_{3}\in \lbrace 3,10\rbrace$. If $c_{3}=3$ then $lk(3)=C_{8}(0,2,[11,c_{4},3],[7,10,4])$ which implies [8,11] appear in same 4-gon but not form an edge, which is not possible. If $c_{3}=10$ then $lk(10)=C_{8}(8,6,[9,2,1],[4,3,7])$. We have [7,8] is an edge in a 4-gon, therefore [6,9] will form an edge in a 4-gon and then [5,8] will form an edge which is not possible. Similarly $ (a,b,c,d)\neq (9,10,5,11) $.
	\end{case}
	\begin{case}
		When $(a,b,c,d)=(9,10,11,4)$ then either $lk(4)=C_{8}(8,1,[11,c_{1},3],[0,6,\allowbreak 5])$ or $lk(4)=C_{8}(11,1,[8,c_{1},3],[0,6,5])$. Now remaining three 4-gons are $[3,4,b_{1},b_{2}]$, $[2,11,b_{3},b_{4}]$, $[3,11,b_{5},b_{6}]$. If $lk(4)=C_{8}(8,1,[11,c_{1},3],[0,6,5])$ then one of $b_{1},b_{2}$ is 11 which is not possible. If $lk(4)=C_{8}(11,1,[8,c_{1},3],[0,6,5])$, since [1,11] is not an edge in a 4-gon, therefore [5,11] and [10,11] are make an edge in two distinct 4-gon. Now $b_{3}\neq 10$ then (2,10) will appear in two 4-gon. Therefore $b_{5}=10$ and $b_{3}=5$. Since (5,6) and (4,6) can not appear in two 4-gon, therefore $b_{6}=6$. (2,9) can not appear in two 4-gon, therefore $b_{2}=9$ and therefore $b_{4}=7$. Therefore $lk(4)=C_{8}(11,1,[8,9,3],[0,6,5])\Rightarrow
		lk(3)=C_{8}(0,2,[11,10,6],[9,8,4])\Rightarrow
		lk(2)=C_{8}(0,3,[11,5,7],[9,10,1])\Rightarrow
		lk(9)=C_{8}(7,6,[3,4,8],$ $[10,1,2])\Rightarrow
		lk(6)=C_{8}(7,9,[3,11,10],[5,4,0])\Rightarrow
		lk(10)=C_{8}(5,8,[9,2,1],[11,3,6])\Rightarrow
		lk(8)=C_{8}(10,5,\allowbreak [7,0,1],[4,3,9])\Rightarrow
		lk(7)=C_{8}(6,9,[2,11,5],[8,1,0])\Rightarrow
		lk(5)=C_{8}(8,10,\allowbreak[6,\allowbreak 0,4],[11,\allowbreak 2,7])$ $\Rightarrow
		lk(11)=C_{8}(1,4,[5,7,2],[3,6,10])$. This map is identical to the map $ \boldsymbol{KNO_{3[(3^3,4,3,4)]}} $, given in figure \ref{33434non}.
	\end{case}
	\begin{case}
		When $(a,b,c,d)=(9,10,11,6)$ then $lk(6)=C_{8}(8,1,[11,a_{1},7],[0,4,5])$. Now remaining three incomplete 4-gons are $[3,4,b_{1},b_{2}]$, $[2,11,b_{3},b_{4}]$ and $[3,11,b_{5},b_{6}]$. Since (4,6) can not appear in two 4-gon, therefore 6 will be one of $b_{3},b_{4},b_{5},b_{6}$ and we have [6,11] is an edge, therefore [6,11] form an edge in a 4-gon and then [10,11] will not form an edge in a 4-gon, which implies one of $b_{1},b_{2}$ must be 10. Since (2,9) and (9,10) can not appear in two 4-gon, therefore one of $b_{5},b_{6}$ must be 9. Now from $lk(6)$ we see that $a_{1}=2$. Therefore $lk(2)=C_{8}(0,3,[11,6,7],[9,10,1])$ which implies 6 will appear two times in $lk(7)$.
	\end{case}
	\begin{case}
		When $(a,b,c,d)=(6,9,4,10)$ then  Now either $lk(4)=C_{8}(10,1,[9,a_{1},3],[0,\allowbreak 6,5])$ or $lk(4)=C_{8}(9,1,[10,a_{1},3],[0,6,5])$ where $a_{1}\in \lbrace 7,8,11\rbrace$.
		\begin{subcase}
		If $lk(4)=C_{8}(10,1,[9,a_{1},3],[0,6,5])$ then $a_{1}\neq 7,8$ as then (10,11) will occur in  two 4-gon, therefore $a_{1}=11$. Now two incomplete 4-gons are $[2,10,b_{1},b_{2}]$ and $[3,10,b_{3},b_{4}]$. Now since (3,11) will not appear in two 4-gon, therefore one of $b_{1},b_{2}$ is 11. If remaining one of $b_{1},b_{2}$ is 5 then (7,8) will occur in two 4-gon, therefore one of $b_{3},b_{4}$ is 5. From $lk(4)$ we see that [1,10] is not an edge in a 4-gon, therefore [5,10] will form an edge in a 4-gon and then [8,10] will form an edge in a 4-gon, therefore $b_{1}=8$, $b_{2}=11$, $b_{3}=5$ and $b_{4}=7$. Therefore $lk(10)=C_{8}(1,4,[5,7,3],[2,11,8])\Rightarrow
		lk(3)=C_{8}(0,2,[10,5,7],[11,9,4])\Rightarrow
		lk(2)=C_{8}(0,3,[10,8,11],[6,9,1])\Rightarrow
		lk(11)=C_{8}(6,7,[3,4,9],[8,10,2])\Rightarrow
		lk(7)=C_{8}(6,\allowbreak11,[3,10,\allowbreak 5],[8,1,0])\Rightarrow
		lk(8)=C_{8}(5,9,\allowbreak [11,2,10],[1,0,7])\Rightarrow
		lk(9)=C_{8}(5,8,[11,3,4],[1,\allowbreak2,6])\Rightarrow
		lk(5)=C_{8}(9,8,[7,3,10],\allowbreak [4,0,6])\Rightarrow
		lk(6)=C_{8}(7,11,[2,1,9],[5,4,0])$. This map isomorphic to the map $ \boldsymbol{KNO_{3[(3^3,4,3,4)]} }$, given in figure \ref{33434non}, under the map $ (0,3,2)(1,4,11,7,9,\allowbreak5,10) $.
	\end{subcase}
		\begin{subcase}
		If $lk(4)=C_{8}(9,1,[10,a_{1},3],[0,6,5])$ then three incomplete 4-gons are $[3,4,10,a_{1}]$, $[2,11,b_{1},b_{2}]$ and $[3,11,b_{3},b_{4}]$. Since (3,10) can not appear in two 4-gon, therefore one of $b_{1},b_{2}$ is 10. Since [4,10] is an edge in a 4-gon, therefore [8,10] will not form an edge in a 4-gon, therefore one of $b_{3},b_{4}$ will be 8 and hence $a_{1}=7$. Now $lk(7)=C_{8}(6,a_{2},[10,4,3],[8,1,0])$. Since [0,6] is an edge in a 4-gon, therefore $[6,a_{2}]$ will not form an edge in a 4-gon which implies $a_{2}=11$. Now $lk(5)=C_{8}(9,a_{3},[a_{4},a_{5},a_{6}],[6,0,4])$ which implies $lk(6)=C_{8}(7,11,[a_{7},a_{8},a_{6}],[5,4,0])$. From $lk(6)$ we see that $a_{6}\in \lbrace 2,9\rbrace$, but if $a_{6}=9$ then 9 will occur two times in $lk(5)$, therefore $a_{6}=2$ and then $a_{7}=9,a_{8}=1$. Now from $lk(5)$ we see that [2,5] form an edge in a 4-gon, therefore $b_{2}=5$ and then $b_{1}=10$ and $b_{3}=9,b_{4}=8$ which implies $a_{4}=10,a_{5}=11$. Now since [4,9] is not an edge in a 4-gon, therefore $[8,a_{3}]$ will form an edge in a 4-gon, which implies $a_{3}=8$. Therefore $lk(3)=C_{8}(0,2,[11,9,8],[7,10,4])\Rightarrow
		lk(10)=C_{8}(1,8,[5,2,11],[7,3,4])\Rightarrow
		lk(8)=C_{8}(10,5,[9,11,3],[7,0,1])\Rightarrow
		lk(2)=C_{8}3,0,[1,9,6],[5,10,11]\Rightarrow
		lk(9)=C_{8}(5,4,\allowbreak [1,2,6],[11,3,8])\Rightarrow
		lk(11)=C_{8}(6,7,[10,5,2],[3,8,9])$. This map isomorphic to the map $ \boldsymbol{KNO_{1[(3^3,4,3,4)]}} $, given in figure \ref{33434non}, under the map $ (0,7,8,1)(2,6,5,9,4,3,11,10) $.
	\end{subcase}
	\end{case}
	\begin{case}
		When $(a,b,c,d)=(6,9,10,4)$ then either $lk(4)=C_{8}(1,8,[5,6,0],[3,b_{1},\allowbreak 10])$ or $lk(4)=C_{8}(1,10,[5,6,0],[3,b_{1},8])$.
		\begin{subcase}
		If $lk(4)=C_{8}(1,8,[5,6,0],[3,b_{1},10])$ then remaining three incomplete 4-gons are $[3,4,10,b_{1}]$, $[2,11,b_{2},b_{3}]$ and $[3,11,b_{4},b_{5}]$. Now since (3,10) can not appear in two 4-gon, therefore one of $b_{2},b_{3}$ is 10 and as [4,10] is an edge in a 4-gon, therefore [9,10] will not form an edge in a 4-gon which implies one of $b_{4},b_{5}$ is 9. Noe from $lk(4)$ we see that $b_{1}=7$. Now either $lk(7)=C_{8}(6,a_{1},[3,4,10],[8,1,0])$ or $lk(7)=C_{8}(6,a_{1},[10,4,3],[8,1,0])$. If $lk(7)=C_{8}(6,a_{1},[3,4,10],[8,1,0])$, since [0,6] is an edge in a 4-gon, therefore $[6,a_{1}]$ will not form an edge in a 4-gon which implies $a_{1}=11$ which is not possible as then 11 will occur two times in $lk(3)$. If $lk(7)=C_{8}(6,a_{1},[10,4,3],[8,1,0])$ then similarly $[6,a_{1}]$ will not form an edge in a 4-gon, therefore $a_{1}=11$ which implies $b_{2}=10$ and then from $lk(4)$ and $lk(7)$ we see that [3,8] will form an edge in a 4-gon, therefore $b_{5}=8$ and therefore $b_{6}=9$ and $b_{3}=5$. Now
		$lk(10)=C_{8}(1,9,[5,2,11],[7,3,4])\Rightarrow
		lk(8)=C_{8}(5,4,[1,0,7],[3,11,9])\Rightarrow
		lk(9)=C_{8}(5,10,[1,2,6],[11,3,8])\Rightarrow
		lk(6)=C_{8}(7,11,[9,1,2],[5,4,0])\Rightarrow
		lk(11)=C_{8}(6,7,[10,5,2],\allowbreak[3,8,9])\Rightarrow
		lk(2)=C_{8}(0,3,[11,10,5],[6,9,1])\Rightarrow
		lk(3)=C_{8}(0,2,\allowbreak [11,9,8],[7,10,4])\Rightarrow
		lk(5)=C_{8}(9,8,[4,0,6],[2,11,10])$. This map isomorphic to the map $ \boldsymbol{KO_{2[(3^3,4,3,4)]}} $, given in figure \ref{33434or}, under the map $ (0,5)(1,9,2,7,11,8,3,10) $.
	\end{subcase}
		\begin{subcase}
		If $lk(4)=C_{8}(1,10,[5,6,0],[3,b_{1},8])$ then $b_{1}\in \lbrace 10,11\rbrace$, but $b_{1}=10$ is not possible, therefore $b_{1}=11$. Now remaining two incomplete 4-gons are $[2,10,b_{2},b_{3}]$ and $[3,10,b_{4},b_{5}]$. Since (3,11) and (2,9) can not appear in two 4-gon, therefore one of $b_{2},b_{3}$ is 11 and one of $b_{4},b_{5}$ is 9 and since [9,10] is an edge therefore $b_{4}=9$. From $lk(4)$ we see that [1,10] is not an edge in a 4-gon, therefore [5,10] will form an edge in a 4-gon which implies $b_{2}=5$ and then $b_{3}=11$ and $b_{5}=7$. Now $lk(10)=C_{8}(4,1,[9,7,3],[2,11,5])\Rightarrow
		lk(2)=C_{8}(0,3,[10,5,11],[6,9,1])\Rightarrow
		lk(3)=C_{8}(0,2,[10,9,7],[11,8,4])\Rightarrow
		lk(6)=C_{8}(7,11,[2,1,9],\allowbreak[5,4,0])\Rightarrow
		lk(7)=C_{8}(6,11,[3,10,9],[8,1,0])\Rightarrow
		lk(11)=C_{8}(6,7,[3,4,8],[5,10,2])\Rightarrow
		lk(8)=C_{8}(9,5,\allowbreak [11,3,4],[1,0,7])\Rightarrow
		lk(5)=C_{8}(8,9,[6,0,4],[10,2,11])\Rightarrow
		lk(9)=C_{8}(8,5,\allowbreak[6,2,1],\allowbreak [10,3,7])$. This map isomorphic to the map $ \boldsymbol{KNO_{3[(3^3,4,3,4)]}} $, given in figure \ref{33434non}, under the map $ (0,2)(4,11,6,7,9,8,10) $.
	\end{subcase}
	\end{case}
	\begin{case}
		When $(a,b,c,d)=(6,9,10,5)$ then either $lk(6)=C_{8}(7,11,[2,1,9],[5,4,0])$ or $lk(6)=C_{8}(7,11,[9,1,2],[5,4,0])$.
		\begin{subcase}
		If  $lk(6)=C_{8}(7,11,[2,1,9],[5,4,0])$ then $lk(2)=C_{8}(0,3,[10,b_{3},11],\allowbreak [6,9,1])$. Now remaining three incomplete 4-gons are $[3,4,b_{1},b_{2}]$, $[2,10,b_{3},11]$ and $[3,10,b_{4},\allowbreak b_{5}]$. Therefore one of $b_{1},b_{2}$ is 11. Now either [5,10] will form an edge in a 4-gon or not. If [5,10] form an edge in a 4-gon then $lk(5)=C_{8}(1,8,[4,0,6],[9,a_{1},10])$ which implies (9,10) occur in same 4-gon but not form an edge which make contradiction. If [5,10] will not form an edge in a 4-gon, then one of $b_{1},b_{2}$ is 5 and [5,8] will form an edge in a 4-gon, which is not possible.
	\end{subcase}
		\begin{subcase}
		If $lk(6)=C_{8}(7,11,[9,1,2],[5,4,0])$ then either [5,10] form an edge in a 4-gon or not. If [5,10]  is not an edge in a 4-gon then $lk(5)=C_{8}(10,1,[8,11,2],[6,0,4])$. Now remaining two incomplete 4-gons are $[3,4,b_{1},b_{2}]$ and $[3,11,b_{3},b_{4}]$ and then 10 will occur in both 4-gon which is not possible as (3,10) can not appear in two 4-gon. Therefore [5,10] will form an edge in a 4-gon and then $lk(5)=C_{8}(1,8,[4,0,6],[2,11,\allowbreak 10])$ which implies $lk(2)=C_{8}(0,3,[11,10,5],[6,9,1])$. Now remaining two incomplete 4-gons are $[3,4,b_{1},b_{2}]$ and $[3,11,b_{3},b_{4}]$. As [10,11] is an edge in a 4-gon, therefore one of $b_{1},b_{2}$ is 10. Since [9,10] is not an edge in a 4-gon, therefore one of $b_{3},b_{4}$ is 9. Since [1,8] is an edge in a 4-gon, therefore from $lk(5)$ we get [4,8] will not form an edge in a 4-gon which implies one of $b_{3},b_{4}$ is 8 and then one of $b_{1},b_{2}$ is 7. Since [0,7] is an edge in a 4-gon, therefore from $lk(6)$ we see that [9,11] will form an edge in a 4-gon i.e. $b_{3}=9$ and $b_{4}=8$. Now $lk(8)=C_{8}(5,4,[9,11,3],[7,0,1])$ which implies $b_{2}=7$ and then $b_{1}=10$. Now $lk(3)=C_{8}(0,2,[11,9,8],[7,10,4])\Rightarrow lk(4)=C_{8}(8,9,[10,7,3],[0,6,5])\Rightarrow lk(10)=C_{8}(9,1,[5,2,11],[7,3,4])\Rightarrow lk(7)=C_{8}(6,11,[10,4,3],[8,1,0])\Rightarrow lk(9)=C_{8}(10,4,[8,3,11],\allowbreak[6,2,1])$. This map isomorphic to the map $\boldsymbol{ KNO_{1[(3^3,4,3,4)]}} $, given in figure \ref{33434non}, under the map $ (0,10)(1,2,8,9,3,5)(6,11,7) $.
	\end{subcase}
	\end{case}
	\begin{case}
		When $(a,b,c,d)=(9,10,5,6)$ then remaining three incomplete 4-gons are $[3,4,b_{1},\allowbreak b_{2}]$, $[2,11,b_{3},b_{4}]$ and $[3,11,b_{5},b_{6}]$. Now $lk(6)=C_{8}(1,8,[c_{1},c_{2},7],[0,4,5])$. If $b_{5},b_{6}\in \lbrace 6,7\rbrace$ then one of $b_{3},b_{4}$ must be 9 or 10 which is not possible. Therefore $b_{3},b_{4}\in \lbrace 6,7\rbrace$. Now $lk(2)=C_{8}(0,3,[11,b_{3},b_{4}],[9,10,1])$, from this and $lk(6)$ we see that $b_{4}\neq 6$, therefore $b_{4}=7,b_{3}=6$ and then $c_{1}=11,c_{2}=2$. Since [1,8] is an edge in a 4-gon, therefore [8,11] will not form an edge in a 4-gon, therefore one og $b_{1},b_{2}$ is 8. Now $lk(7)=C_{8}(c_{3},9,[2,11,6],[0,1,8])$ where $c_{3}\in \lbrace 3,4\rbrace$, which implies one of $b_{5},b_{6}$ is 9 and then remaining one of $b_{1},b_{2}$ is 10 and therefore remaining one of $b_{5},b_{6}$ is 5. Now $c_{3}\neq 3$ as then face sequence will not followed in $lk(3)$, therefore $c_{3}=4$. Therefore $lk(4)=C_{8}(9,7,[8,10,3],[0,6,5])\Rightarrow lk(8)=C_{8}(11,6,[1,0,7],[4,3,10])\Rightarrow lk(5)=C_{8}(1,10,[11,3,9],[4,0,6])$ which implies $b_{5}=5,b_{6}=9$. Now $lk(9)=C_{8}(4,7,[2,1,10],[3,11,\allowbreak5])\Rightarrow lk(3)=C_{8}(0,2,[11,5,9],[10,8,4])\Rightarrow lk(10)=C_{8}(5,11,\allowbreak [8,4,3],[9,2,1])\Rightarrow lk(11)=C_{8}(10,8,[6,7,2],[3,9,5])$. This map is isomorphic to the map $ \boldsymbol{KO_{2[(3^3,4,3,4)]}} $, given in figure \ref{33434or}, under the map $ (0,5,3,10,2,1,6,9,8)(4,11,7) $.
	\end{case}
	\begin{case}
		When $(a,b,c,d)=(9,10,6,11)$ then remaining three incomplete 4-gons are $[3,4,b_{1},\allowbreak b_{2}]$, $[2,11,b_{3},b_{4}]$ and $[3,11,b_{5},b_{6}]$. (10,11) can not appear in two 4-gon, therefore one of $b_{1},b_{2}$ is 10, and (2,9), (9,10) can not appear in two 4-gon, which implies one of $b_{5},b_{6}$ is 9. Since (4,6) will not appear in two 4-gon, therefore (6,11) will appear in same 4-gon and we see that [6,11] is an edge, therefore [6,11] will form an edge in a 4-gon. Now $lk(6)=C_{8}(10,1,[11,a_{1},7],[0,4,5])$ which implies [6,7] form an edge in a 4-gon, therefore $a_{1}=2$ and $b_{3}=6,b_{4}=7$. Since (4,5) can not appear in two 4-gon, therefore one of $b_{5},b_{6}$ is 5 and hence one of $b_{1},b_{2}$ is 8. Now $lk(2)=C_{8}(0,3,[11,6,7],[9,10,1])$ which implies $lk(7)=C_{8}(c_{1},9,[2,11,6],[0,1,8])$. From here we see that $[8,c_{1}]$ will form an edge in a 4-gon which implies $c_{1}\in \lbrace 3,4,10\rbrace$. $c_{1}\neq 3,10$ as then face sequence is not followed in $lk(c_{1})$, therefore $c_{1}=4$ which implies $b_{1}=8,b_{2}=10$. Now $lk(4)=C_{8}(7,9,[5,6,0],[3,10,8])$ which implies $lk(8)=C_{8}(c_{2},11,[1,0,7],[4,3,10]$, where $[11,c_{2}]$ form an edge in a 4-gon and $[10,c_{2}]$ will not form an edge in a 4-gon which implies $c_{2}=5$. Now $lk(11)=C_{8}(8,1,[6,7,2],[3,9,5])\Rightarrow lk(5)=C_{8}(8,10,[6,0,4],[9,3,11])\Rightarrow lk(9)=C_{8}(4,7,[2,1,10],[3,11,5])\Rightarrow lk(3)=C_{8}(0,2,[11,5,9],[10,8,4])\Rightarrow lk(10)=C_{8}(5,6,\allowbreak [1,2,9],\allowbreak [3,4,8])$. This map isomorphic to the map $ \boldsymbol{KO_{1[(3^3,4,3,4)]}} $, given in figure \ref{33434or}, under the map $ (0,10)(1,4,8)(2,5)(3,7,11,9,6) $.
	\end{case}
	\begin{case}
		When $(a,b,c,d)=(9,10,11,5)$ then remaining three incomplete 4-gons are $[3,4,b_{1},\allowbreak b_{2}]$, $[2,11,b_{3},b_{4}]$ and $[3,11,b_{5},b_{6}]$. If one of $b_{3},b_{4}$ is 5, then one of $b_{5},b_{6}$ will be 6. Since [5,11] form an edge, therefore it will form an edge in a 4-gon, which implies [10,11] and [5,8] will not form an edge in 4-gon, therefore one of $b_{1},b_{2}$ is 10. Since (2,9) and (9,10) can not appear in two 4-gon, therefore one of $b_{5},b_{6}$ is 9. Since [5,8] is not an edge in a 4-gon, therefore one of $b_{1},b_{2}$ is 8 which implies one of $b_{3},b_{4}$ is 7, therefore $b_{3}=5,b_{4}=7$. Now $lk(7)=C_{8}(6,10,[2,11,5],[8,1,0])$ which implies 7 will occur two times in $lk(8)$, which is not possible. Similarly if $b_{5}=5$ then one of $b_{3},b_{4}$ is 6 one one of $b_{1},b_{2}$ is 10 and $b_{6}=9$. Now either $lk(5)=C_{8}(1,8,[4,0,6],[9,3,11])$ or $lk(5)=C_{8}(1,8,[6,0,4],[9,3,11])$. If $lk(5)=C_{8}(1,8,[4,0,6],[9,3,11])$ then $lk(6)=C_{8}(9,c_{1},[c_{2},c_{3},7],[0,4,5])$. From $lk(5)$ we see that one of $b_{3},b_{4}$ is 8 and from $lk(6)$ we see that $b_{3},b_{4}$ is 7, which is not possible. If $lk(5)=C_{8}(1,8,[6,0,4],[9,3,11])$ then $lk(9)=C_{8}(4,c_{1},[2,1,10],[3,11,5])$. Since [4,5] form an edge in a 4-gon, therefore $[2,c_{1}]$ will form an edge in a 4-gon, which implies $c_{1}=7$. Now
		$lk(7)=C_{8}(9,4,[8,1,0],[6,11,2])\Rightarrow
		lk(11)=C_{8}(1,10,[6,7,2],[3,9,5])\Rightarrow
		lk(6)=C_{8}(8,10,[11,2,7],[0,4,3])\Rightarrow
		lk(8)=C_{8}(5,6,\allowbreak [10,3,4],[7,0,1])$ $\Rightarrow
		lk(4)=C_{8}(9,7,[8,10,3],\allowbreak[0,6,5])\Rightarrow
		lk(2)=C_{8}(0,3,[11,6,7],\allowbreak [9,10,1])\Rightarrow
		lk(3)=C_{8}(0,2,[11,5,9],[10,8,4])\Rightarrow
		lk(10)=C_{8}(6,11,[1,2,9],[3,4,8])$. This map isomorphic to the map $ \boldsymbol{KO_{1[(3^3,4,3,4)]}} $, given in figure \ref{33434or}, under the map $ (0,7,6)(1,9,11,4,8,2,5)(3,10) $.
	\end{case}
	\textbf{Non-orientable cases:}
	
	If $ (a,b,c,d)=(4,9,6,5) $ then $ lk(4) $ will not possible.
	\begin{case}
		If $ (a,b,c,d)=(6,9,5,4) $ then $ lk(6)=C_8(7,a_1,[9,1,2],[5,4,0]) $ which implies $ a_1=10 $. Now $ lk(5)=C_8(1,9,[a_2,a_3,2],[6,0,4]) $ and $ lk(2)=C_8(0,3,[a_3,a_2,5],[6,9,1] $. Considering $ lk(9) $, from $ lk(5) $, we see that $ a_2\neq 10 $. So, if $ a_3\neq 10 $ then $ |v_{lk(10)}|\leq7 $ which is not possible. Therefore $ a_3=10 $ and then $ a_2=11 $ as otherwise $ |v_{lk(11)}|\leq 6 $ which make contradiction. Now $ lk(10)=C_8(6,7,[11,5,2],[3,8,9])\Rightarrow lk(9)=C_8(11,5,[1,2,6],[10,3,8]) \Rightarrow lk(8)=C_8(11,4,[1,0,7],[3,10,9])\Rightarrow lk(4)=C_8(1,8,[11,7,3],[0,6,5])\Rightarrow lk(3)=C_8(2,0,\allowbreak[4,11,7],\allowbreak [8,9,10])\Rightarrow lk(7)=C_8(6,10,[11,4,3],[8,1,0])\Rightarrow lk(11)=C_8(8,9,[5,2,10],[7,3,\allowbreak4])$. This map is isomorphic to the map $ KNO_{2[(3^3,4,3,4)]} $, under the map $ (0,6,5,4)(2,10,11,\allowbreak1,3,7,8) $.
	\end{case}
	For $ (a,b,c,d)=(9,4,6,5),(9,6,5,4) $, $ C(5,1,4)\in lk(6) $ and $ C(1,6,0,4)\in lk(5) $, respectively.
	\begin{case}
		When $ (a,b,c,d)=(9,6,5,10) $ then $ lk(6)=C_8(7,11,[9,2,1],[5,4,0]) $. Now $ [5,10,a_1] $ and $ [2,3,b_1] $ will be two faces of the polygon for some $ a_1,b_1\in V $. Therefore $ [2,b_1],[3,b_1],[5,a_1],[10,a_1],[9,11] $ all are adjacent egde of a 3-gon and a 4-gon, therefore $ a_1\in\{3,11\}, b_1\in\{10,11\} $. If $ b_1=10 $ then $ a_1 $ can not be 3, therefore $ a_1=11 $ and then $ \{3,4,5,11\} $ will form a 4-gon, which make contradiction. If $ b_1=11 $ then $ a_1\neq 11 $ as [5,11], [9,11], [10,11] all are adjacent edges of a 3-gon and a 4-gon, which make contradiction. Therefore $ b_1=3 $ and then $ [3,4,7,10],[3,11,9,5] $ will be two faces and $ \{2,11,8,10\} $ will form a 4-gon. Now after completing $ lk(3,lk(4)) $ and $ lk(9) $, we see that $ lk(7) $ will not possible.
	\end{case}
	\begin{case}
		When $ (a,b,c,d)=(9,5,4,10) $ then $ [2,3,10] $ and $ [4,10,11] $ will be two faces. Now $ lk(4)=C_8(10,1,[5,6,0],[3,a_1,11])\Rightarrow lk(3)=C_8(0,2,[10,8,b_1],[a_1,11,4])\Rightarrow lk(10)\allowbreak=C_8(4,1,[8,b_1,3],[2,c_1,11])\Rightarrow lk(2)=C_8(0,3,[10,11,c_1],[9,5,1]) $. From here we see that $ b_1\in \{6,9\} $. If $ b_1=9 $ then $ lk(9)=C_8(7,6,[2,1,5],[8,10,3]) $, $ lk(6)=C_8(7,9,[2,10,11],[5,\allowbreak4,0]) $, $ lk(7)=C_8(6,9,[3,4,11],[8,1,0]) $, $ lk(8)=C_8(11,5,[9,3,10],[1,\allowbreak0,7]) $, $ lk(5)=C_8(11,\allowbreak8,[9,2,\allowbreak1],[4,0,6]) $, $ lk(11)=C_8(5,8,[7,3,4],[10,2,6]) $. This map isomorphic to the map $ \boldsymbol{KNO_{3[(3^3,4,3,4)]}} $, given in figure \ref{33434non}, under the map $ (0,2,3)(1,11,8,5,10,\allowbreak4)(6,9) $. If $ b_1=6 $ then $ lk(9)=C_8(6,7,[2,1,5],[11,4,3]) $, $ lk(6)=C_8(9,7,[0,4,5],[8,10,3]) $, $ lk(7)=C_8(6,9,\allowbreak[2,10,11],[8,1,\allowbreak 0]) $, $ lk(8)=C_8(11,5,[6,3,10],[1,0,7]) $, $ lk(5)=C_8(8,11,[9,\allowbreak2,1],[4,0,6]) $, $ lk(11)=C_8(5,8,\allowbreak[7,2,10],[4,3,9]) $. This map isomorphic to the map $ \boldsymbol{KNO_{3[(3^3,4,3,4)]}} $, given in figure \ref{33434non}, under the map $ (1,4)(2,3)(5,8)(6,7)(10,11) $.
	\end{case}
	\begin{case}
		When $ (a.b,c,d)=(9,10,6,5) $ then $ lk(6)=C_8(1,10,[a_1,a_2,7],[0,4,5]) $ where $ a_1\in\{2,3,11\} $ and $ [2,3,11] $ will be a face. If $ a_1=2 $ then face sequence will not follow in $ lk(2) $ and if $ a_1=3 $ then after completing $ lk(3) $, face sequence will not follow in $ lk(10) $, therefore $ a_1=11 $. Now three incomplete 4-gons are $ [3,4,c_1,c_2] $, $ [2,11,c_3,c_4] $, $ [3,11,c_5,c_6] $. As [1,10] and [2,9] are an edge of a 4-gons, therefore [10,11] will be an adjacent edge of two 3-gon, therefore $ c_1,c_2\in\{8,10\} $ and one of $ c_5,c_6 $ will be 9 and then $ [2,11,c_3,c_4]=[a_2,11,6,7] $. Then one of $ c_5,c_6 $ is 5. Now $ lk(2)=C_8(3,0,[1,10,9],[7,6,11])\Rightarrow lk(5)=C_8(1,8,[11,3,9],[4,0,6])\Rightarrow lk(3)=C_8(2,0,[4,8,10],[9,5,11])\Rightarrow lk(10)=C_8(6,11,[8,4,\allowbreak3],[9,2,1])\Rightarrow lk(8)=C_8(5,11,[10,3,\allowbreak 4],[7,0,1])\Rightarrow lk(4)=C_8(9,7,[8,10,3],[0,6,5])\Rightarrow lk(7)=C_8(4,9,[2,11,6],[0,1,8])\Rightarrow lk(9)=C_8(7,4,[5,11,3],[10,1,2])\Rightarrow lk(11)=C_8(10,\allowbreak8,[5,9,3],[2,7,6]) $. This map is isomorphic to the map $ \boldsymbol{KNO_{1[(3^3,4,3,4)]}} $, given in figure \ref{33434non}, under the map $ (0,8,3,5)(1,2,10,9,6)(4,\allowbreak7,11) $.
	\end{case}
\end{proof}
\section{Acknowledgment}

 The work reported here forms part of the PhD thesis of DB.

{\small

}

\end{document}